\theoremstyle{plain} 
\newtheorem{thm}{Theorem}[section]
\newtheorem{lem}[thm]{Lemma} 
\newtheorem{prop}[thm]{Proposition} 
\newtheorem{cor}[thm]{Corollary} 
\newtheorem{claim}[thm]{Claim}
\theoremstyle{definition} 
\newtheorem{defn}[thm]{Definition}
\newtheorem{rem}[thm]{Remark} 
\newtheorem{quest}[thm]{Question}
\newcommand{\A}{\mathcal{A}}
\newcommand{\T}{\mathcal{T}}
\newcommand{\Q}{\mathbb{Q}}
\newcommand{\Z}{\mathbb{Z}}
\newcommand{\ZN}{\mathbb{Z}}
\newcommand{\RN}{\mathbb{R}}
\newcommand{\PB}{\mathbb{P}}
\newcommand{\CO}{\mathcal{O}}
\newcommand{\CN}{\mathbb{C}}
\newcommand{\Ku}{\mathrm{Ku}}
\newcommand{\D}{\mathcal{D}^{\mathrm{b}}}
\newcommand{\Hom}{\mathrm{Hom}}
\newcommand{\Ext}{\mathrm{Ext}}
\newcommand{\ext}{\mathrm{ext}}
\newcommand{\ch}{\mathrm{ch}}
\newcommand{\td}{\mathrm{td}}
\newcommand{\Coh}{\mathrm{Coh}}
\newcommand{\Stab}{\mathrm{Stab}}
\newcommand{\PR}{\mathrm{pr}}
\newcommand{\RHom}{\mathrm{RHom}}
\newcommand{\RCH}{R\mathcal{H}om}
\newcommand{\CExt}{\mathcal{E}xt}
\newcommand{\CH}{\mathcal{H}}
\numberwithin{equation}{section}
\begin{document}

\title{Moduli of stable sheaves on quadric threefold}

\author{Song Yang}
\address{Center for Applied Mathematics and KL-AAGDM, Tianjin University, Weijin Road 92, Tianjin 300072, P.R. China}%
\email{syangmath@tju.edu.cn}%

\begin{abstract}
For each $0<\alpha<\frac{1}{2}$, there exists a  Bayer--Lahoz--Macr{\`{\i}}--Stellari  inducing Bridgeland stability condition $\sigma(\alpha)$ on a Kuznetsov component $\mathrm{Ku}(Q)$ of the smooth quadric threefold $Q$. 
We obtain the non-emptiness of the moduli space $M_{\sigma(\alpha)}([\mathcal{P}_{x}])$ of $\sigma(\alpha)$-semistable objects in $\mathrm{Ku}(Q)$ with the numerical class $[\mathcal{P}_{x}]$, where $\mathcal{P}_{x}\in \mathrm{Ku}(Q)$ is the projection sheaf of the skyscraper sheaf at a closed point $x\in Q$.
We show that the moduli space $\overline{M}_{Q}(\mathbf{v})$ of Gieseker semistable sheaves with Chern character $\mathbf{v}=\mathrm{ch}(\mathcal{P}_{x})$ is smooth and irreducible of dimension four, and prove that the moduli space $M_{\sigma(\alpha)}([\mathcal{P}_{x}])$ is isomorphic to $\overline{M}_{Q}(\mathbf{v})$. 
As an application, we show that the quadric threefold $Q$ can be reinterpreted as a Brill--Noether locus in the Bridgeland moduli space $M_{\sigma(\alpha)}([\mathcal{P}_{x}])$.
In the appendices, we show that the moduli space $M_{\sigma(\alpha)}([S])$ contains only one single point corresponding to the spinor bundle $S$ and give a Bridgeland moduli interpretation for the Hilbert scheme of lines in $Q$.
\end{abstract}

\date{\today}

\subjclass[2020]{Primary  14D20; Secondary 14F08, 14J30, 14J45}

\keywords{Quadric threefold, Derived categories, Stability conditions}

\maketitle

\setcounter{tocdepth}{1}
\tableofcontents


\section{Introduction}

The notion of stability conditions on a $\CN$-linear triangulated category was introduced by Bridgeland in \cite{Bri07,Bri08}.
In the last decade, 
Bridgeland stability conditions and wall-crossing have became a very active area of research in algebraic geometry; 
see \cite{BM23} for a recent survey.
The purpose of this paper is to study moduli spaces of stable objects in a Kuznetsov component of the smooth quadric hypersurface in the complex projective four-space, 
by using Bridgeland stability conditions and wall-crossing. 


\subsection{Stability conditions on Kuznetsov components}

Recently,
Bayer--Lahoz--Macr{\`{\i}}--Stellari \cite{BLMS23} introduced a general construction of Bridgeland stability conditions on the right orthogonal complement, called a {\it Kuznetsov component}, of an exceptional collection in a $\CN$-linear triangulated category $\mathcal{D}$ with the Serre functor, by inducing from a weak stability condition on $\mathcal{D}$; see also \cite{BMMS12,LMS15,MS12} for a different construction in the case of smooth cubic threefolds and fourfolds.
Since then, using Bayer--Lahoz--Macr{\`{\i}}--Stellari's inducing construction, 
many Bridgeland stability conditions have been constructed on Kuznetsov components of Fano threefolds \cite{BLMS23,FP23,PR23,BF21}, cubic fourfolds \cite{BLMS23,LLMS18,LPZ23a}, Gushel--Mukai fourfolds \cite{PPZ22} and so on.
Recently, there is a rich emerging theory of moduli spaces of semistable objects on Kuznetsov components of Fano threefolds. 
It is now well-known that there is a classification \cite{IP99} of Fano threefolds of Picard rank $1$ with the index range from $1$ to $4$.
Via Bridgeland stability conditions and wall-crossing, 
the moduli spaces of semistable objects on Kuznetsov components of Fano threefolds of Picark rank $1$ of index $1$ and index $2$, and their categorical Torelli theorems have been widely investigated recently in \cite{APR22,PY22,Zha20,BBF+,PRo23,FP23,PR23,FLZ23,LZ22,LPZ23b,Qin23,JLLZ24,JLZ22,PS23,PPZ23} etc.
The moduli spaces of semistable objects on Kuznetsov components of the
remaining two cases, the rigid Fano threefolds of Picard rank $1$ of index $3$
and $4$ (namely, the smooth quadric hypersurface in $\PB^{4}$ and the complex
projective three-space $\PB^{3}$, respectively), have been studied less.

The current paper is focusing on the case of smooth quadric hypersurface in $\PB^{4}$.
Let $Q$ be the smooth quadric threefold.
Since $\{\CO,\CO(H)\}$ is an exceptional collection on the bounded derived category 
$\D(Q)=\D(\Coh(Q))$ of coherent sheaves on $Q$, there exists a semiorthogonal decomposition
$$
\D(Q)
=\langle \Ku(Q), \CO,\CO(H) \rangle,
$$
where $\CO$ is the structure sheaf of $Q$ and $\CO(H):=  \CO_{\PB^{4}}(H)|_{Q}$ and $H$ is a hyperplane. 
Here $\Ku(Q)$ is the right orthogonal complement of the exceptional collection $\{\CO, \CO(H)\}$, i.e., 
$$
\Ku(Q)= \{E\in \D(Q) \mid \Hom(\CO(iH),E[k])=0, \forall\, i=0,1 \textrm{ and } \forall\, k\in \Z\},
$$
which is called a {\it Kuznetsov component} (or {\it residual category}) of $Q$.
As a direct consequence of Bayer--Lahoz--Macr{\`{\i}}--Stellari's inducing construction, 
the Kuznetsov component $\Ku(Q)$ has a Bridgeland stability condition $\sigma(\alpha, \beta)$ as follows (see Proposition \ref{induced-stability-KuQ}): 
the values of $\alpha$ and $\beta$ vary in the subset of the upper $(\alpha,\beta)$-palne
$$
V:=   \{(\alpha, \beta)\in \RN_{>0} \times \RN \mid  -\frac{1}{2}\leq \beta <0, \alpha<-\beta; \; \textrm{or}\; -1< \beta < -\frac{1}{2}, \alpha\leq 1+\beta\}.
$$

\begin{prop}\label{key-BSC}
For each $(\alpha, \beta)\in V$, 
there exists a Bridgeland stability condition $\sigma(\alpha,\beta)$ on the Kuznetsov component $\Ku(Q)$.
\end{prop}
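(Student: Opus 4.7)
The plan is to apply the Bayer--Lahoz--Macr\`{\i}--Stellari (BLMS) inducing machinery from \cite{BLMS23} to the standard weak stability condition on $\D(Q)$ given by tilt stability. First, I would recall the two-step construction of the heart: start from slope stability $\mu_{H}$ on $\Coh(Q)$ (a weak stability condition), tilt at $\beta\in\RN$ to obtain the heart $\Coh^{\beta}(Q)\subset\D(Q)$, and endow it with the tilt slope $\nu_{\alpha,\beta}$ associated to a central charge of the form
\[
Z_{\alpha,\beta}(E) \;=\; -\ch_{2}^{\beta}(E) + \tfrac{\alpha^{2}}{2}H^{3}\ch_{0}(E) + i\,H\cdot\ch_{1}^{\beta}(E),
\]
where $\ch^{\beta} = e^{-\beta H}\cdot\ch$. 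The pair $\sigma_{\alpha,\beta}^{\mathrm{tilt}} := (\Coh^{\beta}(Q), Z_{\alpha,\beta})$ is a weak stability condition on $\D(Q)$ for every $\alpha>0$, and the classical Bogomolov--Gieseker type inequality needed for the support property is the one worked out in the prototype construction of \cite{BLMS23} for Fano threefolds of Picard rank one.

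Next, I would invoke the BLMS reduction: given $\sigma_{\alpha,\beta}^{\mathrm{tilt}}$ and the exceptional collection $\{\CO, \CO(H)\}$ generating the admissible complement of $\Ku(Q)$, the inducing theorem of \cite{BLMS23} yields a Bridgeland stability condition $\sigma(\alpha,\beta)$ on $\Ku(Q)$ provided that: (a) each exceptional object, up to an appropriate shift, lies in $\Coh^{\beta}(Q)$ and is $\nu_{\alpha,\beta}$-(semi)stable; and (b) the tilt phases of $\CO$ and $\CO(H)$, together with the phases of their images under the Serre functor $S_{\Ku(Q)}$, fit into a window of length one, so that a real line in the complex plane separates the central charges of the exceptional pair from those of the rest of the heart.

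For (a), since $\CO$ and $\CO(H)$ are line bundles, hence $\mu_{H}$-stable, the constraint $-1<\beta<0$ is exactly what places both of them in the torsion-free part of the tilt so that they represent themselves in $\Coh^{\beta}(Q)$; their $\nu_{\alpha,\beta}$-stability then follows from a standard check that no destabilizing subobject can exist once the wall-and-chamber structure for their Chern characters has been analysed. For (b), a direct computation of $Z_{\alpha,\beta}(\CO)$, $Z_{\alpha,\beta}(\CO(H))$ and of $Z_{\alpha,\beta}$ applied to the relevant shifts dictated by the Serre functor reduces the required phase inequality to a system of linear and quadratic constraints in $(\alpha,\beta)$. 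Splitting into the subcases $-\tfrac{1}{2}\leq\beta<0$ and $-1<\beta<-\tfrac{1}{2}$ yields the two-piece inequality $\alpha<-\beta$ and $\alpha\leq 1+\beta$ respectively, which is precisely the definition of the region $V$.

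The main obstacle, I expect, is the explicit verification of the phase separation condition (b): the region $V$ decomposes into two pieces precisely because for $\beta<-\tfrac{1}{2}$ the binding constraint on the window switches from $\CO(H)$ to the Serre-dual shift (essentially governed by $\CO(-H)[1]$), while for $\beta\geq-\tfrac{1}{2}$ it is $\CO(H)$ that saturates the window. Tracking which exceptional object (or its Serre twin) is the limiting one at each point of $V$, and simultaneously excluding tilt walls for $\CO$ and $\CO(H)$ throughout $V$ so that their $\nu_{\alpha,\beta}$-stability persists, is the calculational heart of the argument. Once both (a) and (b) are in place, the BLMS theorem immediately gives $\sigma(\alpha,\beta)\in\Stab(\Ku(Q))$.
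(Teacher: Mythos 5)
Your overall strategy coincides with the paper's: both induce $\sigma(\alpha,\beta)$ from the double-tilted weak stability condition $\sigma^{0}_{\alpha,\beta}$ via the criterion of \cite[Proposition 5.1]{BLMS23}, checking that the exceptional objects $\CO$, $\CO(H)$ and the (shifted) images $\CO(-3H)[1]$, $\CO(-2H)[1]$ under the Serre functor $-\otimes\CO(-3H)[3]$ of $\D(Q)$ land in the appropriate tilts. However, there is a genuine gap: the BLMS criterion has a third hypothesis that you never verify, namely that $Z_{\alpha,\beta}^{0}(F)\neq 0$ for every nonzero $F$ in the double-tilted heart intersected with $\Ku(Q)$. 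Without this the induced pair need not be a stability function (objects of vanishing central charge could survive in the heart). The paper disposes of it in two lines: the only objects of $\Coh^{\beta}(Q)$ killed by $Z_{\alpha,\beta}$ are zero-dimensional torsion sheaves, and these have nonzero global sections, hence do not lie in $\Ku(Q)$. Your conditions (a) and (b) alone do not suffice to invoke the theorem.

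Two further inaccuracies in your account of where the region comes from, neither of which is fatal because they err on the permissive side. First, the Serre duals of the exceptional pair are $\CO(-3H)$ and $\CO(-2H)$ (up to shift), not ``essentially $\CO(-H)[1]$''; the object $\CO(-H)$ is a generator of $\Ku(Q)$ and plays no role here. Second, the phase-window computation does \emph{not} yield $V$: it yields the strictly larger region $\widetilde{V}$ cut out by $\alpha<-\beta$ (from $\Re Z_{\alpha,\beta}(\CO)<0$) and $\alpha\leq 2+\beta$ (from $\Re Z_{\alpha,\beta}(\CO(-2H)[1])\geq 0$), these two being the binding constraints for $-1\leq\beta<0$ and $-2<\beta<-1$ respectively. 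The extra restriction $\alpha\leq 1+\beta$ on the strip $-1<\beta<-\frac{1}{2}$ that defines $V$ is not needed for existence at all; in the paper it is imposed afterwards so that all the $\sigma(\alpha,\beta)$ with $(\alpha,\beta)\in V$ lie in a single $\tilde{\mathrm{GL}}^{+}_{2}(\RN)$-orbit. Since $V\subset\widetilde{V}$ the proposition as stated still follows, but your claim that the window condition produces ``precisely the definition of the region $V$'' is false, and attempting to derive $\alpha\leq 1+\beta$ from the phase inequalities would not succeed.
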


These stability conditions $\sigma(\alpha,\beta)$ parametrized by $V$ will be shown to be in the same $\tilde{\mathrm{GL}}^{+}_{2}(\RN)$-orbit in the stability manifold $\Stab(\Ku(Q))$ under the right $\tilde{\mathrm{GL}}^{+}_{2}(\RN)$-action (see Lemma \ref{GL-one-orbitV+}).
In fact, for a fixed $0<\alpha<\frac{1}{2}$, we set $\sigma(\alpha):=\sigma(\alpha, -\frac{1}{2})$,
we denote by the unique $\tilde{\mathrm{GL}}^{+}_{2}(\RN)$-orbit
$$
\mathcal{K}:=  \sigma(\alpha) \cdot \tilde{\mathrm{GL}}^{+}_{2}(\RN) \subset \Stab(\Ku(Q)).
$$ 
Then the stability condition $\sigma(\alpha,\beta)\in \mathcal{K}$ for all  $(\alpha,\beta)\in V$.


\subsection{Moduli spaces of stable objects in the Kuznetsov component}

By a theorem of Kapranov \cite{Kap88}, 
the Kuznetsov component $\Ku(Q)$ is generated by the exceptional collection $\{\CO(-H),S\}$,
where $S$ is the spinor bundle on $Q$.
We write the numerical classes of $\CO(-H)$ and $S$
$$
\lambda_{1}:=  [\CO(-H)]\; \textrm{ and }\;  \lambda_{2}:=  [S]
$$ 
in the numerical Grothendieck group $\mathcal{N}(\Ku(Q))$ of $\Ku(Q)$.
Then $\{\lambda_{1}, \lambda_{2} \}$ is a basis of $\mathcal{N}(\Ku(Q))$. 
Consider a numerical class $a\lambda_{1}+b\lambda_{2} \in \mathcal{N}(\Ku(Q))$ with $a, b\in \ZN-\{0\}$.
For a stability condition $\sigma\in \mathcal{K}$, we denote by 
$$
M_{\sigma}(a\lambda_{1}+b\lambda_{2})
$$ 
the moduli space of $\sigma$-semistable objects in $\Ku(Q)$ with the numerical class $a\lambda_{1}+b\lambda_{2}$.
Here by the moduli space $M_{\sigma}(a\lambda_{1}+b\lambda_{2})$ we mean the good moduli space in the sense of Alper \cite{Alp13} for the algebraic stack $\mathcal{M}_{\sigma}(a\lambda_{1}+b\lambda_{2})$ of $\sigma$-semistable objects in $\Ku(Q)$ with class $a\lambda_{1}+b\lambda_{2}$, 
whose existence follows from the work \cite{BLMNPS21}.
In fact, by \cite[Theorem 21.24]{BLMNPS21}, 
the moduli space $M_{\sigma}(a\lambda_{1}+b\lambda_{2})$ is a proper algebraic space over $\CN$.
Once a Bridgeland stability condition exists on the Kuznetsov component, 
one of the fundamental problems is to investigate the non-emptiness of the Bridgeland moduli spaces of semistable objects, 
and then the smoothness, irreducibility and projectivity and so on.
Naturally, one may ask the following: {\it is the moduli space $M_{\sigma}(a\lambda_{1}+b\lambda_{2})$ non-empty when its expected dimension positive}? {\it If non-empty, is it smooth and irreducible}?

This paper is mainly interested in moduli spaces of semistable objects with a primitive class $a\lambda_{1}+b\lambda_{2}$ with $\gcd(a,b)=1$. 
In particular, we consider the primitive numerical class $2\lambda_{2}-\lambda_{1}$ which is the class of the projection sheaf of the skyscraper sheaf supported on a closed point on $Q$.
Given a closed point $x\in Q$.
Let $\mathcal{P}_{x}\in \Ku(Q)$ be the sheaf given by the projection of a skyscraper sheaf $\CO_{x}$ supported on $x$. 
Here $\mathcal{P}_{x}$ is precisely defined by a short exact sequence 
$$
\xymatrix@C=0.5cm{
0 \ar[r]^{} & \mathcal{P}_{x} \ar[r]^{} & \CO^{\oplus 4} \ar[r]^{} & I_{x}(H) \ar[r]^{} & 0,}
$$
where $I_{x}$ is the ideal sheaf of the closed point $x$ in $Q$.
Moreover, there is a short exact sequence 
$$
\xymatrix@C=0.5cm{
0 \ar[r]^{} & \CO(-H) \ar[r]^{} & S^{\oplus 2} \ar[r]^{} & \mathcal{P}_{x} \ar[r]^{} & 0.}
$$
Hence, the numerical class $[\mathcal{P}_{x}]\in \mathcal{N}(\Ku(Q))$ of $\mathcal{P}_{x}$ is $2\lambda_{2}-\lambda_{1} $.
Now we can state the first result in this paper:

\begin{thm}\label{main-thm-nonempty}
For every stability condition $\sigma \in \mathcal{K}$, 
the moduli space $M_{\sigma}(2\lambda_{2}-\lambda_{1})$ is non-empty.
\end{thm}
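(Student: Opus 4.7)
The plan is to exhibit an explicit semistable object of the prescribed class. Since by Lemma \ref{GL-one-orbitV+} the family $\mathcal{K}$ forms a single $\tilde{\mathrm{GL}}^{+}_{2}(\RN)$-orbit and moduli spaces of semistable objects depend only on the orbit of the stability condition, it suffices to prove $M_{\sigma(\alpha)}(2\lambda_{2}-\lambda_{1})\neq\emptyset$ for one fixed $0<\alpha<\frac{1}{2}$, with $\sigma(\alpha)=\sigma(\alpha,-\frac{1}{2})$.

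The natural candidate is the projection sheaf $\mathcal{P}_{x}$ associated to a closed point $x\in Q$: the two short exact sequences displayed in the introduction show that $\mathcal{P}_{x}\in \Ku(Q)$ and $[\mathcal{P}_{x}]=2\lambda_{2}-\lambda_{1}$. The first step is to verify that $\mathcal{P}_{x}$ lies in the heart $\mathcal{A}_{\Ku(Q)}$ of $\sigma(\alpha,-\frac{1}{2})$. Using the sequence $0\to \CO(-H)\to S^{\oplus 2}\to \mathcal{P}_{x}\to 0$ together with the tilt-stability of $\CO(-H)$ and $S$ in the ambient double-tilted heart of $\D(Q)$, one places $\mathcal{P}_{x}$ in that heart and then intersects with $\Ku(Q)$.

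The decisive and most delicate step is to establish the $\sigma(\alpha,-\frac{1}{2})$-semistability of $\mathcal{P}_{x}$. The strategy is to invoke the inducing criterion of Bayer--Lahoz--Macr{\`{\i}}--Stellari: semistability in $\Ku(Q)$ can be tested through weak tilt-semistability on the ambient $\D(Q)$. The skyscraper sheaf $\CO_{x}$ is tilt-stable on $\D(Q)$, and the projection functor $\PR\colon \D(Q)\to \Ku(Q)$ is expected to transfer this atomic behaviour to $\mathcal{P}_{x}$.

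I expect the main obstacle to be ruling out potential destabilizing sub- or quotient objects: one must show that no short exact sequence $0\to A\to \mathcal{P}_{x}\to B\to 0$ in $\mathcal{A}_{\Ku(Q)}$ violates the slope inequality for $\sigma(\alpha,-\frac{1}{2})$. I would handle this by writing $[A]=a_{1}\lambda_{1}+a_{2}\lambda_{2}$, applying the explicit form of the induced central charge, and combining with positivity constraints coming from the tilt-stability of the exceptional objects $\CO(-H)$ and $S$, so as to enumerate and exclude the finitely many numerical possibilities for a destabilizer. Since the class $2\lambda_{2}-\lambda_{1}$ is primitive in $\mathcal{N}(\Ku(Q))$, semistability of $\mathcal{P}_{x}$ then immediately yields $\mathcal{P}_{x}\in M_{\sigma(\alpha)}(2\lambda_{2}-\lambda_{1})$, completing the proof.
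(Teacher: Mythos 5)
Your overall reduction is the same as the paper's: use Lemma \ref{GL-one-orbitV+} to work at a single $\sigma(\alpha,-\frac{1}{2})$, and exhibit the projection sheaf $\mathcal{P}_{x}$ (whose class is $2\lambda_{2}-\lambda_{1}$) as the witness. The gap is in the decisive step, which you only sketch. First, the suggestion that the projection functor ``transfers the atomic behaviour'' of $\CO_{x}$ to $\mathcal{P}_{x}$ is not an argument: $\PR$ does not preserve stability in any generality, and the paper never uses anything of this kind. Second, your proposed destabilizer analysis works directly in the rank-two lattice $\mathcal{N}(\Ku(Q))$, writing $[A]=a_{1}\lambda_{1}+a_{2}\lambda_{2}$; but the conditions $0\leq \Im Z(A)\leq \Im Z(\mathcal{P}_{x})$ and $\mu(A)\geq\mu(\mathcal{P}_{x})$ cut out a half-strip containing infinitely many lattice points, so without an explicit support-property bound for the induced stability condition the claimed ``finitely many numerical possibilities'' does not materialize, and you give no mechanism for producing one. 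Third, you have no starting point for stability: a wall-crossing argument needs the object to be known stable somewhere.

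The paper supplies exactly these missing pieces, and in the ambient category rather than in $\Ku(Q)$: it first proves that $\mathcal{P}_{x}$ is $\mu_{H}$-slope stable and reflexive (Proposition \ref{Kp-slope-stable}, itself a nontrivial argument using the sequence $0\to\mathcal{P}_{x}\to\CO^{\oplus4}\to I_{x}(H)\to 0$ and Proposition \ref{sum-lines-detect}); this gives tilt-stability $\sigma_{\alpha,\beta}$ for $\alpha\gg0$ and $\beta<-\frac{1}{3}$ via the large volume limit. It then proves Lemma \ref{no-wall-1}: there are no actual tilt-stability walls for $\ch_{\leq2}=(3,-H,-\frac{1}{2}H^{2})$ to the left of the vertical wall $\beta=-\frac{1}{3}$, by intersecting every candidate semicircle with the line $\beta=-1$ and excluding destabilizing sequences there using the discriminant constraints $\Delta_{H}(A),\Delta_{H}(B)\geq0$ and $\leq\Delta_{H}(E)$ from Remark \ref{actual-restrict} --- this is where finiteness genuinely comes from, via the rank-three lattice $\Lambda_{H}^{2}$ and Theorem \ref{tilt-wall-struct-thm}, not the rank-two lattice of $\Ku(Q)$. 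Finally, tilt-stability passes to $\sigma^{0}_{\alpha,\beta}$ by rotation and then restricts to $\sigma(\alpha,\beta)$-stability in $\Ku(Q)$. Incidentally, your route for placing $\mathcal{P}_{x}$ in the heart via $0\to\CO(-H)\to S^{\oplus2}\to\mathcal{P}_{x}\to 0$ is also problematic at $\beta=-\frac{1}{2}$, since there $S$ and $\CO(-H)$ lie in $\mathcal{F}_{H}^{-1/2}$ and the sequence does not live in $\Coh^{-1/2}(Q)$; the correct and simpler observation is $\mu_{H}(\mathcal{P}_{x})=-\frac{1}{3}>-\frac{1}{2}$ combined with slope stability.
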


To study the moduli space $M_{\sigma}(2\lambda_{2}-\lambda_{1})$  further,
we notice that the Chern character of $\mathcal{P}_{x}$ is 
$
\ch(\mathcal{P}_{x})=(3,-H,-\frac{1}{2}H^{2},\frac{1}{3}H^{3}).
$
The truncated Chern character $\ch_{\leq 2}(\mathcal{P}_{x})$ on $Q$ is the same as that on smooth cubic threefolds which has been detailed studied in \cite{BBF+}. 
Actually, we find that these two cases are extremely similar to each other.
This motivates us to investigate the moduli space, 
denoted by $\overline{M}_{Q}(\mathbf{v})$, 
of $H$-Gieseker semistable sheaves on $Q$ with Chern character 
$$
\mathbf{v}:=\ch(\mathcal{P}_{x})=(3,-H,-\frac{1}{2}H^{2},\frac{1}{3}H^{3}).
$$
As a matter of fact, the moduli space $\overline{M}_{Q}(\mathbf{v})$ is a projective moduli space parameterizing $S$-equivalence classes of $H$-Gieseker semistable sheaves with Chern character $\mathbf{v}$ (see \cite{Gie77,Mar77,Mar78,Sim94}).
Our main goal in this paper is to show the following result.

\begin{thm}\label{main-thm-iso}
The moduli space $\overline{M}_{Q}(\mathbf{v})$ is smooth and irreducible of dimension $4$.
Moreover, for every stability condition $\sigma \in \mathcal{K}$, 
the moduli space $M_{\sigma}(2\lambda_{2}-\lambda_{1})$ is isomorphic to $\overline{M}_{Q}(\mathbf{v})$.
\end{thm}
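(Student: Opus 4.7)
The plan is to prove the two assertions in parallel, following closely the template established in \cite{BBF+} for the analogous case of cubic threefolds. The observation already noted in the introduction---that the truncated Chern character $\ch_{\leq 2}(\mathcal{P}_{x})$ agrees with the cubic threefold case---suggests that the tilt-stability and wall-crossing arguments from \emph{loc.\ cit.}\ should transfer with only minor modifications. A key preliminary step is to note that, by Lemma \ref{GL-one-orbitV+}, the claim needs to be verified only for a single representative $\sigma\in \mathcal{K}$, say $\sigma=\sigma(\alpha,-\frac{1}{2})$ for small $\alpha$, since the $\tilde{\mathrm{GL}}^{+}_{2}(\RN)$-action preserves the moduli space.

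For the first assertion, I would begin by producing the embedding $Q\hookrightarrow \overline{M}_{Q}(\mathbf{v})$, $x\mapsto \mathcal{P}_{x}$, giving a three-dimensional irreducible locus of locally free sheaves. To account for the remaining dimension and for S-equivalence, I would classify all Gieseker semistable sheaves with $\ch=\mathbf{v}$: these should appear as degenerations of $\mathcal{P}_{x}$ and can be described via the two defining short exact sequences, namely as cokernels of suitable maps $\CO(-H)\to S^{\oplus 2}$ or as kernels of maps $\CO^{\oplus 4}\to I_{Z}(H)$ for zero-dimensional $Z$. Smoothness at each $[F]$ would follow by showing $\Ext^{2}(F,F)=0$ via Serre duality on $Q$ (using $\omega_{Q}\cong \CO(-3H)$) and the explicit presentation, and by computing $\ext^{1}(F,F)=4$, $\ext^{0}(F,F)=1$ for simple $F$. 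Irreducibility would follow by showing every semistable sheaf specializes to or from a $\mathcal{P}_{x}$ via these presentations.

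For the second assertion, I would construct mutually inverse maps between $\overline{M}_{Q}(\mathbf{v})$ and $M_{\sigma(\alpha)}(2\lambda_{2}-\lambda_{1})$. In one direction, each $F\in \overline{M}_{Q}(\mathbf{v})$ should first be shown to lie in $\Ku(Q)$, by verifying $\Hom(\CO(iH),F[k])=0$ for $i=0,1$ and all $k$ using Gieseker semistability together with the Chern character of $\mathbf{v}$. Next, using the tilt-stability machinery of \cite{BLMS23}, I would show $F$ is $\nu_{\alpha,-1/2}$-tilt-semistable for all small $\alpha>0$, and hence $\sigma(\alpha)$-semistable in $\Ku(Q)$. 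Conversely, any $\sigma(\alpha)$-semistable object $E$ in $\Ku(Q)$ with class $2\lambda_{2}-\lambda_{1}$ must first be shown to be a sheaf in degree zero (by analyzing cohomology with respect to the tilted heart), and then shown to be $H$-Gieseker semistable, by excluding destabilizing subsheaves of the matching Chern characters. The resulting bijection on closed points would then be upgraded to an isomorphism of moduli spaces via the universal family together with the smoothness established in the first part.

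The main obstacle is expected to be the wall-crossing step: one must rule out walls for the class $2\lambda_{2}-\lambda_{1}$ between the Gieseker chamber and the $\sigma(\alpha)$-chamber. Concretely, this amounts to enumerating all numerically possible destabilizing sub/quotient Chern characters compatible with $\mathbf{v}$, and excluding each one either by a Bogomolov-type inequality or by showing that the corresponding sheaf/object cannot exist on $Q$. Since $H^{3}=2$ on $Q$ rather than $3$ as on the cubic threefold, the arithmetic of potential walls will differ from \cite{BBF+} and must be redone carefully. A secondary difficulty is the classification of non-locally-free members of $\overline{M}_{Q}(\mathbf{v})$, which is needed both for irreducibility and for surjectivity of the map from $M_{\sigma(\alpha)}(2\lambda_{2}-\lambda_{1})$.
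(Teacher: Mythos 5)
Your outline follows the same broad strategy as the paper (adapting the cubic-threefold template of \cite{BBF+}, reducing to one $\sigma\in\mathcal{K}$ via the $\tilde{\mathrm{GL}}^{+}_{2}(\RN)$-action, proving smoothness by $\Ext^{2}$-vanishing, and matching the Gieseker and Bridgeland moduli problems by wall-crossing), but the step you treat as routine --- the classification of all $H$-Gieseker semistable sheaves with $\ch=\mathbf{v}$ --- is the actual crux, and the presentations you propose for it are wrong. First, $\mathcal{P}_{x}$ is \emph{not} locally free: it is reflexive with $\CExt^{1}(\mathcal{P}_{x},\CO)\cong\CO_{x}$, so the locus $\{\mathcal{P}_{x}\}\cong Q$ is precisely the non-locally-free locus, of dimension $3$ inside a $4$-dimensional moduli space. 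Second, a kernel of a surjection $\CO^{\oplus 4}\to I_{Z}(H)$ has Chern character $\mathbf{v}$ only when $Z$ is a single point, and since $h^{0}(I_{x}(H))=4$ the map is then the evaluation up to $\mathrm{GL}_{4}$; so this family reproduces only the $\mathcal{P}_{x}$ and misses the dense open locus entirely. The generic member of $\overline{M}_{Q}(\mathbf{v})$ is a vector bundle $E_{D}$, the kernel of the evaluation $\CO^{\oplus 3}\to\CO_{Y}(D)$ for a divisor $D$ of type $(2,0)$ on a smooth hyperplane section $Y$ (equivalently, the unique extension of $\CO$ by a bundle in $M_{Q}(-1,2)$). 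Identifying this family requires the wall-crossing analysis for the torsion class $(0,H,\tfrac12 H^{2},-\tfrac13 H^{3})$ along the unique wall $W(\tfrac12)$, and it is exactly what drives the irreducibility proof: one surjects the smooth irreducible $6$-dimensional Hilbert scheme of degree-$2$ locally Cohen--Macaulay curves onto the bundle locus. Your alternative, ``every semistable sheaf specializes to or from a $\mathcal{P}_{x}$,'' cannot work as stated, since the $\mathcal{P}_{x}$ only sweep out a $3$-dimensional closed subvariety.

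Your other suggestion --- presenting every object of class $2\lambda_{2}-\lambda_{1}$ as the cone of a map $\CO(-H)\to S^{\oplus 2}$ --- is a genuinely viable alternative route (it is the Kronecker-quiver $K(4)$ picture, dimension vector $(1,2)$, whose King moduli space is indeed $4$-dimensional; the paper notes this in a remark), but you have not carried it out: proving that every $\sigma$-semistable object, or every Gieseker-semistable sheaf, lies in the relevant quiver heart is a wall-crossing statement of the same order of difficulty as the step you flag as ``the main obstacle.'' That obstacle is resolved in the paper by showing first that any $\sigma(\alpha,\beta)$-stable object of this class is $\sigma^{0}_{\alpha,\beta}$-semistable for some $(\alpha,\beta)$ near $(0,-1)$ (ruling out destabilizers using the bound $\mu^{0}_{\alpha,\beta}\geq -2$ and Li's numerical restriction), then reducing to tilt-semistability and invoking the absence of walls left of $\beta=-\tfrac13$ together with the sheaf classification. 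Until you supply the classification and this no-wall/destabilizer analysis, both the irreducibility claim and the identification $M_{\sigma}(2\lambda_{2}-\lambda_{1})\cong\overline{M}_{Q}(\mathbf{v})$ remain unproven.
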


As an application, 
in Theorem \ref{Q-BN-locus}, 
we show that the smooth quadric threefold $Q$ can be recovered as a Brill--Noether locus in the Bridgeland moduli space $M_{\sigma}(2\lambda_{2}-\lambda_{1})$.

In Appendix \ref{Append}, 
we discuss the uniqueness of Bridgeland stable objects with the class of the spinor bundle $S$.
It is well-known that the spinor bundle $S$ is $\mu_{H}$-slope stable;
moreover, the moduli space $M_{Q}(-1,1)$ of  stable vector bundles of rank $2$ with Chern classes $c_{1}=-1$ and $c_{2}=1$ is consisting of one single point $S$ (cf. \cite{AS89,OS94}).
In Theorem \ref{spinor-bundle-Bstable}, we show that the moduli space $M_{\sigma}(\lambda_{2})$ also consists of one single point $S$.
In Appendix \ref{Append-Hilb-line}, we show that the moduli space $M_{\sigma}(\lambda_{2}-\lambda_{1})$ is isomorphic to the Hilbert scheme of lines in $Q$ (see Theorem \ref{Hilb-line-Q}).
It is well-known that the Hilbert scheme of lines is isomorphic to $\PB^{3}$.
Therefore, the moduli space $M_{\sigma}(\lambda_{2}-\lambda_{1})$ is isomorphic to $\PB^{3}$.


\subsection{Related work}
This work is inspired by \cite{BLMS23,BLMNPS21,BBF+} for constructing Bridgeland stability conditions and moduli spaces on a Kuznetsov component. 
Especially, benefiting from detailed comparing with smooth cubic threefolds in \cite{BBF+},
to study the moduli space $\overline{M}_{Q}(\mathbf{v})$, 
we use wall-crossing techniques for classifying sheaves with certain Chern characters on the smooth quadric threefold.

It is worth noticing that the non-emptiness problem of the Bridgeland moduli spaces in Kuznetsov components of quartic double solids and Gushel--Mukai threefolds have been solved recently by Perry--Pertusi--Zhao \cite{PPZ23}, and they showed that generically these moduli spaces are smooth and projective. 
In both cases, the Kuznetsov components are Enriques categories. 
In fact, they introduced a general approach to handle the case of Enriques categories.
However, for the smooth quadric threefold $Q$, the Kuznetsov component $\Ku(Q)$ is not an Enriques category.
It could be an interesting question to develop a geometric approach for addressing the non-emptiness problem of moduli spaces of $\sigma(\alpha,\beta)$-stable objects in $\Ku(Q)$ with a general numerical class $a\lambda_{1}+b\lambda_{2}$ where $\gcd(a,b)=1$. 
In particular, in \cite{Yan24}, we are interested in the moduli spaces $M_{\sigma}((n+1)\lambda_{2}-n\lambda_{1})$ and $\overline{M}_{Q}(\mathbf{v}_{n})$, where $\mathbf{v}_{n}:=(n+1)\ch(S)-n\ch(\CO(-H))$ for some $n\geq 2$.

In \cite{BF21}, Bolognese--Fiorenza constructed Bridgeland stability conditions on a different Kuznetsov component which is the right orthogonal complement of exceptional collection $\{\CO,\CO(H),\CO(2H)\}$ in $\D(Q)$ (see \cite[Remark 3.5]{BF21}), and applied them to reprove Kapranov's full exceptional collection theorem. 

In the case of $\PB^{3}$, via Bridgeland stability conditions and wall-crossing, 
Schmidt \cite{Sch20b,Sch23} recently showed that moduli spaces of semistable sheaves between rank zero and four with maximal third Chern character are smooth and irreducible.
It could be interesting to study the similar problem for the quadric threefold. 

In \cite{OS94}, 
Ottaviani--Szurek proved that the moduli space $M_{Q}(-1,2)$ of rank $2$ stable vector bundles with Chern classes $c_{1}=-1$ and $c_{2}=2$ is smooth and irreducible of dimension $6$. 
The moduli space $\overline{M}_{Q}(\mathbf{v})$ is closely related to the moduli space $M_{Q}(-1,2)$ (see Remark \ref{projection-construct-VB}).
Let $\overline{M}_{Q}(\nu)$ be the moduli space of $H$-Gieseker semistable sheaves with Chern character $\nu=(2,-H,-\frac{1}{2}H^{2},\frac{1}{3}H^{3})$.
Recently, Comaschi--Jardim \cite{CJ24} showed that the moduli space $\overline{M}_{Q}(\nu)$ consists of two irreducible components and one of the irreducible components contains $M_{Q}(-1,2)$.

Finally, 
the Fano threefolds of Picard rank $1$ of index $1$ and genus $\geq 6$, and the Fano threefolds of Picard rank $1$ of index $2$ and degree $\geq 2$ have been proved to be realized as a Brill--Noether locus of Bridgeland moduli spaces in their Kuznetsov components by Jacovskis--Liu--Zhang  \cite{JLZ22} and Feyzbakhsh--Liu--Zhang \cite{FLZ23}, respectively.
These works motivate the application in Section \ref{app-sect}.


\subsection*{Acknowledgements}
The author would like to thank Professor Guosong Zhao and Professor Xiaojun Chen for their constant encouragement, 
Xiangdong Yang and Xun Yu for many helpful conversations.
The author would like to thank Professor Paolo Stellari and Laura Pertusi for many helpful discussions;
especially when the author was visiting Universit\`{a} degli Studi di Milano (2018.1--2019.2) under the support of China Scholarship Council.
The author would like to thank the anonymous referee for careful reading the paper and  numerous valuable suggestions and comments.
This work is partially supported by the National Natural Science Foundation of China (No. 12171351).


\section{Bridgeland stability and tilt stability}

In this section, we recall the notions of slope stability and Gieseker stability for coherent sheaves, 
and mainly review the definitions and basic facts of (weak) Bridgeland stability conditions, tilt stability and wall-crossing.

\subsection{Slope stability}
Let $X$ be a smooth complex projective variety of dimension $3$ and $H$ an ample divisor on $X$.
Then the {\it $\mu_{H}$-slope} of $E\in \Coh(X)$ is given by
$$
\mu_{H}(E):=   \frac{H^{2}\cdot \ch_{1}(E)}{H^{3}\cdot \ch_{0}(E)}
$$
for $\ch_{0}^{\beta}(E)\neq 0$; otherwise, $\mu_{H}(E):=   +\infty$.

\begin{defn}
A sheaf $E\in \Coh(X)$ is called {\it $\mu_{H}$-slope (semi)stable}  if for any non-zero proper subsheaf $F\subset E$, the inequality $\mu_{H}(F)< (\leq)\, \mu_{H}(E/F)$ holds.
\end{defn}

A fundamental fact of  slope stability is that every $\mu_{H}$-slope semistable sheaf $E\in \Coh(X)$ satisfies the classical Bogomolov inequality  (cf. \cite[Theorem 7.3.1]{HL10}):
\begin{equation*} 
H \cdot \big(\ch_{1}(E)^{2}-2 \ch_{0}(E)\ch_{2}(E) \big)\geq 0.
\end{equation*}

\subsection{Gieseker stability}
Let $X$ be a smooth complex projective variety of dimension $3$ and $H$ an ample divisor on $X$.
For any sheaf $E \in \Coh(X)$, 
the {\it Hilbert polynomial} of $E$ with respect to $H$ is given by
$$
p(E,m):=   \chi(E(mH))=\sum_{i = 0}^{3} a_{i}(E) m^{i}.
$$
Moreover, we write $p_{2}(E,m):=  \sum_{i=1}^{3} a_{i}(E) m^{i}$.
We review a pre-order on the real coefficient polynomial ring $\RN[m]$: 
\begin{enumerate}
    \item[(1)] For every non-zero $f \in \RN[m]$, one has $f \prec 0$;
    \item[(2)] For non-zero $f, g \in \RN[m]$, if $\deg(f)>\deg(g)$, then $f \prec g$;
    \item[(3)] Suppose that two non-zero $f, g \in \RN[m]$ with $\deg(f) = \deg(g)$.
             Let $a_{f}$ and $a_{g}$ be the leading coefficient of $f$ and $g$. 
             Then $f \preceq g$ if and only if $\frac{f(m)}{a_{f}} \leq \frac{g(m)}{a_{g}}$ for all $m \gg 0$.
    \item[(4)] If $f, g \in \RN[m]$ with $f \preceq g$ and $g \preceq f$, one writes $f \asymp g$.
\end{enumerate}

\begin{defn}
\begin{enumerate}
    \item[(1)] A sheaf $E\in \Coh(X)$ is called {\it $H$-Gieseker (semi)stable} if for all non-zero proper subsheaves $F \subset E$, one has $p(F, m) \prec (\preceq)\, p(E, m)$.
    \item[(2)] A sheaf $E\in \Coh(X)$ is called {\it $2$-$H$-Gieseker (semi)stable} if for all non-zero proper subsheaves $F \subset E$, one has $p_{2}(F, m) \prec (\preceq) \, p_{2}(E/F,m)$.
\end{enumerate}
\end{defn}

Note that every Gieseker semistable sheaf is torsion-free.
For a torsion-free sheaf, by Riemann--Roch theorem, it follows that $\mu_{H}$-slope stability implies $H$-Gieseker stability and $H$-Gieseker semistability implies $\mu_{H}$-slope semistability.
Hence, these six notions yield each other as follows:
$$
\xymatrix@C=0.5cm{
\text{$\mu_{H}$-slope stable}  \ar@{=>}[r] & \text{$2$-$H$-Gieseker stable} \ar@{=>}[r] & \text{$H$-Gieseker stable} \ar@{=>}[d] \\
\text{$\mu_{H}$-slope semistable} & \text{$2$-$H$-Gieseker semistable} \ar@{=>}[l] & \text{$H$-Gieseker semistable} \ar@{=>}[l]
}
$$

\begin{rem}
If $\ch_{0}(E)$ and  $H^{2}\cdot \ch_{1}(E)/H^{3}$ are relatively prime,
then these six notions coincide.
\end{rem}

\subsection{(Weak) Bridgeland stability}

Let $\mathcal{D}$ be a $\CN$-linear triangulated category and $\A$ the heart of a bounded $t$-structure on $\mathcal{D}$.
We denote by  $K(\mathcal{D})$ (resp. $K(\A)$) the Grothendieck group of $\mathcal{D}$ (resp. $\A$). 
We know that the heart $\A$ of a bounded $t$-structure on $\mathcal{D}$ is an abelian category and the Grothendieck groups $K(\A)\cong K(\mathcal{D})$.
Roughly speaking, a (weak)  stability condition on $\mathcal{D}$ is consisting of the heart of a bounded $t$-structure and a (weak) stability function on it.

\begin{defn}
A {\it (weak) stability function} on  $\A$ is a pair $\sigma=(\A, Z)$ consists of a group homomorphism (called a central charge) $Z: K(\A) \rightarrow \CN$, $Z(E)=\Re Z(E) +i\, \Im Z(E)$
such that for every non-zero $E\in \A$, we have 
$\Im Z(E)\geq 0, \; \textrm{and}\; \Im Z(E)= 0 \Rightarrow \Re Z(E)\, (\leq ) < 0.$
\end{defn}

The (weak) stability function $\sigma=(\A, Z)$ allows one to define a notion of {\it $\mu_{\sigma}$-slope}
for any object $E\in \A$ by given
$$
\mu_{\sigma}(E)
:=   
\begin{cases}
-\frac{\Re Z(E)}{\Im Z(E)} & \textrm{if}\, \Im Z(E)>0; \\
+\infty &  \textrm{otherwise,}
\end{cases}
$$
and a notion of {\it $\sigma$-stability} on $\mathcal{D}$.

\begin{defn}
A non-zero object $E\in \mathcal{D}$ is called {\it $\sigma$-(semi)stable} if some shift $E[k]\in \A$ and for every non-zero proper subobject $F\subset E[k]$ in $\A$, the inequality $\mu_{\sigma}(F) \, (\leq) < \mu_{\sigma}(E[k]/F)$ holds.
\end{defn}

Fix a finite rank lattice $\Lambda$ and a surjective homomorphism $\upsilon \colon K(\mathcal{D})\rightarrow \Lambda$.

\begin{defn}
A {\it (weak) stability condition} on $\mathcal{D}$ (with respect to $\Lambda$) is a pair $\sigma=(\A, Z)$ consisting of the heart  $\A$ of a bounded $t$-structure on $\mathcal{D}$ and a group homomorphism $Z: \Lambda \rightarrow \mathbb{C}$ such that the following conditions hold:
\begin{enumerate}
\item[(i)] The composition $\xymatrix@C=0.3cm{K(\A) \ar[r]^{\;\;\;\upsilon} & \Lambda \ar[r]^{} &\mathbb{C}}$
defines a (weak) stability function on $\A$; for any object $E\in \mathcal{D}$, we write $Z(E):=   Z(\upsilon(E))$ for simplicity.
\item[(ii)] (HN-filtrations) Any object of $\A$ has a Harder--Narasimhan filtration in $\sigma$-semistable ones.

\item[(iii)] (Support property) There is a quadratic form $\Delta$ on $\Lambda\otimes \mathbb{R}$ such that $\Delta|_{\ker Z}$ is negative definite, and $\Delta(E)\geq 0$ for all $\sigma$-semistable objects $E\in \A$.
\end{enumerate}
\end{defn}

We denote by $\Stab_{\Lambda}(\mathcal{D})$ the set of stability conditions on the $\CN$-linear triangulated category $\mathcal{D}$.
This set $\Stab_{\Lambda}(\mathcal{D})$, called the stability manifold of $\mathcal{D}$, has the structure of a complex manifold \cite[Theorem 1.2]{Bri07}; see also \cite[Proposition 11.5]{BMS16} and \cite[Theorem 1.2]{Bay19}.
Moreover, there exist two natural group actions on the stability manifold $\Stab_{\Lambda}(\mathcal{D})$:
\begin{itemize}
  \item a right action of the universal covering space $\tilde{\mathrm{GL}}^{+}_{2}(\RN)$ of $\mathrm{GL}^{+}_{2}(\RN)$.
  \item a left action the group of $\CN$-linear exact autoequivalences of $\mathcal{D}$.
\end{itemize}
We refer to \cite[Lemma 8.2]{Bri08} for detailed discussions.


\subsection{Tilt stability}

The approach by tilting a heart provides an important tool for constructing (weak) stability conditions.
Given a weak stability condition $\sigma=(\A, Z)$ on a $\CN$-linear triangulated category $\mathcal{D}$.
For every a real number $\mu\in \mathbb{R}$, 
there is a torsion pair $(\T_{\sigma}^{\mu}, \mathcal{F}_{\sigma}^{\mu})$ defined as extension closures:
$$
\T_{\sigma}^{\mu}
:=   \langle 
E \in \A \mid E \,\textrm{is}\, \sigma\textrm{-semistable with}\, \mu_{\sigma}(E)>\mu 
\rangle,
$$
$$
\mathcal{F}_{\sigma}^{\mu}
:=   \langle 
E \in \A \mid E \,\textrm{is}\, \sigma\textrm{-semistable with}\, \mu_{\sigma}(E)\leq\mu
\rangle.
$$

\begin{prop}[\cite{HRS96}]
The extension closure of $\T_{\sigma}^{\mu}$ and $\mathcal{F}_{\sigma}^{\mu}$,
$$
\A^{\mu}_{\sigma}:=  \langle  \T_{\sigma}^{\mu}, \mathcal{F}_{\sigma}^{\mu}[1]\rangle,
$$
forming the heart of a bounded $t$-structure on $\mathcal{D}$.
\end{prop}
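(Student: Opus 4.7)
The plan is to verify that $(\T_{\sigma}^{\mu}, \mathcal{F}_{\sigma}^{\mu})$ is a genuine torsion pair in the abelian category $\A$, and then to invoke the general tilting machinery of Happel--Reineke--Smal\o{} to conclude that $\A^{\mu}_{\sigma} = \langle \T_{\sigma}^{\mu}, \mathcal{F}_{\sigma}^{\mu}[1]\rangle$ is the heart of a bounded $t$-structure on $\mathcal{D}$. I would do this in two steps.

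First I would establish the Hom-vanishing $\Hom(\T_{\sigma}^{\mu}, \mathcal{F}_{\sigma}^{\mu})=0$. The core case is the following: for $\sigma$-semistable $A, B \in \A$ with $\mu_{\sigma}(A) > \mu_{\sigma}(B)$, any nonzero morphism $f\colon A \to B$ has image $I\subset B$ that is simultaneously a quotient of $A$ and a subobject of $B$; semistability of $A$ and $B$ then forces $\mu_{\sigma}(A) \leq \mu_{\sigma}(I) \leq \mu_{\sigma}(B)$, a contradiction. Since $\T_{\sigma}^{\mu}$ and $\mathcal{F}_{\sigma}^{\mu}$ are by definition extension closures of semistables with slope $>\mu$ and $\leq \mu$ respectively, a standard induction on the number of extension steps (using the long exact sequences obtained by applying $\Hom(-,F)$ and $\Hom(T,-)$) propagates the vanishing from semistable building blocks to arbitrary $T \in \T_{\sigma}^{\mu}$ and $F \in \mathcal{F}_{\sigma}^{\mu}$.

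Next I would produce, for every $E\in \A$, a short exact sequence $0 \to T \to E \to F \to 0$ with $T\in \T_{\sigma}^{\mu}$ and $F \in \mathcal{F}_{\sigma}^{\mu}$. For this I invoke the HN-filtration axiom (ii) in the definition of a weak stability condition: let $0=E_{0}\subset E_{1}\subset \cdots \subset E_{n}=E$ be the Harder--Narasimhan filtration with semistable factors of strictly decreasing $\mu_{\sigma}$, and let $k$ be the largest index with $\mu_{\sigma}(E_{k}/E_{k-1})>\mu$. Then $T:=E_{k}$ lies in $\T_{\sigma}^{\mu}$ as an iterated extension of semistables of slope $>\mu$, and the quotient $F:=E/E_{k}$ lies in $\mathcal{F}_{\sigma}^{\mu}$ by the same reasoning; uniqueness of this decomposition follows from the Hom-vanishing above. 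This completes the verification that $(\T_{\sigma}^{\mu}, \mathcal{F}_{\sigma}^{\mu})$ is a torsion pair in $\A$.

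Finally, I would cite the Happel--Reineke--Smal\o{} tilting theorem, which states that for any torsion pair $(\T, \mathcal{F})$ in the heart $\A$ of a bounded $t$-structure on a triangulated category, the full subcategory $\langle \T, \mathcal{F}[1]\rangle$ is again the heart of a bounded $t$-structure. Applying this to our torsion pair yields the proposition. The main subtlety is really just the first step, where one must be careful in the weak stability setting because slopes can take the value $+\infty$; however the image argument above goes through unchanged since the inequality $\mu_{\sigma}(A)>\mu\geq \mu_{\sigma}(B)$ still forces a contradiction when $\mu_{\sigma}(I)$ is sandwiched between them. All other steps are formal consequences of HN-filtrations plus standard extension-closure arguments.
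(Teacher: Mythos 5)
Your proposal is correct: the verification that $(\T_{\sigma}^{\mu}, \mathcal{F}_{\sigma}^{\mu})$ is a torsion pair via the Hom-vanishing image argument and the truncation of the Harder--Narasimhan filtration, followed by the Happel--Reiten--Smal\o{} tilting theorem, is exactly the standard argument. The paper offers no proof of its own here --- it simply cites \cite{HRS96} --- and your write-up faithfully reconstructs the proof underlying that citation, including the correct handling of the $+\infty$-slope case arising for weak stability functions.
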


This abelian category $\A_{\sigma}^{\mu}$ is called the heart obtained by {\it tilting} $\A$ with respect to $\sigma$ at the slope $\mu$.

In what follows, we will recall the first tilting construction for defining tilt stability.
From now on, let $X$ be a smooth complex projective variety of dimension $3$ and $H$ an ample divisor on $X$.
For every integer $j\in \{0,1, 2, 3\}$, 
we consider the lattice $\Lambda_{H}^{j}\cong \Z^{j+1}$ generated by 
$$
(H^{3}\ch_{0}, H^{2}\ch_{1}, \cdots, H^{3-j}\ch_{j})\in \Q^{j+1}
$$
with the map $\upsilon_{H}^{j}: K(\D(X))\rightarrow \Lambda_{H}^{j}$.

\begin{lem}[{\cite[Example 2.8]{BLMS23}}]
The pair $\sigma_{H}:=   (\Coh(X), Z_{H})$ with the central charge
$$
Z_{H}(E):=  -H^{2} \cdot \ch_{1}(E)+ i H^{3} \cdot \ch_{0}(E)
$$
is a weak stability condition on $\D(X)$ (with respect to $\Lambda_{H}^{1}$) with the trivial quadratic form.
\end{lem}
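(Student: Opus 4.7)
The plan is to verify the three axioms in the definition of a weak stability condition for $\sigma_H = (\Coh(X), Z_H)$ with respect to the lattice $\Lambda_H^1$, taking the quadratic form to be $\Delta \equiv 0$.

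First, I would check that $Z_H$ is well-defined as a homomorphism $\Lambda_H^1 \to \mathbb{C}$: by construction $Z_H(E)$ depends only on the pair $(H^3\ch_0(E), H^2\ch_1(E))$, so it factors through $\upsilon_H^1$. Next I would verify the weak positivity property on $\Coh(X)$: for any non-zero $E \in \Coh(X)$ one has $\ch_0(E) \geq 0$, whence $\Im Z_H(E) = H^3 \ch_0(E) \geq 0$; if $\Im Z_H(E) = 0$ then $E$ is a torsion sheaf and, as the cycle class $\ch_1(E)$ is represented by an effective divisor (or is zero when $E$ has dimension $\leq 1$), we get $H^2 \cdot \ch_1(E) \geq 0$, so $\Re Z_H(E) = -H^2 \ch_1(E) \leq 0$. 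This is precisely the defining inequality for a weak stability function.

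For the existence of Harder--Narasimhan filtrations, I would observe that the associated slope $\mu_{\sigma_H}(E) = (H^2\ch_1(E))/(H^3\ch_0(E))$ on torsion-free sheaves coincides with the classical $\mu_H$-slope, while all torsion sheaves have $\mu_{\sigma_H} = +\infty$. Thus the HN filtration of $E \in \Coh(X)$ with respect to $\sigma_H$ is obtained by prepending the maximal torsion subsheaf to the $\mu_H$-slope HN filtration of the torsion-free quotient, and existence then follows from the classical existence of slope HN filtrations on a smooth projective variety (which in turn rests on the Noetherianity of $\Coh(X)$ and the boundedness of semistable sheaves with bounded invariants).

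Finally, for the support property with $\Delta \equiv 0$: the condition $\Delta(E) \geq 0$ is trivial, and one only needs $\Delta|_{\ker Z_H}$ to be negative definite. But $Z_H \colon \Lambda_H^1 \otimes \mathbb{R} \cong \mathbb{R}^2 \to \mathbb{C} \cong \mathbb{R}^2$ sends $(H^3\ch_0, H^2\ch_1)$ to $-H^2\ch_1 + i H^3\ch_0$, which is manifestly an $\mathbb{R}$-linear isomorphism, hence $\ker Z_H = 0$ and the negative-definiteness condition on the zero subspace is vacuous. I expect the only mildly delicate step to be the existence of HN filtrations, because although it reduces to the classical statement for $\mu_H$-slope stability, one must be careful to incorporate torsion sheaves into the filtration as the maximal-slope piece; the other two axioms are essentially a direct computation.
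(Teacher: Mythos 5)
Your proposal is correct. The paper gives no proof of this lemma at all — it is stated as a direct citation of \cite[Example 2.8]{BLMS23} — so there is nothing internal to compare against; your verification (the factorization through $\Lambda_{H}^{1}$, effectivity of $\ch_{1}$ for torsion sheaves giving the weak positivity, reduction of HN filtrations to the classical slope case with torsion sheaves as the $+\infty$-slope piece, and the observation that $\ker Z_{H}=0$ makes the trivial quadratic form admissible) is exactly the standard argument underlying that reference.
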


Now given a real number $\beta\in \RN$,
we denote by $\Coh^{\beta}(X)$ the heart of a bounded $t$-structure on $\D(X)$ obtained by tilting weak stability condition $\sigma_{H}=(\Coh(X), Z_{H})$ at the slope $\mu_{H}=\beta$, i.e., 
$\Coh^{\beta}(X)=\langle \T_{H}^{\beta}, \mathcal{F}_{H}^{\beta}[1] \rangle$, where
$$
\T_{H}^{\beta}
:=   \langle E \in \Coh(X) \mid E \,\textrm{is}\; \mu_{H}\textrm{-semistable with}\, \mu_{H}(E)>\beta \rangle,
$$
$$
\mathcal{F}_{H}^{\beta}
:=   \langle E \in  \Coh(X) \mid E \,\textrm{is}\; \mu_{H}\textrm{-semistable with}\, \mu_{H}(E)\leq\beta \rangle.
$$

For an object $E\in \D(X)$, the twisted Chern character $\ch^{\beta}(E)$ is defined to be $\ch^{\beta}(E):=e^{-\beta H}\cdot \ch(E)$.
More explicitly, one has the first three formulae:
\begin{eqnarray*}
\ch_{0}^{\beta}(E) & = & \ch_{0}(E) \\
\ch_{1}^{\beta}(E) & = & \ch_{1}(E)-\beta H\cdot \ch_{0}(E) \\
\ch_{2}^{\beta}(E) & = & \ch_{2}(E)-\beta H\cdot \ch_{1}(E)+\frac{\beta^{2}}{2}H^{2} \cdot \ch_{0}(E) \\
\end{eqnarray*}
For any $(\alpha, \beta)\in \RN_{>0}\times \RN$,
one defines a central charge
\begin{equation}\label{central-charge-wsc}
Z_{\alpha, \beta}(E)
:=   \frac{1}{2}\alpha^2 H^{3}\cdot \ch_{0}^{\beta}(E)-H \cdot \ch_{2}^{\beta}(E)
+ i H^{2}\cdot \ch_{1}^{\beta}(E)
\end{equation}
for  any $E\in \Coh^{\beta}(X)$.

\begin{prop}[{\cite[Proposition 2.12]{BLMS23}}]
The pair $\sigma_{\alpha, \beta}:= (\Coh^{\beta}(X), Z_{\alpha, \beta})$ is a weak stability condition on $\D(X)$ (with respect to $\Lambda_{H}^{2}$) with the quadratic form given by the $H$-discriminant $\Delta_{H}$.
Moreover, these stability conditions vary continuously as $(\alpha,\beta)$ varies in $\RN_{>0}\times \RN$.
\end{prop}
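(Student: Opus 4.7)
The plan is to verify in turn the three axioms of a weak stability condition—weak stability function, existence of Harder--Narasimhan filtrations, and support property with $\Delta_H$—and then obtain continuity from the continuous dependence of $Z_{\alpha,\beta}$ on $(\alpha,\beta)$.

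First I would check that $Z_{\alpha,\beta}$ defines a weak stability function on $\Coh^{\beta}(X)$. By additivity and the construction of the tilted heart, every $E \in \Coh^{\beta}(X)$ fits in a short exact sequence with $\T_H^{\beta}$-part in degree $0$ and $\mathcal{F}_H^{\beta}[1]$-part in degree $1$, so it is enough to check $\Im Z_{\alpha,\beta} \geq 0$ on the generators: $\mu_H$-semistable sheaves with $\mu_H > \beta$ (where $H^2 \cdot \ch_1^{\beta} > 0$), shifts of $\mu_H$-semistable sheaves with $\mu_H \leq \beta$ (where $-H^2 \cdot \ch_1^{\beta} \geq 0$), and torsion sheaves of codimension $\geq 2$ (handled directly from the defining formula). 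In the boundary case $\Im Z_{\alpha,\beta}(E) = 0$, all nonzero-rank HN factors of $E$ with respect to $\sigma_H$ must have slope exactly $\beta$; applying the classical Bogomolov inequality to each such factor and collecting terms then forces $\Re Z_{\alpha,\beta}(E) \leq 0$, as required.

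For the existence of Harder--Narasimhan filtrations I would invoke the standard Noetherianity criterion: the heart $\Coh^{\beta}(X)$ is Noetherian (inherited from $\Coh(X)$ through the tilting construction for rational $\beta$, with the irrational case obtained by approximation), and the image of the composition $K(\Coh^{\beta}(X)) \to \Lambda_H^2 \to \CN$ is discrete, which together imply that HN filtrations exist. For the support property with the $H$-discriminant
$$
\Delta_H(E) = (H^2 \cdot \ch_1(E))^2 - 2(H^3 \cdot \ch_0(E))(H \cdot \ch_2(E)),
$$
a direct linear-algebra verification shows that $\Delta_H$ is negative definite on $\ker Z_{\alpha,\beta} \subset \Lambda_H^2 \otimes \RN$. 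The inequality $\Delta_H(E) \geq 0$ for $\sigma_{\alpha,\beta}$-semistable $E$ is then reduced to the classical Bogomolov inequality for $\mu_H$-semistable sheaves: in the large-volume chamber $\alpha \gg 0$, $\sigma_{\alpha,\beta}$-semistability is controlled by $\mu_H$-semistability, and an induction on rank combined with the wall-and-chamber structure shows that $\Delta_H \geq 0$ is preserved when one crosses a wall in the $(\alpha,\beta)$-plane.

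The main obstacle is the last step—propagating the Bogomolov inequality from classical $\mu_H$-semistable sheaves to $\sigma_{\alpha,\beta}$-semistable objects of the tilted heart, which is the substantive content of tilt theory; all other verifications are essentially formal consequences of the definitions. Once the three axioms are in place, continuity of $\sigma_{\alpha,\beta}$ as $(\alpha,\beta)$ ranges over $\RN_{>0} \times \RN$ follows because $Z_{\alpha,\beta}$ depends real-analytically on $(\alpha,\beta)$ and the support property with the fixed quadratic form $\Delta_H$ is stable under small perturbations; the only subtlety is that the heart $\Coh^{\beta}(X)$ jumps as $\beta$ crosses a $\mu_H$-slope of some semistable sheaf, but one verifies that the associated slicings glue into a continuous family of Bridgeland-type slicings across these transitions.
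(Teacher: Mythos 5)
The paper itself gives no argument here --- it imports the statement verbatim as \cite[Proposition 2.12]{BLMS23} --- so your proposal should be measured against the standard proof in the references (Bayer--Macr{\`{\i}}--Toda and Bayer--Macr{\`{\i}}--Stellari), and in outline it reproduces that proof faithfully: the check of the weak stability function on the two generating subcategories of the tilted heart, the reduction of the boundary case $\Im Z_{\alpha,\beta}(E)=0$ to the classical Bogomolov inequality, the negative definiteness of $\Delta_{H}$ on $\ker Z_{\alpha,\beta}$, and the propagation of $\Delta_{H}\geq 0$ from the large volume limit by wall-crossing are all the right steps, and you correctly identify the last of these as the substantive content.

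The one place where your argument as written breaks down is the Harder--Narasimhan step for irrational $\beta$. You invoke the criterion ``Noetherian heart plus discrete image of $\Im Z$,'' but the image of $\Im Z_{\alpha,\beta}(E)=H^{2}\cdot\ch_{1}(E)-\beta H^{3}\cdot\ch_{0}(E)$ is a dense subgroup of $\RN$ whenever $\beta\notin\Q$, so the discreteness hypothesis simply fails there. (The parenthetical is also slightly misplaced: Noetherianity of $\Coh^{\beta}(X)$ holds for all $\beta$ directly, cf.\ \cite[Lemma B.1]{BMS16}; it is the HN property, not Noetherianity, that requires extra care.) The correct repair is the argument of \cite[Appendix B]{BMS16}, which establishes the HN property for irrational $\beta$ by a genuinely different mechanism --- controlling destabilizing subobjects via the support property and a limiting argument from nearby rational $\beta$ --- rather than by the discrete-image criterion. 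With that substitution the proof is complete; the remaining steps, including the continuity assertion (analytic dependence of $Z_{\alpha,\beta}$ together with openness of the support property, and the gluing of slicings across the loci where the heart jumps), are as you describe.
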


Here the {\it $H$-discriminant} of $E\in \D(X)$ is given by
\begin{eqnarray*}
\Delta_{H}(E): 
& = & (H^{2}\cdot \ch^{\beta}_{1}(E))^{2}-2(H^{3}\cdot\ch^{\beta}_{0}(E))\cdot (H \cdot \ch^{\beta}_{2}(E)) \\ 
& = & (H^{2}\cdot \ch_{1}(E))^{2}-2(H^{3}\cdot\ch_{0}(E))\cdot (H \cdot \ch_{2}(E)).
\end{eqnarray*}
One of the fundamental results is that  there is a tilt stability version  of the Bogomolov inequality (see \cite[Corollary 7.3.2]{BMT14} and \cite[Theorem 3.5]{BMS16}):

\begin{thm}[\cite{BMT14}]\label{B-ineq}
If $E\in \Coh^{\beta}(X)$ is $\sigma_{\alpha,\beta}$-semistable,
then its $H$-discriminant $\Delta_{H}(E)\geq 0$.
\end{thm}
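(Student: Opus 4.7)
The plan is to argue by contradiction, reducing to the classical Bogomolov inequality for $\mu_{H}$-slope semistable sheaves via the large volume limit of tilt stability, combined with a wall-crossing induction.

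First, I would set up the large volume limit for tilt stability. From the explicit form of the central charge \eqref{central-charge-wsc}, one checks that for fixed $\beta \in \RN$ and $\alpha \to +\infty$ the tilt slope $\mu_{\sigma_{\alpha,\beta}}$ is governed by the imaginary part $H^{2}\cdot \ch_{1}^{\beta}$ and by $H^{3}\cdot \ch_{0}^{\beta}$ alone. A standard argument then shows that every $\sigma_{\alpha,\beta}$-semistable object in $\Coh^{\beta}(X)$ for $\alpha$ sufficiently large is, up to shift, a torsion-free $\mu_{H}$-slope semistable sheaf on $X$ (or a torsion sheaf supported in codimension at least two). For such sheaves, the classical Bogomolov inequality recalled in the slope-stability subsection, together with the HN-filtration with respect to $\mu_{H}$-slope, yields $\Delta_{H} \geq 0$ immediately.

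Second, I would propagate this to arbitrary $(\alpha_{0},\beta_{0})$ by wall-crossing. Suppose for contradiction that $E \in \Coh^{\beta_{0}}(X)$ is $\sigma_{\alpha_{0},\beta_{0}}$-semistable with $\Delta_{H}(E) < 0$. Along the vertical ray $\{(\alpha,\beta_{0}) : \alpha \geq \alpha_{0}\}$ the wall-and-chamber structure for a fixed numerical class is locally finite, so either $E$ is $\sigma_{\alpha,\beta_{0}}$-semistable for all $\alpha \gg 0$, which contradicts the first step, or $E$ is destabilized at some maximal wall $\alpha = \alpha_{\ast}$. At that wall there is a short exact sequence
\begin{equation*}
0 \longrightarrow A \longrightarrow E \longrightarrow B \longrightarrow 0
\end{equation*}
in $\Coh^{\beta_{0}}(X)$ with $A$ and $B$ both $\sigma_{\alpha_{\ast},\beta_{0}}$-semistable and $\mu_{\sigma_{\alpha_{\ast},\beta_{0}}}(A) = \mu_{\sigma_{\alpha_{\ast},\beta_{0}}}(B)$. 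The key algebraic input is a convexity-type estimate: using the equality of tilt slopes together with additivity of $\ch^{\beta_{0}}$, one derives an inequality of the form
\begin{equation*}
\Delta_{H}(A) + \Delta_{H}(B) \leq \Delta_{H}(E),
\end{equation*}
appropriately weighted when the tilt-ranks $\ch_{0}^{\beta_{0}}(A)$ or $\ch_{0}^{\beta_{0}}(B)$ vanish. Combined with $\Delta_{H}(E) < 0$, this forces at least one of $\Delta_{H}(A), \Delta_{H}(B)$ to be strictly negative; replacing $E$ by that factor and iterating, Noetherian descent on a numerical invariant (for instance the absolute value of $\ch_{0}^{\beta_{0}}$) eventually produces a $\sigma_{\alpha,\beta_{0}}$-semistable object with $\Delta_{H} < 0$ in the large volume regime, contradicting the first step.

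The main obstacle is the convexity estimate on a wall. Verifying it requires a careful case analysis, because inside the tilted heart $\Coh^{\beta_{0}}(X)$ the tilt-rank $\ch_{0}^{\beta_{0}}$ of a subobject need not be positive, so one must track signs and deal with the degenerate configurations where some $\ch_{0}^{\beta_{0}}$ vanishes separately. Once this algebraic input is secured, the identification of the large volume limit with $\mu_{H}$-slope semistability and the termination of the wall-crossing descent are by now routine.
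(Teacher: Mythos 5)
First, a remark on the ground truth: the paper does not prove this statement at all --- it is quoted from the literature with the pointers \cite[Corollary 7.3.2]{BMT14} and \cite[Theorem 3.5]{BMS16} --- so your proposal can only be measured against the standard proof in those references. Your high-level architecture (reduce to the classical Bogomolov inequality via the large volume limit, then propagate to arbitrary $(\alpha,\beta)$ by crossing walls along a vertical ray, with a convexity estimate for $\Delta_{H}$ on each wall) is indeed the architecture of that standard proof.

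However, there is a genuine gap in your descent step, and it sits exactly at the point you flag as "the main obstacle". The convexity estimate $\Delta_{H}(A)+\Delta_{H}(B)\leq \Delta_{H}(E)$ for a destabilizing sequence on a wall is \emph{not} an unconditional algebraic identity: in the relevant lemma (\cite[Lemma A.6]{BMS16}) it is proved under the hypotheses that $Z_{\alpha,\beta}(A)$ and $Z_{\alpha,\beta}(B)$ lie on the same ray \emph{and} that $\Delta_{H}(A)\geq 0$ and $\Delta_{H}(B)\geq 0$. Since $\Delta_{H}$ has signature $(2,1)$, the inequality genuinely fails without the positivity hypotheses (e.g.\ in a hyperbolic plane $Q(x,y)=xy$ one has $Q(1,-2)+Q(2,1)=0>-3=Q(3,-1)$). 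So you cannot run the argument in the direction "$\Delta_{H}(E)<0$ forces $\Delta_{H}(A)<0$ or $\Delta_{H}(B)<0$": the inequality you would need for that is only available once you already know the factors satisfy the Bogomolov inequality, which is what is being proved. The constraints listed in Remark \ref{actual-restrict} of the paper are likewise consequences of Theorem \ref{B-ineq}, not inputs to it. A secondary problem is your proposed termination invariant: $|\ch_{0}^{\beta}|$ need not decrease when passing to subobjects or quotients in the tilted heart (a rank-zero object can have a positive-rank subobject and a negative-rank quotient), so "Noetherian descent" on it is not justified.

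The fix is to run the induction in the opposite direction, as in \cite{BMT14,BMS16}: induct on the tilt-degree $H^{2}\cdot\ch_{1}^{\beta}(E)\geq 0$, which takes values in a discrete set. Objects with $H^{2}\cdot\ch_{1}^{\beta}=0$ (infinite tilt slope) are handled directly, and for an object of finite slope every Jordan--H\"older factor along the first destabilizing wall above $(\alpha_{0},\beta_{0})$ has strictly smaller tilt-degree; the inductive hypothesis then supplies $\Delta_{H}\geq 0$ for all factors, the convexity lemma applies legitimately, and one concludes $\Delta_{H}(E)\geq\sum_{i}\Delta_{H}(F_{i})\geq 0$. If no wall is crossed one lands in the large volume regime, where (after the case analysis of \cite[Lemma 2.7]{BMS16} --- your description of that classification is also slightly too coarse, since shifts and extensions by sheaves supported in low dimension occur) the classical Bogomolov inequality finishes the argument.
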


In \cite{BMT14},  the weak stability condition $\sigma_{\alpha,\beta}$ is called {\it tilt stability}.
In case of surfaces, it indeed defines a Bridgeland stability condition (see \cite{Bri08,AB13}).
The tilt stability is strongly connected to $\mu_{H}$-slope stability and the notion of $2$-$H$-Gieseker stability.
More precisely, the notion of $2$-$H$-Gieseker stability occurs as a large volume limit of tilt stability (see \cite[Proposition 4.8]{BBF+} and \cite[Lemma 2.7]{BMS16}):

\begin{prop}[{\cite{Bri08}}]\label{limit-tilt-stab-Gie-stab-prop}
Suppose $E\in \D(X)$ and $\mu_{H}(E)>\beta$.
Then $E\in \Coh^{\beta}(X)$ and $E$ is $\sigma_{\alpha,\beta}$-(semi)stable for $\alpha\gg 0$
if and only if $E$ is a $2$-$H$-Gieseker (semi)stable sheaf.
\end{prop}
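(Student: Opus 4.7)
The plan is to expand the tilt slope asymptotically in $\alpha$, so that the leading $\alpha^{2}$-coefficient encodes $\mu_{H}$-slope information while the subleading (constant-in-$\alpha$) term encodes the finer $2$-$H$-Gieseker data. Whenever $H^{2}\cdot \ch_{1}^{\beta}(E)>0$, one rewrites
$$\mu_{\alpha,\beta}(E) \;=\; \frac{H\cdot \ch_{2}^{\beta}(E)}{H^{2}\cdot \ch_{1}^{\beta}(E)} \;-\; \frac{\alpha^{2}}{2}\cdot \frac{H^{3}\cdot \ch_{0}^{\beta}(E)}{H^{2}\cdot \ch_{1}^{\beta}(E)},$$
and observes that $H^{3}\ch_{0}^{\beta}(E)/H^{2}\ch_{1}^{\beta}(E)$ equals $1/(\mu_{H}(E)-\beta)$. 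Hence for $\alpha\gg 0$ the ordering of tilt slopes is dictated first by $\mu_{H}$, and, when those agree, by $H\ch_{2}^{\beta}/H^{2}\ch_{1}^{\beta}$, matching the $2$-$H$-Gieseker pre-order.

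For the ``if'' direction, start from a $2$-$H$-Gieseker (semi)stable sheaf $E$ with $\mu_{H}(E)>\beta$. Since $2$-$H$-Gieseker semistability implies $\mu_{H}$-semistability, every $\mu_{H}$-HN factor of $E$ has the same slope $\mu_{H}(E)>\beta$, so $E\in \T_{H}^{\beta}\subset \Coh^{\beta}(X)$. To rule out tilt destabilizers, take any short exact sequence $0\to F\to E\to G\to 0$ in $\Coh^{\beta}(X)$: the long exact sequence of $\Coh(X)$-cohomology, combined with $E$ being a sheaf, forces $F$ to be a sheaf fitting into $0\to K\to F\to F_{0}\to 0$ with $K=\mathcal{H}^{-1}(G)\in \mathcal{F}_{H}^{\beta}$ torsion-free and $F_{0}\subset E$ a sheaf subobject. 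Because $\T_{H}^{\beta}$ is closed under quotients, $F_{0}\in \T_{H}^{\beta}$, so $\mu_{H}(F_{0})>\beta\ge \mu_{H}(K)$; writing $\mu_{H}(F)$ as the rank-weighted average of $\mu_{H}(K)$ and $\mu_{H}(F_{0})$, and using $\mu_{H}$-semistability of $E$, one concludes $K=0$ and $\mu_{H}(F_{0})=\mu_{H}(E)$. The subleading comparison then reduces to the $2$-$H$-Gieseker inequality between $F_{0}$ and $E/F_{0}$, precluded by the hypothesis. The converse is parallel: given $E\in \Coh^{\beta}(X)$ with $\mu_{H}(E)>\beta$ that is $\sigma_{\alpha,\beta}$-(semi)stable for all $\alpha\gg 0$, the triangle $\mathcal{H}^{-1}(E)[1]\to E\to \mathcal{H}^{0}(E)$ gives a tilt sub-object whose slope tends to $+\infty$ when $\mathcal{H}^{-1}(E)\ne 0$, forcing $\mathcal{H}^{-1}(E)=0$; any $\mu_{H}$- or $p_{2}$-destabilizing subsheaf of $E$ then yields a tilt destabilizer via the same expansion.

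The main technical obstacle is the case in which the tilt sub-object $F$ is not literally a sheaf sub-object of $E$, i.e., $K\ne 0$; the averaging argument that eliminates this case crucially uses the strict inequality $\mu_{H}(K)\le \beta<\mu_{H}(F_{0})$ afforded by the hypothesis $\mu_{H}(E)>\beta$. A secondary subtlety is propagating the strict versus non-strict distinction of (semi)stability consistently through both the leading-order $\mu_{H}$-comparison and the subleading $2$-$H$-Gieseker comparison.
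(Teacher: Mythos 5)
The paper does not actually prove this proposition: it is quoted as the standard ``large volume limit'' statement, attributed to Bridgeland and to \cite[Proposition 4.8]{BBF+} and \cite[Lemma 2.7]{BMS16}, so there is no in-paper argument to compare yours against. Judged on its own, your architecture is the standard and correct one: the expansion of $\mu_{\alpha,\beta}$ in $\alpha^{2}$ with leading coefficient $-\tfrac{1}{2(\mu_{H}(E)-\beta)}$, the long-exact-sequence reduction of a tilt subobject $F\subset E$ to a sheaf extension $0\to K\to F\to F_{0}\to 0$ with $K=\mathcal{H}^{-1}(E/F)\in\mathcal{F}_{H}^{\beta}$, and the mediant argument forcing $K=0$ when $E$ is $\mu_{H}$-semistable all work. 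The identification of the subleading term with the $2$-$H$-Gieseker pre-order is also correct, because once $\mu_{H}(F_{0})=\mu_{H}(E)$ the $\beta$-twist shifts $H\cdot\ch_{2}^{\beta}/H^{3}\ch_{0}$ of both objects by the same constant; you assert rather than verify the small Riemann--Roch computation matching this with the coefficient $a_{1}/a_{3}$ of $p_{2}$, but that is routine.

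The one genuine gap is a quantifier issue in the ``if'' direction. Your asymptotic comparison rules out each \emph{fixed} destabilizing class for $\alpha$ larger than a threshold depending on that class; to conclude that $E$ is $\sigma_{\alpha,\beta}$-(semi)stable for \emph{all} $\alpha\gg 0$ you must bound these thresholds uniformly, i.e. show that only finitely many classes $\ch_{\leq 2}(F)$ can destabilize, or equivalently that the semicircular walls for $\ch_{\leq 2}(E)$ are bounded. This is supplied by the support property ($\Delta_{H}(F)\geq 0$ and $\Delta_{H}(E/F)\geq 0$ for both factors, together with $0\leq \Im Z_{\alpha,\beta}(F)\leq \Im Z_{\alpha,\beta}(E)$) combined with the nested-wall structure of Theorem \ref{tilt-wall-struct-thm}; without it your argument only gives semistability along sequences of $\alpha$ depending on the destabilizer. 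The converse direction does not suffer from this, since a single destabilizer suffices there; the only point you gloss over is that a $\mu_{H}$- or $p_{2}$-destabilizing subsheaf of $E$ need not lie in $\T_{H}^{\beta}$, so it must first be replaced by its maximal subsheaf in $\T_{H}^{\beta}$ (which only increases the slope) before it can be read as a subobject in $\Coh^{\beta}(X)$.
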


In particular, one has the following:

\begin{cor}\label{limit-tilt-stab-Gie-stab}
If $\gcd(\ch_{0}(E),\frac{H^{2}\cdot \ch_{1}(E)}{H^{3}})=1$ and $\mu_{H}(E)>\beta$, 
then $E\in \Coh^{\beta}(X)$ and $E$ is $\sigma_{\alpha,\beta}$-stable for $\alpha\gg 0$
if and only if $E$ is a $\mu_{H}$-slope stable sheaf.
\end{cor}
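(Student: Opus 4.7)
The plan is to derive the corollary as an immediate consequence of Proposition \ref{limit-tilt-stab-Gie-stab-prop} together with the Remark on the collapse of the six (semi)stability notions under the coprimality hypothesis.

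First, since $\mu_{H}(E) > \beta$, Proposition \ref{limit-tilt-stab-Gie-stab-prop} directly yields that $E \in \Coh^{\beta}(X)$ and that $E$ is $\sigma_{\alpha,\beta}$-stable for $\alpha \gg 0$ if and only if $E$ is $2$-$H$-Gieseker stable. Thus the entire remaining content of the corollary is the identification of $2$-$H$-Gieseker stability with $\mu_{H}$-slope stability in this coprime numerical regime.

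For that identification, I would invoke the Remark following the implication diagram relating the six (semi)stability notions: when $\gcd(\ch_{0}(E), H^{2}\cdot \ch_{1}(E)/H^{3}) = 1$, the entire chain of implications collapses into equivalences. The underlying reason is the standard one—the coprimality condition forbids any proper lower-rank subsheaf $F \subset E$ from satisfying $\mu_{H}(F) = \mu_{H}(E)$, since the resulting rational identity between $(\ch_{0}(F), H^{2}\ch_{1}(F)/H^{3})$ and $(\ch_{0}(E), H^{2}\ch_{1}(E)/H^{3})$ would force $\ch_{0}(E) \mid \ch_{0}(F)$. Consequently every proper lower-rank subsheaf strictly destabilizes already at the level of slopes, and in particular $2$-$H$-Gieseker stability coincides with $\mu_{H}$-slope stability.

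Composing the two equivalences yields the corollary. There is no genuine obstacle here; the statement is essentially a primitive-class specialization of Proposition \ref{limit-tilt-stab-Gie-stab-prop}, in which the refinement by lower-order Hilbert-polynomial terms that distinguishes the various Gieseker notions from slope stability becomes vacuous.
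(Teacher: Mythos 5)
Your proposal is correct and matches the paper's (implicit) argument: the corollary is stated there as an immediate consequence of Proposition \ref{limit-tilt-stab-Gie-stab-prop} together with the remark that the coprimality of $\ch_{0}(E)$ and $H^{2}\cdot\ch_{1}(E)/H^{3}$ collapses all six (semi)stability notions, which is exactly the route you take. Your additional justification of that remark (no proper lower-rank subsheaf can have equal slope, since $\ch_{0}(E)\,\frac{H^{2}\ch_{1}(F)}{H^{3}}=\ch_{0}(F)\,\frac{H^{2}\ch_{1}(E)}{H^{3}}$ together with $\gcd=1$ would force $\ch_{0}(E)\mid\ch_{0}(F)$) is sound and only makes the deduction more complete.
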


\subsection{Wall-and-chamber structures in tilt stability}

\begin{defn}\label{numerical-wall-def}
Let $v\in K(\D(X))$.
\begin{enumerate}
  \item[(i)] A {\it numerical wall} for $v$ with respect to $w\in K(\D(X))$ in tilt-stability is a non-trivial proper subset in the upper half plane
$$
W(v,w):=  \{ (\alpha, \beta)\in \RN_{>0}\times \RN \mid \mu_{\alpha,\beta}(v)=\mu_{\alpha,\beta}(w)\}.
$$
  \item[(ii)] A {\it chamber} for the class $v$ is a connected component in the complement of the union of walls in the upper half $(\alpha,\beta)$-plane. 
\end{enumerate}
\end{defn}

The numerical walls in tilt stability have well-behaved wall-and-chamber structure.
The properties of numerical walls satisfy the following Nested wall theorem which was first proved in \cite{Mac14a}; see also for example \cite[Theorem 3.1]{Sch20a}. 

\begin{thm}[]\label{tilt-wall-struct-thm}
Let $v \in K(\D(X))$ with $\Delta_{H}(v) \geq 0$.
All numerical walls for $v$ in the $(\alpha,\beta)$-plane is described as follows:
\begin{enumerate}
\item[(1)] A numerical wall for $v$ is either a semicircle centered at $\beta$-axis or a vertical line parallel to $\alpha$-axis in the upper half plane.
\item[(2)] If $\ch_{0}(v) \neq 0$, then there exists a unique numerical vertical wall for $v$ given by $\beta=\mu_{H}(v)$. 
The remaining numerical walls for $v$ are consisted of two sets of nested semicircular walls whose apexes lie on the hyperbola $\mu_{\alpha, \beta}(v) = 0$. 
\item[(3)] If $\ch_{0}(v) = 0$ and $H^{2} \cdot \ch_{1}(v) \neq 0$, 
then every numerical wall for $v$ is a semicircle whose top point lies on the vertical line $\beta=\frac{H \cdot \ch_{2}(v)}{H^{2} \cdot \ch_{1}(v)}$ in the upper half plane.
\item[(4)] Any two distinct numerical walls intersect empty.
\end{enumerate}
\end{thm}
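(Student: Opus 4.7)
\textbf{Proof proposal for Theorem \ref{tilt-wall-struct-thm}.}
The plan is to derive an explicit algebraic equation for the numerical wall $W(v,w)$, read off parts (1)--(3) from its shape, and then use the Bogomolov inequality (Theorem~\ref{B-ineq}) to rule out transverse intersections of walls.

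First I would unwind the definition of $\mu_{\alpha,\beta}$ from the central charge \eqref{central-charge-wsc}. Multiplying out $\mu_{\alpha,\beta}(v)=\mu_{\alpha,\beta}(w)$ gives
$$
\bigl(H\cdot\ch_{2}^{\beta}(v)-\tfrac{1}{2}\alpha^{2}H^{3}\ch_{0}(v)\bigr)\,H^{2}\ch_{1}^{\beta}(w)
=
\bigl(H\cdot\ch_{2}^{\beta}(w)-\tfrac{1}{2}\alpha^{2}H^{3}\ch_{0}(w)\bigr)\,H^{2}\ch_{1}^{\beta}(v).
$$
Expanding $\ch_{i}^{\beta}$ in terms of the untwisted $\ch_{i}$ using the formulas above \eqref{central-charge-wsc}, every quartic or cubic monomial in $\beta$ cancels, and collecting terms one finds an equation of the form
$$
A(\alpha^{2}+\beta^{2})+B\beta+C=0,\qquad A,B,C\in\RN,
$$
with $A,B,C$ depending only on $\ch_{\le 2}(v)$ and $\ch_{\le 2}(w)$. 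This proves (1): either $A\neq 0$ and the wall is a circle centered on the $\beta$-axis, or $A=0$ and it is a vertical line.

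For (2) and (3), I would then analyze when $A=0$. A direct computation shows $A=\tfrac{1}{2}H^{3}\bigl(\ch_{0}(v)H^{2}\ch_{1}(w)-\ch_{0}(w)H^{2}\ch_{1}(v)\bigr)$, so $A=0$ precisely when $v$ and $w$ are $\mu_{H}$-parallel. If $\ch_{0}(v)\neq 0$, the unique vertical wall is then pinned at $\beta=\mu_{H}(v)$, the root of $H^{2}\ch_{1}^{\beta}(v)=0$; for a semicircular wall, the apex $(\beta_{0},r)$ satisfies $\alpha^{2}=r^{2}=\beta_{0}^{2}+\frac{B}{A}\beta_{0}+\frac{C}{A}$, and substituting back into the formula for $\mu_{\alpha,\beta}(v)$ I would check this apex lies on the hyperbola $\mu_{\alpha,\beta}(v)=0$. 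When $\ch_{0}(v)=0$ with $H^{2}\ch_{1}(v)\neq 0$, the term $H^{2}\ch_{1}^{\beta}(v)=H^{2}\ch_{1}(v)$ is constant in $\beta$, so the wall equation simplifies to a quadratic in $\beta$ linear in $\alpha^{2}$ whose apex has $\beta$-coordinate equal to $\tfrac{H\ch_{2}(v)}{H^{2}\ch_{1}(v)}$, giving (3).

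The main obstacle is part (4). Suppose two numerical walls $W(v,w_{1})$ and $W(v,w_{2})$ meet at $(\alpha_{0},\beta_{0})$. Then $Z_{\alpha_{0},\beta_{0}}(v)$, $Z_{\alpha_{0},\beta_{0}}(w_{1})$, $Z_{\alpha_{0},\beta_{0}}(w_{2})$ are pairwise $\RN$-proportional, so one can find a nontrivial real linear combination $w=\lambda_{1}w_{1}+\lambda_{2}w_{2}$ with $Z_{\alpha_{0},\beta_{0}}(w)=0$. The support property of $\sigma_{\alpha,\beta}$, i.e.\ the Bogomolov inequality $\Delta_{H}\geq 0$ from Theorem~\ref{B-ineq}, says that $\Delta_{H}$ is negative semidefinite on $\ker Z_{\alpha_{0},\beta_{0}}$; combined with $\Delta_{H}(v)\geq 0$ and the fact that $v$ is also $\RN$-proportional to $w_{1}$ and $w_{2}$ modulo $\ker Z_{\alpha_{0},\beta_{0}}$, an elementary bilinear-form argument forces the classes $(H^{3}\ch_{0},H^{2}\ch_{1},H\ch_{2})$ of $v,w_{1},w_{2}$ to be $\RN$-dependent. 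But then the defining equations of $W(v,w_{1})$ and $W(v,w_{2})$ are scalar multiples, so the two walls coincide, proving (4).
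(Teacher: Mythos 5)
The paper does not actually prove Theorem~\ref{tilt-wall-struct-thm}: it is recalled from the literature, with the statement attributed to \cite{Mac14a} and \cite[Theorem 3.1]{Sch20a}. So there is no in-paper argument to compare against; your sketch is essentially a reconstruction of the standard proof from those references, and it is sound. The central computation is right: expanding $\mu_{\alpha,\beta}(v)=\mu_{\alpha,\beta}(w)$ with the twisted Chern characters, the $\beta^{3}$ and $\alpha^{2}\beta$ terms cancel and one is left with $A(\alpha^{2}+\beta^{2})+B\beta+C=0$ where $A=\tfrac{1}{2}\bigl(H^{2}\ch_{1}(v)\,H^{3}\ch_{0}(w)-H^{3}\ch_{0}(v)\,H^{2}\ch_{1}(w)\bigr)$ (your displayed $A$ has the opposite sign, which is immaterial), and parts (1)--(3) follow by inspecting $A$, $B$, $C$ exactly as you describe.

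Two smaller points. First, for (4) the appeal to the support property is more than you need: once $Z_{\alpha_{0},\beta_{0}}(w_{1})$ and $Z_{\alpha_{0},\beta_{0}}(w_{2})$ are real multiples of $Z_{\alpha_{0},\beta_{0}}(v)$, each $w_{i}$ lies in $\RN v+\ker Z_{\alpha_{0},\beta_{0}}$, and since $\ker Z_{\alpha_{0},\beta_{0}}\subset\Lambda_{H}^{2}\otimes\RN$ is one-dimensional for every $\alpha_{0}>0$ (the matrix of $Z_{\alpha,\beta}$ in the coordinates $(H^{3}\ch_{0},H^{2}\ch_{1},H\ch_{2})$ always has rank $2$), the three classes span a common $2$-plane and the walls, which depend only on that plane, coincide. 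The quadratic form is only needed to guarantee $Z_{\alpha_{0},\beta_{0}}(v)\neq 0$, which follows from $\Delta_{H}(v)\geq 0$ since $\Delta_{H}$ is negative definite on the kernel. Second, your sketch verifies that the apexes lie on the hyperbola $\mu_{\alpha,\beta}(v)=0$ but does not address the word ``nested'' in (2); this is a genuine (if small) missing step. It follows by combining (4) with the apex computation: parametrizing the left branch as $\beta=\mu_{H}(v)-\sqrt{\alpha^{2}+c^{2}}$, one checks that as the apex height $\alpha_{0}$ grows, the leftmost point $\beta_{0}-\alpha_{0}$ of the semicircle decreases while the rightmost point $\beta_{0}+\alpha_{0}$ increases, so the diameters on the $\beta$-axis are totally ordered by inclusion; together with non-intersection this forces the semicircles on each side of the vertical wall to be nested.
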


Suppose that $\xymatrix@C=0.4cm{0 \ar[r]^{} & A \ar[r]^{} & E \ar[r]^{} & B \ar[r]^{} & 0}$ is a short exact sequence of $\sigma_{\alpha,\beta}$-semistable objects in $\Coh^{\beta_{0}}(X)$ for some $(\alpha_{0},\beta_{0})\in W(E,A)$.  
By Theorem \ref{tilt-wall-struct-thm}, this exact sequence is a short exact sequence of $\sigma_{\alpha,\beta}$-semistable objects in $\Coh^{\beta}(X)$ for all $(\alpha,\beta)\in W(E,A)$.

\begin{defn}\label{wall-def}
A numerical wall $W$ for $v\in K(\D(X))$ is called an {\it actual wall} for $v$ if there exists a short exact sequence of $\sigma_{\alpha,\beta}$-semistable objects
\begin{equation}\label{def-wall-sequ}
\xymatrix@C=0.5cm{
0 \ar[r]^{} & A \ar[r]^{} & E \ar[r]^{} & B \ar[r]^{} & 0}
\end{equation}
in $\Coh^{\beta}(X)$ for one $(\alpha,\beta)\in W(E,A)$ such that $v=\ch(E)$ and $\mu_{\alpha,\beta}(A)=\mu_{\alpha,\beta}(E)$ defines the numerical wall $W$ (i.e., $W=W(E,A)$). 
\end{defn}

\begin{rem}\label{actual-restrict}
In Definition \ref{wall-def}, 
an actual wall satisfies the following constraints:
\begin{enumerate}
  \item[(1)] $\mu_{\alpha,\beta}(A)=\mu_{\alpha,\beta}(E)=\mu_{\alpha,\beta}(B)$;
  \item[(2)] $\Delta_{H}(A)\geq 0, \Delta_{H}(B)\geq 0$;
  \item[(3)] $\Delta_{H}(A)\leq \Delta_{H}(E)$, $\Delta_{H}(B)\leq \Delta_{H}(E)$.
\end{enumerate} 
Moreover, we have the following
$$
\Delta_{H}(A)+\Delta_{H}(B)\leq \Delta_{H}(E),
$$
and the equality holds if either $A$ or $B$ is a sheaf supported in points (see \cite[Proposition 12.5]{BMS16}).
\end{rem}


\section{Quadric threefold}

In this section, we review some basic facts on the smooth quadric threefold.
Let $Q$ be the smooth quadric threefold  in $\PB^{4}$ and $H$ a general hyperplane section on $Q$.
One may view $Q$ as a smooth hyperplane section $\mathrm{Gr}(2,4)\cap \PB^{4}$ of $\mathrm{Gr}(2,4)$ in $\PB^{5}$.
The spinor bundle $S$ on $Q$ is the restriction of the universal bundle on $\mathrm{Gr}(2,4)$. 
Hence, the spinor bundle $S$ is a rank $2$ vector bundle on $Q$.
We first list some basic facts on the spinor bundle $S$; we refer to \cite{Ott88} for more details:
\begin{itemize}
  \item there is a short exact sequence 
          $
          \xymatrix@C=0.3cm{
          0 \ar[r]^{} & S \ar[r]^{} & \CO^{\oplus 4}  \ar[r]^{} & S(H) \ar[r]^{} & 0;}
          $
  \item the dual bundle $S^{\vee}\cong S(H)$ and $S$ is $\mu_{H}$-slope stable;
  \item the Chern classes $c_{1}=-1$, $c_{2}=1$ and $c_{3}=0$; 
           hence, the Chern character of $S$ is $\ch(S)=(2, -H, 0, \frac{1}{12}H^{3})$.
\end{itemize}

On the one hand, since the canonical sheaf $\omega_{Q}=\CO(-3H)$, hence the Serre functor on $\D(Q)$ is 
$$
S_{Q}(-)=(-) \otimes \omega_{Q}[3]=(-) \otimes \CO(-3H)[3].
$$
By Kodaria vanishing and Serre duality,
the collection $\{\CO, \CO(H)\}$ is exceptional and hence there is a semiorthogonal decomposition
$$
\D(Q)
=\langle \Ku(Q), \CO,\CO(H) \rangle.
$$
By a theorem of Kapranov \cite{Kap88}, 
the sequence $\{\CO(-H),S,\CO,\CO(H)\}$ is a strongly full exceptional collection and thus $\Ku(Q)= \langle \CO(-H), S\rangle$.
Then, there is a projection functor 
$$
\PR: \D(Q) \rightarrow \Ku(Q)
$$ 
which is given by $\PR(-):=  \mathrm{L}_{\CO} \circ \mathrm{L}_{\CO(H)}(-)[2]$,
where the left mutation $\mathrm{L}_{E}$ for an exceptional object $E\in \D(Q)$ is given by the exact triangle
$$
\xymatrix@C=0.4cm{
\RHom(E,F)\otimes E \ar[r]^{\;\;\;\;\;\;\;\;\;\;\;\;\;\;\;\mathrm{ev}} &  F \ar[r]^{} & \mathrm{L}_{E}(F)}.
$$
Here $\mathrm{ev}$ is the evaluation morphism.

On the other hand, we note that the Todd class of $Q$ is given by 
$$
\td(Q)=1+\frac{3}{2}H+\frac{13}{12}H^{2}+\frac{1}{2}H^{3}.
$$
By Hirzebruch--Riemann--Roch theorem,
the holomorphic Euler--Poincar\'{e} characteristic of $E\in \D(Q)$ is given by the formula
\begin{equation}\label{HRR-quadric3}
\chi(E)=\ch_{3}(E)+\frac{3}{2}H\cdot \ch_{2}(E)+\frac{13}{12}H^{2}\cdot \ch_{1}(E)+\frac{1}{2}H^{3} \ch_{0}(E).
\end{equation}

The numerical Chow group of the quadric threefold $Q$ is given as follows:

\begin{lem}\label{Num-Chow-Q}
The numerical Chow group has a basis $\{1,H,\frac{1}{2}H^{2}, \frac{1}{2}H^{3}\}$.
Moreover, if $E\in \D(Q)$, then $\ch_{2}(E)\in \frac{H^{2}}{2}\cdot \ZN$ and $\ch_{3}(E)\in \frac{H^{3}}{12}\cdot \ZN$.
\end{lem}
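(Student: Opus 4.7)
The plan is to identify the numerical Chow ring of $Q$ explicitly via its cellular structure, and then control the denominators of $\ch_{2}$ and $\ch_{3}$ by a dévissage through a generating set of $\D(Q)$.

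For the first assertion, I would use the fact that the smooth quadric threefold $Q$ is a projective homogeneous variety (a rational homogeneous space under $\mathrm{SO}(5)$) and is therefore cellular. Its Chow ring is consequently torsion-free, with one generator in each codimension: $A^{0}(Q)=\Z[Q]$, $A^{1}(Q)=\Z H$, $A^{2}(Q)=\Z\ell$, $A^{3}(Q)=\Z[pt]$, where $\ell$ is the class of a line contained in $Q$. A short intersection-theoretic computation (a $\PB^{2}$-section of $Q$ is a smooth conic, and a line meets a hyperplane in one point) yields $H^{2}=2\ell$ and $H\cdot\ell=[pt]$, hence $H^{3}=2[pt]$. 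This produces the identifications $\ell=\tfrac{1}{2}H^{2}$ and $[pt]=\tfrac{1}{2}H^{3}$, and since numerical and rational equivalence coincide on a cellular variety, it gives the claimed basis $\{1,H,\tfrac{1}{2}H^{2},\tfrac{1}{2}H^{3}\}$.

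For the divisibility statement on $\ch_{2}$ and $\ch_{3}$, I would exploit the full exceptional collection $\{\CO(-H),S,\CO,\CO(H)\}$ of Kapranov recalled in this section. Chern characters are additive on exact triangles, so the induced numerical Grothendieck group $\mathcal{N}(\D(Q))\cong\Z^{4}$ is spanned integrally by the classes of these four objects. It therefore suffices, by $\Z$-linearity, to verify the bounds on each generator. For the line bundles, $\ch(\CO(nH))=e^{nH}=1+nH+\tfrac{n^{2}}{2}H^{2}+\tfrac{n^{3}}{6}H^{3}$, so $\ch_{2}\in\tfrac{1}{2}H^{2}\cdot\Z$ and $\ch_{3}=\tfrac{2n^{3}}{12}H^{3}\in\tfrac{1}{12}H^{3}\cdot\Z$. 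For the spinor bundle, the formula $\ch(S)=(2,-H,0,\tfrac{1}{12}H^{3})$ recalled above shows that both bounds hold and, moreover, that the bound $\tfrac{1}{12}H^{3}\cdot\Z$ is sharp (so it cannot be improved to $\tfrac{1}{6}H^{3}\cdot\Z$).

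I do not expect a genuine obstacle here: the argument is essentially book-keeping of intersection numbers (the half-integers in the basis trace back to $H^{2}=2\ell$ and $H^{3}=2[pt]$), together with the elementary observation that additivity of $\ch$ propagates the bounds from the Kapranov generators to every object of $\D(Q)$. The only point that requires a moment of care is to remember that $\ch_{3}$ has a denominator of $3!$ in its defining formula $\ch_{3}=\tfrac{1}{6}(c_{1}^{3}-3c_{1}c_{2}+3c_{3})$, which together with $[pt]=\tfrac{1}{2}H^{3}$ is exactly what produces the factor $\tfrac{1}{12}$ appearing in the statement.
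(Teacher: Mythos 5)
Your proposal is correct, and the first half (the intersection-theoretic identification $H^{2}=2\ell$, $H\cdot\ell=[pt]$, $H^{3}=2[pt]$ giving the basis) coincides with the paper's argument, which simply cites the structure of $H^{*}(Q,\ZN)$. For the divisibility statements, however, you take a genuinely different route. The paper argues generator-free: for $\ch_{2}$ it uses the integrality of Chern classes together with the identity $\ch_{2}=\tfrac{1}{2}(c_{1}^{2}-2c_{2})$ and the fact that $c_{2}(E)\in\ZN\cdot\tfrac{H^{2}}{2}$, and for $\ch_{3}$ it feeds the already-established bounds on $\ch_{0},\ch_{1},\ch_{2}$ into the Hirzebruch--Riemann--Roch formula \eqref{HRR-quadric3} and uses $\chi(E)\in\ZN$ to solve for $\ch_{3}(E)$, which forces $\ch_{3}(E)\in\tfrac{H^{3}}{12}\cdot\ZN$. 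You instead invoke the fullness of Kapranov's collection $\{\CO(-H),S,\CO,\CO(H)\}$ so that every class in $K(\D(Q))$ is an integral combination of these four, and then verify the bounds on each generator by additivity of $\ch$. Both arguments are valid. Yours is more uniform (one mechanism handles $\ch_{2}$ and $\ch_{3}$ simultaneously) and has the pleasant side effect of exhibiting sharpness of the $\tfrac{1}{12}$ bound via the spinor bundle, but it leans on the nontrivial input that the exceptional collection is full; the paper's argument is more elementary, needing only integrality of Chern classes and Riemann--Roch, and would survive on a threefold without a known full exceptional collection.
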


\begin{proof}
Note that $\mathrm{Pic}(Q)=H^{2}(Q,\ZN) \cong \ZN\cdot H$.
The cohomology ring $H^{\ast}(Q,\ZN)$ is generated by $H$, a line $[\ell]\in H^{4}(Q,\ZN)$ and a point $[p]\in H^{6}(Q,\ZN)$ with the relation $H^{2}=2\ell$, $H\cdot l=p$ and $H^{3}=2p$.
 Since $\ch_{2}(E)=\frac{1}{2}(c_{1}^{2}(E)-2c_{2}(E))$, 
 so $\ch_{2}(E)\in \frac{H^{2}}{2}\cdot \ZN$.
 Finally, $\ch_{3}(E)\in \frac{H^{3}}{12}\cdot \ZN$ follows from the Hirzebruch--Riemann--Roch formula \eqref{HRR-quadric3}.
\end{proof}

Likewise to the case of smooth cubic threefolds (see \cite[Proposition 4.20]{BBF+}), 
direct sums of line bundles on the smooth quadric threefold can also be detected among semistable sheaves (or objects) in terms of their Chern characters. 

\begin{prop}\label{sum-lines-detect}
Let $Q$ be the smooth quadric threefold. 
\begin{enumerate}
\item[(i)]  If $E$ is $\mu_{H}$-slope semistable (or $\sigma_{\alpha, \beta}$-semistable for some $\alpha > 0$ and
                $\beta < 0$) with $\ch(E) = (r, 0, 0, eH^{3})$ where $r > 0$, then $e \leq 0$. 
               Moreover, if $e = 0$, then $E \cong \CO^{\oplus r}$.
\item[(ii)]  If $E$ is $\sigma_{\alpha, \beta}$-semistable for some $\alpha > 0$ and $\beta > 0$ 
                with $\ch(E) = (-r, 0, 0, eH^{3})$ where $r > 0$, then $e = 0$ and $E \cong \CO^{\oplus r}[1]$.
\end{enumerate}
\end{prop}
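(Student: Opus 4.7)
Both parts follow a similar template, inspired by [BBF+, Prop.\ 4.20] for the smooth cubic threefold. The key structural input is $\Delta_H(E)=0$, which together with $\sigma_{\alpha,\beta}$-semistability severely restricts $E$.

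For part (i), the $\mu_H$-slope semistable case reduces to the $\sigma_{\alpha,\beta}$-semistable case via Proposition~\ref{limit-tilt-stab-Gie-stab-prop}: since $\mu_H(E)=0>\beta$ for any $\beta<0$, $E$ is $2$-$H$-Gieseker semistable, hence $\sigma_{\alpha,\beta}$-semistable for any fixed $\beta\in(-3,0)$ and $\alpha\gg 0$. In the $\sigma_{\alpha,\beta}$-semistable setting, a direct slope computation gives $\mu_{\alpha,\beta}(\CO)=\mu_{\alpha,\beta}(E)=\frac{\alpha^{2}-\beta^{2}}{2\beta}$, and Hirzebruch--Riemann--Roch~\eqref{HRR-quadric3} gives $\chi(\CO,E)=r+2e$. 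Serre duality combined with a slope comparison (using that $\CO(-3H)[1]\in \Coh^{\beta}(Q)$ is $\sigma_{\alpha,\beta}$-stable with strictly smaller slope than $E$ for $\beta\in(-3,0)$ and small $\alpha>0$) yields $\Ext^{i}(\CO,E)=0$ for $i=2,3$, hence $\hom(\CO,E)-\ext^{1}(\CO,E)=r+2e$. The matching upper bound $\hom(\CO,E)\leq r$ comes from the evaluation map $\mathrm{ev}\colon \CO^{\oplus \hom(\CO,E)}\to E$ in $\Coh^{\beta}(Q)$: both source and target are $\sigma_{\alpha,\beta}$-semistable of slope $\mu_{\alpha,\beta}(\CO)$ with $\Delta_{H}=0$, so the Bogomolov additivity in Remark~\ref{actual-restrict} forces $\Delta_{H}$ to vanish on the image, kernel and cokernel, and rank considerations give the bound; combining yields $e\leq 0$. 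When $e=0$ the inequalities saturate and the kernel and cokernel of $\mathrm{ev}$ have vanishing Chern characters, hence vanish by the same $\Delta_{H}=0$ analysis, so $E\cong \CO^{\oplus r}$.

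For part (ii), a cohomology-triangle analysis of $\mathcal{H}^{-1}(E)[1]\to E\to \mathcal{H}^{0}(E)$ in $\Coh^{\beta}(Q)$ combined with $\Delta_{H}(E)=0$ and $\sigma_{\alpha,\beta}$-semistability forces $\mathcal{H}^{0}(E)=0$, so $E=F[1]$ for $F:=\mathcal{H}^{-1}(E)$ a torsion-free sheaf in $\mathcal{F}_{H}^{\beta}$; a Harder--Narasimhan analysis then shows $F$ is $\mu_{H}$-slope semistable of slope zero with $\ch(F)=(r,0,0,-eH^{3})$. A Jordan--H\"{o}lder analysis of $F$ shows that every JH factor has class $[\CO]$: an $I_{Z}$-type factor would produce, via the short exact sequence $0\to \CO_{Z}\to I_{Z}[1]\to \CO[1]\to 0$ in $\Coh^{\beta}(Q)$, a subobject $\CO_{Z}\subset E$ of slope $+\infty$, violating semistability of $E$; higher-rank slope-stable factors with $\mu_{H}=0$ are ruled out by the vanishing of $H\cdot\ch_{2}$ combined with the Bogomolov inequality (Theorem~\ref{B-ineq}) and the triviality of projectively flat bundles on the simply-connected Fano threefold $Q$. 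Since $H^{1}(\CO_{Q})=0$, all extensions split and $F\cong \CO^{\oplus r}$, whence $e=0$ and $E\cong \CO^{\oplus r}[1]$.

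The principal obstacle is the evaluation-map analysis in part (i), specifically establishing $\hom(\CO,E)\leq r$ and the vanishing of the kernel and cokernel of $\mathrm{ev}$ when $e=0$, which rely on a careful exploitation of $\Delta_{H}(E)=0$ through the wall-and-chamber structure of Theorem~\ref{tilt-wall-struct-thm}; all other steps are routine applications of HRR, Serre duality, and slope comparisons in the $(\alpha,\beta)$-plane.
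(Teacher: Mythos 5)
Your overall strategy for part (i) is genuinely different from the paper's. The paper first reduces (following [BBF+, Prop.\ 4.20]) to the case of a $\mu_{H}$-slope \emph{stable reflexive} sheaf $E$ with $\ch(E)=(r,0,0,eH^{3})$ and then runs a \emph{two-sided} Euler characteristic argument: after killing $\Ext^{2}$ and $\Ext^{3}$ in both directions by Serre duality and tilt-slope comparisons, one has $\chi(\CO,E)=r+2e$ and $\chi(E,\CO)=r-2e$, and since these cannot both be non-positive, one of $\Hom(\CO,E)$, $\Hom(E,\CO)$ is non-zero; stability of the reflexive slope-$0$ sheaves $E$ and $\CO$ then forces $E\cong\CO$, and the general semistable case is reduced to this. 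You instead work with the semistable sheaf directly and try to close the argument with the single chain $r+2e=\chi(\CO,E)\leq\hom(\CO,E)\leq r$. The first half of that chain is fine (your $\Ext^{2}$ and $\Ext^{3}$ vanishings go through for semistable objects), but the second half is where the gap lies.

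The inequality $\hom(\CO,E)\leq r$ is true, but your justification does not establish it. Knowing that $\Delta_{H}$ vanishes on the kernel $K$ and image $I$ of $\mathrm{ev}\colon \Hom(\CO,E)\otimes\CO\to E$ only gives $H\cdot\ch_{2}(K)=H\cdot\ch_{2}(I)=0$; it does not force $K=0$, and ``rank considerations'' give only $\hom(\CO,E)=\mathrm{rk}(K)+\mathrm{rk}(I)$ with $\mathrm{rk}(I)\leq r$, which is not the desired bound unless $K\neq 0$ is excluded. The natural Chern-character route to excluding it ($\ch_{3}(I)\leq 0$ by the first claim of (i), while $H^{0}(K)=0$ and $\chi(K)=-h^{1}(K)\leq 0$ force $\ch_{3}(K)<0$, contradicting $\ch_{3}(K)+\ch_{3}(I)=0$) invokes the very statement being proved, so as written the argument is circular and must at least be reorganized as an induction on rank. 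There is an elementary fix: a nonzero section of a slope-semistable torsion-free slope-$0$ sheaf gives a \emph{saturated} copy of $\CO$ (its saturation $L$ satisfies $\CO\subset L\subset L^{\vee\vee}=\CO(k)$ with $k\geq 0$, while semistability gives $k\leq 0$), the quotient $E/\CO$ is again torsion-free semistable of slope $0$, and induction on rank yields $h^{0}(E)\leq r$; the $e=0$ case then closes as you describe. Either supply such an argument or restructure along the paper's two-sided route. Similarly, in part (ii) the appeal to triviality of projectively flat bundles on $Q$ is far heavier machinery than the paper uses, and needs care because Jordan--H\"{o}lder factors need not be locally free; the paper's two-sided Euler characteristic argument already shows that a stable reflexive slope-$0$ factor with $H\cdot\ch_{2}=0$ is $\CO$, which is all that is required at that step.
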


\begin{proof} 
We follow the proof of \cite[Proposition 4.20]{BBF+}.
In both cases,
by Theorem \ref{tilt-wall-struct-thm}, 
$E$ has no semicircular walls.
We first show that if $E$ is a $\mu_{H}$-slope stable reflexive sheaf with Chern character $\ch(E)=(r,0,0,eH^3)$, then $E\cong \CO$. 
If $\beta\in (-3,0)$, 
then $\mu_{H}(E)=0>\beta>-3=\mu_{H}(\CO(-3H))$.
Since $E$ and $\CO(-3H)$ are $\mu_{H}$-slope stable torsion sheaves,
by Proposition \ref{limit-tilt-stab-Gie-stab-prop}, $E, \CO(-3H)[1]\in \Coh^{\beta}(Q)$ for $\beta\in (-3,0)$.
Similarly, if $\beta\in (0,3)$, then $E[1], \CO(3H) \in \Coh^{\beta}(Q)$ for $\beta\in (0,3)$.
By \cite[Proposition 4.18]{BBF+}, 
$E[1]$ is $\sigma_{\alpha,0}$-stable.
Since $E$ has no semicircular walls, hence $E$ is $\sigma_{\alpha,\beta}$-stable for $\alpha>0$ and $\beta<0$  and $E[1]$ is $\sigma_{\alpha,\beta}$-stable for $\alpha>0$ and $\beta\geq 0$.
Note that 
$
\displaystyle 
\lim_{\alpha\to 0^{+}}\mu_{\alpha,\beta}(E)=-\frac{\beta}{2} > -\frac{\beta}{2}-\frac{3}{2}=\lim_{\alpha\to 0^{+}}\mu_{\alpha, \beta}(\CO(-3H)[1]).
$ 
Since $E$ and $\CO(-3H)[1]$ are $\sigma_{\alpha, \beta}$-stable for $\alpha \ll 1$ and $\beta\in (-3,0)$, by Serre duality, one has
$$
\Ext^{2}(\CO, E) \cong \Hom(E, \CO(-3H)[1])=0.
$$ 
Likewise, 
$
\displaystyle \lim_{\alpha\to 0^{+}}\mu_{\alpha, \beta}(\CO(3H))
=\frac{3}{2}-\frac{\beta}{2}>-\frac{\beta}{2}
=\lim_{\alpha\to 0^{+}}\mu_{\alpha,\beta}(E[1]).
$
Since $E[1]$ and $\CO(3H)$ are $\mu_{\alpha, \beta}$-stable for $\alpha \ll 1$ and $\beta \in (0, 3)$, one has 
$$
\Ext^{2}(E, \CO) \cong \Hom(\CO(3H), E[1])=0.
$$
By Hirzebruch--Riemann--Roch formula \eqref{HRR-quadric3}, 
we have $\chi(\CO,E)=r+2e$ and $\chi(E,\CO)=r-2e$.
Note that one of $r+2e$ and $r-2e$ must be positive; 
otherwise $2e\leq -r$ and $r\leq 2e$ contradict $r>0$.
Hence, either $\Hom(E,\CO)$ or $\Hom(\CO,E)$ is non-trivial.
Since $E$ and $\CO$ are reflexive and slope-stable of slope $0$, 
so one has $E \cong \CO$.
The rest arguments are the same as that of \cite[Proposition 4.20]{BBF+}.
\end{proof}


\section{Construction of stability conditions}

In order to construct Bridgeland stability conditions on a Kuznetsov component,
following the work \cite{BLMS23}, one needs the second tilting construction obtained from the tilt stability.

Fix a real number $\mu\in \RN$ and a unit vector $u$ in the upper half plane such that $\mu=-\frac{\Re u}{\Im u}$,  
we denote by the heart of a bounded $t$-structure 
$$
\Coh_{\alpha,\beta}^{\mu}(Q) =\langle \T_{\alpha,\beta}^{\mu}, \mathcal{F}_{\alpha,\beta}^{\mu}[1]\rangle,
$$
which is obtained by tilting the tilt stability $\sigma_{\alpha,\beta}=(\Coh^{\beta}(Q),Z_{\alpha,\beta})$
at the slope $\mu_{\alpha,\beta}=\mu$.
Thanks to \cite[Proposition 2.15]{BLMS23},  the pair 
$$
\sigma_{\alpha,\beta}^{\mu}:=  (\Coh_{\alpha,\beta}^{\mu}(Q),Z_{\alpha,\beta}^{\mu})
$$
is a weak stability condition on $\D(Q)$, where $Z_{\alpha,\beta}^{\mu}:=  \frac{1}{u}Z_{\alpha,\beta}$ and $Z_{\alpha,\beta}$ is given by \eqref{central-charge-wsc}.

Next, we consider the following subset of the upper half $(\alpha,\beta)$-plane:
$$
\widetilde{V}:=   \{(\alpha, \beta)\in \RN_{>0} \times \RN \mid  -1\leq \beta <0, \alpha<-\beta; \; \textrm{or}\; -2< \beta < -1, \alpha\leq 2+\beta\}.
$$
For constructing stability conditions on $\Ku(Q)$, we fix some notation:
\begin{itemize}
\item $\A(\alpha,\beta):= \Coh_{\alpha,\beta}^{0}(Q)\cap \Ku(Q)$;
\item $Z(\alpha,\beta):=Z_{\alpha,\beta}^{0}|_{\Ku(Q)}$, where $Z_{\alpha,\beta}^{0}=-i Z_{\alpha,\beta}$;
\item  the lattice 
$
\Lambda_{H,\Ku(Q)}^{2}:=   \mathrm{Im}(K(\Ku(Q))\rightarrow K(Q) \rightarrow \Lambda_{H}^{2}) \cong \ZN^{\oplus 2}.
$
\end{itemize}

\begin{prop}[\cite{BLMS23}]\label{induced-stability-KuQ}
For each $(\alpha, \beta)\in \widetilde{V}$, 
the pair 
$$
\sigma(\alpha, \beta):=   (\A(\alpha,\beta),Z(\alpha,\beta))
$$
is a Bridgeland stability condition on $\Ku(Q)$ with respect to the lattice $\Lambda_{H,\Ku(Q)}^{2}$.
\end{prop}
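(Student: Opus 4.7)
The plan is to apply the general inducing criterion of Bayer--Lahoz--Macr\`{\i}--Stellari \cite[Proposition 5.1]{BLMS23} to the weak stability condition $\sigma^{0}_{\alpha,\beta}=(\Coh^{0}_{\alpha,\beta}(Q),Z^{0}_{\alpha,\beta})$ on $\D(Q)$, together with the semiorthogonal decomposition $\D(Q)=\langle\Ku(Q),\CO,\CO(H)\rangle$. That criterion produces a Bridgeland stability condition on $\Ku(Q)$ by restriction once one verifies that the exceptional objects $\CO,\CO(H)$ and their shifted Serre duals $S_{Q}(\CO)[-1]=\CO(-3H)[2]$, $S_{Q}(\CO(H))[-1]=\CO(-2H)[2]$ all lie in the heart $\Coh^{0}_{\alpha,\beta}(Q)$, together with the non-vanishing of $Z^{0}_{\alpha,\beta}$ on the two exceptional objects.

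These containment conditions are verified by a direct tilt-slope calculation. Since $\beta<0<\mu_{H}(\CO)<\mu_{H}(\CO(H))$ and $\beta>-2>-3$, each of $\CO,\CO(H),\CO(-3H)[1],\CO(-2H)[1]$ lies in $\Coh^{\beta}(Q)$. Using \eqref{central-charge-wsc} one obtains
\[
\mu_{\alpha,\beta}(\CO)=\frac{\alpha^{2}-\beta^{2}}{2\beta},\qquad \mu_{\alpha,\beta}(\CO(-kH)[1])=\frac{\alpha^{2}-(k+\beta)^{2}}{2(k+\beta)}\quad(k=2,3),
\]
and the analogous formula $\mu_{\alpha,\beta}(\CO(H))=\frac{(1-\beta)^{2}-\alpha^{2}}{2(1-\beta)}$. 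The first slope is positive precisely when $\alpha<-\beta$, the slope of $\CO(-kH)[1]$ is negative precisely when $\alpha<k+\beta$, and the slope of $\CO(H)$ is positive precisely when $\alpha<1-\beta$. The bounds $\alpha<1-\beta$ and $\alpha<3+\beta$ hold automatically on $\widetilde{V}$, while the bounds $\alpha<-\beta$ and $\alpha<2+\beta$ are exactly the two defining inequalities of $\widetilde{V}$: on the strip $-1\leq\beta<0$ the explicit constraint $\alpha<-\beta$ also implies $\alpha<2+\beta$, whereas on the strip $-2<\beta<-1$ the explicit (and strictly stronger) constraint $\alpha\leq 2+\beta$ implies $\alpha<-\beta$. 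Hence $\CO,\CO(H)\in\T^{0}_{\alpha,\beta}$ and $\CO(-3H)[1],\CO(-2H)[1]\in\mathcal{F}^{0}_{\alpha,\beta}$, placing all four objects in $\Coh^{0}_{\alpha,\beta}(Q)$.

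With these containments in place, \cite[Proposition 5.1]{BLMS23} supplies every piece of structure needed for $\sigma(\alpha,\beta)$: the category $\A(\alpha,\beta)=\Coh^{0}_{\alpha,\beta}(Q)\cap\Ku(Q)$ is the heart of a bounded $t$-structure on $\Ku(Q)$, and $Z(\alpha,\beta)=Z^{0}_{\alpha,\beta}|_{\Ku(Q)}$ is a stability function on it. The stability-function property comes down to the fact that a non-zero $F\in\A(\alpha,\beta)$ with $Z(\alpha,\beta)(F)=0$ would have to lie in the extension-closure of $\CO$ and $\CO(H)$ inside $\Coh^{0}_{\alpha,\beta}(Q)$, and that closure intersects $\Ku(Q)$ only in zero by semiorthogonality. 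Harder--Narasimhan filtrations and the support property, with quadratic form the restriction of $\Delta_{H}$ to $\Lambda^{2}_{H,\Ku(Q)}$, descend from the corresponding properties of $\sigma^{0}_{\alpha,\beta}$ on $\D(Q)$.

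The main subtlety I expect is the appearance of the stronger bound $\alpha\leq 2+\beta$ in the strip $-2<\beta<-1$: it is forced precisely by the Serre-dual containment $S_{Q}(\CO(H))[-1]=\CO(-2H)[2]\in\Coh^{0}_{\alpha,\beta}(Q)$, and recognising this is what justifies the apparently ad hoc shape of $\widetilde{V}$.
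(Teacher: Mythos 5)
Your overall strategy is the same as the paper's: verify that $\CO$, $\CO(H)$ and the shifted Serre duals $\CO(-3H)[2]$, $\CO(-2H)[2]$ lie in $\Coh^{0}_{\alpha,\beta}(Q)$ and then invoke \cite[Proposition 5.1]{BLMS23}. Your slope computations are correct and correctly explain why the two defining inequalities of $\widetilde{V}$ are exactly $\alpha<-\beta$ and $\alpha\leq 2+\beta$ (the paper does the identical computation phrased in terms of the sign of $\Re Z_{\alpha,\beta}$). One small omission there: to conclude from the sign of $\mu_{\alpha,\beta}$ that these objects lie in $\T^{0}_{\alpha,\beta}$ or $\mathcal{F}^{0}_{\alpha,\beta}$ you must first know they are $\sigma_{\alpha,\beta}$-(semi)stable; this holds because line bundles have vanishing $H$-discriminant (\cite[Corollary 3.11]{BMS16}) and should be recorded.

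The genuine gap is in your verification of the non-degeneracy hypothesis of \cite[Proposition 5.1]{BLMS23}. You claim that a non-zero $F\in\A(\alpha,\beta)$ with $Z(\alpha,\beta)(F)=0$ must lie in the extension closure of $\CO$ and $\CO(H)$, and then kill it by semiorthogonality. That characterization of the kernel is false: $\Im Z_{\alpha,\beta}(\CO)=-\beta H^{3}>0$ and $\Im Z_{\alpha,\beta}(\CO(H))=(1-\beta)H^{3}>0$, so the extension closure of $\CO$ and $\CO(H)$ meets $\ker Z^{0}_{\alpha,\beta}$ only in zero and contains none of the objects you need to rule out. The actual objects of $\Coh^{0}_{\alpha,\beta}(Q)$ with vanishing central charge are, by \cite[Lemma 2.16]{BLMS23}, exactly the $0$-dimensional torsion sheaves $T$ (e.g.\ skyscrapers $\CO_{x}$, whose Chern character $(0,0,0,*)$ is annihilated by $Z_{\alpha,\beta}$). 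These are excluded from $\Ku(Q)$ not by semiorthogonality against $\CO(H)$ but because $\Hom(\CO,T)=H^{0}(Q,T)\neq 0$ contradicts the defining condition of $\Ku(Q)$. As written, your argument for this hypothesis does not apply to the objects that actually lie in the kernel, so this step must be replaced by the identification of $\ker Z^{0}_{\alpha,\beta}\cap\Coh^{0}_{\alpha,\beta}(Q)$ with $\Coh^{\beta}(Q)_{0}$ and the global-sections argument.
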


\begin{proof}
By applying Serre functor to $\CO$ and $\CO(H)$, 
we have  $S_{Q}(\CO)=\CO(-3H)[3] \in\Coh(Q)[3]$
and $S_{Q}(\CO(H))=\CO(-2H)[3]\in\Coh(Q)[3]$.
Note that the $\mu_{H}$-slopes
$$\mu_{H}(\CO) =0 > \beta,
\mu_{H}(\CO(H))=1, \mu_{H}(\CO(-3H))=-3
\textrm{ and } \mu_{H}(\CO(-2H))=-2.
$$
Since line bundles are $\mu_H$-slope stable, 
\cite[Lemma 2.7]{BMS16} yields that $\CO$, $\CO(H)$, $\CO(-3H)[1]$ and $\CO(-2H)[1]$ are in $\Coh^{\beta}(Q)$ for $-2<\beta<0$. 
Since the $H$-discriminant of a line bundle is zero, 
by \cite[Corollary 3.11 (a)]{BMS16}, 
$\CO$,  $\CO(H)$, $\CO(-3H)[1]$ and $\CO(-2H)[1]$ 
are $\sigma_{\alpha, \beta}$-stable for $\alpha>0$.

For any $(\alpha, \beta)\in \widetilde{V}$, by direct computations, we have
\begin{eqnarray*}
\Re Z_{\alpha, \beta}(\CO) 
&=& \frac{1}{2}\alpha^{2}H^{3}-\frac{\beta^{2}}{2}H^{3}<0,\\
\Re Z_{\alpha, \beta}(\CO(H)) 
&=& \frac{1}{2}\alpha^{2}H^{3}-\frac{(1-\beta)^{2}}{2}H^{3}<0, \\
\Re Z_{\alpha, \beta}(\CO(-3H)[1]) 
&=& -\frac{1}{2}\alpha^{2}H^{3}+\frac{(3+\beta)^{2}}{2}H^{3}>0, \\
\Re Z_{\alpha, \beta}(\CO(-2H)[1]) 
&=&  -\frac{1}{2}\alpha^{2}H^{3}+\frac{(2+\beta)^{2}}{2}H^{3} \geq 0.
\end{eqnarray*}
Since $\Im Z_{\alpha, \beta}>0$ for all objects $\CO$, $\CO(H)$, $\CO(-3H)[1]$ and $\CO(-2H)[1]$, 
so we have
$$
\mu_{\alpha, \beta}(\CO),  
\mu_{\alpha, \beta}(\CO(H))>0,
\textrm{ and }
\mu_{\alpha, \beta}(\CO(-3H)[1])<0,  
\mu_{\alpha,\beta}(\CO(-2H)[1]) \leq 0.
$$
As a result, for each $(\alpha, \beta)\in \widetilde{V}$,
$\CO$, $\CO(H)$, 
$\CO(-3H)[2]$ and $\CO(-2H)[2]$ are contained in the heart $\Coh_{\alpha, \beta}^{0}(Q)$. 

We denote by $\Coh^{\beta}(Q)_{0}$ the category of $0$-dimensional torsion sheaves which are the only objects in $\Coh^{\beta}(Q)$ with the central charge $Z_{\alpha,\beta}=0$. 
Since the global sections of $0$-dimensional torsion sheaves are non-trivial, 
hence $\Coh^{\beta}(Q)_{0}\cap \Ku(X)=\emptyset$.
According to \cite[Lemma 2.16]{BLMS23},
there is no non-zero object $F\in \Coh_{\alpha, \beta}^{0}(Q) \cap \Ku(Q)$ with $Z_{\alpha, \beta}^{0}(F)=0$. 
As a result, \cite[Proposition 5.1]{BLMS23} concludes the proof.
\end{proof}

\begin{rem}
As triangulated categories, the Kuznetsov component $\Ku(Q)$ is equivalent to the bounded derived category $\D(K(4))$ of the Kronecker quiver $K(4)$. 
Here the Kronecker quiver $K(l)$ is the quiver with two vertices and $l$ parallel arrows.
By a theorem of Macr{\`{\i}} \cite{Mac07}, the stability manifold $\Stab(\D(K(l)))$ is simply-connected.  
In a recent paper \cite{DK19},
Dimitrov--Katzarkov proved that for the Kronecker quiver $K(l)$ with $l\geq 3$,
the stability manifold $\Stab(\D(K(l)))$ is biholomorphic to $\CN\times \mathcal{H}$, 
where $\mathcal{H}= \{z\in \CN \mid \Im(z)>0\}$.
Therefore, the stability manifold $\Stab(\Ku(Q))$ is also biholomorphic to $\CN\times \mathcal{H}$.
\end{rem}

On the other hand, for studying the moduli spaces of stable objects in $\Ku(Q)$,
we would like to consider the following subset $V$ of $\widetilde{V}$,
$$
V:=   \{(\alpha, \beta)\in \widetilde{V} \mid  -\frac{1}{2} \leq \beta <0, \alpha<-\beta; \; \textrm{or}\; -1< \beta < -\frac{1}{2}, \alpha\leq 1+\beta\}.
$$
Note that the numerical Grothendieck group $\mathcal{N}(\Ku(Q))$ has a basis as follows:
$$
 \lambda_{1}=[\CO(-H)]=1-H+\frac{1}{2}H^{2}-\frac{1}{6}H^{3},\;
 \lambda_{2}=[S]=2-H+\frac{1}{12}H^{3}
$$
For every $(\alpha,\beta)\in V$, the image of the stability function $Z(\alpha,\beta)$ is not contained in a line.
Moreover, the determinant
$$
\left|\begin{array}{cc}
-(\beta+1) & -(2\beta+1) \\
\frac{1}{2}+\beta+\frac{\beta^{2}}{2}-\frac{\alpha^{2}}{2} & \beta^{2}+\beta-\alpha^{2}
\end{array}\right|=\frac{1}{2}((\beta+1)^{2}+\alpha^{2})>0.
$$  
Proving pretty much exactly the same as that for \cite[Proposition 3.6]{PY22},
we have the following:

\begin{lem}\label{GL-one-orbitV+}
Fix $0<\alpha_{0}<\frac{1}{2}$.
For any $(\alpha, \beta)\in V$,
there exists $\tilde{g}\in \tilde{\mathrm{GL}}_{2}^{+}(\RN)$ such that 
$$
\sigma(\alpha, \beta)=\sigma(\alpha_{0}, -\frac{1}{2}) \cdot \tilde{g}.
$$
\end{lem}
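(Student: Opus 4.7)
The strategy is to combine the explicit non-degeneracy of the central charge with a dimension-counting argument to conclude that all $\sigma(\alpha,\beta)$ for $(\alpha,\beta)\in V$ lie in a single $\tilde{\mathrm{GL}}_{2}^{+}(\RN)$-orbit.

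First I would observe that the matrix displayed just before the statement of the lemma is precisely the matrix representing the central charge $Z(\alpha,\beta)$ in the basis $\{\lambda_{1},\lambda_{2}\}$ of $\Lambda_{H,\Ku(Q)}^{2}\otimes\RN$. Its positive determinant $\tfrac{1}{2}((\beta+1)^{2}+\alpha^{2})$ shows that $Z(\alpha,\beta)\colon \Lambda_{H,\Ku(Q)}^{2}\otimes\RN\to\CN$ is an $\RN$-linear isomorphism for every $(\alpha,\beta)\in V$.

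Second, since $\Lambda_{H,\Ku(Q)}^{2}$ has rank $2$, the stability manifold $\Stab(\Ku(Q))$ is a complex manifold of real dimension $4$. The real Lie group $\tilde{\mathrm{GL}}_{2}^{+}(\RN)$ is also $4$-dimensional, and it acts freely on the open sublocus of stability conditions whose central charge is an $\RN$-isomorphism onto $\CN$: indeed, if $(M,f)\cdot\sigma=\sigma$, then $M^{-1}Z=Z$ forces $M=\mathrm{Id}$; this in turn forces $f=\mathrm{id}_{\RN}+n$ for some integer $n$, and comparing phases of any nonzero semistable object yields $n=0$. Thus every $\tilde{\mathrm{GL}}_{2}^{+}(\RN)$-orbit meeting this sublocus is itself an open $4$-dimensional submanifold of $\Stab(\Ku(Q))$.

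Third, since such open orbits partition this sublocus into pairwise disjoint open pieces, the continuous image of any connected subset of the parameter space must land in a single orbit. The map $(\alpha,\beta)\mapsto \sigma(\alpha,\beta)$ is continuous on $V$ (inherited from continuity of the tilt stability $\sigma_{\alpha,\beta}$ in $(\alpha,\beta)\in \RN_{>0}\times\RN$ and from the inducing construction used in Proposition \ref{induced-stability-KuQ}), and $V$ itself is connected, since its two sub-regions are each convex and meet along the open segment $\{\beta=-\tfrac{1}{2},\, 0<\alpha<\tfrac{1}{2}\}$. This yields the required element $\tilde g\in \tilde{\mathrm{GL}}_{2}^{+}(\RN)$ satisfying $\sigma(\alpha,\beta)=\sigma(\alpha_{0},-\tfrac{1}{2})\cdot \tilde g$.

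The main technical obstacle is the justification that orbits are open, which rests both on the non-degeneracy of $Z(\alpha,\beta)$ throughout $V$ (supplied by the determinant computation) and on the freeness of the action argued above. An alternative, more hands-on route modeled on \cite[Proposition 3.6]{PY22} is to construct $\tilde g$ explicitly: take the change-of-basis matrix between $Z(\alpha_{0},-\tfrac{1}{2})$ and $Z(\alpha,\beta)$, lift it through the covering $\tilde{\mathrm{GL}}_{2}^{+}(\RN)\to \mathrm{GL}_{2}^{+}(\RN)$ via its rotation-and-positive-upper-triangular factorization, and verify by hand that the induced operation on the heart $\A(\alpha_{0},-\tfrac{1}{2})$ recovers $\A(\alpha,\beta)$.
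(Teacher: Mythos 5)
Your proposal is correct and follows essentially the same route as the paper, which simply defers to \cite[Proposition 3.6]{PY22}: there, too, the non-degeneracy of the central charge (the displayed determinant), the resulting openness of the four-dimensional $\tilde{\mathrm{GL}}_{2}^{+}(\RN)$-orbits inside the four-real-dimensional manifold $\Stab_{\Lambda^{2}_{H,\Ku(Q)}}(\Ku(Q))$, and the connectedness of $V$ together with continuity of $(\alpha,\beta)\mapsto\sigma(\alpha,\beta)$ are exactly the ingredients used. Your alternative explicit construction of $\tilde g$ is a reasonable hands-on variant but is not needed.
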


For a fixed $0<\alpha_{0}<\frac{1}{2}$, 
we denote by 
$$
\mathcal{K}:=  \sigma(\alpha_{0}, -\frac{1}{2}) \cdot \tilde{\mathrm{GL}}^{+}_{2}(\RN) \subset \Stab(\Ku(Q))
$$ 
the $\tilde{\mathrm{GL}}^{+}_{2}(\RN)$-orbit in $\Stab(\Ku(Q))$.
It is an open subset of $\Stab(\Ku(Q))$.
In particular, by Lemma \ref{GL-one-orbitV+}, these stability conditions $\sigma(\alpha,\beta)\in \mathcal{K}$ for all $(\alpha,\beta)\in V$.


\section{Proof of Theorem \ref{main-thm-nonempty}}

This section is devoted to the proof of Theorem \ref{main-thm-nonempty}.

\subsection{Slope stability of projection sheaves}
Let $Q$ be the smooth quadric threefold in $\PB^{4}$.
For a closed point $x\in Q$, 
there is a resolution of the skyscraper sheaf $\CO_{x}$ (cf. \cite[\S3, (3)]{Sch14}):
\begin{equation}\label{resol-sky-Q}
\xymatrix@C=0.5cm{
0 \ar[r]^{} & \CO(-H)\ar[r]^{} & S^{\oplus 2} \ar[r]^{} &  \CO^{\oplus 4} \ar[r]^{} & \CO(H) \ar[r]^{} & \CO_{x} \ar[r]^{} & 0.}
\end{equation}
Note that the spinor bundle in \cite{Sch14} is $S(H)$.
By the ideal sheaf exact sequence for $x\in Q$, 
we obtain a short exact sequence
\begin{equation}\label{structure-exact-Q}
\xymatrix@C=0.5cm{
0 \ar[r]^{} & I_{x}(H) \ar[r]^{} & \CO(H)  \ar[r]^{} & \CO_{x} \ar[r]^{} & 0,}
\end{equation}
where $I_{x}$ is the ideal sheaf of $x$ in $Q$.
Recall that $\mathcal{P}_{x}:=\PR(\CO_{x})\in \Ku(Q)$.
By the definition of the projection functor, we have 
\begin{equation}\label{structure-exact-cor-Q}
\xymatrix@C=0.5cm{
0 \ar[r]^{} & \mathcal{P}_{x} \ar[r]^{} & \CO^{\oplus 4} \ar[r]^{} & I_{x}(H) \ar[r]^{} & 0,}
\end{equation}
which is indeed obtained by splitting the exact sequence \eqref{resol-sky-Q}.
It follows that the projection sheaf  $\mathcal{P}_{x}$ is a torsion-free sheaf of rank $3$.
Moreover, from \eqref{resol-sky-Q}, there exists a short exact sequence
\begin{equation}\label{structure-exact-cor2-Q}
\xymatrix@C=0.5cm{
0 \ar[r]^{} & \CO(-H) \ar[r]^{} & S^{\oplus 2} \ar[r]^{} & \mathcal{P}_{x} \ar[r]^{} & 0.}
\end{equation}
From the exact sequence \eqref{structure-exact-cor-Q} (or \eqref{structure-exact-cor2-Q}), 
the Chern character of $\mathcal{P}_{x}$ is
$$
\ch(\mathcal{P}_{x})=(3,-H,-\frac{1}{2}H^{2},\frac{1}{3}H^{3}).
$$

By the construction of the projection sheaves, we have the following:

\begin{lem}\label{Px-ext}
$\Hom(\mathcal{P}_{x},\mathcal{P}_{x})=\CN$,  
$\Ext^{1}(\mathcal{P}_{x},\mathcal{P}_{x})=\CN^{4}$ 
and $\Ext^{i}(\mathcal{P}_{x},\mathcal{P}_{x})=0$ for $i\neq 0,1$.
\end{lem}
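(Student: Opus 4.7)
The plan is to use the semiorthogonality $\mathcal{P}_{x}\in \Ku(Q)$ to reduce $\RHom(\mathcal{P}_{x},\mathcal{P}_{x})$ to $\RHom(\mathcal{O}_{x},\mathcal{P}_{x})$ up to a shift, and then compute the latter via Serre duality. This avoids having to wrestle directly with the connecting map in the tilted heart.

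\textbf{Step 1 (reduction).} Apply $\RHom(-,\mathcal{P}_{x})$ to the defining sequence \eqref{structure-exact-cor-Q}. Since $\mathcal{P}_{x}\in \Ku(Q)$, the semiorthogonality gives $\Ext^{i}(\CO^{\oplus 4},\mathcal{P}_{x})=H^{i}(\mathcal{P}_{x})^{\oplus 4}=0$ for all $i$, so the long exact sequence collapses to
\[
\Ext^{i}(\mathcal{P}_{x},\mathcal{P}_{x}) \;\cong\; \Ext^{i+1}(I_{x}(H),\mathcal{P}_{x}), \qquad i\geq 0.
\]
Next apply $\RHom(-,\mathcal{P}_{x})$ to the ideal sheaf sequence \eqref{structure-exact-Q}. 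Again $\mathcal{P}_{x}\in \Ku(Q)$ forces $\Ext^{i}(\CO(H),\mathcal{P}_{x})=0$ for all $i$, so
\[
\Ext^{i}(I_{x}(H),\mathcal{P}_{x}) \;\cong\; \Ext^{i+1}(\CO_{x},\mathcal{P}_{x}), \qquad i\geq 0.
\]
Combining yields $\Ext^{i}(\mathcal{P}_{x},\mathcal{P}_{x}) \cong \Ext^{i+2}(\CO_{x},\mathcal{P}_{x})$ for $i\geq 0$.

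\textbf{Step 2 (computing $\Ext^{*}(\CO_{x},\mathcal{P}_{x})$).} By Serre duality on $Q$ and the identity $\omega_{Q}\otimes \CO_{x}\cong \CO_{x}$, we have $\Ext^{i}(\CO_{x},\mathcal{P}_{x})=\Ext^{3-i}(\mathcal{P}_{x},\CO_{x})^{\vee}$. I will compute $\Ext^{*}(\mathcal{P}_{x},\CO_{x})$ by a two-stage long exact sequence computation: first apply $\RHom(-,\CO_{x})$ to \eqref{structure-exact-Q} using $\Ext^{*}(\CO_{x},\CO_{x})$ (Koszul, dimensions $1,3,3,1$) and $\Ext^{*}(\CO(H),\CO_{x})=\mathbb{C}$ in degree $0$, obtaining $\Ext^{*}(I_{x}(H),\CO_{x})$ of dimensions $(3,3,1,0)$; then apply $\RHom(-,\CO_{x})$ to \eqref{structure-exact-cor-Q} with $\Ext^{*}(\CO^{\oplus 4},\CO_{x})=\mathbb{C}^{4}$ in degree $0$. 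This yields $\Hom(\mathcal{P}_{x},\CO_{x})=\mathbb{C}^{4}$, $\Ext^{1}(\mathcal{P}_{x},\CO_{x})=\mathbb{C}$, and $\Ext^{\geq 2}=0$. Dualizing, $\Ext^{i}(\CO_{x},\mathcal{P}_{x})$ has dimensions $(0,0,1,4)$ for $i=0,1,2,3$ and vanishes otherwise.

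\textbf{Step 3 (conclusion).} Substituting into the isomorphism of Step 1:
\[
\Hom(\mathcal{P}_{x},\mathcal{P}_{x})=\Ext^{2}(\CO_{x},\mathcal{P}_{x})=\CN,\quad \Ext^{1}(\mathcal{P}_{x},\mathcal{P}_{x})=\Ext^{3}(\CO_{x},\mathcal{P}_{x})=\CN^{4},
\]
and $\Ext^{i}(\mathcal{P}_{x},\mathcal{P}_{x})=\Ext^{i+2}(\CO_{x},\mathcal{P}_{x})=0$ for $i\geq 2$. A useful cross-check is the Euler pairing in $\mathcal{N}(\Ku(Q))$: since $[\mathcal{P}_{x}]=2\lambda_{2}-\lambda_{1}$ and the pairing matrix relative to $\{\lambda_{1},\lambda_{2}\}$ (computed from Kapranov's exceptional collection) has entries $\chi(\lambda_{1},\lambda_{1})=1$, $\chi(\lambda_{1},\lambda_{2})=4$, $\chi(\lambda_{2},\lambda_{1})=0$, $\chi(\lambda_{2},\lambda_{2})=1$, one finds $\chi(\mathcal{P}_{x},\mathcal{P}_{x})=1-8+4=-3$, matching $1-4$.

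The main obstacle is really just bookkeeping in Step 2; there is no conceptual hurdle, because the semiorthogonality $\mathcal{P}_{x}\in \Ku(Q)$ (which kills $H^{*}(\mathcal{P}_{x})$ and $H^{*}(\mathcal{P}_{x}(-H))$) does all the heavy lifting in Step 1, eliminating any need to identify the connecting map $\Ext^{1}(\mathcal{P}_{x},\CO(-H))\to \Ext^{1}(\mathcal{P}_{x},S^{\oplus 2})$ coming from the spinor resolution.
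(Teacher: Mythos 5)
Your proof is correct and follows essentially the same route as the paper: both use $\mathcal{P}_{x}\in \Ku(Q)$ together with the sequences \eqref{structure-exact-cor-Q} and \eqref{structure-exact-Q} to reduce to $\RHom(\mathcal{P}_{x},\mathcal{P}_{x})\cong \RHom(\CO_{x},\mathcal{P}_{x})[2]$, and then compute the latter from those same two exact sequences. The only (harmless) difference is in the second half: the paper computes $\RHom(\CO_{x},\mathcal{P}_{x})$ covariantly, using Grothendieck--Verdier duality for $\RHom(\CO_{x},L)$ and the Euler characteristic $\chi(\CO_{x},\mathcal{P}_{x})=-3$ to pin down $\Ext^{3}$, whereas you compute $\RHom(\mathcal{P}_{x},\CO_{x})$ contravariantly and pass through Serre duality, which determines all the groups without invoking $\chi$.
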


\begin{proof}
Since $\mathcal{P}_{x}\in \Ku(Q)$, by \eqref{structure-exact-cor-Q} and \eqref{structure-exact-Q}, we have
$$
\RHom(\mathcal{P}_{x},\mathcal{P}_{x})
\cong 
\RHom(I_{x}(H),\mathcal{P}_{x})[1]
\cong 
\RHom(\CO_{x},\mathcal{P}_{x})[2].
$$
Thus $\Ext^{i}(\CO_{x},\mathcal{P}_{x})=0$ for $i=0,1$.
By Grothendieck--Verdier duality (cf. \cite[Theorem 3.34]{Huy06}), 
we have $\RCH(\CO_{x},L) \cong \CO_{x}[-3]$ for all line bundle $L$ on $Q$.
In fact, for all line bundle $L$, we have
\begin{eqnarray*}
\RCH(\CO_{x},L)  
& = & \RCH(i_{\ast} \CN(x),L)  \\
& \cong & i_{\ast} \RCH_{x}(\CN(x), i^{!}L) \; (\textrm{by Grothendieck--Verdier duality}) \\
& \cong & i_{\ast} \RCH_{x}(\CN(x), \CN(x)[-3])=\CO_{x}[-3],
\end{eqnarray*}
where $i: x \hookrightarrow Q$ is the inclusion.
In particular, we get $\RHom(\CO_{x},L) \cong \CN[-3]$.
Applying $\Hom(\CO_{x},-)$ to \eqref{structure-exact-Q}, 
we obtain $\RHom(\CO_{x},I_{x}(H))=\CN[-1]\oplus \CN^{3}[-2]\oplus \CN^{3}[-3]$.
Then, applying $\Hom(\CO_{x},-)$ to \eqref{structure-exact-cor-Q}, 
we get $\Ext^{2}(\CO_{x}, \mathcal{P}_{x})=\CN$. 
By Hirzebruch--Riemann-Roch theorem, 
we have $\chi(\CO_{x},\mathcal{P}_{x})=-3$ and thus $\Ext^{3}(\CO_{x}, \mathcal{P}_{x})=\CN^{4}$. 
As a result, we get what we wanted.
\end{proof}

The following result shows that the projection sheaf $\mathcal{P}_{x}$ is $\mu_{H}$-slope stable and thus $H$-Gieseker stable.

\begin{prop}\label{Kp-slope-stable}
The sheaf $\mathcal{P}_{x}$ is reflexive, 
$\mu_{H}$-slope stable and locally free except at the point $x\in Q$.
\end{prop}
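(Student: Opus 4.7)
The plan is to verify the three properties separately, using the two defining sequences \eqref{structure-exact-cor-Q} and \eqref{structure-exact-cor2-Q}, the Kuznetsov orthogonality $\mathcal{P}_{x}\in\Ku(Q)$, and the $\mu_{H}$-slope stability of the spinor bundle $S$.

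First I would settle the local structure. Restricting \eqref{structure-exact-cor-Q} to $Q\setminus\{x\}$, the quotient $I_{x}(H)$ becomes $\CO(H)$, exhibiting $\mathcal{P}_{x}$ there as the kernel of a surjection between locally free sheaves, hence locally free. Conversely, splicing \eqref{structure-exact-cor-Q} with \eqref{structure-exact-Q} produces a four-term exact sequence $0\to \mathcal{P}_{x}\to \CO^{\oplus 4}\to \CO(H)\to \CO_{x}\to 0$; were $\mathcal{P}_{x}$ free at $x$, localizing would yield a free resolution of the residue field of $\mathcal{O}_{Q,x}$ of length $2$, contradicting the Koszul value $3=\dim \mathcal{O}_{Q,x}$. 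For reflexivity, note $\mathcal{P}_{x}$ is torsion-free as a subsheaf of $\CO^{\oplus 4}$, so it remains to show $\mathrm{depth}_{x}\mathcal{P}_{x}\geq 2$. Since $S$ is locally free, the sequence \eqref{structure-exact-cor2-Q} gives a length-$1$ free resolution of $\mathcal{P}_{x,x}$ over the regular local ring $\mathcal{O}_{Q,x}$, so Auslander--Buchsbaum yields $\mathrm{depth}_{x}\mathcal{P}_{x}\geq 3-1=2$.

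Second, for $\mu_{H}$-slope stability, one computes $\mu_{H}(\mathcal{P}_{x})=-\tfrac{1}{3}$. Since $\mathrm{Pic}(Q)=\ZN\cdot H$, any destabilizing saturated subsheaf has rank $1$ or $2$, and in either case is detected by $\Hom$'s against line bundles. For rank $1$: reflexivity of $\mathcal{P}_{x}$ lets one replace a saturated destabilizer by its reflexive hull, a line bundle $\CO(dH)$ with $d\geq 0$ admitting an injection into $\mathcal{P}_{x}$; the case $d\geq 1$ is excluded by \eqref{structure-exact-cor-Q} via the vanishing $\Hom(\CO(dH),\CO^{\oplus 4})=0$, while $d=0$ is killed by the defining orthogonality $\Hom(\CO,\mathcal{P}_{x})=0$. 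For rank $2$: saturating gives a rank-$1$ torsion-free quotient, whose reflexive hull is a line bundle $\CO(eH)$ with $e\leq -1$, yielding a nonzero map $\mathcal{P}_{x}\to \CO(eH)$. Applying $\Hom(-,\CO(eH))$ to \eqref{structure-exact-cor2-Q} reduces the vanishing of this Hom to $H^{0}(Q,S((e+1)H))=0$ for $e+1\leq 0$, which follows from the $\mu_{H}$-slope stability of $S$ (since $\mu_{H}(S((e+1)H))=e+\tfrac{1}{2}<0$ while $\mu_{H}(\CO)=0$).

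The main technical care needed is in the reduction step of slope stability: verifying that a saturated rank-$1$ subsheaf of the reflexive sheaf $\mathcal{P}_{x}$ extends to an injection from its reflexive hull into $\mathcal{P}_{x}$, and dually that a saturated rank-$1$ quotient embeds into its reflexive hull. Once these reductions are established (using reflexivity of $\mathcal{P}_{x}$ from the previous step), the rest of the proof collapses to direct $\Hom$ computations from \eqref{structure-exact-cor-Q}, \eqref{structure-exact-cor2-Q}, the definition of $\Ku(Q)$, and the classical vanishing $H^{0}(Q,S(kH))=0$ for $k\leq 0$.
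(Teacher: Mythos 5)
Your proof is correct, but it takes a genuinely different route from the paper's on both main points. For $\mu_{H}$-stability, the paper only shows semistability (enough since $\ch_{\leq 1}(\mathcal{P}_{x})=(3,-H)$ is primitive): it takes the maximal destabilizing subsheaf $F$, observes via \eqref{structure-exact-cor-Q} that $F\subset\CO^{\oplus 4}$ forces $\mu_{H}(F)=0$, and then uses the Bogomolov inequality on $F$ and $\CO^{\oplus 4}/F$ together with Proposition \ref{sum-lines-detect}(i) to conclude $F\cong\CO^{\oplus r}$, contradicting $\Hom(\CO,\mathcal{P}_{x})=0$. You instead run a rank-by-rank analysis using $\mathrm{Pic}(Q)=\ZN\cdot H$: rank-one destabilizers are saturated to line bundles $\CO(dH)$, $d\geq 0$, and killed by $\mathcal{P}_{x}\subset\CO^{\oplus 4}$ and $\mathcal{P}_{x}\in\Ku(Q)$; rank-two destabilizers are dualized to maps $\mathcal{P}_{x}\to\CO(eH)$, $e\leq -1$, and killed via \eqref{structure-exact-cor2-Q} and the stability of $S$. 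Your argument is more elementary and self-contained (it avoids the tilt-stability input hidden in Proposition \ref{sum-lines-detect}), at the cost of exploiting the small rank and of needing reflexivity of $\mathcal{P}_{x}$ first (to saturate inside $\mathcal{P}_{x}$) — which is consistent with your ordering, so there is no circularity. For reflexivity and the singular locus, the paper computes the local Ext sheaves $\CExt^{i}(\mathcal{P}_{x},\CO)$ from \eqref{structure-exact-cor-Q}, \eqref{structure-exact-Q} and Grothendieck--Verdier duality, obtaining $\CExt^{i}=0$ for $i\geq 2$ and $\CExt^{1}\cong\CO_{x}$; you use depth and Auslander--Buchsbaum on the length-one locally free resolution \eqref{structure-exact-cor2-Q}, plus the Koszul bound $\mathrm{pd}\,\kappa(x)=3$ for non-freeness at $x$. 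Both are valid; the paper's computation has the advantage of producing the precise sheaf $\CExt^{1}(\mathcal{P}_{x},\CO)\cong\CO_{x}$, which is sharper information about the singularity of $\mathcal{P}_{x}$, while your route is shorter if one only wants reflexivity and the locus of non-local-freeness. (A still quicker alternative for reflexivity, in the spirit of your argument, is to quote Hartshorne's criterion that the kernel of a map from a locally free sheaf to a torsion-free sheaf is reflexive, applied directly to \eqref{structure-exact-cor-Q}.)
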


\begin{proof}
Note that $\ch_{\leq 1}(\mathcal{P}_{x})=(3,-H)$ is primitive.
Hence, to show that $\mathcal{P}_{x}$ is $\mu_{H}$-slope stable, 
it suffices to show that $\mathcal{P}_{x}$ is $\mu_{H}$-slope semistable.
We conclude this by contradiction. 
Suppose $\mathcal{P}_{x}$ is not $\mu_{H}$-slope semistable. 
Assume that $F \subset \mathcal{P}_{x}$ is the $\mu_{H}$-slope semistable subsheaf in the Harder--Narasimhan filtration of $\mathcal{P}_{x}$. 
Then we have $\mu_{H}(F)>\mu_{H}(\mathcal{P}_{x})=-\frac{1}{3}$ and the quotient $E_{D,V}/E$ is torsion-free. 
From the exact sequence \eqref{structure-exact-cor-Q}, 
one see that $F$ is also a subsheaf of $\CO^{\oplus 4}$ and thus $\mu_{H}(F)=0$. 
Set $\ch(F):=  (r,0,dH^{2},eH^{3})$. 
Then Chern character of the quotient sheaf $\CO^{\oplus 4}/F$ is 
$\ch(\CO^{\oplus 4}/F)=(4-r,0,-dH^{2},-eH^{3})$. 
By \eqref{structure-exact-cor-Q} again, 
since $I_{x}(H)$ is torsion-free, 
so is $\CO^{\oplus 4}/F$.
Since $\mu_{H}(\CO^{\oplus 4}/F)=0$ and $\CO^{\oplus 4}$ is $\mu_{H}$-slope semistable, 
so $\CO^{\oplus 4}/F$ is also $\mu_{H}$-slope semistable.
As a result, the classical Bogomolov inequality implies that $-rd \geq 0$ and $ (4-r) d\geq 0$.
Hence, we have $d = 0$. 
By applying Proposition \ref{sum-lines-detect} (i) to $F$ and $\CO^{\oplus 4}/F$, 
we obtain $e = 0$ and hence $F \cong  \CO^{\oplus r}$. 
This means $\mathcal{P}_{x}$ has global sections.
However, $H^{0}(Q,\mathcal{P}_{x})\cong  \Hom(\CO,\mathcal{P}_{x})=0$ as $\mathcal{P}_{x}\in \Ku(Q)$, a contradiction.

To prove that $\mathcal{P}_{x}$ is reflexive,
we first apply $\RCH(-,\CO)$ to \eqref{structure-exact-cor-Q} and obtain 
$$
\CExt^{i}(\mathcal{P}_{x},\CO)\cong \CExt^{i+1}(I_{x}(H),\CO)
$$
for any $i\geq 1$.
By Grothendieck--Verdier duality, 
it follows that $\RCH(\CO_{x},\CO) \cong \CO_{x}[-3]$.
Applying $\RCH(-,\CO)$ to \eqref{structure-exact-Q}, 
we get $\CExt^{j}(I_{x}(H),\CO)=0$ for any $j\geq 3$,
and $\CExt^{2}(I_{x}(H),\CO)\cong \CO_{x}$.
Hence $\CExt^{i}(\mathcal{P}_{x},\CO)=0$ for any $i\geq 2$,  
and  $\CExt^{1}(\mathcal{P}_{x},\CO)\cong \CO_{x}$ is supported in $x$.
This implies that $\mathcal{P}_{x}$ is reflexive, but not locally free at $x$.
\end{proof}

In particular,
the following result is an immediately consequence of Proposition \ref{Kp-slope-stable}, Lemma \ref{Px-ext} and \cite[Corollary 4.5.2]{HL10}.
Recall that the Chern character $\mathbf{v}=(3,-H,-\frac{1}{2}H^{2},\frac{1}{3}H^{3})$.

\begin{cor}
The moduli space $\overline{M}_{Q}(\mathbf{v})$ is smooth at $\mathcal{P}_{x}$ for every closed point $x\in Q$.
\end{cor}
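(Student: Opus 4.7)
The plan is to assemble the three ingredients already at hand: slope stability of $\mathcal{P}_{x}$ (Proposition \ref{Kp-slope-stable}), the $\Ext$-computation of Lemma \ref{Px-ext}, and the standard unobstructedness criterion for moduli of Gieseker-stable sheaves (\cite[Corollary 4.5.2]{HL10}).

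First I would note that, since $\mathcal{P}_{x}$ is torsion-free and $\mu_{H}$-slope stable by Proposition \ref{Kp-slope-stable}, it is in particular $H$-Gieseker stable, so it defines a closed point $[\mathcal{P}_{x}] \in \overline{M}_{Q}(\mathbf{v})$. The local structure of $\overline{M}_{Q}(\mathbf{v})$ at a Gieseker-stable sheaf $E$ is controlled by the Kodaira--Spencer-type deformation theory: the Zariski tangent space is $\Ext^{1}(E,E)$ and the obstructions to deformations lie in the trace-free part $\Ext^{2}(E,E)_{0} \subset \Ext^{2}(E,E)$. The content of \cite[Corollary 4.5.2]{HL10} is that vanishing of $\Ext^{2}(E,E)_{0}$ implies smoothness at $[E]$.

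Next I would invoke Lemma \ref{Px-ext}, which gives $\Ext^{i}(\mathcal{P}_{x},\mathcal{P}_{x}) = 0$ for all $i \neq 0,1$. In particular $\Ext^{2}(\mathcal{P}_{x},\mathcal{P}_{x}) = 0$, so a fortiori its trace-free summand $\Ext^{2}(\mathcal{P}_{x},\mathcal{P}_{x})_{0}$ vanishes. Applying the above criterion concludes smoothness at $[\mathcal{P}_{x}]$.

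There is no serious obstacle here, as this is essentially bookkeeping: both the stability input and the cohomological input have already been established in the preceding proposition and lemma. The only point one might wish to emphasize for the reader is that slope stability upgrades to Gieseker stability (needed to place $[\mathcal{P}_{x}]$ in $\overline{M}_{Q}(\mathbf{v})$ in the first place), and that the trace map splits off so that $\Ext^{2}(\mathcal{P}_{x},\mathcal{P}_{x}) = 0$ is a strictly stronger statement than the required vanishing of $\Ext^{2}(\mathcal{P}_{x},\mathcal{P}_{x})_{0}$. With these observations the corollary is immediate.
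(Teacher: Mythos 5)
Your proposal is correct and follows exactly the route the paper takes: the corollary is stated there as an immediate consequence of Proposition \ref{Kp-slope-stable} (slope, hence Gieseker, stability of $\mathcal{P}_{x}$), Lemma \ref{Px-ext} (giving $\Ext^{2}(\mathcal{P}_{x},\mathcal{P}_{x})=0$), and \cite[Corollary 4.5.2]{HL10}. Your additional remarks about the trace-free part are a harmless refinement of the same argument.
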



\subsection{Bridgeland stability of projection sheaves}

Note that for any $\beta\in \RN$, the twisted Chern character $\ch_{\leq 2}^{\beta}$ of $\mathcal{P}_{x}$ is 
$$
\ch_{\leq 2}^{\beta}(\mathcal{P}_{x})=(3, -(3\beta+1)H,(\frac{3}{2}\beta^{2}+\beta-\frac{1}{2})H^{2}).
$$
Since $\ch_{0}(\mathcal{P}_{x})=3\neq 0$,
by Theorem \ref{tilt-wall-struct-thm},
there exists a unique numerical vertical wall for $\ch(\mathcal{P}_{x})$ given by $\beta=-\frac{1}{3}$, 
and the numerical semicircular walls are two sets of nested semicircles whose top points lie on the hyperbola
\begin{equation}\label{Kp-wall-apexes-hyperbola}
(\beta+\frac{1}{3})^{2}-\alpha^{2}=(\frac{2}{3})^{2}.
\end{equation}

We consider the following two subsets of $\widetilde{V}$:
\begin{eqnarray*}
\widetilde{V}_{L} &=& \{(\alpha, \beta)\in \widetilde{V} \mid  -1\leq \beta <-\frac{1}{3}, \alpha<-\beta; \; \textrm{or}\; -2< \beta < -1, \alpha\leq 2+\beta\}. \\
\widetilde{V}_{R} &=& \{(\alpha, \beta)\in \widetilde{V}\mid  -\frac{1}{3} \leq \beta < 0, \alpha<-\beta\}.
\end{eqnarray*}
Then $\widetilde{V}_{L}\cap \widetilde{V}_{R}=\emptyset$ and $\widetilde{V}=\widetilde{V}_{L}\cup \widetilde{V}_{R}$.
Let $\sigma(\alpha,\beta)$ be the stability condition in Proposition \ref{induced-stability-KuQ} for any $(\alpha,\beta)\in \widetilde{V}_{L}$.
Let $[\mathcal{P}_{x}]:=  2\lambda_{2}-\lambda_{1}$ be the numerical class of $\mathcal{P}_{x}$ in $\mathcal{N}(\Ku(Q))$.
The main purpose of this section is to show the following:

\begin{thm}[{Theorem \ref{main-thm-nonempty}}]
For any $(\alpha,\beta)\in \widetilde{V}$,
the moduli space $M_{\sigma(\alpha,\beta)}(2\lambda_{2}-\lambda_{1})$ is non-empty.
In particular, the moduli space $M_{\sigma}(2\lambda_{2}-\lambda_{1})$ is non-empty for all $\sigma\in \mathcal{K}$. 
\end{thm}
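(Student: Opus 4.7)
The natural candidate for a point of $M_{\sigma(\alpha,\beta)}(2\lambda_{2}-\lambda_{1})$ is the projection sheaf $\mathcal{P}_{x}$ itself: by construction $\mathcal{P}_{x} \in \Ku(Q)$ and $[\mathcal{P}_{x}] = 2\lambda_{2}-\lambda_{1}$. The strategy is to show that, after a suitable shift, $\mathcal{P}_{x}$ lies in the heart $\A(\alpha,\beta)$ and is $\sigma(\alpha,\beta)$-semistable there, so that $\mathcal{P}_{x}$ is a $\sigma(\alpha,\beta)$-semistable object of $\Ku(Q)$ with the prescribed class.

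The main technical step is to establish the tilt-stability of $\mathcal{P}_{x}$ in $\Coh^{\beta}(Q)$ throughout the relevant range. By Proposition \ref{Kp-slope-stable}, $\mathcal{P}_{x}$ is $\mu_{H}$-slope stable with $\mu_{H}(\mathcal{P}_{x}) = -\tfrac{1}{3}$, hence $\mathcal{P}_{x} \in \Coh^{\beta}(Q)$ for $\beta < -\tfrac{1}{3}$. The large-volume-limit Proposition \ref{limit-tilt-stab-Gie-stab-prop} then gives $\sigma_{\alpha,\beta}$-stability of $\mathcal{P}_{x}$ for $\alpha \gg 0$. To propagate this across all of $\widetilde{V}$, I would invoke the Nested Wall Theorem \ref{tilt-wall-struct-thm}: the numerical walls for $[\mathcal{P}_{x}]$ consist of the vertical wall $\beta = -\tfrac{1}{3}$ and nested semicircles with apexes on the hyperbola \eqref{Kp-wall-apexes-hyperbola}. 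I would rule out actual walls by analysing the constraints of Remark \ref{actual-restrict}: for any putative destabilizing sequence $0 \to A \to \mathcal{P}_{x} \to B \to 0$, the inequalities $\Delta_{H}(A), \Delta_{H}(B) \geq 0$ and $\Delta_{H}(A) + \Delta_{H}(B) \leq \Delta_{H}(\mathcal{P}_{x})$, combined with the integrality conditions of Lemma \ref{Num-Chow-Q}, restrict the possible Chern characters of $A$ and $B$ to a finite list that can be eliminated individually.

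Once $\sigma_{\alpha,\beta}$-stability of $\mathcal{P}_{x}$ is in hand, the tilting procedure at slope $\mu_{\alpha,\beta} = 0$ places either $\mathcal{P}_{x}$ or $\mathcal{P}_{x}[1]$ into $\Coh_{\alpha,\beta}^{0}(Q)$, determined by the sign of $\mu_{\alpha,\beta}(\mathcal{P}_{x})$; since $\mathcal{P}_{x} \in \Ku(Q)$, this shift lies in $\A(\alpha,\beta)$. The general mechanism of the inducing construction (see \cite[Proposition 5.1]{BLMS23}, as used in the proof of Proposition \ref{induced-stability-KuQ}) then transfers tilt-stability in $\Coh^{\beta}(Q)$ to $\sigma(\alpha,\beta)$-stability in $\A(\alpha,\beta)$: any destabilizing subobject in the induced heart would produce one in the ambient weak stability condition $\sigma_{\alpha,\beta}^{0}$. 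Hence $\mathcal{P}_{x}$ defines a stable point of $M_{\sigma(\alpha,\beta)}(2\lambda_{2}-\lambda_{1})$, proving non-emptiness for all $(\alpha,\beta) \in \widetilde{V}$; the statement for $\sigma \in \mathcal{K}$ then follows from Lemma \ref{GL-one-orbitV+}.

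\textbf{Main obstacle.} The crux of the argument is excluding actual walls for $\mathcal{P}_{x}$ in tilt stability throughout $\widetilde{V}$. While Theorem \ref{tilt-wall-struct-thm} supplies a clean numerical picture, each candidate destabilizing Chern character must be ruled out via Bogomolov-Gieseker and integrality, and the analysis may split into cases according to $\beta$. The author's observation that $\mathrm{ch}_{\leq 2}(\mathcal{P}_{x})$ coincides with the corresponding character on smooth cubic threefolds, where the parallel wall analysis was carried out in \cite{BBF+}, strongly suggests that the same technique adapts verbatim; nevertheless, this case analysis is the bulk of the work.
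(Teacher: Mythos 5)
Your proposal is correct and follows essentially the same route as the paper: exhibit $\mathcal{P}_{x}$ (resp. $\mathcal{P}_{x}[1]$) as a $\sigma(\alpha,\beta)$-stable object via its slope stability, the large-volume limit, and a wall-exclusion argument in tilt stability (the paper's Lemma \ref{no-wall-1}, which reduces to the single line $\beta=-1$ via the nested wall theorem and then runs exactly the Chern-character case analysis you describe), before transferring stability to the induced heart and concluding for all of $\mathcal{K}$ by Lemma \ref{GL-one-orbitV+}. The only cosmetic difference is that the paper organizes the region $\widetilde{V}$ into $\widetilde{V}_{L}$ and $\widetilde{V}_{R}$ and treats the shift explicitly in each, which your "determined by the sign of $\mu_{\alpha,\beta}$" remark captures.
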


\begin{proof}
The first statement follows immediately from the Proposition \ref{start-point-stable} below.
The second statement is a consequence of the first one and Lemma  \ref{GL-one-orbitV+}.
\end{proof}

The following proposition is the core for the proof of Theorem \ref{main-thm-nonempty}.

\begin{lem}\label{no-wall-1}
There are no walls for tilt semistable objects $E$ with Chern character $\ch_{\leq 2}(E) = (3, -H, -\frac{1}{2} H^{2})$ to the left of the vertical wall $\beta=-\frac{1}{3}$.
\end{lem}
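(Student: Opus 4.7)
The natural approach is proof by contradiction. Suppose there exists an actual wall $W$ for $E$ in the region $\{(\alpha,\beta) \mid \alpha>0,\, \beta<-\tfrac{1}{3}\}$, yielding a destabilizing short exact sequence
\begin{equation*}
0 \to A \to E \to B \to 0
\end{equation*}
in $\Coh^{\beta}(Q)$ at a point $(\alpha, \beta) \in W$, with $A$ and $B$ tilt semistable and $\mu_{\alpha,\beta}(A) = \mu_{\alpha,\beta}(E) = \mu_{\alpha,\beta}(B)$. By Theorem~\ref{tilt-wall-struct-thm} together with equation~\eqref{Kp-wall-apexes-hyperbola}, $W$ is a semicircle centered at $(s, 0)$ on the $\beta$-axis whose apex lies on the left branch of the hyperbola $(\beta+\tfrac{1}{3})^{2}-\alpha^{2}=\tfrac{4}{9}$; this forces $s \leq -1$, and the elementary bound on the radius $r=\sqrt{(s+\tfrac{1}{3})^{2}-\tfrac{4}{9}}<|s+\tfrac{1}{3}|$ confirms that the whole semicircle lies strictly to the left of $\beta=-\tfrac{1}{3}$.

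The plan is to enumerate the finitely many possible Chern characters for the destabilizers. Writing $\ch_{\leq 2}(A) = (a, cH, \tfrac{d}{2}H^{2})$ with $a, c, d \in \ZN$ (permissible by Lemma~\ref{Num-Chow-Q}), we get $\ch_{\leq 2}(B) = (3-a, -(1+c)H, -\tfrac{1+d}{2}H^{2})$. Since $H^{3}=2$ on $Q$, a direct computation yields $\Delta_{H}(E)=16$, $\Delta_{H}(A)=4(c^{2}-ad)$ and $\Delta_{H}(B)=4((1+c)^{2}+(3-a)(1+d))$. The constraints from Remark~\ref{actual-restrict} then read
\begin{equation*}
c^{2}\geq ad,\quad (1+c)^{2}\geq -(3-a)(1+d),\quad (c^{2}-ad)+(1+c)^{2}+(3-a)(1+d)\leq 4,
\end{equation*}
a system admitting only finitely many integer solutions $(a,c,d)$. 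For each such triple we will compute the center $s=s(a,c,d)$ of the associated semicircular wall, determined by the apex equation $\mu_{\alpha,s}(A)=\mu_{\alpha,s}(E)=0$, and immediately discard those with $s > -1$.

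For the admissible triples surviving this check, it remains to rule out the existence of an actual destabilizer in $\Coh^{\beta}(Q)$. The tools at hand are: (i) the sharper sum bound of Remark~\ref{actual-restrict}, which becomes an equality only if a factor is supported on points; (ii) Proposition~\ref{sum-lines-detect}, pinning down rank-$r$ semistable classes of the form $(\pm r,0,0,\ast)$ to be $\CO^{\oplus r}$ or $\CO^{\oplus r}[1]$; and (iii) the requirement $A, B \in \Coh^{\beta}(Q)$ for the wall values $\beta<-\tfrac{1}{3}$, which restricts the $\mu_{H}$-slopes of the Harder--Narasimhan factors of $A$ and $B$.

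The main obstacle will be the delicate sub-cases for low ranks, most notably $a \in \{1,2\}$ with $A$ numerically close to a twisted line bundle $\CO(kH)$ or to the spinor bundle $S$. Candidates such as $A=\CO(-H)[1]$ and related twists produce walls whose centers $s$ approach $\beta = -\tfrac{1}{3}$ from the left and stay close to the vertical wall; these require a sharp use of the explicit formula for $s$ together with the Bogomolov bound $\Delta_{H}(B)\geq 0$ to preclude $s\leq -1$. Once these borderline cases are dispatched, the contradiction with the assumed location of $W$ completes the argument.
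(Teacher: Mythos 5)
Your overall strategy (enumerate destabilizing Chern characters via the discriminant constraints of Remark~\ref{actual-restrict}, then locate the resulting walls) is a legitimate one, but as written it has a genuine gap at its very first step: the system $c^{2}\geq ad$, $(1+c)^{2}\geq -(3-a)(1+d)$, $(c^{2}-ad)+(1+c)^{2}+(3-a)(1+d)\leq 4$ does \emph{not} have finitely many integer solutions. For example, $(a,c,d)=(9m^{2},3m^{2},m^{2})$ satisfies all three for every $m\in\ZN$: one checks $c^{2}-ad=0$ and $(1+c)^{2}+(3-a)(1+d)=4$. The discriminant inequalities alone never bound the rank of the destabilizer; what is missing from your ``system'' is the heart condition $0\leq H^{2}\cdot\ch_{1}^{\beta}(A)\leq H^{2}\cdot\ch_{1}^{\beta}(E)$ at a point $(\alpha,\beta)$ on the wall, and it is precisely this constraint that cuts the list down to finitely many candidates. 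Since the enumeration you propose to perform is infinite, the argument cannot proceed as described. Beyond this, the proposal is an outline rather than a proof: the triples are never listed, the centers $s(a,c,d)$ are never computed, and the ``delicate sub-cases'' you flag as the main obstacle are left entirely open.

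The paper's proof repairs exactly this defect by a reduction you do not make. From the nested wall theorem, the apex of any semicircular wall to the left of $\beta=-\frac{1}{3}$ lies on the left branch of the hyperbola $(\beta+\frac{1}{3})^{2}-\alpha^{2}=\frac{4}{9}$, whose vertex is at $(\alpha,\beta)=(0,-1)$; a short computation (essentially your bound $r^{2}=(s+\frac{1}{3})^{2}-\frac{4}{9}$ compared with $(-1-s)^{2}$) shows every such semicircle meets the line $\beta=-1$. It therefore suffices to exclude actual walls along $\beta=-1$ only. There $\ch^{-1}_{\leq 2}(E)=(3,2H,0)$, membership of $A$ and $B$ in $\Coh^{-1}(Q)$ together with finiteness of their tilt slopes forces $H^{2}\cdot\ch_{1}^{-1}(A)=H^{3}$, and the two remaining conditions $(2a-3)\alpha^{2}=2c$ and $-3\leq ac\leq 1$ are immediately incompatible once one normalizes $a\geq 2$. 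If you want to salvage your approach, you should either import this reduction to a single vertical line, or add the heart/positivity constraint on $\ch_{1}^{\beta}$ along the wall to your system before claiming finiteness.
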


\begin{proof}
Since the apexes of the numerical semicircular walls for $E$ lie on the hyperbola \eqref{Kp-wall-apexes-hyperbola}, hence every numerical semicircular wall to the left of the vertical wall intersects with the vertical line $\beta=-1$.
As a result, it is sufficient to show that there is no an actual wall for $E$ along the vertical line $\beta=-1$.
Hence, we may assume that $E$ is a $\sigma_{\alpha,-1}$-semistable object with Chern character $\ch_{\leq 2}(E) = (3, -H, -\frac{1}{2} H^{2})$.
In the following, we show that there are no actual walls along $\beta = -1$ for $E$ with respect to $\sigma_{\alpha,-1}$.

Note that $\ch_{\leq 2}^{-1}(E)=(3,2H,0)$.
By Definition \ref{wall-def}, 
an actual wall would be given by a short exact sequence of $\sigma_{\alpha,-1}$-semistable objects in the heart $\Coh^{-1}(Q)$
\begin{equation}\label{sky-point-destab-sequ}
\xymatrix@C=0.5cm{
0 \ar[r]^{} & A \ar[r]^{} & E \ar[r]^{} & B \ar[r]^{} & 0}
\end{equation}
such that the following conditions hold (see Remark \ref{actual-restrict}):
\begin{enumerate}
\item[(1)] $\mu_{\alpha, -1}(A)=\mu_{\alpha, -1}(E)=\mu_{\alpha, -1}(B)$;
\item[(2)] $\Delta_{H}(A)\geq 0$, $\Delta_{H}(B)\geq 0$;
\item[(3)]  $\Delta_{H}(A) \leq \Delta_{H}(E), \Delta_{H}(B) \leq \Delta_{H}(E)$.
\end{enumerate}
By \eqref{sky-point-destab-sequ},
the truncated twisted Chern characters $\ch^{-1}_{\leq 2}$ of $A$ and $B$ satisfy
\begin{equation}\label{sky-point-destab-equation}
(3,2H,0)
=(a,bH, \frac{c}{2}H^{2})+(3-a,(2-b)H, \frac{1-c}{2}H^{2})
\end{equation}
for some $a, b, c \in \Z$. 
Since $A, B\in \Coh^{-1}(Q)$,  
so $b\geq 0$ and $2-b\geq 0$.
By $(1)$, $A$ and $B$ could not have infinity tilt slopes and it follows that $b=1$. 
By changing the roles of $A$ and $B$, we may assume $a\geq 2$.
By $(1)$, $(2)$ and $(3)$, we obtain the following restrictions:
\begin{enumerate}
  \item[$(i)$] $(2a-3)\alpha^{2}=2c$;
  \item[$(ii)$] $-3 \leq ac\leq 1$.
\end{enumerate}
By $(i)$ and $a\geq 2$, we get $c\geq 1$.
This contradicts to $(ii)$. 
As a consequence, there is no a solution for the equation \eqref{sky-point-destab-equation} and thus concludes the proof. 
\end{proof}

\begin{prop}\label{start-point-stable}
\begin{enumerate}
  \item[(1)] The sheaf $\mathcal{P}_{x}$ is $\sigma(\alpha,\beta)$-stable for any $(\alpha,\beta)\in \widetilde{V}_{L}$.
  \item[(2)] The sheaf $\mathcal{P}_{x}[1]$ is $\sigma(\alpha,\beta)$-stable for any $(\alpha,\beta)\in \widetilde{V}_{R}$.
\end{enumerate}
\end{prop}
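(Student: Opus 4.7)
The plan is to deduce Bridgeland stability on $\Ku(Q)$ from tilt stability on $\D(Q)$ via the BLMS inducing machinery underlying Proposition \ref{induced-stability-KuQ}. I would handle (1) and (2) in parallel.

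For (1): by Proposition \ref{Kp-slope-stable}, $\mathcal{P}_{x}$ is $\mu_{H}$-slope stable, and since $\ch_{\leq 1}(\mathcal{P}_{x})=(3,-H)$ is primitive, the implication diagram of Section~2.2 upgrades this to $2$-$H$-Gieseker stability. For any $(\alpha,\beta)\in \widetilde{V}_{L}$ we have $\beta<\mu_{H}(\mathcal{P}_{x})=-\tfrac{1}{3}$, so $\mathcal{P}_{x}\in \Coh^{\beta}(Q)$, and Proposition \ref{limit-tilt-stab-Gie-stab-prop} gives $\sigma_{\alpha,\beta}$-tilt-stability of $\mathcal{P}_{x}$ for $\alpha\gg 0$. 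Lemma \ref{no-wall-1} rules out every actual wall for the Chern character $(3,-H,-\tfrac{1}{2}H^{2})$ to the left of the vertical wall $\beta=-\tfrac{1}{3}$, so $\mathcal{P}_{x}$ remains $\sigma_{\alpha,\beta}$-tilt-stable throughout $\widetilde{V}_{L}$. A direct sign analysis of $Z_{\alpha,\beta}(\mathcal{P}_{x})$ then identifies the correct shift of $\mathcal{P}_{x}$ sitting in the heart $\mathcal{A}(\alpha,\beta)$, after which the induction criterion of \cite[Proposition 5.1]{BLMS23} (the same tool used to prove Proposition \ref{induced-stability-KuQ}) upgrades tilt-stability to $\sigma(\alpha,\beta)$-stability on $\Ku(Q)$.

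For (2): I would run the parallel argument with $\mathcal{P}_{x}[1]\in \Coh^{\beta}(Q)$, which belongs to $\Coh^{\beta}(Q)$ precisely because $\beta\geq -\tfrac{1}{3}=\mu_{H}(\mathcal{P}_{x})$. Two ingredients have to be adapted: a large-volume initial tilt-stability statement to the right of the vertical wall, which I would obtain either directly by mirroring Proposition \ref{limit-tilt-stab-Gie-stab-prop} on the other side or by dualization using the reflexivity of $\mathcal{P}_{x}$ from Proposition \ref{Kp-slope-stable}; and a no-walls lemma to the right of $\beta=-\tfrac{1}{3}$ within $\widetilde{V}_{R}$. By Theorem \ref{tilt-wall-struct-thm}, the apex of any semicircular wall on the right branch of the hyperbola $(\beta+\tfrac{1}{3})^{2}-\alpha^{2}=\tfrac{4}{9}$ satisfies $\beta_{0}>\tfrac{1}{3}$, and entering the sliver $\widetilde{V}_{R}\subset\{\alpha<-\beta,\;\beta\in[-\tfrac{1}{3},0)\}$ forces the radius to exceed $\beta_{0}$; replaying the numerical bookkeeping of Lemma \ref{no-wall-1} (combining equation-of-Chern-characters and the Bogomolov constraints of Remark \ref{actual-restrict}) rules out any such actual wall.

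The main obstacle is the no-walls part of (2). In Lemma \ref{no-wall-1} the left region contains the clean vertical line $\beta=-1$ that every potential semicircular wall must cross, reducing the analysis to a single twisted Chern character computation. The region $\widetilde{V}_{R}$ is a thin sliver near the origin with no analogous distinguished vertical line, so one must argue geometrically from the apex on the right branch of the hyperbola, or reduce to the left case by a duality argument; the bookkeeping confirming that the correct shift of $\mathcal{P}_{x}$ lies in $\mathcal{A}(\alpha,\beta)$ and that no object of $\Ku(Q)\cap \A(\alpha,\beta)$ with $Z(\alpha,\beta)=0$ obstructs stability is then routine.
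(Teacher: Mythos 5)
Your proposal is correct and follows essentially the same route as the paper: part (1) matches the paper's argument step for step (slope stability of $\mathcal{P}_{x}$, large-volume tilt stability, Lemma \ref{no-wall-1} to kill all walls left of $\beta=-\tfrac{1}{3}$, then rotation/induction into $\Ku(Q)$), and for part (2) the paper likewise starts from tilt stability of $\mathcal{P}_{x}[1]$ (using reflexivity of $\mathcal{P}_{x}$ and \cite[Proposition 4.18]{BBF+} at $\beta=-\tfrac{1}{3}$) and rules out walls exactly as you suggest, by noting that any semicircular wall reaching $\widetilde{V}_{R}$ must cross the vertical line $\beta=0$, where the bookkeeping of Lemma \ref{no-wall-1} (in fact an easier version, since $H^{2}\cdot\ch_{1}^{0}$ can only split as $0+H^{3}$) applies. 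The one thing you miss is the paper's shortcut that dissolves the ``main obstacle'' you flag in (2): since $\widetilde{V}_{R}\subset V$ and all $\sigma(\alpha,\beta)$ with $(\alpha,\beta)\in V$ lie in a single $\tilde{\mathrm{GL}}^{+}_{2}(\RN)$-orbit by Lemma \ref{GL-one-orbitV+}, part (2) follows immediately from part (1) because stability is preserved under the group action.
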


\begin{proof}
(1)  For each $(\alpha,\beta)\in \widetilde{V}_{L}$, 
we have $\beta<-\frac{1}{3}=\mu_{H}(\mathcal{P}_{x})$.
By Proposition \ref{Kp-slope-stable}, 
$\mathcal{P}_{x}$ is $\mu_{H}$-slope stable
and thus $\mathcal{P}_{x} \in \Coh^{\beta}(Q)$.
Since $H^{2}\cdot \ch_{1}^{\beta}(\mathcal{P}_{x})=-2(3\beta+1)>0$,
by Proposition \ref{limit-tilt-stab-Gie-stab-prop}, it follows that 
$\mathcal{P}_{x}$ is $\sigma_{\alpha, \beta}$-stable for all $\alpha$ sufficiently large.
Next we need to show that there are no actual walls for the stability of $\mathcal{P}_{x}$ with respect to $\sigma_{\alpha,\beta}$.
In fact, this is a consequence of Theorem \ref{tilt-wall-struct-thm} and Lemma \ref{no-wall-1}.
Since $\sigma^{0}_{\alpha,\beta}$ is just a rotation of $\sigma_{\alpha,\beta}$,
so $\mathcal{P}_{x}$ is $\sigma^{0}_{\alpha,\beta}$-stable and hence it is $\sigma(\alpha,\beta)$-stable.

(2) Since $\mathcal{P}_{x}$ is reflexive and $\mu_{H}$-slope stable and $\mu_{H}(\mathcal{P}_{x})=-\frac{1}{3}$, $\mathcal{P}_{x}\in \Coh^{-\frac{1}{3}}(Q)$.
By \cite[Proposition 4.18]{BBF+}, 
$\mathcal{P}_{x}[1]$ is $\sigma_{\alpha,-\frac{1}{3}}$-stable for all $\alpha>0$.
Hence, $\mathcal{P}_{x}[1]$ is $\sigma_{\alpha,-\frac{1}{3}}$-stable for $\alpha \gg 0$.
Similar to the proof of Lemma \ref{no-wall-1},
there are no actual walls along $\beta =0 $ for $\mathcal{P}_{x}[1]$ with respect to $\sigma_{\alpha,0}$.
Analogous to (1), $\mathcal{P}_{x}[1]$ is $\sigma(\alpha,\beta)$-stable for any $(\alpha,\beta)\in \widetilde{V}_{R}$. Alternatively, since $\widetilde{V}_{R}\subset V$, the second statement follows immediately from Lemma \ref{GL-one-orbitV+} and the first assertion, as the stability is preserved under the $\tilde{\mathrm{GL}}^{+}_{2}(\RN)$-action.
\end{proof}


\section{Construction of sheaves}

Let $\overline{M}_{Q}(\mathbf{v})$ be the moduli space of $H$-Gieseker semistable sheaves on $Q$ with Chern character 
$$
\mathbf{v}:=  \ch(\mathcal{P}_{x})=(3,-H,-\frac{1}{2}H^{2},\frac{1}{3}H^{3}).
$$
We denote by $M_{Q}(\mathbf{v})\subset \overline{M}_{Q}(\mathbf{v})$ the open locus of $H$-Gieseker semistable vector bundles on $Q$ with Chern character  $\mathbf{v}$.
In Proposition \ref{Kp-slope-stable}, it is proved that the sheaf $\mathcal{P}_{x}$ is a $\mu_{H}$-slope stable reflexive sheaf  and hence a $H$-Gieseker semi-stable sheaf. 
Conversely, 
since $\ch_{\leq 1}(\mathcal{P}_{x})$ is primitive, 
any $H$-Gieseker semistable sheaf with Chern character $\ch(\mathcal{P}_{x})$ is $\mu_{H}$-slope stable.

To analyze the moduli space $\overline{M}_{Q}(\mathbf{v})$, 
we need to characterize the $H$-Gieseker semistable sheaves on the smooth quadric threefold  $Q$ with Chern character $\mathbf{v}$.
The next two sections are devoted to this purpose.
It is well-known that any hyperplane section $Y= H\cap Q \subset \PB^{4}$ is either smooth quadric surface or a quadric cone.

Following the construction of sheaves in \cite[Section 5]{BBF+},
we suppose that $Y\subset Q$ is a hyperplane section. 
Let $D$ be an effective Weil divisor and $V\subset H^{0}(Y,\CO_{Y}(D))$ a non-zero subspace.
Then there is a natural evaluation morphism $\mathrm{ev}: V\otimes \CO_{Y}
\rightarrow \CO_{Y}(D)$.
Let $\mathcal{E}_{D,V}$ be the cone of the induced morphism  $\overline{\mathrm{ev}}: V\otimes \CO\rightarrow \CO_{Y}(D)$ in $\D(Q)$.
Then there exists a distinguished triangle 
$$
\xymatrix@C=0.5cm{ V\otimes \CO \ar[r]^{\overline{\mathrm{ev}}} & \CO_{Y}(D)  \ar[r]^{} & \mathcal{E}_{D,V} }. 
$$
Consider the long exact sequence of cohomological sheaves, 
there is a long exact sequence of sheaves
\begin{equation}\label{constrct-sheaf-exact-sequ}
\xymatrix@C=0.5cm{
0 \ar[r]^{} & \CH^{-1}(\mathcal{E}_{D,V}) 
\ar[r]^{} & V \otimes \CO
 \ar[r]^{\overline{\mathrm{ev}}} & \CO_{Y}(D)
  \ar[r]^{} & \CH^{0}(\mathcal{E}_{D,V}) 
 \ar[r]^{} & 0.}
\end{equation}
For simplicity, we denote by $E_{D,V}:=  \CH^{-1}(\mathcal{E}_{D,V})$.
If $V=H^{0}(Y,\CO_{Y}(D))$, we set $\mathcal{E}_{D}:=  \mathcal{E}_{D,V}$ and $E_{D}:=  E_{D,V}$.
Then one has:

\begin{prop}\label{Construct-sh-prop}
The sheaf $E_{D,V}$ is reflexive and $\mu_{H}$-slope stable.
Moreover, if $\CH^{0}(\mathcal{E}_{D,V})=0$, 
then $E_{D,V}$ is locally free.
\end{prop}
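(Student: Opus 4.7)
I would establish the four claims of the proposition in the order: (a) $E_{D,V}$ is torsion-free, (b) $E_{D,V}$ is reflexive, (c) $E_{D,V}$ is $\mu_H$-slope stable, and (d) if $\CH^{0}(\mathcal{E}_{D,V})=0$ then $E_{D,V}$ is locally free. Step (a) is immediate from the injection $E_{D,V}\hookrightarrow V\otimes\CO$ supplied by \eqref{constrct-sheaf-exact-sequ}. For (b) and (d), I would split \eqref{constrct-sheaf-exact-sequ} into the two short exact sequences
$$0 \to E_{D,V} \to V\otimes\CO \to \mathcal{I} \to 0, \qquad 0 \to \mathcal{I} \to \CO_Y(D) \to \CH^{0}(\mathcal{E}_{D,V}) \to 0,$$
where $\mathcal{I}:=\mathrm{Im}(\overline{\mathrm{ev}})$. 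The crucial input is that $\CO_Y(D)$, being a divisorial sheaf on the normal surface $Y$ (either a smooth quadric or a quadric cone), is reflexive on $Y$, hence Cohen--Macaulay of dimension $2$; equivalently, by Auslander--Buchsbaum, $\mathrm{pd}_{\CO_Q}(\CO_Y(D)) = 1$ at every point of $Y$. A depth chase through the two sequences yields $\mathrm{depth}_x E_{D,V}\geq 2$ at every codimension-$\geq 2$ point $x\in Q$, so $E_{D,V}$ satisfies Serre's $S_2$ condition on the smooth threefold $Q$ and is therefore reflexive. For (d), when $\CH^{0}(\mathcal{E}_{D,V})=0$ the sequence reduces to $0 \to E_{D,V}\to V\otimes\CO \to \CO_Y(D) \to 0$, and $\mathrm{pd}(\CO_Y(D))=1$ forces $\mathrm{pd}(E_{D,V})=0$, i.e., $E_{D,V}$ is locally free.

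\textbf{Slope stability.} One computes $\mu_H(E_{D,V})=-1/\dim V$. Let $F\subsetneq E_{D,V}$ be a nonzero proper saturated subsheaf of rank $r<\dim V$, and let $F_{\mathrm{sat}}$ denote its saturation in $V\otimes\CO$. Since $\mathrm{Pic}(Q)=\ZN\cdot H$, $\det F_{\mathrm{sat}}=\CO(-kH)$ for some integer $k\geq 0$. If $k\geq 1$, then $\mu_H(F)\leq \mu_H(F_{\mathrm{sat}})=-k/r\leq -1/r < -1/\dim V$, so $F$ does not destabilize. If $k=0$, note first that the condition $F=F_{\mathrm{sat}}$ is equivalent to $F_{\mathrm{sat}}\subset E_{D,V}$, since $F$ is saturated in $E_{D,V}$ and any inclusion $F_{\mathrm{sat}}\subset E_{D,V}$ would force $F_{\mathrm{sat}}/F$ to be zero inside the torsion-free quotient $E_{D,V}/F$. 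If $F\neq F_{\mathrm{sat}}$, the nonzero image of $F_{\mathrm{sat}}$ in $\CO_Y(D)$ is a nonzero subsheaf of $\CO_Y(D)$, hence of generic rank one on $Y$ (as $\CO_Y(D)$ is torsion-free of rank one on $Y$), so has $\ch_1=H$ on $Q$; the saturation assumption then gives $\ch_1(F)=-H$, whence $\mu_H(F)=-1/r<-1/\dim V$. If $F=F_{\mathrm{sat}}$, a Grassmannian/Pl\"ucker argument shows $F\cong V'\otimes\CO$ for some $r$-dimensional subspace $V'\subset V$: on the open locus $U\subset Q$ where $F$ is locally free (with codimension-$\geq 2$ complement, by reflexivity), the classifying map $\phi\colon U\to \mathrm{Gr}(r,V)$ satisfies $\phi^{\ast}\CO_{\mathrm{Gr}}(1)=\CO_U$, so the Pl\"ucker composition $U\to \PB(\wedge^{r}V)$ is given by global sections of $\CO_U$; since $j_{\ast}\CO_U=\CO_Q$ on the normal projective variety $Q$ and $H^{0}(Q,\CO)=\CN$, these sections are constant and $\phi$ factors through a single point, yielding $F|_U\cong V'\otimes\CO_U$, and $F\cong V'\otimes\CO$ follows by $S_2$-extension. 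But then $F\subset E_{D,V}$ forces $V'\subset \ker(V\hookrightarrow H^{0}(Y,\CO_Y(D)))=0$, contradicting $r\geq 1$.

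\textbf{Main obstacle.} The most delicate step is the Grassmannian/Pl\"ucker argument in the subcase $k=0$, $F=F_{\mathrm{sat}}$; it hinges on combining the triviality of $\phi^{\ast}$ of the ample Pl\"ucker polarization (forcing the classifying map to factor through a point) with the $S_{2}$-extension across codimension $\geq 2$ (using $j_{\ast}\CO_U=\CO_Q$ on the normal variety $Q$) to transport the trivialization from $U$ to all of $Q$. The underlying principle is that a reflexive subsheaf of a trivial bundle on a connected normal projective variety with trivial determinant is necessarily a trivial subbundle.
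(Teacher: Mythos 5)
Your proposal is correct, but the slope-stability step follows a genuinely different route from the paper's. The paper disposes of the whole proposition in one line, invoking Proposition \ref{sum-lines-detect} and declaring the argument identical to that of \cite[Lemma 5.1]{BBF+}: there, a saturated destabilizing subsheaf $F\subset E_{D,V}$ is first forced to have $\mu_{H}(F)=0$, then the classical Bogomolov inequality applied to $F_{\mathrm{sat}}$ and to $(V\otimes\CO)/F_{\mathrm{sat}}$ kills $\ch_{2}$, and Proposition \ref{sum-lines-detect} --- whose proof runs through tilt stability, Serre duality and Euler-characteristic counts --- identifies the subsheaf with $\CO^{\oplus r}$, contradicting $H^{0}(E_{D,V})=\ker(V\to H^{0}(Y,\CO_{Y}(D)))=0$. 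You reach the same contradiction by an elementary projective-geometry argument instead: the classifying map of the saturated slope-zero subsheaf into $\mathrm{Gr}(r,V)$ pulls the Pl\"ucker polarization back to $\CO_{U}$, hence is constant, so $F\cong V'\otimes\CO$ directly, with no Bogomolov inequality and no tilt stability. This buys self-containedness --- your argument uses only $\mathrm{Pic}(Q)=\ZN\cdot H$, normality, and $H^{0}(Q,\CO)=\CN$, so it would work verbatim on any smooth projective threefold of Picard rank one --- whereas the paper's route reuses machinery (Proposition \ref{sum-lines-detect}) that it needs elsewhere anyway, e.g.\ in Proposition \ref{Kp-slope-stable}. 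The reflexivity and local-freeness parts (the depth chase through the two short exact sequences and Auslander--Buchsbaum) coincide with what \cite[Lemma 5.1]{BBF+} does. Two small points to tighten: the classifying map to $\mathrm{Gr}(r,V)$ is defined on the locus where the quotient $(V\otimes\CO)/F_{\mathrm{sat}}$ is locally free, not merely where $F_{\mathrm{sat}}$ is, which still has complement of codimension $\geq 2$ because that quotient is torsion-free; and the computations $\ch_{1}(\mathrm{Im}(\overline{\mathrm{ev}}))=H$ and $\mu_{H}(E_{D,V})=-1/\dim V$ use that $Y$ is integral, which holds here since a hyperplane section of the smooth $Q$ is either a smooth quadric surface or an irreducible quadric cone.
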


\begin{proof}
Based on Proposition \ref{sum-lines-detect}, 
the proof is the same as that of \cite[Lemma 5.1]{BBF+}.
\end{proof}

Using this construction, 
the following proposition essentially provides all $\mu_{H}$-slope stable vector bundle of rank $3$ with Chern character $\mathbf{v}$ (see Theorem \ref{claasifying-sh-ch-v}).

Suppose that $Y\in |H|$ is a smooth hyperplane section of $Q$.
Let $D$ be a divisor of type $(2,0)$ on $Y$ (i.e., $D$ is either a disjoint sum of two lines or a double line in $Y$).
Then we have:

\begin{prop}\label{new-G-st-sh}
The sheaf $E_{D}$ is a $\mu_{H}$-slope stable vector bundle of rank $3$ with Chern character $\ch(E_{D})=\mathbf{v}$.
\end{prop}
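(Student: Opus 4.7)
The plan is to reduce the problem to an application of Proposition~\ref{Construct-sh-prop}, and then compute $\ch(E_D)$ directly from the resulting short exact sequence. First one needs to verify $\CH^0(\mathcal{E}_D) = 0$; by \eqref{constrct-sheaf-exact-sequ} this is equivalent to the surjectivity of the morphism $\overline{\mathrm{ev}} \colon V \otimes \CO_Q \to \CO_Y(D)$ of sheaves on $Q$. Since $Y \cong \PB^1 \times \PB^1$ is smooth and $D$ has bidegree $(2,0)$, the line bundle $\CO_Y(D) \cong \CO_Y(2,0)$ is globally generated with $\dim V = 3$. Hence the evaluation map on $Y$ is already surjective, and since $\overline{\mathrm{ev}}$ factors as $V \otimes \CO_Q \twoheadrightarrow V \otimes \CO_Y \twoheadrightarrow \CO_Y(D)$, it is surjective on $Q$.

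Having established $\CH^0(\mathcal{E}_D) = 0$, Proposition~\ref{Construct-sh-prop} shows that $E_D$ is a $\mu_H$-slope stable reflexive sheaf which is moreover locally free. From the resulting short exact sequence
$$ 0 \to E_D \to V \otimes \CO_Q \to \CO_Y(D) \to 0 $$
one reads off $\operatorname{rank}(E_D) = \dim V - 0 = 3$, as $\CO_Y(D)$ is torsion on $Q$.

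For the Chern character, the ideal sheaf sequence $0 \to \CO_Q(-H) \to \CO_Q \to \CO_Y \to 0$ gives $\ch(\CO_Y) = 1 - e^{-H}$, while $D^2 = 0$ on $Y$ implies $\CO_Y(D)|_D \cong \CO_D$, so that the standard sequence $0 \to \CO_Y \to \CO_Y(D) \to \CO_D \to 0$ yields $\ch(\CO_Y(D)) = \ch(\CO_Y) + \ch(\CO_D)$. The class $\ch(\CO_D)$ equals $2\ch(\CO_\ell)$ for a line $\ell \subset Q$: this is immediate when $D = \ell_1 + \ell_2$, and remains true when $D$ is a double line because $\CO_D$ has the same $K_0$-class as $\CO_{\ell_1} \oplus \CO_{\ell_2}$. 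The class $\ch(\CO_\ell)$ is pinned down by applying Hirzebruch-Riemann-Roch \eqref{HRR-quadric3} to $\chi(\CO_\ell(nH)) = n+1$, giving $\ch(\CO_\ell) = (0, 0, \tfrac{1}{2}H^2, -\tfrac{1}{4}H^3)$. Assembling these identities yields $\ch(\CO_Y(D)) = (0, H, \tfrac{1}{2}H^2, -\tfrac{1}{3}H^3)$ and hence $\ch(E_D) = (3, -H, -\tfrac{1}{2}H^2, \tfrac{1}{3}H^3) = \mathbf{v}$.

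There is no genuine obstacle in this proof: the only conceptual step is the reduction to global generation in order to secure $\CH^0(\mathcal{E}_D) = 0$, after which stability, local freeness, and the rank come free from Proposition~\ref{Construct-sh-prop}, and the Chern character computation is purely formal bookkeeping with the Riemann-Roch formula \eqref{HRR-quadric3}.
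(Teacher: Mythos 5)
Your proof is correct and follows essentially the same route as the paper: global generation of $\CO_{Y}(D)$ gives the surjectivity of $\overline{\mathrm{ev}}$ and hence the short exact sequence $0 \to E_{D} \to \CO^{\oplus 3} \to \CO_{Y}(D) \to 0$, after which Proposition \ref{Construct-sh-prop} yields stability and local freeness, and the Chern character is formal bookkeeping. The only (immaterial) divergence is that you compute $\ch(\CO_{Y}(D))$ via the sequence $0 \to \CO_{Y} \to \CO_{Y}(D) \to \CO_{D} \to 0$, whereas the paper dualizes the ideal-sheaf sequence using Grothendieck--Verdier duality; both give $(0,H,\tfrac{1}{2}H^{2},-\tfrac{1}{3}H^{3})$.
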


\begin{proof}
Since $D$ is a divisor of type $(2,0)$ on the smooth hyperplane section $Y$,
so $H^{0}(Y,\CO_{Y}(D))\cong H^{0}(\PB^{1},\CO_{\PB^{1}}(2))\cong \CN^{3}$.
By the hypothesis, $\CO_{Y}(D)$ is globally generated. 
As a result, the morphism $\overline{\mathrm{ev}}$  in \eqref{constrct-sheaf-exact-sequ} is surjective, 
and thus there is a short exact sequence
$$
\xymatrix@C=0.5cm{
0 \ar[r]^{} & E_{D}
\ar[r]^{} & \CO^{\oplus 3}
 \ar[r]^{} & \CO_{Y}(D)
 \ar[r]^{} & 0.}
$$

By Proposition \ref{Construct-sh-prop}, $E_{D}$ is a $\mu_{H}$-slope stable vector bundle of rank $3$.
In the rest of the proof, we will compute the Chern characters of $\CO_{Y}(D)$ and $E_{D}$.
Since $D$ is either a disjoint sum of two lines or a double line, 
hence the Chern character $\ch(\CO_{D})=2\ch(\CO_{\ell})$ for a line $\ell\subset Y$.
For a line $\ell\subset Q$, by Grothendieck--Riemann--Roch theorem, 
we have $c_{i}(\CO_{\ell})=0$ for $i<2$ and $c_{2}(\CO_{\ell})=-\frac{1}{2}H^{2}$.
Hence, the Chern character of $\CO_{\ell}$ is $\ch(\CO_{\ell})=(0,0,\frac{1}{2}H^{2},\ch_{3}(\CO_{\ell}))$.
By Hirzebruch--Riemann--Roch formula \eqref{HRR-quadric3},
we get $\chi(\CO_{\ell})=1$ and thus $\ch_{3}(\CO_{\ell})=-\frac{1}{4}H^{3}$.
Therefore, the Chern character $\ch(\CO_{D})=(0,0,H^{2},-\frac{1}{2}H^{3})$.
Consider the ideal sheaf exact sequence
$$
\xymatrix@C=0.5cm{
0 \ar[r]^{} & \CO_{Y}(-D) \ar[r]^{} & \CO_{Y} \ar[r]^{} & \CO_{D} \ar[r]^{} & 0.}
$$
By Grothendieck--Verdier duality,
the derived dual 
$$\RCH(\CO_{D},\CO_{Y}) \cong \RCH(\CO_{D},\CO) \otimes \CO(-H)[1].
$$
Since $\ch(\CO_{Y})=\ch(\CO)-\ch(\CO(-H))=(0,H,-\frac{1}{2}H^{2},\frac{1}{6}H^{3})$, 
then we have 
$$
\ch(\CO_{Y}(D))=\ch(\CO_{Y})-\ch(\CO_{D}^{\vee}\otimes\CO(-H)[1])=(0,H,\frac{1}{2}H^{2},-\frac{1}{3}H^{3}),
$$
where $\CO_{D}^{\vee}=R\mathcal{H}om(\CO_{D},\CO)$ is the derived dual.
As a consequence, the Chern character $\ch(E_{D})=3\ch(\CO)-\ch(\CO_{Y}(D))=\mathbf{v}$ is what we wanted.
This completes the proof.
\end{proof}

\begin{rem}\label{projection-construct-VB}
Recall that $M_{Q}(-1,2)$ is the moduli space of stable vector bundles of rank $2$ with Chern classes $c_{1}=-1$ and $c_{2}=2$ on $Q$. 
For every vector bundle $F\in M_{Q}(-1,2)$, the Chern character of $F$ is 
$
\ch(F)=(2,-H,-\frac{1}{2}H^{2},\frac{1}{3}H^{3}).
$
By \cite[Theorem 4.1]{OS94}, the moduli space $M_{Q}(-1,2)$ is rational, irreducible and smooth of dimension $6$.
Moreover, by \cite[Proposition 4.2]{OS94}, 
$H^{j}(Q,F(-H))=0$ for all $j\in \ZN$, $H^{1}(Q,F)=\CN$ and $H^{i}(Q,F)=0$ for $i\neq 1$. 
Therefore, $F$ belongs to the right orthogonal complement of $\CO(H)$.
Note that for any $F\in M_{Q}(-1,2)$, there is a distinguished triangle  
$
\xymatrix@C=0.3cm{
\CO[-1] \ar[r]^{} & F \ar[r]^{} & \mathrm{L}_{\CO}F} 
$
in $\D(Q)$.
Set $E_{F}:=\mathrm{L}_{\CO}F\in \Ku(Q)$.
Since $\Ext^{1}(\CO,F)\cong H^{1}(Q,F) =\CN$, 
hence there is a unique non-trivial extension 
\begin{equation}\label{VB-construct-two}
\xymatrix@C=0.5cm{
0 \ar[r]^{} & F \ar[r]^{} & E_{F} \ar[r]^{} &\CO \ar[r]^{} & 0.} 
\end{equation}
It follows that the sheaf $E_{F}$ is a vector bundle of rank $3$ with Chern character  $\ch(E_{F})=\mathbf{v}$.
By the lemma below, 
this provides another construction of $\mu_{H}$-slope stable rank $3$ vector bundles with Chern character $\mathbf{v}$.
\end{rem}

\begin{lem}
The vector bundle $E_{F}$ is $\mu_{H}$-slope stable for every vector bundle $F\in M_{Q}(-1,2)$.
\end{lem}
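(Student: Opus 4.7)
The plan is to establish $\mu_H$-slope stability of $E_F$ by contradiction, following the strategy of Proposition~\ref{Kp-slope-stable} and exploiting both the defining extension $0 \to F \to E_F \to \CO \to 0$ and the containment $E_F \in \Ku(Q)$. Note that $E_F$ is a vector bundle of rank $3$ (as an extension of vector bundles) with $\mu_H(E_F) = -1/3$, and $\ch_{\leq 1}(E_F) = (3,-H)$ is primitive, so it suffices to show $\mu_H$-slope semistability. Suppose this fails, and let $G \subsetneq E_F$ be the maximal destabilizing subsheaf in the Harder--Narasimhan filtration: $G$ is $\mu_H$-slope semistable with $\mu_H(G) > -1/3$ and $\mathrm{rk}(G) \in \{1,2\}$. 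Consider the composition $\psi\colon G \hookrightarrow E_F \twoheadrightarrow \CO$; if $\psi = 0$ then $G \subset F$, and $\mu_H$-slope stability of $F$ forces $\mu_H(G) \leq \mu_H(F) = -1/2 < -1/3$, a contradiction.

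So $\psi$ is non-zero. When $\mathrm{rk}(G) = 1$, the map $\psi$ is injective (source and target are torsion-free of rank one), so $G \cong \CO(-eH) \cdot I_Z$ for some $e \geq 0$ and subscheme $Z \subset Q$ of $\mathrm{codim} \geq 2$; the destabilizing inequality $\mu_H(G) = -e > -1/3$ forces $e = 0$, whence $G \cong I_Z$. Applying $\Hom(-, E_F)$ to the ideal sheaf sequence $0 \to I_Z \to \CO \to \CO_Z \to 0$ and using $\RHom(\CO, E_F) = 0$ yields $\Hom(I_Z, E_F) \cong \Ext^1(\CO_Z, E_F)$. The local-to-global spectral sequence, combined with the local vanishing $\CExt^q(\CO_Z, \CO) = 0$ for $q < 2$ (valid since $\mathrm{codim}\,Z \geq 2$ and $Q$ is smooth) and local-freeness of $E_F$, gives $\Ext^1(\CO_Z, E_F) = 0$, contradicting the embedding $I_Z \hookrightarrow E_F$.

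When $\mathrm{rk}(G) = 2$, set $K = \ker \psi \subset F$ and $I = \mathrm{image}\,\psi \subset \CO$; both are torsion-free of rank one. Writing $\ch_1(K) = k_1 H$ and $\ch_1(I) = -eH$ with $e \geq 0$, $\mu_H$-slope stability of $F$ yields $k_1 = \mu_H(K) < \mu_H(F) = -1/2$, hence $k_1 \leq -1$, whereas the destabilizing condition $\mu_H(G) = (k_1 - e)/2 > -1/3$ forces $k_1 - e \geq 0$, i.e. $k_1 \geq e \geq 0$, a contradiction. The main technical difficulty lies in the rank one case, where one must exclude embeddings $I_Z \hookrightarrow E_F$ for arbitrary $\mathrm{codim} \geq 2$ subschemes $Z$ (points, curves, and non-reduced thickenings alike); the uniform $\Ext$-vanishing argument powered by $E_F \in \Ku(Q)$ handles all of these at once, obviating a case-by-case analysis on the geometry of $Z$.
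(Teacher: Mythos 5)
Your proof is correct, but it takes a genuinely different route from the paper's. The paper applies Hoppe's criterion \cite{Hop84}: since $\mathrm{Pic}(Q)\cong\ZN$, stability of the rank $3$ bundle $E_{F}$ follows from $H^{0}(Q,E_{F})=0$ and $H^{0}(Q,\wedge^{2}E_{F})=0$; the first vanishing is obtained exactly as in your $\psi=0$ versus $\psi\neq 0$ dichotomy applied to a section $\CO\to E_{F}$ (it would either factor through $F$, impossible by stability of $F$, or split the non-split extension), and the second from the wedge sequence $0\to\wedge^{2}F\to\wedge^{2}E_{F}\to F\to 0$, whose outer terms are stable of negative slope. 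You instead run a direct Harder--Narasimhan analysis on a destabilizing subsheaf $G$, splitting into cases according to the rank of $G$ and the composite $G\to E_{F}\to\CO$. Your rank $2$ case is a clean slope count using only stability of $F$; your rank $1$ case is where the real divergence lies, as you must rule out embeddings $I_{Z}\hookrightarrow E_{F}$, which you do via $\RHom(\CO,E_{F})=0$ (from $E_{F}\in\Ku(Q)$, established in the preceding remark) together with the local-to-global spectral sequence and the vanishing $\CExt^{q}(\CO_{Z},\CO)=0$ for $q<\mathrm{codim}\,Z$. Both arguments are complete; the paper's is shorter because Hoppe's criterion packages the subsheaf analysis into two cohomology vanishings, while yours is more self-contained (no wedge-power computation, no appeal to the criterion) and its $\Ext$-vanishing step illustrates how membership in $\Ku(Q)$ can substitute for case analysis on the geometry of the destabilizing subsheaf.
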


\begin{proof}
By Hoppe's criterion (see \cite[Lemma 2.6]{Hop84}), 
if $H^{0}(Q, E_{F})=0=H^{0}(Q,\wedge^{2} E_{F})$,
then $E_{F}$ is $\mu_{H}$-slope stable. 
Suppose $H^{0}(Q,E_{F})\neq 0$. 
For a non-zero morphism $\varphi\in \Hom(\CO,E_{F})$, 
it must be either factor through $F$ or split $E_{F}$, both are impossible, 
as $\mu_{H}(F)=-\frac{1}{2}$ and the exact sequence \eqref{VB-construct-two} is non-split.
Considering the wedge of \eqref{VB-construct-two}, there is a short exact sequence
\begin{equation}\label{wedge-VB-construct-two}
\xymatrix@C=0.5cm{
0 \ar[r]^{} & \wedge^{2} F \ar[r]^{} & \wedge^{2} E_{F} \ar[r]^{} & F \ar[r]^{} & 0.} 
\end{equation}
Since $\wedge^{2} F$ and $F$ are line bundles, so they are $\mu_{H}$-slope stable. 
Both $\mu_{H}$-slopes of $\wedge^{2} F$ and $F$ are less than $0$,
it follows from \eqref{wedge-VB-construct-two} that $H^{0}(Q,\wedge^{2} E_{F})=0$.
This finishes the proof.
\end{proof}

To describe $H$-Gieseker semistable sheaves with Chern character $\mathbf{v}$, 
we need control the maximal Chern characters $\ch_{2}$ and $\ch_{3}$ of sheaves with Chern character $\ch_{\leq 1}=(3,-1)$.

\begin{prop}\label{maximal-ch-Q}
Suppose that  $E$ is a $\mu_{H}$-slope stable sheaf with Chern character 
$$
\ch(E)=(3,-H, \ch_{2}(E), \ch_{3}(E)).
$$
Then $H \cdot \ch_{2}(E) \leq -\frac{1}{2} H^{3}$. 
If furthermore $H\cdot \ch_{2}(E)=-\frac{1}{2} H^{3}$, then $\ch_{3}(E) \leq \frac{1}{3} H^{3}$. 
In particular, every $\mu_{H}$-slope stable sheaf with Chern character $\mathbf{v}$ is reflexive.
\end{prop}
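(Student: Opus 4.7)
The plan is to combine the classical Bogomolov inequality and the integrality constraints of Lemma \ref{Num-Chow-Q} to get the first easy bound, and then upgrade the borderline cases using wall-crossing in tilt stability, following the spirit of Lemma \ref{no-wall-1}.

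\emph{Step 1 (initial bound on $\ch_2$).} Apply Theorem \ref{B-ineq} to $E$: since $E$ is $\mu_H$-slope stable,
$H^3-6\,H\cdot \ch_2(E) = H\cdot(\ch_1(E)^2-2\,\ch_0(E)\ch_2(E))\ge 0,$
so $H\cdot \ch_2(E)\le \frac{H^3}{6}=\frac{1}{3}$. By Lemma \ref{Num-Chow-Q} and $H^3=2$, we have $H\cdot \ch_2(E)\in \Z$, hence $H\cdot \ch_2(E)\le 0$.

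\emph{Step 2 (ruling out $H\cdot \ch_2(E)=0$).} Assume for contradiction that $\ch_2(E)=0$. Then $\Delta_{H}(E)=4-12\,H\cdot \ch_2(E)=4$. Since $\ch_{\le 1}(E)=(3,-H)$ is primitive and $\mu_{H}(E)=-\tfrac13$, Corollary \ref{limit-tilt-stab-Gie-stab} yields that $E$ is $\sigma_{\alpha,\beta}$-stable for $\alpha\gg 0$ and any $\beta<-\tfrac13$. Fix $\beta=-1$, so that $\ch_{\le 2}^{-1}(E)=(3,2H,\tfrac12 H^2)$. I would then argue, exactly as in Lemma \ref{no-wall-1}, that there is no actual wall along $\beta=-1$. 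Any destabilizing sequence $0\to A\to E\to B\to 0$ in $\Coh^{-1}(Q)$ with $\ch_{\le 2}^{-1}(A)=(a,bH,\tfrac{c}{2}H^2)$, $a,c\in\Z$, must satisfy the constraints of Remark \ref{actual-restrict}: $0\le b\le 2$, Bogomolov gives $4(b^2-ac)\ge 0$ and $4((2-b)^2-(3-a)(1-c))\ge 0$ with their sum $\le \Delta_H(E)=4$, and the numerical wall equation forces $\alpha^2=\tfrac{2c-b}{3(2a-b)}>0$. A finite case analysis on $b\in\{0,1,2\}$, combined with Proposition \ref{sum-lines-detect} to discard any $A$ or $B$ isomorphic to $\CO^{\oplus k}$, eliminates every candidate. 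Therefore $E$ is $\sigma_{\alpha,-1}$-stable for all $\alpha>0$; taking $\alpha\to 0^+$ and comparing with $S_Q$-related objects (as in the proof of Proposition \ref{sum-lines-detect}) produces the desired contradiction.

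\emph{Step 3 (bound on $\ch_3$ at the boundary).} Suppose $H\cdot \ch_2(E)=-\tfrac12 H^3=-1$. Then $\ch_3(E)\in \tfrac{1}{6}\Z$ and we want $\ch_3(E)\le \tfrac13 H^3=\tfrac23$. If instead $\ch_3(E)\ge \tfrac{5}{6}$, then $\Delta_H(E)=16$ and we repeat the wall-crossing analysis at $\beta=-1$ with the enlarged discriminant, again enumerating the integer solutions allowed by Remark \ref{actual-restrict} and Proposition \ref{sum-lines-detect} to rule out every potential actual wall, and conclude by a limit argument as in Step 2.

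\emph{Step 4 (reflexivity for $\ch(E)=\mathbf{v}$).} The canonical map $E\hookrightarrow E^{\vee\vee}$ has torsion cokernel $T$ supported in codimension $\ge 2$, and $E^{\vee\vee}$ is $\mu_H$-slope stable of the same slope. If $T$ has $1$-dimensional support, then $H\cdot \ch_2(T)>0$ yields $H\cdot \ch_2(E^{\vee\vee})>-1$, contradicting Step 2 applied to $E^{\vee\vee}$. If $T$ has $0$-dimensional support, then $\ch_3(T)>0$ yields $\ch_3(E^{\vee\vee})>\tfrac13 H^3$, contradicting Step 3. Hence $T=0$ and $E$ is reflexive.

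The main obstacle will be the case enumeration in Steps 2 and 3: the low degree $H^3=2$ makes the numerical walls scarce but also makes each candidate require a separate geometric exclusion, and one must be careful in the limiting $\alpha\to 0^+$ step to extract a genuine contradiction, using Proposition \ref{sum-lines-detect} as the key geometric input.
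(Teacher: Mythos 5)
Your Step 1 and Step 4 match the paper. The genuine problems are in Steps 2 and 3, where the load-bearing arguments are left unspecified and, as sketched, would not close.

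In Step 2 the paper does not run a wall-crossing argument at all: it rules out $H\cdot\ch_{2}(E)=0$ by invoking the stronger Bogomolov--Gieseker-type inequality of Li (\cite[Proposition 3.2]{Li19}), which forces $\ch_{0}(E)\in\{1,2\}$ when $H\cdot\ch_2(E)=0$ and $\ch_1(E)=-H$, contradicting $\ch_{0}(E)=3$. Your replacement --- show $E$ is $\sigma_{\alpha,-1}$-stable for all $\alpha>0$ and then ``take $\alpha\to 0^{+}$ and compare with $S_{Q}$-related objects'' --- never identifies what the contradiction actually is. Tilt-stability for all $\alpha>0$ is not by itself contradictory (it holds, e.g., for $\mathcal{P}_x$), and the Euler-characteristic trick from Proposition \ref{sum-lines-detect} was tailored to $\ch=(r,0,0,eH^3)$, where both $\chi(\CO,E)$ and $\chi(E,\CO)$ could be computed and one forced to be positive; for $\ch=(3,-H,0,\ast)$ no analogous forced non-vanishing is exhibited. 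As written this step is a placeholder, not a proof; you would either need to reprove a piece of Li's inequality or cite it.

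In Step 3 you say that ruling out actual walls at $\beta=-1$ (with ``the enlarged discriminant'') plus ``a limit argument as in Step 2'' gives $\ch_{3}(E)\le\frac13 H^{3}$. But numerical walls in tilt stability depend only on $\ch_{\le 2}$, so the wall analysis is literally Lemma \ref{no-wall-1} again and cannot see $\ch_3$; it only upgrades $E$ to being $\sigma_{\alpha,-1}$-stable for all $\alpha>0$. The actual mechanism in the paper is separate and you never write it down: from $\mu_H$-stability one gets $\hom(\CO,E)=0$ and $\Hom(E,\CO(-3H))=0$; from the $\alpha\to 0^{+}$ slope comparison $\mu_{\alpha,-1}(\CO(-3H)[1])\to -1<0=\lim\mu_{\alpha,-1}(E)$ and Serre duality one gets $H^{2}(Q,E)\cong\Hom(E,\CO(-3H)[1])=0$; then Hirzebruch--Riemann--Roch gives $\ch_{3}(E)-\frac13 H^{3}=\chi(E)=-h^{1}(Q,E)\le 0$. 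Without this cohomology-vanishing-plus-$\chi$ computation the bound on $\ch_3$ does not follow. (A minor further point: in Step 1 you cite Theorem \ref{B-ineq}, which is the tilt-stability Bogomolov inequality; what you actually need and use is the classical Bogomolov inequality for $\mu_H$-semistable sheaves stated earlier in the paper --- the inequality is the same, but the citation is off.)
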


\begin{proof}
 We follow the proof of \cite[Proposition 6.6]{BBF+}.
By hypothesis, $E$ is $\mu_{H}$-slope stable, 
then the classical Bogomolov inequality yields $H \cdot \ch_{2}(E) \leq \frac{1}{6} H^{3}$. 
By Lemma \ref{Num-Chow-Q},   
we know that $\ch_{2}(E)\in \frac{H^{2}}{2} \ZN$ and thus $H \cdot \ch_{2}(E)\leq 0$.
If $H \cdot \ch_{2}(E)=0$, 
it follows from \cite[Proposition 3.2]{Li19} that $\ch_{0}(E)=1$ or $2$, 
a contradiction with $\ch_{0}(E)=3$.

For the second assertion, we assume $H \cdot \ch_{2}=-\frac{1}{2}H^{3}$. 
Since $E$ is $\mu_{H}$-slope stable and $\mu_{H}(E)=-\frac{1}{3}$,
by Proposition \ref{limit-tilt-stab-Gie-stab-prop}, 
$E \in \Coh^{\beta}(X)$ is $\sigma_{\alpha, \beta}$-stable for $\alpha \gg 0$ and $\beta < -\frac{1}{3}$. 
Hence, $E$ is $\sigma_{\alpha,-1}$-stable for $\alpha \gg 0$. 
By Lemma \ref{no-wall-1}, 
it follows that $E$ is $\sigma_{\alpha,-1}$-stable for any $\alpha > 0$. 
Note that 
$
\displaystyle \lim_{\alpha\to 0^{+}}\mu_{\alpha, -1}(\CO(-3H)[1])=-1< 0=\lim_{\alpha\to 0^{+}} \mu_{\alpha, -1}(E).
$
Together with Serre duality, it implies 
\begin{equation}\label{H2-shEV=0}
H^{2}(Q, E)\cong \Hom(E,\CO(-3H)[1])=0.
\end{equation}
Since $\mu_{H}(\CO(-3H))=-3<\mu_{H}(E)=-\frac{1}{3}< 0=\mu(\CO_X)$, 
the slope stability implies $\hom(\CO,E) = 0$ and $\hom(\CO,E[3])\cong \Hom(E,\CO(-3H))=0$. 
Therefore, 
by Hirzebruch--Riemann--Roch formula \eqref{HRR-quadric3},
we have 
$$
\ch_3(E)-\frac{1}{3}H^{3} =\chi(E)= -\hom(\CO,E[1]) \leq 0.
$$
It follows that the second assertion holds.

Finally, we show that a $\mu_{H}$-slope stable sheaf $E$ with Chern character $\mathbf{v}$ is reflexive. 
Since $E$ is torsion-free, there is a short exact sequence
$$
\xymatrix@C=0.5cm{
0 \ar[r]^{} & E \ar[r]^{} & E^{\vee\vee} \ar[r]^{} & T \ar[r]^{} & 0,}
$$
where $T$ is a torsion sheaf supported in codimension $\geq 2$.
We write $\ch(T)=(0,0,\frac{s}{2}H^{2}, \frac{t}{12} H^{3})$, where $s\geq 0$ and $t\in \ZN$.
Since the dual of a $\mu_{H}$-slope stable sheaf is also $\mu_{H}$-slope stable, 
so $E^{\vee \vee}$ is $\mu_{H}$-slope stable. 
Hence, we have $ H\cdot \ch_{2}(E^{\vee\vee})=H \cdot \ch_{2}(E)+\frac{s}{2}H^{3} \leq -\frac{1}{2}H^{3}=H \cdot \ch_{2}(E)$.
Hence, we have $s=0$. 
This means that $T$ is a torsion sheaf supported in codimension $3$ and thus $t\geq 0$.
Again, we have
$\ch_{3}(E^{\vee\vee})=\ch_{3}(E)+\frac{t}{12}H^{3} \leq \frac{1}{3}H^{3}=\ch_{3}(E)$.
Therefore, we obtain $\ch(E)=\ch(E^{\vee \vee})$ and thus $T=0$.
This finishes the proof of Proposition \ref{maximal-ch-Q}.
\end{proof}

As a direct consequence, the following version of large volume limit holds.

\begin{cor}\label{large-limit-v}
If $\beta>-\frac{1}{3}$,
then an object $\tilde{E} \in \Coh^{\beta}(Q)$ of Chern character $\ch(\tilde{E})=-\mathbf{v}$ 
is $\sigma_{\alpha,\beta}$-semistable for $\alpha \gg 0$ if and only if $\tilde{E} \cong E[1]$ for a $\mu_{H}$-slope stable reflexive sheaf $E$.
\end{cor}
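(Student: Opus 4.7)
The plan is to prove both implications by analyzing the long exact sequence of cohomology sheaves in $\Coh^{\beta}(Q)$ and combining it with Proposition \ref{maximal-ch-Q}. The forward direction is the substantive one; the backward direction reduces to \cite[Proposition 4.18]{BBF+} plus the wall-crossing analysis already set up in Proposition \ref{start-point-stable}(2).

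For $(\Rightarrow)$, let $\tilde{E} \in \Coh^{\beta}(Q)$ be $\sigma_{\alpha,\beta}$-semistable for $\alpha \gg 0$ with $\ch(\tilde{E}) = -\mathbf{v}$. The distinguished triangle yields a short exact sequence
\[
0 \to F[1] \to \tilde{E} \to T \to 0
\]
in $\Coh^{\beta}(Q)$, where $F = \mathcal{H}^{-1}(\tilde{E}) \in \mathcal{F}_{H}^{\beta}$ and $T = \mathcal{H}^{0}(\tilde{E}) \in \T_{H}^{\beta}$. A direct calculation gives $\mu_{\alpha,\beta}(\tilde{E}) \sim \tfrac{3\alpha^{2}}{2(1+3\beta)} \to +\infty$ as $\alpha \to \infty$, so any quotient of positive rank or of $2$-dimensional support would have bounded or $-\infty$-tending tilt slope, destabilizing $\tilde{E}$. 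Hence $T$ is torsion of support dimension at most $1$. Next I would show $F$ is $\mu_{H}$-slope stable: if $F_{1} \subset F$ were $\mu_{H}$-destabilizing with $\mu_{H}(F_{1}) > \mu_{H}(F) = -\tfrac{1}{3}$, then $F_{1} \in \mathcal{F}_{H}^{\beta}$ and the inclusion $F_{1}[1] \hookrightarrow F[1] \hookrightarrow \tilde{E}$ in $\Coh^{\beta}(Q)$ would satisfy
\[
\mu_{\alpha,\beta}(F_{1}[1]) \sim \tfrac{\alpha^{2}}{2(\beta-\mu_{H}(F_{1}))} > \tfrac{3\alpha^{2}}{2(1+3\beta)} \sim \mu_{\alpha,\beta}(\tilde{E})
\]
for $\alpha \gg 0$ (the comparison being equivalent to $\mu_{H}(F_{1}) > -\tfrac{1}{3}$), contradicting semistability. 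Thus $F$ is $\mu_{H}$-slope semistable, and the arithmetic constraint $\mathrm{Pic}(Q) = \ZN\, H$ combined with $\mu_{H}(F) = -\tfrac{1}{3}$ forces any proper Jordan--H\"older factor to have rank divisible by $3$, which is impossible since $\ch_{0}(F) = 3$; so $F$ is $\mu_{H}$-slope stable.

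Applying Proposition \ref{maximal-ch-Q} to $F$ gives $H \cdot \ch_{2}(F) \leq -\tfrac{1}{2}H^{3}$; combined with $\ch_{2}(F) = -\tfrac{1}{2}H^{2} + \ch_{2}(T)$, this rules out any $1$-dimensional part of $T$, so $T$ is at most $0$-dimensional torsion. Consequently $H \cdot \ch_{2}(F) = -\tfrac{1}{2}H^{3}$, and the second assertion of Proposition \ref{maximal-ch-Q} gives $\ch_{3}(F) \leq \tfrac{1}{3}H^{3}$. Since $\ch_{3}(F) = \tfrac{1}{3}H^{3} + \ch_{3}(T)$ and $\ch_{3}(T) \geq 0$ for $0$-dimensional torsion, $T = 0$. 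Hence $\tilde{E} \cong F[1]$ with $F$ a $\mu_{H}$-slope stable sheaf of Chern character $\mathbf{v}$, and $F$ is reflexive by the last part of Proposition \ref{maximal-ch-Q}. For $(\Leftarrow)$, given $E$ a $\mu_{H}$-slope stable reflexive sheaf with $\ch(E) = \mathbf{v}$, the inequality $\mu_{H}(E) = -\tfrac{1}{3} < \beta$ puts $E[1] \in \Coh^{\beta}(Q)$, and by \cite[Proposition 4.18]{BBF+} $E[1]$ is $\sigma_{\alpha,-1/3}$-stable for all $\alpha > 0$. The wall-crossing analysis of Proposition \ref{start-point-stable}(2), combined with Lemma \ref{no-wall-1} (whose numerical walls depend only on $\pm\mathbf{v}$) and the nesting of Theorem \ref{tilt-wall-struct-thm}, then propagates stability to $\beta > -\tfrac{1}{3}$ at $\alpha \gg 0$.

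The main delicate step is the asymptotic slope comparison in the forward direction: at the borderline slope $\mu_{H}(F) = -\tfrac{1}{3}$ (fixed by the Chern character), any $\mu_{H}$-destabilization $F_{1} \subset F$ must strictly increase the tilt slope beyond that of $\tilde{E}$ at $\alpha \gg 0$, which uses precisely the numerical relation between $\ch_{0}(\tilde{E}) = -3$ and the slope $-\tfrac{1}{3}$ of $\mathbf{v}$. Once $F$ is known to be $\mu_{H}$-slope stable, the numerical inequalities of Proposition \ref{maximal-ch-Q} close the argument.
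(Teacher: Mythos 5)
Your proof is correct and follows essentially the same route as the paper's: both rest on decomposing $\tilde{E}$ into $\mathcal{H}^{-1}(\tilde{E})[1]$ and $\mathcal{H}^{0}(\tilde{E})$, invoking Proposition \ref{maximal-ch-Q} to force the torsion part to vanish and to get reflexivity, and citing \cite[Proposition 4.18]{BBF+} for the converse. The only difference is that where the paper imports the slope-stability of $\mathcal{H}^{-1}(\tilde{E})$ and the codimension bound on $\mathcal{H}^{0}(\tilde{E})$ as a black box from \cite[Proposition 4.9]{BBF+}, you rederive them via the asymptotic tilt-slope comparison, which is a valid and more self-contained substitute.
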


\begin{proof}
We follow the proof of \cite[Lemma 6.7]{BBF+}.
If $E$ is a $\mu_{H}$-slope stable reflexive sheaf with $\ch(E)=\mathbf{v}$,
it follows immediately from \cite[Proposition 4.18]{BBF+} that $E[1]$ is $\sigma_{\alpha,\beta}$-semistable for $\alpha \gg 0$ and $\beta>-\frac{1}{3}$.

Conversely, suppose that $\tilde{E} \in \Coh^{\beta}(Q)$ is $\sigma_{\alpha,\beta}$-semistable for $\alpha \gg 0$.
Since $\beta>-\frac{1}{3}$, 
by \cite[Proposition 4.9]{BBF+}, 
the cohomological sheaf $\mathcal{H}^{-1}(\tilde{E})$ is a $\mu_{H}$-slope stable reflexive sheaf and $\mathcal{H}^{0}(\tilde{E})$ is a torsion sheaf supported in codimension $\geq 2$.
As a result, the Chern character 
$$
\ch(\mathcal{H}^{-1}(\tilde{E}))=(3,-H,-\frac{1}{2}H^{2}+\ch_{2}(\mathcal{H}^{0}(\tilde{E})),\frac{1}{3}H^{3}+\ch_{3}(\mathcal{H}^{0}(\tilde{E}))).
$$
It is a consequence of Proposition \ref{maximal-ch-Q} that $\ch_{2}(\mathcal{H}^{0}(\tilde{E}))=0$ and  $\ch_{3}(\mathcal{H}^{0}(\tilde{E}))=0$.
Hence, $\mathcal{H}^{0}(\tilde{E})=0$ and $\tilde{E}\cong \mathcal{H}^{-1}(\tilde{E})[1]$.
\end{proof}


\section{Classification of sheaves}

The main result in this section is the following classification of $H$-Gieseker semistable sheaves with Chern character $\mathbf{v}$.

\begin{thm}\label{claasifying-sh-ch-v}
A sheaf $E\in \Coh(Q)$ with Chern character $\mathbf{v}$ is $H$-Gieseker semistable if and only if it is exactly one of the following:
  \begin{enumerate}
  \item[(1)] the reflexive sheaf $\mathcal{P}_{x}$ for a closed point $x \in Q$;
  \item[(2)]  the vector bundle $E_{D}$ for a divisor $D$ of type $(2,0)$ in a smooth hyperplane section $Y \subset Q$.
\end{enumerate}
\end{thm}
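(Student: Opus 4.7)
The ``if'' direction is immediate: Proposition~\ref{Kp-slope-stable} shows $\mathcal{P}_x$ is $\mu_H$-slope stable and reflexive, and Proposition~\ref{new-G-st-sh} shows $E_D$ is a $\mu_H$-slope stable vector bundle with $\ch(E_D) = \mathbf{v}$. Since $\ch_{\leq 1}(\mathbf{v}) = (3,-H)$ is primitive, $\mu_H$-slope stability coincides with $H$-Gieseker stability on sheaves of this Chern character, so both $\mathcal{P}_x$ and $E_D$ are $H$-Gieseker semistable.

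For the converse, let $E$ be $H$-Gieseker semistable with $\ch(E) = \mathbf{v}$. Primitivity upgrades this to $\mu_H$-slope stability, and Proposition~\ref{maximal-ch-Q} then gives reflexivity. My first move is to show $E \in \Ku(Q)$: by Hirzebruch--Riemann--Roch~\eqref{HRR-quadric3} one has $\chi(E) = \chi(E(-H)) = 0$; slope stability together with Serre duality (and $\mu_H(\CO(-kH)) < \mu_H(E)$ for $k \geq 2$) annihilates $H^0$ and $H^3$ of both $E$ and $E(-H)$; the vanishing $H^2(E) = 0$ is already contained in the proof of Proposition~\ref{maximal-ch-Q}, and an entirely analogous tilt-stability computation at $\beta = -1$ (comparing the tilt slopes of $E$ and $\CO(-2H)[1]$, both $\sigma_{\alpha,-1}$-stable, with $\mu_{\alpha,-1}(E) > \mu_{\alpha,-1}(\CO(-2H)[1])$ for small $\alpha$) yields $\Hom(E, \CO(-2H)[1]) = 0$, hence $H^2(E(-H)) = 0$ by Serre duality. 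Combined with the vanishing Euler characteristics, $H^i(E) = H^i(E(-H)) = 0$ for all $i$, so $E \in \Ku(Q)$.

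I then split according to whether $E$ is locally free. In the non-locally-free case, $T := \CExt^1(E, \CO)$ is a non-zero $0$-dimensional torsion sheaf (as $E$ is reflexive on a smooth threefold); the local-to-global $\CExt$ spectral sequence gives $\chi(E, \CO) = \chi(E^{\vee}) - \ell(T)$, and the Chern-character bound of Proposition~\ref{maximal-ch-Q}, applied to the $\mu_H$-slope stable reflexive dual $E^{\vee}$ (after twist/dualization), forces $\ell(T) \leq 1$; together with $\ell(T) \geq 1$ this gives $T \cong \CO_x$ for a single closed point $x \in Q$. Using $E \in \Ku(Q)$ and the identification of the singular point, one then constructs the defining surjection $\CO^{\oplus 4} \twoheadrightarrow I_x(H)$ realizing $E$ as its kernel, i.e., $E \cong \mathcal{P}_x$. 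In the locally free case, $\chi(E^{\vee}) = 3$ and further Ext-vanishings coming from slope and tilt-stability yield $h^0(E^{\vee}) = 3$ with $H^i(E^{\vee}) = 0$ for $i \geq 1$; the three sections produce a morphism whose dualization gives an exact sequence $0 \to E \to \CO^{\oplus 3} \to \CO_Y(D) \to 0$, and a Chern-character comparison fixes $Y$ as a hyperplane section and $D$ as a divisor of type $(2,0)$, with smoothness of $Y$ forced by the local freeness of $E$. This gives $E \cong E_D$. I expect the main obstacle to lie in this locally free case: showing from the sheaf-theoretic data alone that the cokernel really has the form $\CO_Y(D)$ with $Y$ smooth requires controlling both the support and the reduced structure of the quotient, which goes beyond the purely numerical input.
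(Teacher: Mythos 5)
Your ``if'' direction and the reduction to a $\mu_{H}$-slope stable reflexive sheaf are fine, and your preliminary observation that $E\in \Ku(Q)$ is correct and provable by the cohomology vanishings you indicate (this is essentially the computation the paper carries out for $E_{D}$ in Corollary \ref{VB-in-Ku}). But from that point on your route diverges from the paper's --- the paper never argues sheaf-theoretically on $E$ itself; it passes to $E[1]$, which by Corollary \ref{large-limit-v} is tilt semistable for $\alpha\gg 0$, $\beta>-\frac{1}{3}$, and then crosses the single actual wall $W(\frac{1}{2})$ for the class $-\mathbf{v}$ (Lemma \ref{classify-tor-lem1}), which produces the destabilizing quotient $\CO^{\oplus r}[1]$ with $r\geq 3$ and identifies the complementary factor via the classification of tilt semistable objects of class $(0,H,\frac{1}{2}H^{2},-\frac{1}{3}H^{3})$ (Lemma \ref{actual-wall-for-torsionsh}, Proposition \ref{classify-torsion-sheaf}) and Lemma \ref{converse-to-new-G-st-sh}. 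Your proposal replaces all of this with direct homological algebra, and the replacement has genuine gaps.

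The most concrete one is the bound $\ell(T)\leq 1$ in the non-locally-free case. You invoke Proposition \ref{maximal-ch-Q} ``applied to the $\mu_{H}$-slope stable reflexive dual $E^{\vee}$ (after twist/dualization)'', but that proposition is stated only for sheaves with $\ch_{\leq 1}=(3,-H)$, whereas $\ch_{\leq 1}(E^{\vee})=(3,H)$ and no integral twist $E^{\vee}(-kH)$ has $\ch_{1}=-H$ (one would need $k=\frac{2}{3}$). So the cited result does not apply; you would have to prove a separate maximality bound for the character $(3,H,-\frac{1}{2}H^{2},\ast)$, and the proof of Proposition \ref{maximal-ch-Q} does not transport verbatim because $\mu_{H}(E^{\vee})=\frac{1}{3}>0$, so $h^{0}(E^{\vee})=\hom(E,\CO)$ is \emph{not} killed by slope stability (indeed it equals $3$ for $E_{D}$). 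Beyond this, the two reconstruction steps are asserted rather than proved: knowing $\CExt^{1}(E,\CO)\cong\CO_{x}$ does not by itself produce the presentation $0\to E\to\CO^{\oplus 4}\to I_{x}(H)\to 0$, and in the locally free case the identification of the cokernel of $E\to\CO^{\oplus 3}$ as $\CO_{Y}(D)$ with $D$ of type $(2,0)$ on a \emph{smooth} $Y$ --- which you yourself flag as the main obstacle --- is precisely the content of Proposition \ref{classify-torsion-sheaf} and Lemma \ref{converse-to-new-G-st-sh}, where the purity, the support, and the type of $D$ are pinned down by the wall-and-chamber analysis and the Chern character bounds. As it stands, the proposal is an outline whose hard steps coincide with, and would have to be filled in by, the paper's wall-crossing lemmas.
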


This theorem is greatly inspired by \cite[Theorem 6.1]{BBF+} and we will prove it by using the same idea as that for \cite[Theorem 6.1]{BBF+}.

Suppose that $G\in \D(Q)$ is an object of Chern character $\ch_{\leq 2}(G) = (0, H, \frac{1}{2}H^2)$.
Since $\ch_{0}(G)=0$, by Theorem \ref{tilt-wall-struct-thm},
every numerical wall $W(r)$ for $\ch(G)$ is a semicircle with apex lies on the vertical line $\beta=\frac{1}{2}$ in the upper half plane:
\begin{equation}\label{torsion-sh-wall}
W(r)=\{ (\alpha,\beta)\in \RN_{>0}\times \RN \mid (\beta-\frac{1}{2})^{2}+\alpha^{2}=r^{2}, r>0\}.
\end{equation}

\begin{lem}\label{actual-wall-for-torsionsh}
The numerical wall $W(\frac{1}{2})$ is the only possible actual wall for $\ch_{\leq 2}(G) = (0, H, \frac{1}{2}H^2)$. 
Moreover, if $G$ is strictly semistable along the wall $W(\frac{1}{2})$, 
then any Jordan--H\"{o}lder filtration of $G$ is given by either
$$
\xymatrix@C=0.3cm{
0 \ar[r]^{} & I_{Z}(H) \ar[r]^{} & G \ar[r]^{} &\CO[1] \ar[r]^{} & 0} \;
\textrm{or}\;
\xymatrix@C=0.3cm{
0 \ar[r]^{} & \CO[1] \ar[r]^{} & G \ar[r]^{} & I_{Z}(H) \ar[r]^{} & 0,} 
$$
where $Z \subset Q$ is a $0$-dimensional subscheme of length $\frac{1}{6}H^{3}-\ch_{3}(G)$.
\end{lem}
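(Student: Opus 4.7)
I would follow the wall-and-chamber strategy of \cite[Lemma 6.4]{BBF+} adapted to the quadric threefold. Consider a destabilizing short exact sequence $0 \to A \to G \to B \to 0$ in $\Coh^{\beta}(Q)$ along a numerical wall, and write $\ch_{\leq 2}(A) = (a, bH, \tfrac{c}{2}H^2)$ with $a, b, c \in \ZN$ (by Lemma \ref{Num-Chow-Q}). A direct computation from \eqref{central-charge-wsc} shows that $\mu_{\alpha,\beta}(A) = \mu_{\alpha,\beta}(G) = \tfrac{1}{2}-\beta$ is equivalent to $(\beta - \tfrac{1}{2})^2 + \alpha^2 = \tfrac{c-b}{a} + \tfrac{1}{4}$ when $a \neq 0$, and merely to $c = b$ when $a = 0$ (a degenerate case that produces only destabilizations by point-supported sheaves or by $G$ itself). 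This matches the form \eqref{torsion-sh-wall} with $r^2 = \tfrac{c-b}{a} + \tfrac{1}{4}$.

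To rule out actual walls other than $W(\tfrac{1}{2})$, I would apply Remark \ref{actual-restrict}. Since neither $A$ nor $B$ is supported in points along a semicircular wall (they have finite tilt slope matching that of $G$), the strict inequality $\Delta_H(A) + \Delta_H(B) < \Delta_H(G) = (H^3)^2$ holds. Dividing by $(H^3)^2$, the nonnegative integers $b^2 - ac$ and $(1-b)^2 + a(1-c)$ must both vanish. Subtracting these two relations yields $a = 2b - 1$, and then $c = b^2/(2b-1)$ is an integer only for $b \in \{0, 1\}$ since $\gcd(b, 2b-1) = 1$. The two surviving triples are $(a, b, c) = (-1, 0, 0)$ and $(1, 1, 1)$, both giving $r^2 = \tfrac{1}{4}$. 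Therefore $W(\tfrac{1}{2})$ is the unique actual wall.

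For the second assertion, the Jordan--H\"{o}lder factors of a strictly semistable $G$ along $W(\tfrac{1}{2})$ are $\sigma_{\alpha,\beta}$-stable simple objects realising the two classes above. Proposition \ref{sum-lines-detect}(ii), applied on the $\beta > 0, \alpha > 0$ side of the wall, identifies the factor with $\ch_{\leq 2} = (-1, 0, 0)$ as $\CO[1]$. For the other factor $A'$ with $\ch_{\leq 2}(A') = (1, H, \tfrac{1}{2}H^2)$ and $\Delta_H(A') = 0$, a parallel Bogomolov analysis for this class shows no walls to the right of $W(\tfrac{1}{2})$, so $A'$ is $\sigma_{\alpha, \beta}$-semistable for $\alpha \gg 0$. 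By the large-volume-limit Proposition \ref{limit-tilt-stab-Gie-stab-prop}, $A'$ is then a torsion-free rank-one $2$-$H$-Gieseker semistable sheaf of slope $1$, i.e., $I_Z(H)$ for a $0$-dimensional subscheme $Z \subset Q$. Equating $\ch_3$ in $\ch(G) = \ch(\CO[1]) + \ch(I_Z(H))$, using $\ch_3(\CO(H)) = \tfrac{1}{6}H^3$ and $\ch_3(\CO_Z) = \mathrm{length}(Z)\cdot[\mathrm{pt}]$, yields length$(Z) = \tfrac{1}{6}H^3 - \ch_3(G)$; the two orderings of the filtration arise depending on whether $\CO[1]$ or $I_Z(H)$ is the subobject.

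The cleanest step is the integer enumeration, which is made short by the strict Bogomolov inequality forcing both discriminants to vanish. The main technical point is identifying the $I_Z(H)$ factor, which requires a secondary but structurally identical wall analysis for the class $(1, H, \tfrac{1}{2}H^2)$ to guarantee $\sigma_{\alpha, \beta}$-semistability in the large-volume regime.
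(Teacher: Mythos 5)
Your overall strategy matches the paper's: enumerate the possible destabilizing classes by integrality and discriminant constraints, then identify the two Jordan--H\"older factors via Proposition \ref{sum-lines-detect}(ii) and the large-volume limit. The second half of your argument is essentially what the paper does (the paper twists the rank-one factor by $-H$ and uses $B(-H)^{\vee\vee}\cong\CO$ to land on $I_Z(H)$; your slope-$1$ torsion-free identification is the same, and your length computation agrees). The enumeration step, however, is run differently. The paper restricts to the line $\beta=\tfrac{1}{2}$, through which every semicircular wall for this rank-zero class passes at its apex, writes the \emph{twisted} characters $\ch^{1/2}_{\le 2}(A)=(a,\tfrac{b}{2}H,\tfrac{c}{8}H^{2})$, deduces $b=1$ and $4a\alpha^{2}=c$ from equality of slopes, and then needs only the individual bounds $0\le\Delta_{H}(A)\le\Delta_{H}(G)$ to force $(a,c)=(-1,-1)$ and $\alpha=\tfrac{1}{2}$. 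You instead combine the wall-radius formula with the \emph{additivity} bound $\Delta_{H}(A)+\Delta_{H}(B)<\Delta_{H}(G)$. Your route is shorter, but the strictness of that inequality is the one step you have not justified: Remark \ref{actual-restrict} records only that equality holds \emph{if} one factor is supported on points, not the converse, so you are implicitly using the sharper equality criterion of \cite[Proposition 12.5]{BMS16}. Fortunately your conclusion survives even without strictness: if $\Delta_{H}(A)+\Delta_{H}(B)=\Delta_{H}(G)=(H^{3})^{2}$, then $\{b^{2}-ac,\,(1-b)^{2}+a(1-c)\}=\{0,1\}$, which forces $a=2b$, $c=b/2$ (respectively $a=2b-2$, $c=(b+1)/2$), and in either case the radius satisfies $r^{2}=\tfrac{c-b}{a}+\tfrac{1}{4}=0$, so no actual wall arises. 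Either add this short check or cite the precise equality statement; with that patched, your proof is correct.
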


\begin{proof}
Suppose that the numerical wall $W(r)$ is an actual wall.
Then there is a destabilizing short exact sequence of $\sigma_{\alpha,\frac{1}{2}}$-semistable objects
$$
\xymatrix@C=0.5cm{
0 \ar[r]^{} &  A \ar[r]^{} &  G \ar[r]^{} &  B\ar[r]^{} &  0
}
$$ 
in $\Coh^{\frac{1}{2}}(Q)$.
Then the truncated twisted Chern characters satisfy
$$
(0,H,0)=(a,\frac{b}{2}H,\frac{c}{8}H^{2})+(-a,\frac{2-b}{2}H^{2},-\frac{c}{8}H^{2})
$$
where $a, b, c\in \ZN$.
To compute the wall, we need to solve the above equation.
Since $A, B \in \Coh^{\frac{1}{2}}(Q)$ and both have finite $\sigma_{\alpha,\frac{1}{2}}$-slope,
we obtain $b=1$. 
As $\ch_{1}(A)=\ch_{1}^{\frac{1}{2}}(A)+\frac{a}{2}H=\frac{1+a}{2}H$, 
so $a$ must be odd.
We may assume that $a\leq -1$, by changing the roles of $A$ and $B$.
Since $\mu_{\alpha,\frac{1}{2}}(A)=\mu_{\alpha,\frac{1}{2}}(G)$, 
then $4a\alpha^{2}=c$ and thus $c\leq -1$.
Since $0\leq \Delta_{H}(A)\leq \Delta_{H}(G)=(H^{3})^{2}$, 
we get $ac=1$.
This yields $(a,c)=(-1,-1)$ and thus $\alpha=\frac{1}{2}$.
Therefore, the unique possible wall is $W(\frac{1}{2})$.
Then we have $\ch_{\leq2}^{\frac{1}{2}}(A)=(-1,\frac{1}{2}H, -\frac{1}{8}H^{2})$. 
Hence, $\ch_{\leq2}(A)=(-1,0, 0)$ and by Proposition \ref{sum-lines-detect} (ii), 
we obtain $A\cong \CO[1]$ and thus $\ch(B)=(1,H, \frac{1}{2}H^{2}, \ch_{3}(G))$.
Since $B$ is $\sigma_{\frac{1}{2},\frac{1}{2}}$-semistable and $H^{2} \cdot \ch_{1}^{\frac{1}{2}}(B)=\frac{1}{2} H^{3}$,
it follows from the previous arguments that $B$ is $\sigma_{\alpha,\frac{1}{2}}$-semistable for $\alpha\gg0$.
Since $\mu_{H}(B)=1>\frac{1}{2}$, by Corollary \ref{limit-tilt-stab-Gie-stab}, 
$B$ is $\mu_{H}$-slope stable sheaf.
Since $\ch(B(-H))=(1,0, 0, \ch_{3}(G)-\frac{1}{6}H^{2})$, 
by Proposition \ref{sum-lines-detect} (i),
we have $\ch_{3}(G)-\frac{1}{6}H^{2}\leq 0$. 
Since $F:=  B(-H)$ is a torsion-free sheaf of rank $1$, so $F^{\vee\vee}$ is a line bundle with $c_{1}(F^{\vee\vee})=0$. Hence, $F^{\vee\vee}\cong \CO$, as $Q$ is of Picard rank $1$.
Therefore, $F\subset F^{\vee\vee}=\CO$ and thus $B(-H)\cong I_{Z}$ for a closed subscheme $Z$ of $Q$ of codimension $3$.
Here $Z$ is of length $\frac{1}{6}H^{3}-\ch_{3}(G)$.
By changing the roles of $A$ and $B$,  this completes the proof.
\end{proof}

The following result gives a control of the bound of the third Chern character $\ch_{3}$.

\begin{cor}\label{control-ch3}
Let $G$ be a $\sigma_{\alpha,\beta}$-semistable object with $\ch_{\leq2}(G)=(0,H,\frac{1}{2}H^{2})$.
Then $\ch_{3}(G)\leq \frac{1}{6}H^{3}$.
If moreover, $\ch_{3}(G)=\frac{1}{6}H^{3}$ and $(\alpha,\beta)$ above the wall $W(\frac{1}{2})$,
then $G\cong \CO_{Y}(H)$ for some $Y \in |H|$. 
\end{cor}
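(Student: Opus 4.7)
The plan is to reduce to the unique candidate wall $W(\tfrac{1}{2})$ identified in Lemma \ref{actual-wall-for-torsionsh}. Since this is the only possible actual wall for the Chern character $(0,H,\tfrac{1}{2}H^{2})$, every $\sigma_{\alpha,\beta}$-semistable $G$ stays semistable throughout its chamber, so by a continuous deformation I may push $(\alpha,\beta)$ onto $W(\tfrac{1}{2})$ itself without losing semistability.

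For the inequality $\ch_{3}(G)\le \tfrac{1}{6}H^{3}$, I split into two cases according to the behaviour of $G$ on the wall. If $G$ is strictly semistable on $W(\tfrac{1}{2})$, then Lemma \ref{actual-wall-for-torsionsh} gives the Jordan--H\"older factors as $\CO[1]$ and $I_{Z}(H)$ with $\ell(Z)=\tfrac{1}{6}H^{3}-\ch_{3}(G)\ge 0$, yielding the bound directly. If instead $G$ is $\sigma_{\alpha,\beta}$-stable on the wall, then it remains stable in an open neighbourhood of it, and in particular for $\alpha \gg 0$; Proposition \ref{limit-tilt-stab-Gie-stab-prop} then identifies $G$ with a $2$-$H$-Gieseker semistable sheaf. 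Because $\ch_{0}(G)=0$ and $\ch_{1}(G)=H$ is primitive, purity forces $G$ to be scheme-theoretically supported on some $Y\in|H|$ with generic rank one along $Y$; comparing $G$ with $\CO_{Y}(H)$ via the inclusion of $G$ into its double dual on $Y$, together with a Grothendieck--Riemann--Roch computation using $\ch(\CO_{Y}(H))=(0,H,\tfrac{1}{2}H^{2},\tfrac{1}{6}H^{3})$, then gives the bound.

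For the rigidity at equality, assume $\ch_{3}(G)=\tfrac{1}{6}H^{3}$ and $(\alpha,\beta)$ lies above $W(\tfrac{1}{2})$. Equality forces $\ell(Z)=0$, so $I_{Z}=\CO$ and the Jordan--H\"older factors along the wall are $\CO[1]$ and $\CO(H)$. The possible extensions are classified by
\begin{equation*}
\Ext^{1}(\CO[1],\CO(H))\cong H^{0}(Q,\CO(H))\cong\CN^{5}, \qquad \Ext^{1}(\CO(H),\CO[1])\cong H^{2}(Q,\CO(-H))=0,
\end{equation*}
the latter vanishing by Kodaira vanishing. Hence every non-split extension is of the form $\mathrm{cone}\bigl(\CO \xrightarrow{s}\CO(H)\bigr)\cong \CO_{Y}(H)$ for a nonzero section $s\in H^{0}(Q,\CO(H))$ cutting out $Y\in |H|$, while the split object $\CO(H)\oplus\CO[1]$ is readily checked to destabilize strictly above $W(\tfrac{1}{2})$ and is therefore excluded.

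I expect the main obstacle to be the \emph{stable on the wall} subcase of the first part: bounding $\ch_{3}$ for a $2$-$H$-Gieseker semistable sheaf supported on a possibly singular hyperplane section (for example a quadric cone) requires a careful analysis of a rank-one torsion-free sheaf and its double dual near the singular point of $Y$, rather than a purely formal wall-crossing argument.
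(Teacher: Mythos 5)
Your argument is essentially correct but follows a genuinely different route from the paper for the inequality. The paper never splits into \emph{stable on the wall} versus \emph{strictly semistable on the wall}: instead it assumes $\ch_{3}(G)\geq \frac{1}{6}H^{3}$, computes $\chi(\CO(H),G)=\ch_{3}(G)+\frac{1}{3}H^{3}>0$ via Hirzebruch--Riemann--Roch, and kills $\ext^{2}(\CO(H),G)=\hom(G,\CO(-2H)[1])$ by locating $W(\frac{1}{2})$ below the numerical wall $W(G,\CO(-2H)[2-1])=W(\frac{5}{2})$ and applying Serre duality. This produces a nonzero map $\CO(H)\to G$, which forces $W(\frac{1}{2})$ to be an \emph{actual} wall for $G$; Lemma \ref{actual-wall-for-torsionsh} then gives $\ell(Z)=\frac{1}{6}H^{3}-\ch_{3}(G)\geq 0$, whence equality, $Z=\emptyset$, and the sequence $0\to\CO(H)\to G\to\CO[1]\to 0$ identifying $G\cong\CO_{Y}(H)$. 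In other words, the Euler-characteristic argument shows that your \emph{stable on the wall} case is vacuous whenever $\ch_{3}(G)\geq\frac{1}{6}H^{3}$, which is exactly what sidesteps the part you correctly flag as the main obstacle: the classification of rank-one torsion-free sheaves and their reflexive hulls on possibly singular hyperplane sections. Your route through the large-volume limit and the double dual is viable (on the smooth quadric one bounds $\chi(\CO_{Y}(a,b))=(a+1)(b+1)$ over $a+b=2$, and on the cone the class group leaves $\CO_{Y}(H)$ as the only reflexive candidate with $\ch_{2}=\frac{1}{2}H^{2}$), but it costs noticeably more sheaf-theoretic work than the paper's three-line cohomology computation.

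There is one small logical gap in your rigidity step: you write ``equality forces $\ell(Z)=0$,'' which presupposes that $G$ is strictly semistable on $W(\frac{1}{2})$, and you have not ruled out that $G$ with $\ch_{3}(G)=\frac{1}{6}H^{3}$ is stable on the wall. This is patchable from your own first part (in the stable case equality would force $G\cong G^{\vee\vee}\cong\CO_{Y}(H)$, which is in fact strictly semistable on $W(\frac{1}{2})$, so that case is empty and the conclusion holds either way), or more cleanly by importing the paper's observation that $\hom(\CO(H),G)>0$ whenever $\ch_{3}(G)\geq\frac{1}{6}H^{3}$. The remainder of your rigidity argument --- computing $\Ext^{1}(\CO[1],\CO(H))\cong H^{0}(Q,\CO(H))\cong\CN^{5}$ and $\Ext^{1}(\CO(H),\CO[1])=0$, identifying non-split extensions with $\mathrm{cone}(\CO\xrightarrow{s}\CO(H))\cong\CO_{Y}(H)$, and discarding the split object as unstable off the wall --- agrees with the paper's conclusion and in fact makes explicit an $\Ext$ computation the paper leaves implicit.
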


\begin{proof}
Suppose $\ch_{3}(G) \geq \frac{1}{6} H^3$. 
According to Proposition \ref{actual-wall-for-torsionsh}, 
the only possible wall is $W(\frac{1}{2})$. 
Therefore, $G$ has to be tilt semistable along the numerical wall $W(\frac{1}{2})$. 
By direct computations, 
the numerical wall $W(G,\CO(-2H)[1])=W(\frac{5}{2})$ 
and thus $W(\frac{1}{2})$ lies below $W(G,\CO(-2H)[1])$.
Hence, by Serre duality, one has $\ext^{2}(\CO(H),G)=\hom(G,\CO(-2H)[1])=0$. 
As a result, by Hirzebruch--Riemann--Roch formula \eqref{HRR-quadric3}, 
we have
$$
\hom(\CO(H), G) \geq \chi(\CO(H), G) = \ch_{3}(G)+\frac{1}{3}H^{3} > 0.
$$ 
As a result, $W(\frac{1}{2})$ is an actual wall for $G$.
By Proposition \ref{actual-wall-for-torsionsh} again, 
the destabilizing sequence for $G$ is the following exact sequence
$
\xymatrix@C=0.3cm{
0 \ar[r]^{} & \CO(H) \ar[r]^{} & G \ar[r]^{} &\CO[1] \ar[r]^{} & 0.} 
$
It follows that $G\cong \CO_{Y}(H)$ for some $Y \in |H|$ and thus $\ch_{3}(G) = \frac{1}{6} H^{3}$.
\end{proof}

Now we can state the classification result of torsion sheaves with Chern character 
$$
\ch=(0,H,\frac{1}{2}H^{2},-\frac{1}{3}H^{3}).
$$

\begin{prop}\label{classify-torsion-sheaf}
The wall $W(\frac{1}{2})$ is the unique actual wall in tilt stability for objects $G$ with Chern character $\ch(G)=(0,H,\frac{1}{2}H^{2},-\frac{1}{3}H^{3})$.
\begin{enumerate}
\item[(i)] Above the wall $W(\frac{1}{2})$ the moduli space of tilt semistable objects is the moduli space of $H$-Gieseker semistable sheaves, and contains exactly the following two types of sheaves $G$:
    \begin{enumerate}
        \item[(1)] $G = I_{x/Y}(H)$ for a hyperplane section $Y \subset G$ and a point $x \in Y$; and
        \item[(2)] $G = \CO_{Y}(D)$ where $D$ is a Weil divisor on some hyperplane section  $Y \subset G$.
    \end{enumerate}
\item[(ii)] Below the wall $W(\frac{1}{2})$ the moduli space of tilt semistable objects contains exactly the following two types of objects $G$:
    \begin{enumerate}
    \item[(a)] the unique non-trivial extensions
         $ \xymatrix@C=0.3cm{0 \ar[r]^{} & \CO[1] \ar[r]^{} & G \ar[r]^{} & I_{x}(H) \ar[r]^{} & 0} $
         for a point $x \in Q$; and
    \item[(b)] $G=\CO_{Y}(D)$ where $D$ is a Weil divisor on a hyperplane section $Y \subset G$.
    \end{enumerate}
\end{enumerate}
\end{prop}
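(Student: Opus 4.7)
The plan is to bootstrap off Lemma~\ref{actual-wall-for-torsionsh}, which already shows that $W(\tfrac{1}{2})$ is the only numerical wall that can possibly be actual for the given class. What remains is to certify that this wall is really realized and to enumerate the tilt-semistable objects on each side.

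To witness $W(\tfrac{1}{2})$ as actual and set up the wall-crossing geometrically, I would build the key short exact sequence explicitly. For $Y\in|H|$ cut out by $f\in H^{0}(Q,\CO(H))$ and any point $x\in Y$, the sequences $0\to\CO\xrightarrow{\cdot f}\CO(H)\to\CO_{Y}(H)\to 0$ and $0\to I_{x/Y}(H)\to\CO_{Y}(H)\to\CO_{x}\to 0$ combine to give the sheaf exact sequence
$$
0 \to \CO \to I_{x}(H) \to I_{x/Y}(H) \to 0.
$$
Since $\CO[1]$, $I_{x}(H)$ and $I_{x/Y}(H)$ all lie in $\Coh^{\beta}(Q)$ for $0\le\beta<1$, this rotates in the derived category to a short exact sequence $0\to I_{x}(H)\to I_{x/Y}(H)\to\CO[1]\to 0$ in the heart whose three terms share the same tilt slope precisely along $W(\tfrac{1}{2})$; this both verifies that the wall is actual and realizes the Jordan--H\"older filtration predicted by Lemma~\ref{actual-wall-for-torsionsh}.

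For part~(i), since Lemma~\ref{actual-wall-for-torsionsh} excludes any other wall, tilt semistability in the whole region above $W(\tfrac{1}{2})$ is constant, so it suffices to study the limit $\alpha\gg 0$. There, the large-volume-limit principle (Proposition~\ref{limit-tilt-stab-Gie-stab-prop}) identifies $\sigma_{\alpha,\beta}$-semistable objects of the given class with $2$-$H$-Gieseker---equivalently $H$-Gieseker---semistable sheaves. Such a sheaf $G$ is pure of dimension two with scheme-theoretic support a hyperplane section $Y\in|H|$, hence a rank-one torsion-free $\CO_{Y}$-module. Writing $G\cong I_{Z/Y}\otimes L$ with $L$ a rank-one reflexive $\CO_{Y}$-module and $Z$ a $0$-dimensional subscheme, the constraint $\ch_{3}(G)=-\tfrac{1}{3}H^{3}$ forces either $Z=\emptyset$, giving $G=\CO_{Y}(D)$ for some Weil divisor $D$, or $\mathrm{length}(Z)=1$ with $L\cong\CO_{Y}(H)$, giving $G=I_{x/Y}(H)$ with $x\in Y$. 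No other possibilities remain.

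For part~(ii), the sheaves $\CO_{Y}(D)$ remain $\sigma_{\alpha,\beta}$-semistable across $W(\tfrac{1}{2})$: they admit neither a destabilizing $\CO[1]$-subobject (since $\Hom(\CO[1],\CO_{Y}(D))=0$) nor any other potential destabilizer, again by Lemma~\ref{actual-wall-for-torsionsh}. By contrast, just below $W(\tfrac{1}{2})$ the subobject $I_{x}(H)$ of $I_{x/Y}(H)$ overtakes it in tilt slope (as a short slope computation confirms), so $I_{x/Y}(H)$ ceases to be semistable; its replacement in the same S-equivalence class is the unique non-split object $G$ in $0\to\CO[1]\to G\to I_{x}(H)\to 0$, whose semistability I would verify via the same wall argument applied to the class of $G$. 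Completeness below the wall follows by observing that any tilt-semistable $G$ with $\CH^{-1}(G)\ne 0$ must, by $\mu_{H}$-stability considerations inside $\Coh^{\beta}(Q)$ together with Proposition~\ref{sum-lines-detect}, have $\CH^{-1}(G)\cong\CO^{\oplus k}$; the Chern-character arithmetic then forces $k=1$ and $\CH^{0}(G)\cong I_{x}(H)$.

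The hardest step, I expect, is the classification of rank-one torsion-free sheaves on $Y$ in part~(i) when $Y$ is singular (a quadric cone) or reducible (two planes, for a tangent hyperplane). There, Weil-but-not-Cartier divisors enter and the $\CO_{Y}(D)$- versus $I_{x/Y}(H)$-dichotomy threatens to degenerate; one must argue that even these boundary configurations fit uniformly into the stated two families so that the description of the moduli spaces on both sides of $W(\tfrac{1}{2})$ is genuinely exhaustive.
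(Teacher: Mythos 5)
Your overall strategy is the same as the paper's (which runs the argument of \cite[Proposition 6.2]{BBF+} through Lemma \ref{actual-wall-for-torsionsh}), and your explicit witness for the actuality of $W(\frac{1}{2})$ via the sequence $0\to\CO\to I_{x}(H)\to I_{x/Y}(H)\to 0$ is a nice concrete addition. However, there is one genuine gap in part (i). You write that for $G\cong I_{Z/Y}\otimes L$ ``the constraint $\ch_{3}(G)=-\frac{1}{3}H^{3}$ forces either $Z=\emptyset$ \dots or $\mathrm{length}(Z)=1$ with $L\cong\CO_{Y}(H)$.'' A single value of $\ch_{3}(G)$ cannot force this: one could a priori have $\mathrm{length}(Z)=2$ with $\ch_{3}(L)=\frac{2}{3}H^{3}$, or $\mathrm{length}(Z)=1$ with some other reflexive $L$ of the same $\ch_{3}$. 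What is missing is an upper bound on $\ch_{3}$ of the reflexive hull $L$ (a tilt-semistable object with $\ch_{\leq 2}=(0,H,\frac{1}{2}H^{2})$), namely $\ch_{3}(L)\leq\frac{1}{6}H^{3}$ together with the identification $L\cong\CO_{Y}(H)$ in the extremal case --- this is exactly Corollary \ref{control-ch3}, which you never invoke. With that bound, $\mathrm{length}(Z)\leq 1$ and the extremal case is pinned down, and your dichotomy follows; without it, the classification above the wall is not established.

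Two smaller points. First, your completeness argument below the wall (``any tilt-semistable $G$ with $\CH^{-1}(G)\neq 0$ must have $\CH^{-1}(G)\cong\CO^{\oplus k}$\dots'') is only gestured at: to apply Proposition \ref{sum-lines-detect} you must first show $\ch_{\leq 2}(\CH^{-1}(G))=(k,0,0)$, which requires an argument. The cleaner route, and the one implicit in the paper, is to use the Jordan--H\"older factors along $W(\frac{1}{2})$ from Lemma \ref{actual-wall-for-torsionsh}: for $\ch_{3}(G)=-\frac{1}{3}H^{3}$ the subscheme $Z$ there has length $1$, so every object semistable below the wall is an extension of $I_{x}(H)$ by $\CO[1]$ or survives the wall untouched. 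Second, your check that $\CO_{Y}(D)$ survives below the wall tests the wrong Hom: a destabilization coming from $W(\frac{1}{2})$ on the lower side would be a quotient $G\twoheadrightarrow\CO[1]$ (equivalently a subobject of type $I_{x}(H)$), so the relevant vanishing is $\Hom(\CO_{Y}(D),\CO[1])=\Ext^{1}(\CO_{Y}(D),\CO)\cong H^{0}(Y,\CO_{Y}(H-D))^{\vee}=0$ for $D$ of type $(2,0)$, not $\Hom(\CO[1],\CO_{Y}(D))=0$, which is automatic and irrelevant. Finally, note that for the smooth quadric threefold a tangent hyperplane cuts out a quadric cone, never two planes, so the reducible case you worry about at the end does not occur.
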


\begin{proof}
Thanks to Lemma \ref{actual-wall-for-torsionsh} and Corollary \ref{control-ch3},
the proof of Proposition \ref{classify-torsion-sheaf} is indeed that of \cite[Proposition 6.2]{BBF+}.
Here two little differences needed to be clarified.
(i) For a closed point $x\in Q$, 
by Serre duality and the exact sequence \eqref{structure-exact-Q},
one has
$$
\Ext^{1}(I_{x}(H),\CO[1])\cong H^{1}(Q,I_{x}(-2H)) \cong H^{0}(Q,\CO_{x})=\CN.
$$ 
(ii) In our case, 
for the non-negative integer $t$ in the proof of \cite[Proposition 6.2]{BBF+}, 
by Corollary \ref{control-ch3},  one has $0\leq t\leq \frac{2}{3}$ and thus $t=0$.
\end{proof}

Now we are ready to prove Theorem \ref{claasifying-sh-ch-v}.
In fact, it is an immediately consequence of Proposition \ref{Kp-slope-stable},  
the below Lemma \ref{classify-tor-lem1} and  Lemma \ref{converse-to-new-G-st-sh}.

\begin{lem}\label{classify-tor-lem1}
The numerical wall $W(\frac{1}{2})$ is the unique actual wall for objects with Chern character $-\mathbf{v}=(-3,H,\frac{1}{2}H^{2},-\frac{1}{3}H^{3})$ to the right of the vertical wall $\beta=-\frac{1}{3}$, and there are no tilt semistable objects below the wall $W(\frac{1}{2})$. 
Moreover, any tilt semistable object $\tilde{E}$ with Chern character $-\mathbf{v}$ fits into one of the following:
\begin{enumerate}
    \item $\tilde{E}$ fits into a short exact sequence
    $\xymatrix@C=0.3cm{
    0 \ar[r]^{} & \CO_{Y}(D) \ar[r]^{} & \tilde{E} \ar[r]^{} & \CO^{\oplus 3}[1] \ar[r]^{} & 0}$,
    where $D$ is a Weil divisor on a hyperplane section $Y \in |H|$;
    \item $\tilde{E}$ fits into a short exact sequence
    $\xymatrix@C=0.3cm{
     0 \ar[r]^{} &  I_{x}(H) \ar[r]^{} & \tilde{E}\ar[r]^{} & \CO^{\oplus 4}[1] \ar[r]^{} & 0}$,
    where $x \in Q$ is a point.
\end{enumerate}
\end{lem}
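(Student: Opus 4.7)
The plan is to imitate the wall-analysis used in Lemma~\ref{no-wall-1} and Proposition~\ref{actual-wall-for-torsionsh}, applied now to the class $-\mathbf{v}=(-3,H,\frac{1}{2}H^{2},-\frac{1}{3}H^{3})$. A direct computation yields $\Delta_{H}(-\mathbf{v})=4(H^{3})^{2}$ and $\mu_{H}(-\mathbf{v})=-\frac{1}{3}$, so by Theorem~\ref{tilt-wall-struct-thm} every semicircular wall to the right of the vertical wall $\beta=-\frac{1}{3}$ has apex on the hyperbola $(\beta+\frac{1}{3})^{2}-\alpha^{2}=\frac{4}{9}$. One checks directly that the apex $(\frac{1}{2},\frac{1}{2})$ of $W(\frac{1}{2})$ lies on this hyperbola, confirming that $W(\frac{1}{2})$ is a legitimate candidate wall for $-\mathbf{v}$.

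I would then assume an actual wall is witnessed by a short exact sequence $0\to A\to\tilde{E}\to B\to 0$ of $\sigma_{\alpha,\beta}$-semistable objects in $\Coh^{\beta}(Q)$ and parametrize $\ch_{\leq 2}(A)=(a,bH,\frac{c}{2}H^{2})$ with $a,b,c\in\ZN$ using Lemma~\ref{Num-Chow-Q}, so $\ch_{\leq 2}(B)=(-3-a,(1-b)H,\frac{1-c}{2}H^{2})$. Remark~\ref{actual-restrict} gives the Bogomolov bounds $b^{2}-ac\geq 0$ and $(1-b)^{2}+(3+a)(1-c)\geq 0$, and the discriminant sum bound $b^{2}-ac+(1-b)^{2}+(3+a)(1-c)\leq 4$; the heart-positivity conditions $b-\beta a\geq 0$ and $1-b+\beta(3+a)\geq 0$ at some $\beta>-\frac{1}{3}$ on the wall further restrict $b$. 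A systematic enumeration should leave only two integer solutions, $(a,b,c)=(0,1,1)$ and $(a,b,c)=(1,1,1)$; in each case, solving $\mu_{\alpha,\beta}(A)=\mu_{\alpha,\beta}(-\mathbf{v})$ produces the same semicircle $\alpha^{2}+(\beta-\frac{1}{2})^{2}=\frac{1}{4}$, namely $W(\frac{1}{2})$.

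Next I would identify $A$ and $B$ explicitly. In both cases $B$ is tilt semistable with Chern character $(-r,0,0,\ast)$ for $r\in\{3,4\}$, so Proposition~\ref{sum-lines-detect}(ii) forces $B\cong\CO^{\oplus r}[1]$ with $\ch_{3}(B)=0$; the constraint $\ch_{3}(\tilde{E})=-\frac{1}{3}H^{3}$ then determines $\ch_{3}(A)$. For $(a,b,c)=(0,1,1)$, the class of $A$ is $(0,H,\frac{1}{2}H^{2},-\frac{1}{3}H^{3})$, and Proposition~\ref{classify-torsion-sheaf}(i) (applicable above the wall $W(\frac{1}{2})$) identifies $A\cong\CO_{Y}(D)$ for some $Y\in|H|$ and Weil divisor $D$, yielding sequence~(1). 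For $(a,b,c)=(1,1,1)$, $A$ is a $\mu_{H}$-slope stable torsion-free rank-one sheaf with $c_{1}(A)=H$; since $\mathrm{Pic}(Q)=\ZN\cdot H$ the double dual $A^{\vee\vee}\cong\CO(H)$, and the $\ch_{3}$ computation shows the cokernel has length one, so $A\cong I_{x}(H)$ for some $x\in Q$, yielding sequence~(2).

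For the final assertion, a short computation shows that the numerical wall $\mu_{\alpha,\beta}(\CO[1])=\mu_{\alpha,\beta}(-\mathbf{v})$ coincides with $W(\frac{1}{2})$, so strictly below $W(\frac{1}{2})$ the tilt slope of $\CO[1]$ strictly exceeds that of $-\mathbf{v}$. Since every tilt semistable object of class $-\mathbf{v}$ below $W(\frac{1}{2})$ would have to arise from one of the two destabilizing sequences above and therefore contain $\CO[1]\subset\CO^{\oplus r}[1]$ as a subobject, $\CO[1]$ destabilizes it, proving the non-existence assertion. The main technical obstacle I expect is the Diophantine enumeration itself: in particular, ruling out nominally admissible triples with $|a|\geq 2$ that pass the Bogomolov and discriminant bounds but are excluded either by the heart-positivity conditions at $\beta>-\frac{1}{3}$ or by the non-existence of a tilt semistable object with the prescribed invariants.
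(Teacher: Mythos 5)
Your overall architecture (classify the numerical invariants of a putative destabilizing sequence, then identify its factors) is reasonable, but the proposal omits the step that actually makes the lemma work. The lemma asserts that \emph{every} tilt semistable object of class $-\mathbf{v}$ fits into one of the two sequences, and that nothing is semistable below $W(\frac{1}{2})$. Your argument only shows that \emph{if} a destabilizing sequence exists then its truncated Chern characters lie on a short list; it never produces the destabilizing map. The paper's proof hinges on showing $\hom(\tilde{E},\CO[1])\geq 3$ for any tilt semistable $\tilde{E}$ of class $-\mathbf{v}$: the wall $W(\CO(3H),\tilde{E})$ lies above $W(\frac{1}{2})$, so Serre duality gives $\Ext^{3}(\tilde{E},\CO)\cong\Hom(\CO(3H),\tilde{E})=0$, and then $\chi(\tilde{E},\CO)=-3$ forces $\hom(\tilde{E},\CO[1])\geq 3$. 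This single computation simultaneously (i) shows that $W(\frac{1}{2})$ is an actual wall with quotient $\CO^{\oplus r}[1]$, $r\geq 3$, so that every semistable object fits into one of your two sequences, and (ii) rules out semistability below the wall, since there $\mu_{\alpha,\beta}(\CO[1])<\mu_{\alpha,\beta}(\tilde{E})$ and a nonzero map to the stable object $\CO[1]$ of strictly smaller slope is forbidden. Nothing in your proposal substitutes for this input.

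Your treatment of the region below $W(\frac{1}{2})$ is moreover wrong on two counts. First, the slope inequality is backwards: inside the semicircle one computes, e.g.\ at $(\alpha,\beta)=(\epsilon,\frac{1}{2})$ with $\epsilon$ small, that $\mu_{\alpha,\beta}(\CO[1])\approx -\frac{1}{4}$ while $\mu_{\alpha,\beta}(-\mathbf{v})\approx -\frac{3}{20}$, so $\CO[1]$ destabilizes as the target of a nonzero map, not as a subobject of larger slope. Second, the claim that a semistable object below the wall ``would have to arise from one of the two destabilizing sequences above'' is precisely what needs proof and cannot be assumed; across a wall the candidate objects are extensions in the reverse direction, and a priori there could be semistable objects unrelated to either chamber. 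Two smaller gaps: the Diophantine enumeration you defer is handled in the paper by a one-line integrality argument (any wall strictly above $W(\frac{1}{2})$ meets the line $\beta=0$, where $0<H\cdot\ch_{1}(F)<H^{3}$ is impossible for $\ch_{1}(F)\in\ZN\cdot H$), and when you invoke Proposition \ref{classify-torsion-sheaf}(i) to identify $A$ you silently discard the case $A\cong I_{x/Y}(H)$, which has to be absorbed into sequence (2) using $0\to\CO\to I_{x}(H)\to I_{x/Y}(H)\to 0$, as in the argument of [BBF+, Lemma 6.8] to which the paper appeals.
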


\begin{proof}
The numerical walls for $-\mathbf{v}$ are the same as that for $\mathbf{v}$:
the numerical vertical wall for $-\mathbf{v}$ is the vertical line $\beta=-\frac{1}{3}$; 
the numerical semicircular walls are two sets of nested semicircles whose top points lie on the hyperbola \eqref{Kp-wall-apexes-hyperbola}.
The hyperbola \eqref{Kp-wall-apexes-hyperbola} intersects with the wall $W(\frac{1}{2})$ at the point $(\alpha,\beta)=(\frac{1}{2},\frac{1}{2})$ which is the top point of $W(\frac{1}{2})$.

Assume that $\tilde{E}$ is a tilt semistable object with Chern character $-\mathbf{v}$. 
Suppose that a wall $W^{\prime}$ is strictly above $W(\frac{1}{2})$ induced by a short exact sequence  $0 \to F \to \tilde{E} \to G \to 0$. 
Then the wall $W^{\prime}$ contains points $(\alpha, 0)$ with $\alpha > 0$. 
In particular, $0 < H \cdot \ch_{1}(F) < H \cdot \ch_{1}(\tilde{E})=H^{3}$;
this contradicts to $\ch_{1}(F)\in \ZN H$.
This means that above the wall $W(\frac{1}{2})$, $\tilde{E}$ is tilt semistable.
Since the wall
$$
 W(\CO(3H), \tilde{E})=\{ (\alpha,\beta)\in \RN_{>0}\times \RN \mid (\beta-\frac{7}{5})^{2}+\alpha^{2}=(\frac{8}{5})^{2}\},
 $$ 
so it is above the wall $W(\frac{1}{2})$, 
and thus Serre duality yields 
$$
\Ext^{3}(\tilde{E}, \CO)\cong \Hom(\CO(3H),\tilde{E}) = 0.
$$
By Hirzebruch--Riemann--Roch formula \eqref{HRR-quadric3}, 
we have $\chi(\tilde{E} ,\CO)=-3$.
Therefore, we have
$$
\hom(\tilde{E}, \CO[1])
=\hom(\tilde{E},\CO)+\hom(\tilde{E}, \CO[2])-\chi(\tilde{E} , \CO)\geq 3.
$$
Therefore, any morphism $\tilde{E} \rightarrow \CO[1]$ destabilizes $\tilde{E}$ below the wall $W(\frac{1}{2})$.

We set $r:=  \hom(\tilde{E}, \CO[1]) \geq 3$. 
Then, there is a short exact sequence of tilt semistable objects along $W(\frac{1}{2})$ given by
$
0 \to G \to \tilde{E} \to \CO^{\oplus r}[1] \to 0.
$
Then, based on Lemma \ref{actual-wall-for-torsionsh} and Proposition \ref{classify-torsion-sheaf}, 
the rest proof follows direct from that of \cite[Lemma 6.8]{BBF+}.
\end{proof}

\begin{lem}\label{converse-to-new-G-st-sh}
Let $D$ be a Weil divisor on a hyperplane section $Y\in |H|$ with Chern character $\ch(\CO_{Y}(D)) = (0,H,\frac{1}{2} H^{2},-\tfrac{1}{3} H^{3})$. 
Then $\CO_{Y}(D)$ is globally generated and $H^{0}(Y,\CO_{Y}(D)) =\CN^{3}$.
In particular, $D$ is a divisor of type $(2,0)$and $Y$ is smooth.
\end{lem}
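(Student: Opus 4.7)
The plan is to exploit the Chern character data to pin down the geometry of the pair $(Y, D)$; the cohomological claims then follow routinely. Recall that a hyperplane section of the smooth quadric threefold $Q \subset \PB^{4}$ is a quadric surface in $\PB^{3}$, and since $Q$ is smooth, the restricted quadratic form has rank $3$ or $4$. Hence $Y$ is either the smooth quadric $\PB^{1} \times \PB^{1}$ or a quadric cone.

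First I would compute $\chi(Q, \CO_{Y}(D)) = 3$ by substituting the given Chern character into the Hirzebruch--Riemann--Roch formula \eqref{HRR-quadric3}. As preparation, the short exact sequences $0 \to \CO(-H) \to \CO \to \CO_{Y} \to 0$ and $0 \to \CO \to \CO(H) \to \CO_{Y}(H) \to 0$ yield
\[
\ch(\CO_{Y}) = (0, H, -\tfrac{1}{2}H^{2}, \tfrac{1}{6}H^{3}), \qquad \ch(\CO_{Y}(H)) = (0, H, \tfrac{1}{2}H^{2}, \tfrac{1}{6}H^{3}),
\]
and the class of a line in $A^{2}(Q)$ equals $\tfrac{1}{2}H^{2}$.

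Next I would rule out the cone case. If $Y$ is a cone with ruling $L$, then $\mathrm{Cl}(Y) = \ZN \cdot [L]$ with $[L] = \tfrac{1}{2}H^{2}$ in $A^{2}(Q)$ and $2L \sim H|_{Y}$ Cartier. For any Weil divisor $D \sim nL$, the cycle-class description of $\ch_{2}$ gives
\[
\ch_{2}(\CO_{Y}(nL)) = \ch_{2}(\CO_{Y}) + n[L] = \tfrac{n-1}{2}H^{2},
\]
so the hypothesis $\ch_{2} = \tfrac{1}{2}H^{2}$ forces $n = 2$, whence $\CO_{Y}(D) \cong \CO_{Y}(H)$. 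But that sheaf has $\ch_{3} = \tfrac{1}{6}H^{3}$, contradicting $\ch_{3} = -\tfrac{1}{3}H^{3}$; hence $Y$ must be smooth.

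With $Y \cong \PB^{1} \times \PB^{1}$, $D$ is Cartier and $\CO_{Y}(D) = \CO_{Y}(a, b)$ for some $(a, b) \in \ZN^{2}$. The same cycle-class computation yields $\ch_{2}(\CO_{Y}(a,b)) = \tfrac{a+b-1}{2}H^{2}$, so $a + b = 2$; combined with the K\"{u}nneth identity $\chi(\CO_{Y}(a,b)) = (a+1)(b+1) = 3$, the only solutions are $(a, b) \in \{(2, 0), (0, 2)\}$. Thus $D$ is of type $(2, 0)$ up to swapping factors, and $\CO_{Y}(D) \cong p_{1}^{*}\CO_{\PB^{1}}(2)$ is globally generated with $H^{0}(Y, \CO_{Y}(D)) \cong H^{0}(\PB^{1}, \CO_{\PB^{1}}(2)) \cong \CN^{3}$.

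The main technical point is justifying the cycle-class formula for $\ch_{2}$ on the singular cone: rank-$1$ reflexive sheaves $\CO_{Y}(nL)$ for $n$ odd are not locally free, so some care is required when reading off their Chern character on $Q$. Fortunately the only candidate value $n = 2$ is Cartier, so once the $\ch_{2}$-formula is in place the contradiction reduces to the line-bundle calculation for $\CO_{Y}(H)$, which is immediate.
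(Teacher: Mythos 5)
Your proposal is correct, but it takes a genuinely different route from the paper. The paper never touches the classification of $Y$ or $\mathrm{Cl}(Y)$: it first invokes the tilt-stability classification (Proposition \ref{classify-torsion-sheaf}) and a wall computation to get $h^{2}(Y,\CO_{Y}(D))=0$, hence $h^{0}\geq \chi=3$; it then feeds a $3$-dimensional space of sections $V$ into the construction $\mathcal{E}_{D,V}$ and uses the slope-stability of $E_{D,V}$ together with the maximality bounds on $\ch_{2}$ and $\ch_{3}$ from Proposition \ref{maximal-ch-Q} to force $\CH^{0}(\mathcal{E}_{D,V})=0$ (global generation) and $h^{0}=3$; finally it identifies the type of $D$ and the smoothness of $Y$ by cutting a curve with a general section and computing $\ch(\omega_{C})$ via adjunction. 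You instead classify directly: $Y$ is a smooth quadric or a cone, $\mathrm{Cl}(Y)$ is known in both cases, and Grothendieck--Riemann--Roch for $i\colon Y\hookrightarrow Q$ reads off the divisor class from $\ch_{2}$, after which $\ch_{3}$ kills the cone and $\chi=3$ pins down $(a,b)=(2,0)$ on $\PB^{1}\times\PB^{1}$. Your argument is more elementary and self-contained (it needs none of the stability machinery, and proves the lemma for an arbitrary Weil divisor with the stated Chern character), at the cost of the one technical point you correctly flag — justifying the $\ch_{2}$-formula for the non-Cartier reflexive sheaves $\CO_{Y}(nL)$ on the cone, which is handled by $\CO_{Y}(L)\cong I_{L/Y}\otimes\CO_{Y}(H)$ and twisting. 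The paper's longer route earns its keep elsewhere: it produces, as byproducts, the vanishing $H^{i}(Q,\CO_{Y}(D))=0$ for $i>0$ and the identification $\ch(E_{D,V})=\mathbf{v}$, which are reused in Corollary \ref{VB-in-Ku} and Theorem \ref{claasifying-sh-ch-v}; with your approach these follow afterwards from $\CO_{Y}(D)\cong p_{1}^{*}\CO_{\PB^{1}}(2)$, but you would need to say so explicitly.
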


\begin{proof}
Suppose that $D$ is a Weil divisor on a hyperplane section $Y \in |H|$ of $Q$ with Chern character 
$$
\ch(\CO_{Y}(D))=(0,H,\frac{1}{2}H^{2},-\frac{1}{3}H^{3}).
$$
By Proposition \ref{classify-torsion-sheaf}, 
the sheaf $\CO_{Y}(D)$ is tilt stable for all $\alpha > 0$ and $\beta \in \RN$.  
By a direct computation, 
the numerical wall $W(\CO_{Y}(D),\CO(-3H)[1])=W(\frac{7}{2})$. 
Hence, by Serre duality, $h^2(Y,\CO_{Y}(D))=\hom(\CO_{Y}(D),\CO(-3H)[1])=0$. 
By Hirzebruch--Riemann--Roch formula \eqref{HRR-quadric3},
we have $\chi(\CO_{Y}(D))=3$ and  
$$
h^0(Y,\CO_{Y}(D)) 
= \chi(\CO_{Y}(D))+h^{1}(Y,\CO_{Y}(D)) + h^{3}(Y,\CO_{Y}(D)) \geq \chi(\CO_{Y}(D)) = 3.
$$

Let $V \subset H^{0}(Y,\CO_{Y}(D))$ be a subspace of dimension $3$.
Then, we obtain the object $\mathcal{E}_{D,V} \in \D(Q)$.
By Proposition \ref{Construct-sh-prop}, 
the sheaf  $E_{D, V}=\CH^{-1}(\mathcal{E}_{D, V})$ is reflexive and $\mu_{H}$-slope stable. 
Suppose $\ch(\CH^{0}(\mathcal{E}_{D,V})):= (0,aH,\frac{b}{2}H^{2}, \frac{c}{12}H^{3})$ with $a\geq 0$ and $a\in \ZN$.
By the exact sequence \eqref{constrct-sheaf-exact-sequ}, 
we have
$$
\ch(E_{D,V})=(3,(a-1)H,\frac{b-1}{2}H^{2},(\frac{c}{12}+\frac{1}{3})H^{3}).
$$
Since $E_{D,V}$ is a $\mu_{H}$-slope stable sheaf, hence $a=0$ and thus $b\geq 0$.
By Proposition \ref{maximal-ch-Q}, 
we have $\frac{b-1}{2}\leq -\frac{1}{2}$ and thus $b=0$ and $c\geq 0$. 
Again Proposition \ref{maximal-ch-Q} implies that $\frac{c}{12}+\frac{1}{3}\leq \frac{1}{3}$ and thus $c=0$. Therefore, $\ch(\CH^{0}(\mathcal{E}_{D,V}))=0$ and thus $\CH^{0}(\mathcal{E}_{D,V})=0$.
This yields that $\CO_{Y}(D)$ is globally generated and $\ch(E_{D,V})=(3,-H,-\frac{1}{2}H^{2},\frac{1}{3} H^{3})$.

Since $E_{D, V}$ is $\mu_{H}$-slope stable and $\mu_{H}(E_{D,V})=-\frac{1}{3}$, 
so $\Hom(\CO,E_{D,V})=0$, and the Serre duality implies 
$H^{3}(Q,E_{D,V}) \cong \Hom(E_{D,V},\CO(-3H))=0$.  
By \eqref{H2-shEV=0}, we have $H^{2}(Q, E_{D,V})=0$. 
Using Hirzebruch--Riemann--Roch formula \eqref{HRR-quadric3},
it follows that $h^1(Q,E_{D,V})=-\chi(E_{D,V})=0$. 
Using \eqref{constrct-sheaf-exact-sequ}, 
it follows that $H^{i}(Q,\CO_{Y}(D))=0$ for $i>0$ and $H^{0}(Q,\CO_{Y}(D)) = \CN^{3}=V$.

Furthermore, 
according to Bertini's theorem, 
a general section of $\CO_{Y}(D)$ cuts out a smooth curve $C$.
By adjunction formula, 
we have
\begin{eqnarray*}
\ch(\omega_{C}) 
& = & \ch(\CO_{Y}(-2H+D)|_{D})= \ch(\CO_{Y}(-2H+D))-\ch(\CO_{Y}(-2H)) \\
& = &  (0,H,-\frac{3}{2}H^{2},\frac{2}{3}H^{3})-(0,H,-\frac{5}{2}H^{2},\frac{19}{6}H^{3}) \\
& = &  (0,0,H^{2},-\frac{5}{2}H^{3}).
\end{eqnarray*}
Hence, the smooth curve $C$ is of degree $2$.
Moreover, by Hirzebruch--Riemann--Roch formula \eqref{HRR-quadric3},
we have $\chi(\omega_{C})=-2$.
This means that $D$ is divisor of type $(2,0)$ (i.e., it is either a double line or a disjoint union of two lines) and hence $Y$ is smooth.
\end{proof}

As a by-product, we have the following

\begin{cor}\label{VB-in-Ku}
For any divisor $D$ of type $(2,0)$ in a smooth hyperplane section $Y\subset Q$,
the associated vector bundle $E_{D}\in \Ku(Q)$.
\end{cor}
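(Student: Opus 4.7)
My plan is to verify directly that $\Hom^\bullet(\mathcal{O}, E_D) = 0$ and $\Hom^\bullet(\mathcal{O}(H), E_D) = 0$, since this is exactly the defining condition for $E_D$ to lie in $\Ku(Q) = \langle \mathcal{O}(-H), S\rangle$.

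For the first vanishing, I will use the short exact sequence
$$\xymatrix@C=0.5cm{0 \ar[r] & E_{D} \ar[r] & \mathcal{O}^{\oplus 3} \ar[r] & \mathcal{O}_{Y}(D) \ar[r] & 0}$$
from the proof of Proposition \ref{new-G-st-sh}. In fact, the cohomology computation carried out in the proof of Lemma \ref{converse-to-new-G-st-sh} already establishes that $H^i(Q, E_D) = 0$ for all $i$: we have $H^0(Q, E_D) = 0$ by $\mu_H$-slope stability, $H^3(Q, E_D) = 0$ by Serre duality against $\mathcal{O}(-3H)$, the vanishing $H^2(Q, E_D) = 0$ has been established there, and then $H^1(Q, E_D) = -\chi(E_D) = 0$ by Hirzebruch--Riemann--Roch. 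Thus $\Hom^\bullet(\mathcal{O}, E_D) = 0$.

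For the second vanishing, I twist the defining sequence by $\mathcal{O}(-H)$ to obtain
$$\xymatrix@C=0.5cm{0 \ar[r] & E_{D}(-H) \ar[r] & \mathcal{O}(-H)^{\oplus 3} \ar[r] & \mathcal{O}_{Y}(D-H|_{Y}) \ar[r] & 0.}$$
By Kodaira vanishing on the Fano threefold $Q$ (and Serre duality for the top degree) one has $H^k(Q, \mathcal{O}(-H))=0$ for all $k$, so the long exact sequence reduces the desired claim to the vanishing of $H^\bullet(Y, \mathcal{O}_Y(D - H|_Y))$. Since $D$ is of type $(2,0)$ on the smooth quadric surface $Y \cong \mathbb{P}^1 \times \mathbb{P}^1$ by Lemma \ref{converse-to-new-G-st-sh}, and $H|_Y$ is of type $(1,1)$, we have $\mathcal{O}_Y(D - H|_Y) \cong \mathcal{O}_{\mathbb{P}^1 \times \mathbb{P}^1}(1,-1)$, whose cohomology vanishes in all degrees by K\"unneth (since $H^\bullet(\mathbb{P}^1, \mathcal{O}(-1)) = 0$). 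This forces $H^k(Q, E_D(-H)) = 0$ for all $k$, i.e., $\Hom^\bullet(\mathcal{O}(H), E_D) = 0$.

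I do not expect a serious obstacle here, as the corollary is essentially a bookkeeping consequence of the cohomology computations already done in Lemma \ref{converse-to-new-G-st-sh}; the one extra ingredient is the identification of $\mathcal{O}_Y(D)$ after twisting by $\mathcal{O}(-H)$, which follows from the explicit determination of the type of $D$. Once both orthogonality conditions are verified, the containment $E_D \in \Ku(Q)$ is immediate.
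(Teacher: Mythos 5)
Your proposal is correct, and for the key half of the argument it takes a genuinely different route from the paper. The vanishing of $\Hom(\CO,E_D[i])$ is handled identically in both: it is read off from the cohomology computation $H^\bullet(Q,E_D)=0$ already carried out in the proof of Lemma \ref{converse-to-new-G-st-sh}. The difference is in the vanishing of $\Hom(\CO(H),E_D[i])$. The paper disposes of $i=0$ and $i=3$ by slope stability and Serre duality, observes $\chi(\CO(H),E_D)=0$ so that $\ext^1=\ext^2$, and then kills $\Ext^2(\CO(H),E_D)\cong\Hom(E_D,\CO(-2H)[1])$ by a tilt-stability argument: $E_D$ is $\sigma_{\alpha,-1}$-stable for all $\alpha>0$ by Lemma \ref{no-wall-1}, $\CO(-2H)[1]$ is $\sigma_{\alpha,-1}$-stable as well, and a comparison of limiting slopes forces the Hom to vanish. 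You instead twist the defining resolution $0\to E_D\to\CO^{\oplus 3}\to\CO_Y(D)\to 0$ by $\CO(-H)$, use $H^\bullet(Q,\CO(-H))=0$ to identify $H^k(Q,E_D(-H))$ with $H^{k-1}(Y,\CO_Y(D-H|_Y))$, and then observe that $D-H|_Y$ has type $(1,-1)$ on $Y\cong\PB^1\times\PB^1$, whose line bundle has no cohomology by K\"unneth. Your computation is correct at every step ($\CO_Y(2,0)$ is globally generated with $h^0=3$, so the resolution is as claimed), and it is more elementary and self-contained: it avoids tilt stability entirely for this orthogonality, at the price of using the explicit geometry of the smooth quadric surface and the type of $D$, which the paper's stability-theoretic argument does not need.
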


\begin{proof}
From the proof of Lemma \ref{converse-to-new-G-st-sh}, 
it follows that $\Hom(\CO,E_{D}[i])=0$ for any $i\in \ZN$. 
The rest is to show $\Hom(\CO(H),E_{D}[i])=0$ for any $i\in \ZN$. 
Since $E_{D}$ is $\mu_{H}$-slope stable and $\mu_{H}(E_{D})=-\frac{1}{3}<0=\mu_{H}(\CO)$, 
so $\Hom(\CO,E_{D})=0$.
Similarly, since $\mu_{H}(E_{D})=-\frac{1}{3}>-2=\mu_{H}(\CO(-2H))$, by Serre duality, 
we have $\Hom(\CO(H),E_{D}[3]) \cong \Hom(E_{D}, \CO(-2H))=0$.
Note that
$$
0=\chi(\CO(H),E_{D})= \ext^{1}(\CO(H),E_{D})-\ext^{2}(\CO(H),E_{D}).
$$
It is sufficient to show that $\Ext^{2}(\CO(H),E_{D})=0$.
Again, since $E_{D}$ is $\mu_{H}$-slope stable and $\mu_{H}(E_{D})=-\frac{1}{3}$,
by Proposition \ref{limit-tilt-stab-Gie-stab-prop}, 
$E_{D} \in \Coh^{\beta}(X)$ is $\sigma_{\alpha, \beta}$-stable for $\alpha \gg 0$ and $\beta < -\frac{1}{3}$. 
Hence, by Lemma \ref{no-wall-1}, $E_{D}$ is $\sigma_{\alpha,-1}$-stable for $\alpha>0$. 
By \cite[Corollary 3.11]{BMS16}, $\CO(-2H)[1]$ is also $\sigma_{\alpha,-1}$-stable for $\alpha>0$. 
Note that 
$$
\lim_{\alpha\to 0^{+}} \mu_{\alpha, -1}(\CO(-2H)[1])=\frac{\alpha^{2}-1}{2}=-\frac{1}{2} < 0=\lim_{\alpha\to 0^{+}} -\frac{3\alpha^{2}}{4}=\mu_{\alpha, -1}(E_{D}).
$$
By  Serre duality, it follows that $\Ext^{2}(\CO(H),E_{D})\cong \Hom(E_{D},\CO(-2H)[1])=0$.
\end{proof}

The following proposition implies that the vector bundle $E_{D}$ is $\sigma$-stable for all $\sigma\in \mathcal{K}$.

\begin{prop}\label{E_D-BSC-stable}
Let $D$ be a divisor of type $(2,0)$on a smooth hyperplane section $Y$ of $Q$.
Then the sheaf $E_{D}\in \Ku(Q)$ and it is $\sigma(\alpha,\beta)$-stable for any $(\alpha,\beta)\in \widetilde{V}_{L}$, 
and $E_{D}[1]$ is $\sigma(\alpha,\beta)$-stable for any $(\alpha,\beta)\in \widetilde{V}_{R}$.
In particular, $\mathcal{P}_{x}$ and $E_{D}$ are $\sigma$-stable objects in $\Ku(Q)$ for all stability condition $\sigma\in \mathcal{K}$.
\end{prop}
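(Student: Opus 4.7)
The argument closely mirrors the proof of Proposition~\ref{start-point-stable} for $\mathcal{P}_x$, since both $E_D$ and $\mathcal{P}_x$ share the same Chern character $\mathbf{v}$ and thus the same numerical walls in tilt stability. The membership $E_D\in\Ku(Q)$ is Corollary~\ref{VB-in-Ku}, so the remaining task is to check stability in the two regions.

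For the $\widetilde{V}_L$ case, fix $(\alpha,\beta)\in\widetilde{V}_L$, so that $\beta<-\tfrac{1}{3}=\mu_H(E_D)$. By Proposition~\ref{new-G-st-sh}, $E_D$ is $\mu_H$-slope stable, and since $\ch_{\leq 1}(E_D)=(3,-H)$ is primitive, $\mu_H$-slope stability implies $2$-$H$-Gieseker stability. Proposition~\ref{limit-tilt-stab-Gie-stab-prop} then yields that $E_D\in\Coh^{\beta}(Q)$ is $\sigma_{\alpha,\beta}$-stable for $\alpha\gg 0$. Since $\ch_{\leq 2}(E_D)=(3,-H,-\tfrac{1}{2}H^2)$ is precisely the Chern character treated in Lemma~\ref{no-wall-1}, no actual walls exist to the left of $\beta=-\tfrac{1}{3}$, so $E_D$ remains $\sigma_{\alpha,\beta}$-stable throughout $\widetilde{V}_L$. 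As $\sigma^0_{\alpha,\beta}$ is a rotation of $\sigma_{\alpha,\beta}$ and $E_D\in\Ku(Q)$, this gives $\sigma(\alpha,\beta)$-stability of $E_D$.

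For the $\widetilde{V}_R$ case, because $E_D$ is locally free and $\mu_H$-slope stable with $\mu_H(E_D)=-\tfrac{1}{3}$, one has $E_D[1]\in\Coh^{-1/3}(Q)$, and \cite[Proposition~4.18]{BBF+} gives $\sigma_{\alpha,-1/3}$-stability for every $\alpha>0$. To propagate this to all of $\widetilde{V}_R$, I invoke Lemma~\ref{classify-tor-lem1}: the unique actual wall for Chern character $-\mathbf{v}$ to the right of the vertical wall $\beta=-\tfrac{1}{3}$ is $W(\tfrac{1}{2})$, namely the semicircle $(\beta-\tfrac{1}{2})^2+\alpha^2=\tfrac{1}{4}$, which is entirely contained in $\{\beta\geq 0\}$. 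Since $\widetilde{V}_R\subset\{\beta<0\}$, this wall is disjoint from $\widetilde{V}_R$, so $E_D[1]$ is $\sigma_{\alpha,\beta}$-stable throughout the region; restricting to $\Ku(Q)$ then yields $\sigma(\alpha,\beta)$-stability of $E_D[1]$.

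Finally, for the \emph{in particular} statement, pick any $\alpha_0\in(0,\tfrac{1}{2})$ so that $(\alpha_0,-\tfrac{1}{2})\in V\cap\widetilde{V}_L$; by part (1) above, $E_D$ is $\sigma(\alpha_0,-\tfrac{1}{2})$-stable. Since the $\tilde{\mathrm{GL}}^{+}_{2}(\RN)$-action on $\Stab(\Ku(Q))$ preserves stability, Lemma~\ref{GL-one-orbitV+} and the definition of $\mathcal{K}$ imply $\sigma$-stability of $E_D$ for every $\sigma\in\mathcal{K}$; the identical conclusion for $\mathcal{P}_x$ follows from Proposition~\ref{start-point-stable} in place of part (1). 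The main subtlety is the wall-crossing analysis in the $\widetilde{V}_R$ case, which boils down to the elementary geometric observation that the only candidate actual wall sits in the half-plane $\{\beta\geq 0\}$ and therefore cannot threaten stability in $\widetilde{V}_R\subset\{\beta<0\}$.
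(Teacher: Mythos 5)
Your proof is correct and follows essentially the same route as the paper, which likewise reduces the statement to Corollary \ref{VB-in-Ku}, the $\mu_{H}$-slope stability of $E_{D}$, and the wall-free arguments already given for $\mathcal{P}_{x}$ in Proposition \ref{start-point-stable}. The only cosmetic difference is in the $\widetilde{V}_{R}$ region, where you cite Lemma \ref{classify-tor-lem1} (the unique actual wall $W(\tfrac{1}{2})$ lies in $\{\beta>0\}$, hence misses $\widetilde{V}_{R}\subset\{\beta<0\}$) in place of the paper's direct check that no actual walls cross the vertical line $\beta=0$; the two observations play the same role here.
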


\begin{proof}\label{E_D-sh-in-Ku}
By Corollary \ref{VB-in-Ku},
the vector bundle $E_{D}\in \Ku(Q)$.
By Theorem \ref{claasifying-sh-ch-v},
$E_{D}$ is a $\mu_{H}$-slope stable vector bundle.
Then the proposition follows the same proof as that of Proposition \ref{start-point-stable}.
\end{proof}


\section{Proof of Theorem \ref{main-thm-iso}}

This section is devoted to the proof of Theorem \ref{main-thm-iso}.
The proof is divided into the following three propositions (Propositions \ref{main-thm-iso-prop1}, \ref{main-thm-iso-prop2} and \ref{main-thm-iso-prop3}).

The below proposition obtains the smoothness of the moduli space.

\begin{prop}\label{main-thm-iso-prop1}
The moduli space $\overline{M}_{Q}(\mathbf{v})$ is smooth and of dimension $4$.
\end{prop}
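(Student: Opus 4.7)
The plan is to verify smoothness and the dimension pointwise by a standard deformation-theoretic argument. By Theorem \ref{claasifying-sh-ch-v}, every $H$-Gieseker semistable sheaf with Chern character $\mathbf{v}$ is in fact $\mu_{H}$-slope stable, since the truncation $\ch_{\leq 1}(\mathbf{v}) = (3, -H)$ is primitive. Hence $\overline{M}_{Q}(\mathbf{v})$ is a moduli space of simple sheaves, so at a point $[E]$ the Zariski tangent space is $\Ext^{1}(E, E)$ and the obstructions live in $\Ext^{2}(E, E)$. It thus suffices to show, for every $E$ in the classification of Theorem \ref{claasifying-sh-ch-v}, that $\ext^{1}(E, E) = 4$ and $\ext^{2}(E, E) = 0$.

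The case $E = \mathcal{P}_{x}$ is settled immediately by Lemma \ref{Px-ext}. For $E = E_{D}$, the plan is to feed the defining sequence
\begin{equation*}
0 \to E_{D} \to \CO^{\oplus 3} \to \CO_{Y}(D) \to 0
\end{equation*}
into the functors $\RHom(-, E_{D})$ and $\RHom(\CO_{Y}(D), -)$. Corollary \ref{VB-in-Ku} gives $E_{D} \in \Ku(Q)$, whence $\RHom(\CO, E_{D}) = 0$, so the first application collapses to an isomorphism $\RHom(E_{D}, E_{D}) \cong \RHom(\CO_{Y}(D), E_{D})[1]$, and the second reduces the remaining calculation to $\RHom(\CO_{Y}(D), \CO)$ and $\RHom(\CO_{Y}(D), \CO_{Y}(D))$. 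Since $Y$ is smooth by Lemma \ref{converse-to-new-G-st-sh} and $D$ is of type $(2,0)$, the sheaf $\CO_{Y}(D)$ is a line bundle on $Y \cong \PB^{1} \times \PB^{1}$; the former Ext follows from the Koszul resolution $0 \to \CO(-H) \to \CO \to \CO_{Y} \to 0$ combined with Grothendieck--Verdier duality, yielding $\RCH(\CO_{Y}(D), \CO) \cong \CO_{Y}(H - D)[-1]$, while the latter reduces to $R\Gamma(Y, \CO_{Y})$. Both are then finished off by Hirzebruch--Riemann--Roch on $Y$.

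Two numerical shortcuts cut down the bookkeeping significantly. First, $\hom(E_{D}, E_{D}) = 1$ is automatic from $\mu_{H}$-slope stability. Second, Serre duality together with stability yields $\ext^{3}(E_{D}, E_{D}) = \hom(E_{D}, E_{D}(-3H)) = 0$, since the target has strictly smaller slope. A direct Hirzebruch--Riemann--Roch computation gives $\chi(\mathbf{v}, \mathbf{v}) = -3$, so the identity $\ext^{1}(E_{D}, E_{D}) - \ext^{2}(E_{D}, E_{D}) = 4$ holds, and only the vanishing $\ext^{2}(E_{D}, E_{D}) = 0$ must be extracted from the two long exact sequences above. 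That vanishing is the main obstacle, as the sequences have several a priori non-trivial contributions; however, the precise bidegree of $\CO_{Y}(H - D)$ on $\PB^{1} \times \PB^{1}$ should force the required cohomology to vanish. Once this is checked, constancy of $\ext^{1}(E, E) = 4$ over $\overline{M}_{Q}(\mathbf{v})$ gives smoothness of dimension $4$ everywhere.
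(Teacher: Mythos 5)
Your proposal is correct in substance but follows a genuinely different route from the paper. The paper's proof is \emph{uniform}: it never invokes the classification of Theorem \ref{claasifying-sh-ch-v}. After the same reduction to $\ext^{2}(E,E)=\homd(E,E(-3H)[1])=0$ via Serre duality and $\chi(E,E)=-3$, it establishes this vanishing for an \emph{arbitrary} semistable $E$ of character $\mathbf{v}$ by tilt-stability: Lemma \ref{no-wall-1} makes $E$ $\sigma_{\alpha,-1}$-stable for all $\alpha>0$, Lemma \ref{classify-tor-lem1} makes $E(-3H)[1]$ $\sigma_{\alpha,-1}$-stable, and a comparison of limit tilt slopes kills the Hom. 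You instead route everything through the classification and compute the two cases by hand: $\mathcal{P}_{x}$ is Lemma \ref{Px-ext}, and for $E_{D}$ your chain of reductions does close up --- $\RHom(\CO_{Y}(D),\CO)\cong R\Gamma(Y,\CO_{Y}(H-D))[-1]$ vanishes entirely because $H-D$ has bidegree $(-1,1)$ on $\PB^{1}\times\PB^{1}$, so $\Ext^{2}(E_{D},E_{D})\cong\Ext^{2}(\CO_{Y}(D),\CO_{Y}(D))$, which vanishes. One small correction: $\RHom_{Q}(\CO_{Y}(D),\CO_{Y}(D))$ does not reduce to $R\Gamma(Y,\CO_{Y})$ alone; the divisorial pushforward contributes an extra summand $R\Gamma(Y,\CO_{Y}(H))[-1]=\CN^{4}[-1]$ from the normal bundle, giving $\CN\oplus\CN^{4}[-1]$. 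This does not affect the needed $\Ext^{2}$-vanishing (and in fact recovers $\ext^{1}(E_{D},E_{D})=4$ directly, consistent with $\chi=-3$), but as written your bookkeeping would contradict the Euler characteristic. The trade-off between the two arguments: yours is more elementary (line-bundle cohomology on a quadric surface plus Grothendieck--Verdier duality) but leans on the full strength of the classification theorem; the paper's is logically lighter, needing only the wall computations of Lemmas \ref{no-wall-1} and \ref{classify-tor-lem1}, and would survive even without knowing the explicit list of semistable sheaves.
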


\begin{proof}
For a $H$-Gieseker semistable sheaf $E$ with $\ch(E)=\mathbf{v}$,
since $\ch_{\leq 1}(E)=(3,-H)$ is primitive, 
the tangent space of $\overline{M}_{Q}(\mathbf{v})$  at $E$ is given by $\Ext^{1}(E,E)$ and the obstruction lies in $\Ext^{2}(E,E)$; 
if $\Ext^{2}(E,E)=0$, 
then $\overline{M}_{Q}(\mathbf{v})$ is smooth at the point $[E]$ (cf. \cite[Corollary 4.5.2]{HL10}).
Therefore, it is sufficient to show that for every $H$-Gieseker semistable sheaf $E$ with Chern character $\ch(E)=\mathbf{v}$,
$$
\Ext^{i}(E,E) =
\begin{cases}
\CN & \textrm{if } i = 0 \\
\CN^{4} & \textrm{if } i =1 \\
0 &\textrm{otherwise.}
\end{cases}
$$
Since $\ch_{\leq 1}(E)=(3,-H)$ is primitive, 
hence $E$ is $\mu_{H}$-slope stable and so is it twists $E(-3H)$.
Therefore, $\Hom(E,E)=\CN$.
Since $\mu_{H}(E)=-\frac{1}{3}>-\frac{10}{3}=\mu_{H}(E(-3H))$, 
the Serre duality implies $\Ext^{3}(E,E)\cong \Hom(E, E(-3H))^{\vee}=0$.
By Hirzebruch--Riemann--Roch formula \eqref{HRR-quadric3}, we have $\chi(E,E)=-3$ and thus
$$
\ext^{1}(E,E)=4+\ext^{2}(E,E).
$$
Next, we show $\ext^{2}(E,E)=0$.
By Serre duality, $\Ext^{2}(E,E)\cong \Hom(E, E(-3H)[1])$.
In the following, we need to prove $\Hom(E, E(-3H)[1])=0$.
Since $E$ is $\sigma_{\alpha,-1}$-stable for $\alpha \gg 0$,
by Lemma \ref{no-wall-1},
$E$ is $\sigma_{\alpha, -1}$-stable for any $\alpha > 0$. 

By Proposition \ref{maximal-ch-Q}, 
$E$ is a $\mu_{H}$-slope stable reflexive sheaf and thus it twists $E(-3H)$ is also $\mu_{H}$-slope stable reflexive.
Note that $\mu_{H}(E(-3H))=-\frac{10}{3}<-1$.
By \cite[Lemma 2.7 (c)]{BMS16}, 
we have $E(-3H)[1]\in \Coh^{-1}(Q)$.
Moreover, by Lemma \ref{classify-tor-lem1}, 
$E[1]$ is $\sigma_{\alpha,2}$-stable for any $\alpha > 0$ 
and thus $E(-3H)[1]$ is $\sigma_{\alpha,-1}$-stable for $\alpha > 0$ .  
Because of the limit tilt slopes
$$
\lim_{\alpha \to 0^{+}} \mu_{\alpha,-1}(E)
=\lim_{\alpha \to 0^{+}} -\frac{3\alpha^{2}}{4}> \lim_{\alpha \to 0^{+}}\frac{3\alpha^{2}-15}{14}=\lim_{\alpha \to 0^{+}} \mu_{\alpha,-1}(E(-3H)[1])
$$ 
and both $E$ and $E(-3H)[1]$ are $\sigma_{\alpha, -1}$-stable for $\alpha > 0$,
it follows that 
$$
\Ext^{2}(E,E)\cong \Hom(E, E(-3H)[1])=0.
$$
This finishes the proof.
\end{proof}

Next we shall gain the irreducibility of the moduli space.

\begin{prop}\label{main-thm-iso-prop2}
The moduli space $\overline{M}_{Q}(\mathbf{v})$ is irreducible.
\end{prop}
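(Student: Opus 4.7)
Since $\overline{M}_{Q}(\mathbf{v})$ is smooth of pure dimension $4$ by Proposition \ref{main-thm-iso-prop1}, its connected components coincide with its irreducible components, so it suffices to show that $\overline{M}_{Q}(\mathbf{v})$ is connected. By Theorem \ref{claasifying-sh-ch-v}, the moduli space decomposes set-theoretically as $U \cup R$, where $R = \{[\mathcal{P}_{x}] : x \in Q\}$ is the closed locus of non-locally-free projection sheaves (irreducible of dimension $3$ via $R \cong Q$), and $U$ is the open locus of vector bundles $E_{D}$ from Proposition \ref{new-G-st-sh}.

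First I would show that $U$ is irreducible. The key observation is that on any smooth hyperplane section $Y \cong \PB^{1} \times \PB^{1}$ of $Q$, the divisors of type $(2, 0)$ form the single complete linear system $|\CO_{Y}(2, 0)| \cong \PB^{2}$; consequently the line bundle $\CO_{Y}(D)$, and hence the kernel
\[
E_{D} \;=\; \ker\bigl(H^{0}(Y, \CO_{Y}(D)) \otimes \CO \twoheadrightarrow \CO_{Y}(D)\bigr)
\]
of the evaluation sequence from Proposition \ref{new-G-st-sh}, depends up to isomorphism only on $Y$ together with a choice of ruling. Let $\mathcal{V} \subset \PB^{4}$ denote the open subscheme parametrizing smooth hyperplane sections of $Q$, and $\mathcal{P} \to \mathcal{V}$ the \'etale double cover obtained by adjoining a choice of ruling on the fiber. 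Because the monodromy of the family of smooth hyperplane sections of $Q$ swaps the two rulings of $Y$ around a loop encircling the discriminant, $\mathcal{P}$ is connected, and therefore irreducible of dimension $4$. Passing to a further $GL_{3}$-torsor over $\mathcal{P}$ to trivialize the rank-$3$ direct image of the relative line bundle $\CO_{\mathcal{Y}/\mathcal{P}}(2, 0)$, which does not affect irreducibility, the universal evaluation morphism yields a flat family of vector bundles on $Q$ over this base realizing every $E_{D}$. By Theorem \ref{claasifying-sh-ch-v}, the induced classifying map has image exactly $U$, so $U$ is irreducible.

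Next I would deduce the irreducibility of $\overline{M}_{Q}(\mathbf{v})$ by a purity-of-dimension argument. The closure $\overline{U}$ is an irreducible component of $\overline{M}_{Q}(\mathbf{v})$ of dimension $4$. Any further irreducible component is, by Proposition \ref{main-thm-iso-prop1}, smooth of pure dimension $4$ and disjoint from $U$, hence entirely contained in the closed locus $R \cong Q$ of dimension $3$. This is impossible, so $\overline{M}_{Q}(\mathbf{v}) = \overline{U}$ is irreducible.

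The main obstacle lies in the first step, specifically in verifying that $\mathcal{P}$ is connected. This reduces either to the monodromy computation sketched above, or equivalently to exhibiting an automorphism of $Q$ fixing a chosen smooth hyperplane section $Y$ and swapping its two rulings (so that $E_{(2, 0), Y} \cong E_{(0, 2), Y}$ as sheaves on $Q$, identifying the two sheets of $\mathcal{P} \to \mathcal{V}$). A second, purely technical subtlety is the descent of the tautological family from the $GL_{3}$-torsor down to $\mathcal{P}$, which parallels the universal construction carried out in \cite[Section 5]{BBF+} for the analogous moduli problem on the cubic threefold.
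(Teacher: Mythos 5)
Your argument is correct, but it takes a genuinely different route from the paper. The paper dominates the open locus $M_{Q}(\mathbf{v})$ of bundles by the Hilbert scheme $\mathrm{Hilb}_{Q,+}^{2m+2}$ of locally Cohen--Macaulay degree-$2$ curves, quoting Ballico--Huh for its smoothness and irreducibility (it has dimension $6$, and the map $[D]\mapsto E_{D}$ contracts the $\PB^{2}$'s of linearly equivalent divisors), and then passes from $M_{Q}(\mathbf{v})$ to $\overline{M}_{Q}(\mathbf{v})$ with the same implicit purity-of-dimension step that you make explicit. You instead perform the contraction by hand: since $E_{D}$ depends only on the isomorphism class of $\CO_{Y}(D)$, you replace the Hilbert scheme by the $4$-dimensional \'etale double cover $\mathcal{P}\to\mathcal{V}$ of rulings, whose connectedness you must then establish. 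That connectedness is true and can be seen cleanly without a monodromy computation: the incidence variety $\{(\ell,Y) \mid \ell\subset Y\}$ is a $\PB^{2}$-bundle over the Hilbert scheme of lines $\cong\PB^{3}$, hence irreducible, and it dominates $\mathcal{P}$ with connected fibres (the $\PB^{1}$ of lines in a fixed ruling), so $\mathcal{P}$ is connected. Your approach buys independence from the external reference \cite{BH18} at the cost of this extra verification plus the descent of the tautological family, both of which you correctly flag; the paper's approach is shorter but outsources the irreducibility input.

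One small caution on your proposed alternative to the monodromy argument: an automorphism $g$ of $Q$ preserving $Y$ and exchanging its rulings yields $g^{*}E_{(2,0),Y}\cong E_{(0,2),Y}$, which is not by itself an isomorphism of sheaves $E_{(2,0),Y}\cong E_{(0,2),Y}$ on $Q$. To conclude from this that the two points lie in the same connected component of the moduli space you would additionally invoke the connectedness of $\mathrm{Aut}(Q)\cong\mathrm{PSO}(5)$, which acts trivially on the set of components. With that supplement (or with the incidence-variety argument above) your proof is complete.
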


\begin{proof}
Let $\mathrm{Hilb}_{Q}^{2m+2}$ be the Hilbert scheme of curves on $Q$ with Hilbert polynomial $2m+2$ and $\mathrm{Hilb}_{Q,+}^{2m+2}\subset \mathrm{Hilb}_{Q}^{2m+2}$ be the subspace parametrizing the locally Cohen-Macaulay curves in $\mathrm{Hilb}_{Q}^{2m+2}$.
Then, by \cite[Proposition 5.31]{BH18}, 
$\mathrm{Hilb}_{Q,+}^{2m+2}$ is smooth and irreducible of dimension $6$. 
In fact, for a point $[D]\in \mathrm{Hilb}_{Q,+}^{2m+2}$, $D$ is either the disjoint union of two lines or a double line.
Recall that $M_{Q}(\mathbf{v})\subset \overline{M}_{Q}(\mathbf{v})$ is the open locus of $H$-Gieseker semistable vector bundles on $Q$ with Chern character  $\mathbf{v}$.
By Theorem \ref{claasifying-sh-ch-v}, 
there is a surjective morphism $\mathrm{Hilb}_{Q,+}^{2m+2} \rightarrow M_{Q}(\mathbf{v})$ which sends $[D]\in \mathrm{Hilb}_{Q,+}^{2m+2}$ to the vector bundle $E_{D}$.
Therefore, we obtain that $M_{Q}(\mathbf{v})$ is irreducible and thus $\overline{M}_{Q}(\mathbf{v})$ is also irreducible.
\end{proof}

\begin{rem}
Let $V_{3}$ be a smooth cubic threefold and 
$\overline{M}_{V_{3}}(v)$ the moduli space of Gieseker-semistable sheaves on $V_{3}$ with Chern character
$v:=(3, -H, -\frac{1}{2}H^{2}, \frac{1}{6}H^{3})$.
Then $\overline{M}_{V_{3}}(v)$ is smooth and irreducible of dimension $4$; moreover, $\overline{M}_{V_{3}}(v)$ is isomorphic to the blow up of the theta divisor $\Theta$ at its unique singular point (see \cite[Theorem 1.1]{BBF+}). 
Using the same idea for proving \cite[Lemma 7.5]{BBF+},
there also exists a contraction from $\overline{M}_{Q}$ to an algebraic space $Y$ of finite type over $\CN$ such that it is an isomorphism away from $Q$ and contracts $Q$ to a point $0 \in Y$.
However, it is not clear that if this algebraic space $Y$ can be described explicitly.
\end{rem}

\begin{lem}\label{Q-embed-Giseker-mod}
The natural morphism
\begin{eqnarray*}
\iota: Q &\longrightarrow&\overline{M}_{Q}(\mathbf{v})  \\
         x &\longmapsto& \mathcal{P}_{x}
\end{eqnarray*}
is an embedding with the normal bundle $\CO(-2H)$.
\end{lem}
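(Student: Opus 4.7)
The plan is to realize $\iota$ as the classifying morphism of a natural flat family on $Q\times Q$, verify injectivity on closed points and on tangent vectors (making it a closed embedding of smooth varieties), and finally compute the normal bundle via Grothendieck--Verdier duality for the diagonal.

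First I would construct the universal family. Setting $\mathcal{V}:=p_{2*}(\mathcal{I}_{\Delta}(p_1^*H))$, the short exact sequence $0\to\mathcal{I}_\Delta\to \CO_{Q\times Q}\to \CO_\Delta\to 0$ tensored with $p_1^*\CO(H)$ and pushed along $p_2$ shows that $\mathcal{V}$ is a rank-$4$ bundle on $Q$ sitting in $0\to\mathcal{V}\to\CO^{\oplus 5}\to\CO(H)\to 0$. Then $\mathcal{P}:=\ker\!\bigl(p_2^*\mathcal{V}\twoheadrightarrow\mathcal{I}_\Delta(p_1^*H)\bigr)$ is $p_2$-flat, and its fibre at $x$ is precisely the defining sequence $0\to\mathcal{P}_x\to\CO^{\oplus 4}\to I_x(H)\to 0$. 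By the universal property of the good moduli space (\cite{Alp13}, \cite{BLMNPS21}), $\mathcal{P}$ determines a morphism $\iota\colon Q\to\overline{M}_Q(\mathbf{v})$ sending $x$ to $[\mathcal{P}_x]$.

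Injectivity on closed points is immediate from Proposition~\ref{Kp-slope-stable}: $\mathcal{P}_x$ is reflexive and locally free precisely outside $\{x\}$, so its non-locally-free locus recovers $x$ intrinsically, forcing $\mathcal{P}_x\cong\mathcal{P}_y\Rightarrow x=y$. For the differential, using that $\mathcal{P}_x\in\Ku(Q)$ (so all $\Ext^*(\CO(iH),\mathcal{P}_x)$ vanish for $i=0,1$), the two defining exact sequences produce canonical isomorphisms
\[
\Ext^1(\mathcal{P}_x,\mathcal{P}_x)\;\cong\;\Ext^2(I_x(H),\mathcal{P}_x)\;\cong\;\Ext^3(\CO_x,\mathcal{P}_x),
\]
and the long exact sequence of $\Hom(\CO_x,-)$ against $0\to\mathcal{P}_x\to\CO^{\oplus 4}\to I_x(H)\to 0$ together with $\Ext^i(\CO_x,\CO)=0$ for $i<3$ exhibits an injection $\Ext^2(\CO_x,I_x(H))\hookrightarrow\Ext^3(\CO_x,\mathcal{P}_x)$. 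Under the connecting isomorphism $\Ext^1(\CO_x,\CO_x)\cong\Ext^2(\CO_x,I_x(H))$ this is precisely the Kodaira--Spencer map $d\iota$, hence injective. Combining this with injectivity on points, properness of $Q$, and the smoothness of $\overline{M}_Q(\mathbf{v})$ from Proposition~\ref{main-thm-iso-prop1} yields that $\iota$ is a closed embedding.

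Finally, for the normal bundle the key is to pass to bundles on $Q$ and identify the image of $d\iota$ globally. Applying Grothendieck--Verdier duality to $i\colon\Delta\hookrightarrow Q\times Q$ (codimension $3$ with $\det N_\Delta=\CO(3H)$), one obtains $\mathcal{E}xt^1_{p_2}(\mathcal{P},\mathcal{P})\cong (i^*\mathcal{P})(2H)$. Restricting the defining SES of $\mathcal{P}$ to $\Delta$ and using $\mathrm{L}^1 i^*\mathcal{I}_\Delta(p_1^*H)\cong\wedge^2\Omega_Q\otimes\CO(H)\cong T_Q(-2H)$ together with the restricted Euler sequence $0\to\CO(-H)\to\mathcal{V}\cong\Omega_{\PB^4}(1)|_Q\to\Omega_Q(H)\to 0$ gives a short exact sequence on $Q$ of the form $0\to T_Q(-2H)\to i^*\mathcal{P}\to \CO(-H)\to 0$. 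Twisting by $\CO(2H)$ and identifying the left term with $d\iota(T_Q)$ produces the normal sequence $0\to T_Q\to\iota^*T_{\overline{M}_Q(\mathbf{v})}\to N_{Q/\overline{M}_Q(\mathbf{v})}\to 0$; tracking the twists through the Grothendieck--Verdier identification then pins down $N_{Q/\overline{M}_Q(\mathbf{v})}\cong \CO(-2H)$. The main obstacle is the careful bookkeeping of the line-bundle twists in the duality identification needed to produce the correct normal bundle, rather than just its degree.
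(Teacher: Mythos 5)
Your construction of the flat family $\mathcal{P}$ on $Q\times Q$, the injectivity of $\iota$ on closed points via the non-locally-free locus of $\mathcal{P}_{x}$, and the injectivity of $d\iota$ through the chain $\Ext^{1}(\mathcal{P}_{x},\mathcal{P}_{x})\cong\Ext^{3}(\CO_{x},\mathcal{P}_{x})$ are all correct, and this is essentially the argument of \cite[Lemma 7.3]{BBF+}, to which the paper's own proof simply defers after noting smoothness of both sides. The problem is the final step. Your two key identifications are in fact both right: Grothendieck--Verdier duality with $\omega_{i}=\det N_{\Delta}=\CO(3H)$, together with the vanishing of $Rp_{2*}(\mathcal{P}\otimes p_{1}^{*}\CO(-jH))$ for $j=0,1$ coming from $\mathcal{P}_{x}\in\Ku(Q)$, gives $\CExt^{1}_{p_{2}}(\mathcal{P},\mathcal{P})\cong (i^{*}\mathcal{P})(2H)$; and restricting the defining sequence of $\mathcal{P}$ to the diagonal gives $0\to T_{Q}(-2H)\to i^{*}\mathcal{P}\to\CO(-H)\to 0$, since the kernel of $\mathfrak{U}\to\Omega_{Q}(H)$ is the twisted conormal bundle $N^{\vee}_{Q/\PB^{4}}(H)=\CO(-H)$. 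But twisting the latter sequence by $\CO(2H)$ produces the quotient $\CO(H)$, not $\CO(-2H)$: your closing sentence ``tracking the twists \dots pins down $N\cong\CO(-2H)$'' is an assertion that contradicts the two formulas you have just written. The conclusion is moreover insensitive to which rank-three subbundle one identifies with $d\iota(T_{Q})$: once $d\iota$ is a bundle injection, $N\cong\det(\iota^{*}T_{\overline{M}_{Q}(\mathbf{v})})\otimes\det(T_{Q})^{-1}$, and $\det\bigl((i^{*}\mathcal{P})(2H)\bigr)=\CO(-4H+8H)=\CO(4H)$ forces $N\cong\CO(H)$.

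I believe the discrepancy lies with the statement being proved rather than with your method. Independent checks agree with $\CO(H)$: a direct Grothendieck--Riemann--Roch computation gives $c_{1}\bigl(Rp_{2*}\RCH(\mathcal{P},\mathcal{P})\bigr)=-4H$, hence $c_{1}(\CExt^{1}_{p_{2}}(\mathcal{P},\mathcal{P}))=4H$ and $N\cong\CO(H)$ by adjunction; and under $\Ku(Q)\simeq\D(K(4))$ the moduli space of objects of class $2\lambda_{2}-\lambda_{1}$ is the Kronecker moduli space $\mathrm{Gr}(2,4)$, in which $Q$ sits as a hyperplane section with normal bundle $\CO(H)$ (no smooth divisor of $\mathrm{Gr}(2,4)$ can have normal bundle $\CO(-2H)$, since $\CO_{\mathrm{Gr}(2,4)}(-2)$ is not effective). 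For comparison, the same computation for the cubic threefold in \cite[Lemma 7.3]{BBF+} uses $\det N_{\Delta}=\CO(2H)$ and $N^{\vee}_{V_{3}/\PB^{4}}=\CO(-3H)$ and yields $\CO(-H)$; both twists shift by $+H$ when one passes from the cubic to the quadric, so the answer moves to $+H$, not to $-2H$. As written, then, your proof does not establish the lemma: you should either carry your own formulas to their actual conclusion $N\cong\CO(H)$ (and flag the mismatch with the statement), or exhibit a genuine error in one of the two identifications above.
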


\begin{proof}
The smooth quadric threefold $Q$ can be viewed as the moduli space of twisted ideal sheaves $I_{x}(H)$ for all $x\in Q$.
By Proposition \ref{main-thm-iso-prop1}, $\overline{M}_{Q}(\mathbf{v})$ is smooth.
Since both $Q$ and $\overline{M}_{Q}(\mathbf{v})$ are smooth,
then the proof  is the same as that of \cite[Lemma 7.3]{BBF+}.
\end{proof}


Finally, we prove that the Bridgeland moduli space $M_{\sigma}(2\lambda_{2}-\lambda_{1})$ 
is isomorphic the Gieseker moduli space $\overline{M}_{Q}(\mathbf{v})$.

\begin{prop}\label{main-thm-iso-prop3}
For any stability condition $\sigma\in \mathcal{K}$, the moduli space $M_{\sigma}(2\lambda_{2}-\lambda_{1})$ is isomorphic to the moduli space $\overline{M}_{Q}(\mathbf{v})$.
In particular, the moduli space $M_{\sigma}(2\lambda_{2}-\lambda_{1})$ is a smooth complex projective variety of dimension $4$.
\end{prop}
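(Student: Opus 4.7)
The plan is to construct a natural morphism $\iota\colon \overline{M}_Q(\mathbf{v})\to M_\sigma(2\lambda_2-\lambda_1)$ and show that it is an isomorphism. By the classification in Theorem~\ref{claasifying-sh-ch-v}, every closed point of $\overline{M}_Q(\mathbf{v})$ is either a projection sheaf $\mathcal{P}_x$ or a vector bundle $E_D$. Both types lie in $\Ku(Q)$ (the former by construction, the latter by Corollary~\ref{VB-in-Ku}), and both are $\sigma(\alpha,\beta)$-stable for $(\alpha,\beta)\in \widetilde{V}_L$ by Propositions~\ref{start-point-stable}(1) and \ref{E_D-BSC-stable}. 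Since all $\sigma(\alpha,\beta)$ for $(\alpha,\beta)\in V$ belong to the single $\tilde{\mathrm{GL}}_2^+(\RN)$-orbit $\mathcal{K}$ by Lemma~\ref{GL-one-orbitV+}, this stability persists for every $\sigma\in\mathcal{K}$. Using a universal family on $\overline{M}_Q(\mathbf{v})$ together with openness of stability, one obtains the morphism $\iota$, and it is manifestly injective on closed points since non-isomorphic Gieseker semistable sheaves remain non-isomorphic as objects of $\D(Q)$.

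To promote injectivity to an isomorphism, I would next compute the Ext groups of any $\sigma$-stable object $F\in\Ku(Q)$ of class $2\lambda_2-\lambda_1$. Using the Euler pairing restricted to $\Ku(Q)$, Serre duality, and vanishing arguments parallel to those in Lemma~\ref{Px-ext} and Proposition~\ref{main-thm-iso-prop1}, one obtains $\homd(F,F)=1$, $\ext^1(F,F)=4$, and $\ext^i(F,F)=0$ for $i\neq 0,1$. Hence $M_\sigma(2\lambda_2-\lambda_1)$ is smooth of dimension $4$, matching $\overline{M}_Q(\mathbf{v})$ by Proposition~\ref{main-thm-iso-prop1}. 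Since $\iota$ is an injective morphism between smooth moduli spaces of the same dimension, it is an étale open immersion onto a union of connected components of $M_\sigma(2\lambda_2-\lambda_1)$.

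The main obstacle is surjectivity on closed points, namely showing that every $\sigma$-semistable object $F\in\Ku(Q)$ with class $2\lambda_2-\lambda_1$ is isomorphic to one of the Gieseker semistable sheaves of Theorem~\ref{claasifying-sh-ch-v}. The strategy is to exploit Lemma~\ref{GL-one-orbitV+} to replace $\sigma$ by $\sigma(\alpha,\beta)$ with $(\alpha,\beta)\in V\cap \widetilde{V}_L$, and then to inspect the standard cohomology sheaves of $F$ inside the heart $\A(\alpha,\beta)=\Coh_{\alpha,\beta}^{0}(Q)\cap\Ku(Q)$. Invoking the absence of walls for Chern character $(3,-H,-\tfrac{1}{2}H^2)$ to the left of $\beta=-\tfrac{1}{3}$ established in Lemma~\ref{no-wall-1}, one deforms $(\alpha,\beta)$ while preserving semistability and pushes $\alpha$ towards the large volume limit. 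By Proposition~\ref{limit-tilt-stab-Gie-stab-prop} combined with Proposition~\ref{maximal-ch-Q}, this forces $F$ to concentrate in a single cohomological degree and to be a $\mu_H$-slope stable reflexive sheaf of Chern character $\mathbf{v}$. Theorem~\ref{claasifying-sh-ch-v} then identifies $F$ with some $\mathcal{P}_x$ or some $E_D$, completing the surjectivity. The final assertion about smoothness, projectivity, and dimension $4$ then follows from Propositions~\ref{main-thm-iso-prop1}, \ref{main-thm-iso-prop2}, and the properness of $M_\sigma(2\lambda_2-\lambda_1)$ from \cite{BLMNPS21}.
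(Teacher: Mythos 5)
Your overall architecture matches the paper's: both directions reduce to showing that the $\sigma$-stable objects of class $2\lambda_{2}-\lambda_{1}$ in $\Ku(Q)$ are, up to shift, exactly the sheaves $\mathcal{P}_{x}$ and $E_{D}$ classified in Theorem \ref{claasifying-sh-ch-v}, and the forward direction (stability of $\mathcal{P}_{x}$ and $E_{D}$ for all $\sigma\in\mathcal{K}$) is handled correctly via Propositions \ref{start-point-stable} and \ref{E_D-BSC-stable} together with Lemma \ref{GL-one-orbitV+}. However, your surjectivity argument has a genuine gap at its central step. You propose to ``invoke the absence of walls for Chern character $(3,-H,-\tfrac{1}{2}H^{2})$ to the left of $\beta=-\tfrac{1}{3}$ (Lemma \ref{no-wall-1})'' and ``push $\alpha$ towards the large volume limit.'' But Lemma \ref{no-wall-1} and Proposition \ref{limit-tilt-stab-Gie-stab-prop} are statements about tilt stability $\sigma_{\alpha,\beta}$ on all of $\D(Q)$, whereas your object $F$ is only known to be stable for the induced condition $\sigma(\alpha,\beta)$ on the subcategory $\A(\alpha,\beta)=\Coh^{0}_{\alpha,\beta}(Q)\cap\Ku(Q)$. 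Stability in $\A(\alpha,\beta)$ does \emph{not} imply (semi)stability in the ambient heart $\Coh^{0}_{\alpha,\beta}(Q)$: a destabilizing subobject in the larger heart need not lie in $\Ku(Q)$. Bridging precisely this gap is the content of the paper's Claim \ref{claim-for-charact-stabob-inKu}, whose proof is a nontrivial numerical analysis — one takes the limit $(\alpha,\beta)\to(0,-1)$, constrains the truncated Chern character of a putative destabilizing quotient using \cite[Remark 5.12]{BLMS23} and \cite[Proposition 3.2]{Li19}, identifies the only surviving candidates as $S[1]$ and $I_{\ell}[1]$ (via the results of the appendices), and derives a contradiction from $\Ext^{1}(S,I_{\ell})=0$. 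Without this step, your deformation to the large volume limit never gets off the ground, and also the moduli-space bookkeeping (you can no longer assert that $F$ concentrates in one cohomological degree).

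Two smaller points. First, even after $\sigma^{0}_{\alpha,\beta}$-semistability is established, one must still dispose of the alternative in \cite[Proposition 4.1]{FP23} where $F$ sits in a triangle with a punctual torsion object $T$; your appeal to Proposition \ref{maximal-ch-Q} is the right tool here, so this part of your sketch is salvageable. Second, your claim that $\ext^{i}(F,F)=0$ for $i\geq 2$ for an \emph{arbitrary} $\sigma$-stable $F\in\Ku(Q)$ by ``arguments parallel to Lemma \ref{Px-ext}'' is not justified as stated — those arguments use the explicit resolutions of $\mathcal{P}_{x}$ — although it becomes unnecessary once surjectivity is proved, since the paper instead identifies the two moduli stacks outright and invokes uniqueness of the coarse (good) moduli space from \cite{BLMNPS21} and \cite{Alp13}, which is cleaner than your \'etale-open-immersion route.
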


To this end,
we first give a characterization of $\sigma$-stable objects in the Bridgeland moduli space $M_{\sigma}(2\lambda_{2}-\lambda_{1})$.

\begin{lem}\label{charact-stabob-inKu}
Let $\sigma \in \mathcal{K}$ be a stability condition.
If $E$ is a $\sigma$-stable object in $\Ku(Q)$ with class $2\lambda_{2}-\lambda_{1}$,
then up to a shift, $E$ is either the sheaf $\mathcal{P}_{x}$ or the vector bundle $E_{D}$  in Theorem \ref{claasifying-sh-ch-v}.
\end{lem}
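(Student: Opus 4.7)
The plan is to reduce the classification to Theorem \ref{claasifying-sh-ch-v} by showing that, up to a shift, $E$ must be a $\mu_H$-slope stable sheaf of Chern character $\mathbf{v}$. Since the set of $\sigma$-stable objects is invariant under the $\tilde{\mathrm{GL}}^{+}_{2}(\RN)$-action on $\mathrm{Stab}(\Ku(Q))$, Lemma \ref{GL-one-orbitV+} allows me to fix $\sigma = \sigma(\alpha_0, -\tfrac{1}{2})$ with $0 < \alpha_0 < \tfrac{1}{2}$; this choice lies in $\widetilde{V}_L$ since $\beta = -\tfrac{1}{2} < -\tfrac{1}{3} = \mu_H(\mathbf{v})$. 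A direct computation of $Z_{\alpha_0, -1/2}(\mathbf{v})$ yields a strictly negative tilt slope, so after an appropriate shift I may assume $\tilde E := E[1]$ lies in the heart $\A(\alpha_0, -\tfrac{1}{2})$ and is $\sigma$-stable with Chern character $-\mathbf{v}$.

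The core step is to upgrade $\sigma$-stability of $\tilde E$ in $\A(\alpha_0, -\tfrac{1}{2})$ to $\sigma_{\alpha_0, -1/2}^{0}$-semistability of $\tilde E$ in the ambient heart $\Coh_{\alpha_0, -1/2}^{0}(Q)$, using the standard mechanism from \cite[Section 5]{BLMS23}: any Harder--Narasimhan destabilizer of $\tilde E$ in $\Coh_{\alpha_0, -1/2}^{0}(Q)$ would, via the Kuznetsov projection functor $\PR$, produce a destabilizing subquotient in $\A(\alpha_0, -\tfrac{1}{2})$, contradicting the hypothesis. Decomposing $\tilde E$ with respect to the tilt $t$-structure on $\Coh^{-1/2}(Q)$ gives a distinguished triangle
\[
\mathcal{H}^{-1}_{\mathrm{tilt}}(\tilde E)[1] \longrightarrow \tilde E \longrightarrow \mathcal{H}^{0}_{\mathrm{tilt}}(\tilde E),
\]
and I would rule out $\mathcal{H}^{0}_{\mathrm{tilt}}(\tilde E) \neq 0$ via Chern-character bookkeeping together with Proposition \ref{classify-torsion-sheaf} and the constraints of Remark \ref{actual-restrict}. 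This forces $\tilde E = F[1]$ for a $\sigma_{\alpha_0, -1/2}$-semistable sheaf $F \in \Coh^{-1/2}(Q)$ with $\ch(F) = \mathbf{v}$.

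Once $\tilde E = F[1]$, Lemma \ref{no-wall-1} forbids any actual wall for $\ch_{\leq 2}(F) = (3, -H, -\tfrac{1}{2}H^{2})$ to the left of the vertical wall $\beta = -\tfrac{1}{3}$, so $F$ is $\sigma_{\alpha, -1/2}$-semistable for every $\alpha > 0$. Passing to the large-volume limit via Corollary \ref{limit-tilt-stab-Gie-stab} (the primitivity $\gcd(3, -1) = 1$ applies) identifies $F$ as a $\mu_H$-slope stable sheaf with Chern character $\mathbf{v}$, at which point Theorem \ref{claasifying-sh-ch-v} immediately forces $F \cong \mathcal{P}_x$ for some closed point $x \in Q$ or $F \cong E_D$ for some divisor $D$ of type $(2,0)$ on a smooth hyperplane section $Y \subset Q$. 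The main obstacle I anticipate is the middle paragraph: both the upgrade from $\sigma$-stability in $\Ku(Q)$ to $\sigma_{\alpha_0, -1/2}^{0}$-semistability in the ambient heart, and the elimination of the torsion cohomology $\mathcal{H}^{0}_{\mathrm{tilt}}(\tilde E)$, are the places where one must genuinely exploit the Kuznetsov structure, since these steps cannot be handled by numerical invariants alone.
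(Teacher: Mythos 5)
Your overall skeleton (reduce to tilt-semistability in the ambient heart, pass to the large-volume limit via Corollary \ref{limit-tilt-stab-Gie-stab}, then invoke Theorem \ref{claasifying-sh-ch-v}) matches the paper, and your endgame for eliminating the torsion piece is essentially right: the paper uses \cite[Proposition 4.1]{FP23} to get the triangle $F[1]\to G\to T$ with $T$ supported on points, and then the bound $\ch_{3}\leq \frac{1}{3}H^{3}$ from Proposition \ref{maximal-ch-Q} (not Proposition \ref{classify-torsion-sheaf}, which concerns a different class) to force $T=0$. But the step you yourself flag as the main obstacle --- upgrading $\sigma$-stability in $\Ku(Q)$ to $\sigma^{0}_{\alpha,\beta}$-semistability in $\Coh^{0}_{\alpha,\beta}(Q)$ --- is a genuine gap as proposed. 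There is no ``standard mechanism'' by which a Harder--Narasimhan destabilizer of $\tilde{E}$ in the ambient heart is pushed through the projection functor $\PR$ to a destabilizing subquotient in $\A(\alpha,\beta)$: the HN factors need not lie in $\Ku(Q)$, $\PR$ is not $t$-exact for these hearts, and applying it does not produce subobjects or quotients of $\tilde{E}$ in $\A(\alpha,\beta)$. This is precisely why the implication fails to be formal.

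What the paper actually does (Claim \ref{claim-for-charact-stabob-inKu}) is a limit argument at the corner point $(0,-1)$ of the region $V_{L}$, not at your fixed $\beta=-\tfrac{1}{2}$. One assumes $G=E[1]$ is never $\sigma^{0}_{\alpha,\beta}$-semistable on $V_{L}$, uses \cite[Proposition 2.2.2]{BMT14} to get a locally constant HN filtration on a neighborhood of $(0,-1)$, and exploits the degeneration $\lim_{\alpha\to 0^{+}}Z^{0}_{\alpha,\alpha-1}(G)=0$ to force the destabilizing quotient $B$ to satisfy $c=-a-2b$ and $\lim Z^{0}(B)=0$ as well. Combining the slope inequalities with the bound $\mu^{0}_{\alpha,\beta}(B)\geq -2$ from \cite[Remark 5.12]{BLMS23} pins down $a+b\in\{0,-1,-2\}$, and the rank bound of \cite[Proposition 3.2]{Li19} reduces to the single hard case $\ch_{\leq 2}(B)=(-2,H,0)$; there one identifies $B\cong S[1]$ and $A\cong I_{\ell}[1]$ (via Proposition \ref{semistab-char-spinorbd} and Theorem \ref{Hilb-line-Q}) and derives a contradiction from $\Ext^{1}(S,I_{\ell})=0$, which would split $G[-1]\cong I_{\ell}\oplus S$ against stability. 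None of these ingredients appear in your proposal, so the middle paragraph needs to be replaced by this (or an equivalent) argument rather than an appeal to the projection functor.
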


\begin{proof}
By hypothesis, the Chern character of $E$ is $\ch(E)=\pm(3,-H,-\frac{1}{2}H^{2},\frac{1}{3}H^{3})$.
By Lemma \ref{GL-one-orbitV+},
we may assume that $E\in \Ku(Q)$ is $\sigma(\alpha,\beta)$-stable object with class $2\lambda_{2}-\lambda_{1}$ for $(\alpha,\beta)\in V_{L}:=V\cap \widetilde{V}_{L}$.
Hence, we may assume that $G:= E[1]\in \A(\alpha,\beta)$ is $\sigma(\alpha,\beta)$-stable with the slope
$$
\mu_{\alpha,\beta}^{0}(G)=\frac{-(3\beta+1)}{\frac{1}{2}-\beta-\frac{3}{2}\beta^{2}+\frac{3}{2}\alpha^{2}}.
$$
It is important to note that for $0<\alpha<\frac{1}{2}$, $(\alpha,\alpha-1)\in V_{L}$ and the limit of $\mu_{\alpha,\alpha-1}^{0}$-slope
$$
\lim_{\alpha \to 0^{+}} \mu_{\alpha,\alpha-1}^{0}(G)
= \lim_{\alpha \to 0^{+} } (\frac{1}{\alpha}-\frac{3}{2})=+\infty.
$$

\begin{claim}\label{claim-for-charact-stabob-inKu}
$G$ is $\sigma_{\alpha,\beta}^{0}$-semistable for some $(\alpha,\beta)\in V_{L}$.
\end{claim}

Temporarily admitting this claim,
using \cite[Proposition 4.1]{FP23} which is also valid for smooth quadric threefold,
there is either (i) $G$ is $\sigma_{\alpha,\beta}$-semistable; 
or (ii)  there is a distinguished triangle
$$
\xymatrix@C=0.5cm{
F[1] \ar[r]^{} & G \ar[r]^{} & T }
$$
where $F\in \mathcal{F}^{0}_{\alpha,\beta}$ and $T\in \mathcal{T}^{0}_{\alpha,\beta}$ is supported on points. 

In the case (i), we may assume $G:=E$.
By hypothesis, $G$ is $\sigma_{\alpha,\beta}$-semistable.
By Lemma \ref{no-wall-1}, 
there is no wall for objects with $\ch_{\leq 2}(G)=(3,-H,-\frac{1}{2}H^{2})$ to the left of the vertical wall $\beta=-\frac{1}{3}$.
Hence, $G$ is $\sigma_{\alpha,\beta}$-semistable for $\alpha \gg 0$.
By Corollary \ref{limit-tilt-stab-Gie-stab},
$G$ is a $\mu_{H}$-slope stable torsion-free sheaf.
By Theorem \ref{claasifying-sh-ch-v}, it follows that the assertion holds.

In the case (ii), we may assume that $G:=E[1]$.
Since $G$ is $\sigma_{\alpha,\beta}^{0}$-semistable,
it follows that $F$ must be $\sigma_{\alpha,\beta}$-semistable.
Therefore, we may assume that the Chern character of $F$ is
$$
\ch(F)=(3,-H,-\frac{1}{2}H^{2},(\frac{1}{3}+\frac{t}{2})H^{3}),
$$
where $t\geq 0$. 
By Lemma \ref{no-wall-1}, 
there is no wall for $F$ with $\ch_{\leq 2}(F)=(3,-H,-\frac{1}{2}H^{2})$ to the left of the vertical wall $\beta=-\frac{1}{3}$ and thus $F$ is $\sigma_{\alpha,\beta}$-semistable for $\alpha\gg0$.
Since $\mu_{H}(F)=-\frac{1}{3}>\beta$, 
by Corollary \ref{limit-tilt-stab-Gie-stab}, $F$ is a $\mu_{H}$-slope stable torsion-free sheaf.
By Proposition \ref{maximal-ch-Q},  
$\ch_{3}(F)=(\frac{1}{3}+\frac{t}{2})H^{3}\leq \frac{1}{3}H^{3}$, and since $t\geq 0$, so $t=0$. 
This means that $T=0$ and $G=F[1]$. 
As a result, we are reduced to the case (i).
This concludes the proof of Lemma \ref{charact-stabob-inKu}.
\end{proof}

Next we present the proof of Claim \ref{claim-for-charact-stabob-inKu}.

\begin{proof}[Proof of Claim \ref{claim-for-charact-stabob-inKu}]
We conclude by contradiction using the same idea as \cite[Proposition 4.1]{FLZ23}).
Suppose that $G$ is not $\sigma_{\alpha,\beta}^{0}$-semistable for all $(\alpha,\beta)\in V_{L}$.
Since $G$ is $\sigma(\alpha,\beta)$-stable, 
by \cite[Proposition 2.2.2]{BMT14}, 
there is an open ball $U$ in $\RN^{2}$ containing the point $(0,-1)$ such that for any $(\alpha,\beta)\in U\cap V_{L}$, 
$G\in \A(\alpha,\beta)$ and $G$ has constant HN filtration with respect to $\sigma^{0}_{\alpha,\beta}$.
Suppose that $B$ is the destabilizing quotient object of $G$ with minimum slope and 
$A\to G\to B$ 
is the destabilizing sequence of $E$ with respect to $\sigma_{\alpha,\beta}^{0}$ for $(\alpha,\beta)\in U\cap V_{L}$,
where $A,B\in \Coh_{\alpha,\beta}^{0}(Q)$.

We set $\ch_{\leq 2}(B):=(a,bH,\frac{c}{2}H^2)$ with $a,b,c\in \ZN$.
Note that $\mu_{\alpha,\beta}^{0}(G)>\mu_{\alpha,\beta}^{0}(B)$ and 
$\Im(Z_{\alpha,\beta}^{0}(G))\geq \Im(Z_{\alpha,\beta}^{0}(B))>0$  for any $(\alpha,\beta)\in U\cap V_{L}$. 
Since $\displaystyle \lim_{\alpha \to 0^{+}} Z_{\alpha,\alpha-1}^{0}(G)=0$,
it follows that $\displaystyle \lim_{\alpha \to 0^{+}} Z_{\alpha,\alpha-1}^{0}(B)=0$ and thus $c=-a-2b$. 
Therefore, we have
\begin{equation}\label{claim-im-ineq}
0<(\beta^{2}-\alpha^{2}-1)\frac{a}{2}-(\beta+1)b\leq \frac{1}{2}-\beta-\frac{3}{2}\beta^{2}+\frac{3}{2}\alpha^{2}
\end{equation} 
and
\begin{equation}\label{claim-mu-slope-ineq}
\frac{(\beta a-b)}{(\beta^{2}-\alpha^{2}-1)\frac{a}{2}-(\beta+1)b}
<
\frac{-(3\beta+1)}{\frac{1}{2}-\beta-\frac{3}{2}\beta^{2}+\frac{3}{2}\alpha^{2}}
\end{equation}
for all $(\alpha,\beta)\in U\cap V_{L}$.
As a result,
we have
\begin{equation}\label{claim-ineq-comb}
 \beta a-b < -(3\beta+1).
\end{equation}
By \cite[Remark 5.12]{BLMS23}, 
one has $\mu_{\alpha,\beta}^0(B) \geq -2$ for $(\alpha,\beta)\in V_{L}$ .
Considering the limit cases of \eqref{claim-mu-slope-ineq} and \eqref{claim-ineq-comb} when $(\alpha,\beta) \to (0,-1)$, we gain $a+b=0$, $-1$, or $-2$. 

(1) If $a+b=0$, then it contradicts to \eqref{claim-mu-slope-ineq}.  

(2) If $a+b=-1$, then $\ch_{\leq 2}(B)=(a,(-a-1)H,\frac{a+2}{2}H^{2})$.
Since $B$ is $\sigma_{\alpha,\beta}^{0}$-semistable, 
by \cite[Proposition 4.1]{FP23}, 
$\ch_{\leq 2}(B)$ is a possible truncated Chern character of a $\sigma_{\alpha,\beta}$-semistable object $B'[1]$ for $B' \in \Coh^{\beta}(Q)$.
Note that for the quadric threefold $Q$, the open region $R_{3/2rd}=R_{1/4}$ in \cite[Proposition 3.2]{Li19}.
It follows that $|a|\leq 2$.
By direct computations, it follows that $a=\pm 1$ and $a=2$ contradict to \eqref{claim-mu-slope-ineq}.
If $a=-2$, then $\ch_{\leq 2}(B)=(-2,H,0)$ and $\ch_{\leq 2}(A)=(-1,0,\frac{1}{2}H^{2})$.

\begin{claim}\label{ch-B-part}
$\ch(B)=(-2,H,0,-\frac{1}{12}H^{3})$ and thus $\ch(A)=(-1,0,\frac{1}{2}H^{2},-\frac{1}{4}H^{3})$.
\end{claim}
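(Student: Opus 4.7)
The plan is to pin down $\ch_3(B)$ by obtaining tight upper bounds on $\ch_3$ for both $A$ and $B$ (in their desuspended form) and matching them against additivity of the Chern character in the triangle $A\to G\to B$.

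First, I would reduce $A$ and $B$ to shifts of sheaves. Applying the analogue of \cite[Proposition~4.1]{FP23} for the quadric that was already invoked in Claim~\ref{claim-for-charact-stabob-inKu} gives (up to the zero-dimensional torsion ambiguity, handled exactly as in the earlier case $a+b=-1$) decompositions $A\cong \tilde A[1]$ and $B\cong \tilde B[1]$ with $\tilde A,\tilde B\in \Coh^{\beta}(Q)$ tilt semistable. A wall analysis along the vertical line $\beta=-\tfrac{1}{2}$ for $\ch_{\leq 2}(\tilde B)=(2,-H,0)$, parallel to Lemma~\ref{no-wall-1}, shows no semicircular walls occur: the slope equation along $\beta=-1$ combined with the Bogomolov inequality on both factors of any putative destabilizing sequence admits no admissible integer solution. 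Corollary~\ref{limit-tilt-stab-Gie-stab} then forces $\tilde B$ to be $\mu_H$-slope stable, and a parallel analysis along $\beta=0$ forces $\tilde A$ to be $\mu_H$-slope semistable.

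Next, I would identify $\tilde A$ as a twisted ideal sheaf and bound $\ch_3(\tilde A)$. Since $\mathrm{Pic}(Q)=\ZN H$, the reflexive hull $\tilde A^{\vee\vee}$ equals $\CO$, so $\tilde A=I_Z$ for some subscheme $Z\subset Q$. The numerical constraint $\ch_2(\tilde A)=-\tfrac12 H^2$, combined with the computation $\ch(\CO_{\ell})=(0,0,\tfrac12 H^2,-\tfrac14 H^3)$ from the proof of Proposition~\ref{new-G-st-sh}, forces $Z=\ell\cup W$ for a line $\ell\subset Q$ and a zero-dimensional subscheme $W$. Since $\ch_3(\CO_p)>0$ for every closed point $p\in Q$, we obtain $\ch_3(\tilde A)\leq \tfrac14 H^3$, with equality if and only if $W=\emptyset$.

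For the symmetric upper bound on $\ch_3(\tilde B)$, I would mimic the proof of Proposition~\ref{maximal-ch-Q}. The vanishings $\Hom(\CO,\tilde B)=0$ and, by Serre duality, $\Ext^3(\CO,\tilde B)\cong \Hom(\tilde B,\CO(-3H))^{\vee}=0$ follow from $\mu_H$-slope stability via $0>-\tfrac12>-3$. The decisive vanishing $\Ext^2(\CO,\tilde B)\cong \Hom(\tilde B,\CO(-3H)[1])^{\vee}=0$ follows from $\tilde B$ and $\CO(-3H)[1]$ both being $\sigma_{\alpha,-1}$-stable for all $\alpha>0$ (the former from the wall analysis above, the latter by \cite[Corollary~3.11]{BMS16}), together with the comparison of limit $\sigma_{\alpha,-1}$-slopes as $\alpha\to 0^+$. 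Then \eqref{HRR-quadric3} yields $\chi(\tilde B)\leq 0$, i.e.\ $\ch_3(\tilde B)\leq \tfrac{1}{12}H^3$. Additivity of Chern characters in $A\to G\to B$ gives $\ch_3(\tilde A)+\ch_3(\tilde B)=-\ch_3(G)=\tfrac13 H^3=\tfrac14 H^3+\tfrac{1}{12}H^3$, which forces both bounds to be equalities; hence $\ch(B)=(-2,H,0,-\tfrac{1}{12}H^3)$ and $\ch(A)=(-1,0,\tfrac12 H^2,-\tfrac14 H^3)$.

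The most delicate part is the wall-and-chamber bookkeeping in the first step: an unexpected actual wall in either Chern character would force $\tilde A$ or $\tilde B$ to have a non-trivial Jordan--H\"{o}lder filtration and would invalidate the clean reflexive-hull/ideal-sheaf descriptions used to extract the two upper bounds. The final matching argument is crisp, but it relies crucially on both upper bounds being sharp, so any slack in Steps~2--3 would break the conclusion.
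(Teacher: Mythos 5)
Your strategy is sound and arrives at the correct answer, but it is a genuinely different (and much longer) route than the paper's. The paper proves the claim in a few lines by showing $\Hom(\CO,B[i])=0$ for \emph{all} $i$ --- the cases $i\neq 0,1$ from the heart structure of $\Coh^{0}_{\alpha,\beta}(Q)$ plus Serre duality, $i=1$ from the slope comparison $(\mu^{0}_{\alpha,\beta})^{-}(A)>\mu^{0}_{\alpha,\beta}(B)>\mu^{0}_{\alpha,\beta}(\CO(-3H)[2])$, and $i=0$ from $E\in\Ku(Q)$ together with the long exact sequence of the triangle $A\to G\to B$ --- so that $\chi(\CO,B)=0$, and then \eqref{HRR-quadric3} with $\ch_{\leq 2}(B)=(-2,H,0)$ forces $\ch_{3}(B)=-\frac{1}{12}H^{3}$. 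Your approach instead front-loads the classification that the paper only performs \emph{after} the claim (via Proposition \ref{semistab-char-spinorbd} and the rank-one analysis in Theorem \ref{Hilb-line-Q}): you identify the underlying sheaves of $A$ and $B$, prove the sharp bounds $\ch_{3}\leq\frac{1}{4}H^{3}$ and $\ch_{3}\leq\frac{1}{12}H^{3}$, and exploit $\frac{1}{4}+\frac{1}{12}=\frac{1}{3}$. This buys you something: the equality case forces the zero-dimensional torsion pieces $T$ in the \cite[Proposition 4.1]{FP23} triangles to vanish for free, and it hands you $B\cong S[1]$, $A\cong I_{\ell}[1]$ essentially simultaneously. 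The needed ingredients (the no-wall analyses for $(2,-H,0)$ and $(1,0,-\frac{1}{2}H^{2})$ along $\beta=-1$, and the $\ch_{3}$ bound of Lemma \ref{spinor-reflexive}) are all genuinely available, so the matching argument closes.

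The one real caveat is your first step for $A$. You apply the \cite[Proposition 4.1]{FP23} dichotomy to $A$ to write $A\cong\tilde A[1]$ with $\tilde A$ tilt semistable, but that dichotomy requires $A$ to be $\sigma^{0}_{\alpha,\beta}$-semistable, and $A$ is defined only as the complement of the \emph{minimal-slope} HN quotient $B$ of $G$; a priori it is an iterated extension of semistable factors of larger slope, not itself semistable. The paper's proof of this claim is deliberately insensitive to that point --- it only uses the lower bound $(\mu^{0}_{\alpha,\beta})^{-}(A)$ on the slopes of the HN factors of $A$ --- whereas your identification of $\tilde A$ as an ideal sheaf $I_{Z}$ with $Z\supseteq\ell$ breaks down if $A$ has a nontrivial HN filtration. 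To repair this you would either have to rule out a third HN factor by the same numerical bookkeeping (the constraints $\Im Z^{0}_{\alpha,\beta}>0$ and $\mu^{0}_{\alpha,\beta}\geq -2$ applied to each factor), or run your $\ch_{3}$ bound factor-by-factor on the HN pieces of $A$ and sum; neither is automatic from what you wrote. Everything else --- the torsion bookkeeping, the reflexive-hull argument $\tilde A^{\vee\vee}\cong\CO$, the Serre-duality vanishings for $\tilde B$, and the final matching --- is correct.
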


\begin{proof}
By Hirzebruch--Riemann--Roch formula \eqref{HRR-quadric3}
it is sufficient to show that $\Hom(\CO, B[i])=0$ for all $i\in \ZN$.  
Since $\CO, \CO(-3H)[2]\in \Coh^0_{\alpha,\beta}(X)$, 
by Serre duality, 
we get $\Hom(\CO,B[i])\cong \Hom(B,\CO(-3H)[3-i])=0$ for $i\neq 0,1$. 
Note that $\displaystyle \lim_{(\alpha,\beta) \to (0,-1)}\mu^{0}_{\alpha,\beta}(B)=+\infty$.
By shrinking the open ball $U$, we may assume  
$
(\mu^{0}_{\alpha,\beta})^{-}(A) > \mu^{0}_{\alpha,\beta}(B) > \mu^{0}_{\alpha,\beta}(\CO(-3H)[2]).
$
Based on the $\sigma_{\alpha,\beta}^{0}$-semistability,
Serre duality yields that $\Hom(\CO, B[1])\cong \Hom(B,\CO(-3H)[2])=0$ and $\Hom(\CO, A[1])\cong\Hom(A,\CO(-3H)[2]) = 0$.
Moreover, it follows from $E\in \Ku(Q)$ that $\Hom(\CO, B)\cong \Hom(\CO, A[1])=0$. 
This proves the claim.
\end{proof}

Since $B$ is $\sigma_{\alpha,\beta}^{0}$-semistable, 
by Proposition \ref{semistab-char-spinorbd}, we obtain $B\cong S[1]$.
Since $A$ is $\sigma_{\alpha,\beta}^{0}$-semistable, 
as shown in Theorem \ref{Hilb-line-Q}, 
$A\cong I_{\ell}[1]$ for some line $\ell\subset Q$.
Therefore, there is a short exact sequence $0\to I_{\ell} \to G[-1]\to S \to 0$ and thus $G[-1]\cong I_{\ell}\oplus S$, as $\Ext^{1}(S,I_{\ell})=0$.
This contradicts the stability.

(3) If $a+b=-2$, then $\ch_{\leq 2}(B)=(a,(-a-2)H,\frac{a+4}{2}H^{2})$.
By \cite[Proposition 3.2]{Li19} again, one has $|a|\leq 3$.
Since $\beta>-1$, by \eqref{claim-ineq-comb}, we have $a<-3$, a contradiction.
This finishes the proof of Claim \ref{claim-for-charact-stabob-inKu}.
\end{proof}

Now we are in the position to finish the proof of Proposition \ref{main-thm-iso-prop3}.

\begin{proof}[Proof of Proposition \ref{main-thm-iso-prop3}]
Let us begin with some facts on the two moduli spaces.
\begin{itemize}
\item 
By \cite[Theorem 21.24 (2)]{BLMNPS21},
the moduli stack $\mathcal{M}_{\sigma}(2\lambda_{2}-\lambda_{1})$ is an algebraic stack of finite type over $\CN$ for any stability condition $\sigma \in \mathcal{K}$; see \cite[Definition 21.11]{BLMNPS21} for the definition of the moduli stack.
Since the class $2\lambda_{2}-\lambda_{1}$ is primitive, 
this yields that stability and semistability coincide for objects of the class $2\lambda_{2}-\lambda_{1}$.
By \cite[Theorem 21.24 (2)]{BLMNPS21}, 
$\mathcal{M}_{\sigma}(2\lambda_{2}-\lambda_{1})$ admits a coarse moduli space $M_{\sigma}(2\lambda_{2}-\lambda_{1})$ which is an algebraic space proper over $\CN$. 
Moreover, by \cite[Theorem 21.24 (3)]{BLMNPS21}, 
$M_{\sigma}(2\lambda_{2}-\lambda_{1})$ is also a good moduli space of $\mathcal{M}_{\sigma}(2\lambda_{2}-\lambda_{1})$.

 \item 
Let $\overline{\mathcal{M}}_{Q}(\mathbf{v})$ be the the moduli stack of $H$-Gieseker semistable sheaves with the Chern character $\mathbf{v}$.
Since $\mathbf{v}$ is primitive,
then $\overline{\mathcal{M}}_{Q}(\mathbf{v})$ admits a good moduli space $\overline{M}_{Q}(\mathbf{v})$
which is also a coarse moduli space;  see \cite[Example 8.7]{Alp13}.
\end{itemize}

By Proposition \ref{start-point-stable} and Lemma \ref{E_D-BSC-stable},
for a closed point $x\in Q$ and a divisor $D$ of type $(2,0)$on a smooth hyperplane section of $Q$,
the sheaf $\mathcal{P}_{x}$ and the vector bundle $E_{D}$ are $\sigma(\alpha,\beta)$-stable for any $(\alpha,\beta)\in \widetilde{V}_{L}$;
$\mathcal{P}_{x}[1]$ and $E_{D}[1]$ are $\sigma(\alpha,\beta)$-stable for any $(\alpha,\beta)\in \widetilde{V}_{R}$.
Therefore, $\mathcal{P}_{x}$ and  $E_{D}$ are $\sigma$-stable for any  $\sigma \in \mathcal{K}$.
Conversely, by Lemma \ref{charact-stabob-inKu}, 
for any $\sigma$-stable object in $\Ku(Q)$ with class $2\lambda_{2}-\lambda_{1}$,  up to a shift, it is either the sheaf $\mathcal{P}_{x}$ or the vector bundle $E_{D}$. 
Therefore, we may identify these two moduli stacks $\mathcal{M}_{\sigma}(2\lambda_{2}-\lambda_{1})$ and $\overline{\mathcal{M}}_{Q}(\mathbf{v})$. 
As a consequence, by the uniqueness of coarse moduli spaces,
the moduli space $M_{\sigma}(2\lambda_{2}-\lambda_{1})$ is isomorphic to the moduli space $\overline{M}_{Q}(\mathbf{v})$.
This completes the proof of Proposition  \ref{main-thm-iso-prop3} and thus the proof of Theorem \ref{main-thm-iso}.
\end{proof}

\begin{rem}
As mentioned previously, $\Ku(Q)$ is equivalent to the derived category of the Kronecker quiver $K(4)$.
Therefore, the moduli space $M_{\sigma}(2\lambda_{2}-\lambda_{1})$ may be realized as a King's moduli space  of finite-dimensional $K(4)$-representations as in \cite{Kin94}.
\end{rem}

As a direct consequence of Theorem \ref{main-thm-iso} and Lemma \ref{Q-embed-Giseker-mod},
we have:

\begin{cor}\label{Q-embed-Bridgeland-mod}
There is well-defined embedding
\begin{eqnarray}\label{embed-morph-BMod}
\Phi_{\sigma}: Q &\longrightarrow& M_{\sigma}(2\lambda_{2}-\lambda_{1})  \\
         x &\longmapsto& \mathcal{P}_{x} \nonumber
\end{eqnarray}
for every stability condition $\sigma\in \mathcal{K}$.
\end{cor}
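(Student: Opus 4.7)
The plan is to obtain $\Phi_{\sigma}$ by composing the natural map $\iota \colon Q \to \overline{M}_{Q}(\mathbf{v})$ with the isomorphism from Theorem \ref{main-thm-iso}, and then deduce that the composition is an embedding.

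First I would note that for every closed point $x \in Q$, Proposition \ref{Kp-slope-stable} shows $\mathcal{P}_{x}$ is a $\mu_{H}$-slope stable sheaf with Chern character $\mathbf{v}$, and Proposition \ref{start-point-stable} shows $\mathcal{P}_{x}$ is $\sigma(\alpha,\beta)$-stable in $\Ku(Q)$ for every $(\alpha,\beta) \in \widetilde{V}_{L}$. Since every $\sigma \in \mathcal{K}$ is obtained from some $\sigma(\alpha_{0},-\tfrac{1}{2})$ by the $\tilde{\mathrm{GL}}^{+}_{2}(\RN)$-action (Lemma \ref{GL-one-orbitV+}), and this action preserves the set of stable objects, $\mathcal{P}_{x}$ defines a $\CN$-point of $M_{\sigma}(2\lambda_{2}-\lambda_{1})$ for every $\sigma \in \mathcal{K}$. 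Hence the set-theoretic rule $x \mapsto \mathcal{P}_{x}$ lands in $M_{\sigma}(2\lambda_{2}-\lambda_{1})$.

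Next I would assemble the morphism scheme-theoretically. The assignment $x \mapsto \mathcal{P}_{x}$ is defined by the universal projection: the diagonal $\Delta \subset Q \times Q$ gives the universal skyscraper sheaf on $Q \times Q$, and applying the projection functor $\mathrm{pr} = \mathrm{L}_{\CO} \circ \mathrm{L}_{\CO(H)}[2]$ fiberwise (in families) yields a $Q$-flat family of objects in $\Ku(Q)$ with fiber $\mathcal{P}_{x}$ over $x$. This family is exactly the one used in Lemma \ref{Q-embed-Giseker-mod} to produce the morphism $\iota \colon Q \to \overline{M}_{Q}(\mathbf{v})$, and by that lemma $\iota$ is an embedding. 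Composing $\iota$ with the isomorphism $\overline{M}_{Q}(\mathbf{v}) \xrightarrow{\sim} M_{\sigma}(2\lambda_{2}-\lambda_{1})$ of Theorem \ref{main-thm-iso} yields $\Phi_{\sigma}$, and the composition of an embedding with an isomorphism is again an embedding.

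The only subtlety is to check that the isomorphism of Theorem \ref{main-thm-iso} identifies the $\iota$-image of $x$ with the $\Phi_{\sigma}$-image of $x$, but this is immediate from the proof of Proposition \ref{main-thm-iso-prop3}: that proof identifies the two moduli \emph{stacks} $\overline{\mathcal{M}}_{Q}(\mathbf{v})$ and $\mathcal{M}_{\sigma}(2\lambda_{2}-\lambda_{1})$ by sending a sheaf to the same object viewed in $\Ku(Q)$, and $\mathcal{P}_{x} \in \Ku(Q)$ already, so it is sent to itself. Since there is essentially no obstacle beyond citing the previous results, the entire argument is a short deduction; the only thing one must be careful about is that $\Phi_{\sigma}$ is well-defined for \emph{every} $\sigma \in \mathcal{K}$, which is precisely what the $\tilde{\mathrm{GL}}^{+}_{2}(\RN)$-invariance in Lemma \ref{GL-one-orbitV+} guarantees.
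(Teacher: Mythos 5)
Your proposal is correct and matches the paper's argument: the corollary is stated there as a direct consequence of Theorem \ref{main-thm-iso} and Lemma \ref{Q-embed-Giseker-mod}, i.e.\ precisely the composition of the embedding $\iota$ with the isomorphism $\overline{M}_{Q}(\mathbf{v})\cong M_{\sigma}(2\lambda_{2}-\lambda_{1})$. The extra checks you include (stability of $\mathcal{P}_{x}$ for all $\sigma\in\mathcal{K}$ and the compatibility of the stack identification) are sound and only make explicit what the paper leaves implicit.
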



\section{Application}\label{app-sect}

In this section,
we will show that the smooth quadric threefold $Q$ can be reinterpreted as a Brill--Noether locus in a Bridgeland moduli space on the Kuznetsov component $\Ku(Q)$.
For this purpose, we consider the Euler exact sequence in $\PB^{4}$.
By restricting the Euler exact sequence to $Q$,
there is a short exact sequence
\begin{equation}\label{U-bundle}
\xymatrix@C=0.5cm{
0 \ar[r]^{} & \mathfrak{U}
\ar[r]^{} & H^{0}(Q,\CO(H))\otimes \CO
\ar[r]^{} & \CO(H) \ar[r]^{} & 0,} 
\end{equation}
where $H^{0}(Q,\CO(H))=\CN^{5}$, and $\mathfrak{U}:=  \Omega_{\PB^{4}}|_{Q}\otimes \CO(H)$ is a vector bundle of rank $4$.
Hence, the Chern character of $\mathfrak{U}$ is 
$$
\ch(\mathfrak{U})=(4,-H,-\frac{1}{2}H^{2},-\frac{1}{6}H^{3}).
$$
Since $H^{0}(Q,\mathfrak{U})=0$, similar to Proposition \ref{Kp-slope-stable},
it is not difficult to see that the vector bundle $\mathfrak{U}$ is $\mu_{H}$-slope stable.
Moreover, applying Serre functor to $\CO(H)$, we have $\D(Q)=\langle \Ku(Q),\CO,  \CO(H)\rangle=\langle \CO(-2H), \Ku(Q),\CO \rangle$.
By the definition of left mutation, it follows
$$
\mathfrak{U} \cong \mathrm{L}_{\CO}(\CO(H))[-1] \in \langle\CO(-2H), \Ku(Q)\rangle.
$$ 
In fact, in \cite{Yan24}, we will show that the vector bundle $\mathfrak{U}$ is the unique $\mu_{H}$-slope stable sheaf with Chern character $(4,-H,-\frac{1}{2}H^{2},-\frac{1}{6}H^{3})$.

Next, we will project the vector bundle $\mathfrak{U}$ into the Kuznetsov component $\Ku(Q)$.
We denote by $\mathfrak{U}_{Q}:=  \mathrm{R}_{\CO(-2H)} \mathfrak{U}\in \Ku(Q)$ the right mutation of $\mathfrak{U}$ through $\CO(-2H)$.
Using the exact sequence \eqref{U-bundle}, 
a direct computation shows  $\RHom(\mathfrak{U},\CO(-2H))=\CN[-2]$.
Hence, there is a distinguished triangle
\begin{equation}\label{U-bundle-project}
\xymatrix@C=0.5cm{ 
\CO(-2H)[1] \ar[r]^{} & \mathfrak{U}_{Q} \ar[r]^{} & \mathfrak{U} \ar[r]^{} &  \CO(-2H)[2]}
\end{equation} 
in $\D(Q)$.

Likewise to \cite[Definition 6.2]{JLZ22} and \cite[Theorem 6.2]{FLZ23}, 
we give the following:

\begin{defn}
For any stability condition $\sigma\in \mathcal{K}$,
the {\it Brill--Noether locus} in the moduli space $M_{\sigma}(2\lambda_{2}-\lambda_{1})$ (with respect to $\mathfrak{U}_{Q}\in \Ku(Q)$) is given by
$$
\mathcal{BN}(Q):=
\{E\in M_{\sigma}(2\lambda_{2}-\lambda_{1}) \mid \dim \Ext^{1}(E, \mathfrak{U}_{Q})=4 \}.
$$
\end{defn}

Now we can state the main result in this section.

\begin{thm}\label{Q-BN-locus}
For every stability condition $\sigma\in \mathcal{K}$, 
the morphism $\Phi_{\sigma}$ in \eqref{embed-morph-BMod} induces an isomorphism from the quadric threefold $Q$ to the Brill--Noether locus $\mathcal{BN}(Q)$.
\end{thm}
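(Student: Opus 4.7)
The plan is to identify $\mathcal{BN}(Q)$ set-theoretically with $\Phi_{\sigma}(Q)$ and then upgrade to a scheme-theoretic isomorphism. By Theorem \ref{main-thm-iso} and Theorem \ref{claasifying-sh-ch-v}, every object in $M_{\sigma}(2\lambda_{2}-\lambda_{1})$ is isomorphic to either $\mathcal{P}_{x}$ for some $x \in Q$ or $E_{D}$ for a divisor $D$ of type $(2,0)$ on a smooth hyperplane section $Y$. So it suffices to verify $\dim \Ext^{1}(\mathcal{P}_{x}, \mathfrak{U}_{Q}) = 4$ and $\dim \Ext^{1}(E_{D}, \mathfrak{U}_{Q}) \neq 4$. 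A uniform simplification applies first: since $\mathcal{P}_{x}, E_{D} \in \Ku(Q)$, Serre duality on $Q$ together with $\omega_{Q} \cong \CO(-3H)$ gives $\Hom(E, \CO(-2H)[k]) \cong \Hom(\CO(H), E[3-k])^{\vee} = 0$ for all $k$. Applying $\RHom(E,-)$ to the defining triangle \eqref{U-bundle-project} then collapses it to an isomorphism $\RHom(E, \mathfrak{U}_{Q}) \cong \RHom(E, \mathfrak{U})$, reducing the problem to computing $\Ext^{1}(E, \mathfrak{U})$.

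For $E = \mathcal{P}_{x}$, I would use the sequence \eqref{structure-exact-cor-Q} together with $R\Gamma(Q, \mathfrak{U}) = 0$ (immediate from \eqref{U-bundle}) to obtain $\RHom(\mathcal{P}_{x}, \mathfrak{U}) \cong \RHom(I_{x}(H), \mathfrak{U})[1]$. Next, from \eqref{structure-exact-Q}, Grothendieck--Verdier duality yields $\RHom(\CO_{x}, \mathfrak{U}) \cong \mathfrak{U}|_{x}[-3] \cong \CN^{4}[-3]$, while twisting \eqref{U-bundle} by $\CO(-H)$ and applying $R\Gamma$ gives $\RHom(\CO(H), \mathfrak{U}) \cong R\Gamma(Q, \mathfrak{U}(-H)) \cong \CN[-1]$. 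The degree gap between $\CN^{4}[-3]$ and $\CN[-1]$ forces both connecting maps in the long exact sequence to vanish, so $\RHom(I_{x}(H), \mathfrak{U}) \cong \CN[-1] \oplus \CN^{4}[-2]$, and the shift $[1]$ delivers $\Ext^{1}(\mathcal{P}_{x}, \mathfrak{U}_{Q}) \cong \CN^{4}$ as required.

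For $E = E_{D}$, the defining sequence $0 \to E_{D} \to \CO^{\oplus 3} \to \CO_{Y}(D) \to 0$ combined with $R\Gamma(\mathfrak{U}) = 0$ reduces matters to computing $\RHom(\CO_{Y}(D), \mathfrak{U})[1]$. Invoking Grothendieck--Verdier duality along $i_{Y}: Y \hookrightarrow Q$ with $\omega_{Y/Q} \cong \CO_{Y}(H)$ (by adjunction) rewrites this as $R\Gamma_{Y}(\mathfrak{U}|_{Y} \otimes \CO_{Y}(H-D))$. Tensoring the restricted Euler sequence $0 \to \mathfrak{U}|_{Y} \to \CO_{Y}^{\oplus 5} \to \CO_{Y}(H) \to 0$ by $\CO_{Y}(H-D)$: on the smooth quadric surface $Y \cong \PB^{1} \times \PB^{1}$, the classes $H-D$ and $2H-D$ have bidegrees $(-1,1)$ and $(0,2)$, so $R\Gamma(\CO_{Y}(H-D)) = 0$ while $R\Gamma(\CO_{Y}(2H-D)) = \CN^{3}$. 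The resulting long exact sequence then gives $H^{1}(Y, \mathfrak{U}|_{Y}(H-D)) = \CN^{3}$ and hence $\Ext^{1}(E_{D}, \mathfrak{U}_{Q}) = \CN^{3} \neq \CN^{4}$, ruling $E_{D}$ out of $\mathcal{BN}(Q)$.

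These two computations identify $\mathcal{BN}(Q)$ set-theoretically with $\Phi_{\sigma}(Q)$. Since $\Phi_{\sigma}$ is a closed embedding of the smooth variety $Q$ into $M_{\sigma}$ (Corollary \ref{Q-embed-Bridgeland-mod}) and the Brill--Noether locus inherits a natural scheme structure via Fitting ideals of the relative $\mathcal{E}xt^{1}$-sheaf on $M_{\sigma}$, the constancy of $\dim \Ext^{1} = 4$ on $\Phi_{\sigma}(Q)$ (one more than the generic value $3$ achieved on the $E_{D}$-locus) together with a tangent-space comparison upgrades the set-theoretic equality to the desired isomorphism $Q \cong \mathcal{BN}(Q)$. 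I expect the main obstacle to be the Grothendieck--Verdier computation for $\RHom(\CO_{Y}(D), \mathfrak{U})$ --- carefully tracking shifts, twists, and the identification $\omega_{Y/Q} \cong \CO_{Y}(H)$ --- while the scheme-structure step should be routine given smoothness of $Q$.
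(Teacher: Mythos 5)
Your proposal is correct and follows essentially the same strategy as the paper: reduce via Theorem \ref{main-thm-iso} to checking the two types of objects $\mathcal{P}_{x}$ and $E_{D}$, use $E\in \Ku(Q)$ and the triangle \eqref{U-bundle-project} to replace $\mathfrak{U}_{Q}$ by $\mathfrak{U}$, and compute $\RHom(\mathcal{P}_{x},\mathfrak{U})=\CN\oplus\CN^{4}[-1]$ exactly as the paper does, via \eqref{structure-exact-cor-Q}, \eqref{structure-exact-Q}, Grothendieck--Verdier duality for $\CO_{x}$, and the Euler sequence. The one genuine divergence is the $E_{D}$ computation: you apply Grothendieck--Verdier duality along $Y\hookrightarrow Q$ and compute cohomology of $\mathfrak{U}|_{Y}(H-D)$ on $\PB^{1}\times\PB^{1}$, whereas the paper applies $\Hom(\CO_{Y}(D),-)$ to the Euler sequence and uses Serre duality on $Q$; both are routine and both reach $E_{D}\notin\mathcal{BN}(Q)$. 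Notably, you land on $\Ext^{1}(E_{D},\mathfrak{U}_{Q})=\CN^{3}$ while the paper asserts $\RHom(E_{D},\mathfrak{U})=\CN^{3}[-3]$ (hence $\Ext^{1}=0$); your answer appears to be the right one, since applying $\Hom(-,\mathfrak{U})$ to $0\to E_{D}\to\CO^{\oplus 3}\to\CO_{Y}(D)\to 0$ with $R\Gamma(\mathfrak{U})=0$ gives $\Ext^{i}(E_{D},\mathfrak{U})\cong\Ext^{i+1}(\CO_{Y}(D),\mathfrak{U})$, not $\Ext^{i-1}$ as the paper's displayed isomorphisms indicate (and Serre duality forces $\Ext^{3}(E_{D},\mathfrak{U})\cong\Hom(\mathfrak{U},E_{D}(-3H))^{\vee}=0$ by slope stability, ruling out the paper's placement); in either case the conclusion of the theorem is unaffected. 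Your closing remarks on the scheme structure of $\mathcal{BN}(Q)$ go beyond what the paper records, which stops at the set-theoretic identification combined with the embedding of Corollary \ref{Q-embed-Bridgeland-mod}.
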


\begin{proof}
By Theorem \ref{main-thm-iso}, 
it suffices to show that $\mathcal{P}_{x}\in \mathcal{BN}(Q)$ and $E_{D}\notin \mathcal{BN}(Q)$, 
for a closed point $x\in Q$ and a divisor $D$ of type $(2,0)$in a smooth hyperplane section $Y\subset Q$.

We first compute $\Ext^{i}(\mathcal{P}_{x},\mathfrak{U}_{Q})$.
Since $\mathcal{P}_{x}\in \Ku(Q)$, by the exact triangle \eqref{U-bundle-project}, 
we have 
$$
\RHom(\mathcal{P}_{x},\mathfrak{U}_{Q})\cong \RHom(\mathcal{P}_{x},\mathfrak{U}).
$$
By the exact sequence \eqref{structure-exact-cor-Q}, 
we gain $\RHom(\mathcal{P}_{x},\mathfrak{U})\cong \RHom(I_{x}(H),\mathfrak{U})[1]$.
By Grothendieck duality, we get $\RHom(\CO_{x},\mathfrak{U})=\CN^{4}[-3]$.
Using the exact sequence \eqref{U-bundle}, 
we have $\RHom(\CO(H),\mathfrak{U})=\CN[-1]$.
Applying $\Hom(-,\mathfrak{U})$ to the exact sequence \eqref{structure-exact-Q}, 
we have a distinguished triangle
$$
\xymatrix@C=0.5cm{ 
\RHom(\CO_{x},\mathfrak{U}) \ar[r]^{} & \RHom(\CO(H),\mathfrak{U})  \ar[r]^{} & \RHom(I_{x}(H),\mathfrak{U}).}
$$
it follows that $\RHom(I_{x}(H),\mathfrak{U})=\CN[-1]\oplus \CN^{4}[-2]$. 
Therefore, we get $\RHom(\mathcal{P}_{x},\mathfrak{U})=\CN[0]\oplus \CN^{4}[-1]$ and $\mathcal{P}_{x}\in \mathcal{BN}(Q)$.

Next, we will compute $\Ext^{1}(E_{D},\mathfrak{U}_{Q})$.
Since $E_{D}\in \Ku(Q)$, by the exact triangle \eqref{U-bundle-project}, 
we have
$$
\RHom(E_{D},\mathfrak{U}_{Q})\cong \RHom(E_{D},\mathfrak{U}).
$$
For the divisor $D$, by Proposition \ref{new-G-st-sh}, we have 
\begin{equation}\label{VectE-9-sequ}
\xymatrix@C=0.5cm{
0 \ar[r]^{} & E_{D}
\ar[r]^{} &  \CO^{\oplus 3}
 \ar[r]^{} & \CO_{Y}(D)
 \ar[r]^{} & 0,}
\end{equation}
where $Y\subset Q$ is a smooth hyperplane section and $D\subset Y$ is a divisor of $(2,0)$ type.
Since $\mathfrak{U}$ is $\mu_{H}$-slope stable, 
so $\Hom(\CO^{\oplus 3}, \mathfrak{U})=0$,  and Serre duality implies
$$
\Ext^{3}(\CO^{\oplus 3}, \mathfrak{U})\cong \Hom(\mathfrak{U},\CO(-3H)^{\oplus 3})=0.
$$
Applying $\Hom(\CO^{\oplus 3},-)$ to \eqref{U-bundle}, 
we obtain $\Ext^{1}(\CO^{\oplus 3}, \mathfrak{U})=0$ 
and $\Ext^{2}(\CO^{\oplus 3}, \mathfrak{U})=0$,
as $\{\CO,\CO(H)\}$ is a strongly exceptional pair.
Moreover, since $\Hom(\CO^{\oplus 3},\mathfrak{U})=0$ and $\CO_{Y}(D)$ is a torsion sheaf on $Q$,
by applying $\Hom(-,\mathfrak{U})$ to the exact sequence \eqref{VectE-9-sequ},
we get $\Hom(E_{D},\mathfrak{U})=0$, $\Ext^{1}(E_{D},\mathfrak{U})=0$,
and 
$$
\Ext^{2}(E_{D},\mathfrak{U})\cong \Ext^{1}(\CO_{Y}(D),\mathfrak{U})
\;
\textrm{and}
\;
\Ext^{3}(E_{D},\mathfrak{U})\cong \Ext^{2}(\CO_{Y}(D),\mathfrak{U}).
$$

Since $Y\subset Q$ is smooth and $D\subset Y$ is a divisor of $(2,0)$ type,
by Serre duality, we get $H^{3-i}(Y,\CO_{Y}(D-3H))\cong H^{i-1}(Y,\CO_{Y}(H-D))=0$ for $i=1,2$.
As a result, for $i=1,2$, by Serre duality on $Q$, we have
$$
\Ext^{i}(\CO_{Y}(D),\CO) 
\cong 
\Ext^{3-i}(\CO,\CO_{Y}(D)\otimes \CO(-3H))
\cong 
H^{3-i}(Y,\CO_{Y}(D-3H))=0.
$$
Similarly, we have 
$\Ext^{1}(\CO_{Y}(D),\CO(H)) \cong H^{0}(Y,\CO_{Y}(2H-D))=\CN^{3}$.
Next, by applying $\Hom(\CO_{Y}(D),-)$ to the exact sequence \eqref{U-bundle}, 
we derive $\Ext^{1}(\CO_{Y}(D),\mathfrak{U})=0$
and
$$
 \Ext^{2}(\CO_{Y}(D),\mathfrak{U})\cong \Ext^{1}(\CO_{Y}(D),\CO(H))=\CN^{3}.
$$
Therefore, we have $\RHom(E_{D},\mathfrak{U})=\CN^{3}[-3]$ and thus $E_{D}\notin \mathcal{BN}(Q)$.
This completes the proof of Theorem \ref{Q-BN-locus}.
\end{proof}


\appendix

\section{Uniqueness of stable spinor bundle}\label{Append}

In this appendix, we will discuss the uniqueness of Bridgeland stable objects in $\Ku(Q)$ with the numerical class of the spinor bundle $S$.
By \cite[Theorem 2.1]{Ott88}, we know that the spinor bundle $S$ is $\mu_{H}$-slope stable;
moreover, any $\mu_{H}$-stable vector bundle of rank $2$ with Chern classes $c_{1}=-1$ and $c_{2}=1$ is isomorphic to the spinor bundle $S$; see \cite{AS89,OS94} or \cite[Proposition 40]{CJ24}.
The below lemma shows that $S$ is also a Bridgeland stable object in $\Ku(Q)$.

\begin{lem}\label{spinor-bd-stable}
The spinor bundle $S$ is $\sigma$-stable for all stability conditions $\sigma\in \mathcal{K}$.
\end{lem}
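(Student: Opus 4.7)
The plan is to mirror the proof of Proposition \ref{start-point-stable}. By Lemma \ref{GL-one-orbitV+}, $\mathcal{K}$ is a single $\tilde{\mathrm{GL}}^+_2(\RN)$-orbit and stability is preserved under this action, so it suffices to exhibit one $(\alpha_0,\beta_0)\in V$ for which $S$ is $\sigma(\alpha_0,\beta_0)$-stable. I will pick $(\alpha_0,\beta_0)$ in the left sub-region of $V$, with $-1<\beta_0<-\tfrac{1}{2}$ and $\alpha_0\leq 1+\beta_0$. Since $S$ is $\mu_H$-slope stable with $\mu_H(S)=-\tfrac{1}{2}>\beta_0$ and $\mathrm{ch}_{\leq 1}(S)=(2,-H)$ is primitive, Corollary \ref{limit-tilt-stab-Gie-stab} places $S$ in $\mathrm{Coh}^{\beta_0}(Q)$ and makes it $\sigma_{\alpha,\beta_0}$-stable for $\alpha\gg 0$; a direct computation of $\Re Z_{\alpha_0,\beta_0}(S)$ and $\Im Z_{\alpha_0,\beta_0}(S)$ also shows $\mu_{\alpha_0,\beta_0}(S)>0$ throughout this region, so $S\in\mathcal{A}(\alpha_0,\beta_0)$.

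The heart of the argument is an analog of Lemma \ref{no-wall-1}: no actual tilt walls for $S$ exist to the left of its unique vertical wall $\beta=-\tfrac{1}{2}$. By Theorem \ref{tilt-wall-struct-thm}, the semicircular numerical walls in this region have apexes on the hyperbola $\alpha^2=\beta(\beta+1)$, forcing the apex to satisfy $\beta_{\mathrm{apex}}\leq -1$, and an elementary distance computation shows every such semicircle crosses the vertical line $\beta=-1$. Hence it suffices to exclude walls along $\beta=-1$. From $\mathrm{ch}^{-1}_{\leq 2}(S)=(2,H,0)$, any destabilizing sequence $0\to A\to S\to B\to 0$ in $\mathrm{Coh}^{-1}(Q)$ with $\mathrm{ch}^{-1}_{\leq 2}(A)=(a,bH,\tfrac{c}{2}H^2)$ must satisfy $b,1-b\geq 0$, hence $b\in\{0,1\}$. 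A two-sided wall with both factors of matched finite tilt-slope would require $0<b<1$, which has no integer solution; the boundary cases $b\in\{0,1\}$ force one of $A,B$ to be a torsion object in $\mathrm{Coh}^{-1}(Q)$ (i.e.\ to have $\mathrm{Im}\,Z=0$), which via the long exact sequence of cohomology sheaves of the triangle would produce a nonzero torsion subobject or quotient of the locally free sheaf $S$, a contradiction.

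Consequently $S$ is $\sigma_{\alpha_0,\beta_0}$-stable. Since $\sigma^0_{\alpha_0,\beta_0}$ is obtained from $\sigma_{\alpha_0,\beta_0}$ by the standard rotation of the central charge and $S$ lies in $\mathcal{T}^0_{\alpha_0,\beta_0}\subset\mathrm{Coh}^0_{\alpha_0,\beta_0}(Q)$, this upgrades to $\sigma^0_{\alpha_0,\beta_0}$-stability of $S$; restricting to the subcategory $\mathcal{A}(\alpha_0,\beta_0)=\mathrm{Coh}^0_{\alpha_0,\beta_0}(Q)\cap\mathrm{Ku}(Q)$ gives $\sigma(\alpha_0,\beta_0)$-stability, and Lemma \ref{GL-one-orbitV+} then propagates the conclusion to every $\sigma\in\mathcal{K}$. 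I expect the main technical obstacle to be exactly the boundary cases $b\in\{0,1\}$: one has to inspect $\mathcal{H}^{-1}$ and $\mathcal{H}^0$ of the destabilizing triangle and invoke the torsion-freeness of $S$ (much as in the reflexivity analysis of Proposition \ref{Kp-slope-stable}) to rule them out; everything else is a short numerical check supported by the small rank and small discriminant of $S$.
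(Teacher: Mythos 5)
Your proposal is correct and follows essentially the same route as the paper: reduce to a single point of the $\tilde{\mathrm{GL}}^{+}_{2}(\RN)$-orbit, observe via the nested-wall theorem that every potential semicircular wall to the left of the vertical wall $\beta=-\frac{1}{2}$ meets the line $\beta=-1$, and rule out walls there because $\ch^{-1}_{\leq 2}(S)=(2,H,0)$ forces any destabilizing factor to have $H^{2}\cdot\ch_{1}^{-1}$ equal to $0$ or $H^{3}$, hence infinite tilt slope. The only caveat is that your closing aside about extracting a torsion subobject or quotient of the locally free sheaf $S$ is unnecessary and not quite accurate (the factors $A,B$ live in the tilted heart $\Coh^{-1}(Q)$, so vanishing of $\Im Z_{\alpha,-1}$ does not yield a torsion subsheaf of $S$); the clean conclusion, as in the paper, is simply that an object of infinite tilt slope cannot appear in a destabilizing sequence whose slopes must all equal the finite slope of $S$.
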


\begin{proof}
Recall that the Chern character of $S$ is 
$
\ch(S)=(2, -H, 0, \frac{1}{12}H^{3}).
$
Since $\ch_{0}(S)=2\neq 0$,
by Theorem \ref{tilt-wall-struct-thm}, 
there exists a unique numerical vertical wall given by $\beta=-\frac{1}{2}$, 
and the numerical semicircular walls are two sets of nested semicircles whose apexes lie on the hyperbola
$$
(\beta+\frac{1}{2})^{2}-\alpha^{2}=(\frac{1}{2})^{2}.
$$
Therefore, all the numerical semicircular walls left to the vertical wall intersects with the vertical line $\beta=-1$.

It is sufficient to show that there are no walls for $S$ along $\beta=-1$.
Since $S$ is $\mu_{H}$-slope stable and $\mu_{H}(S)=-\frac{1}{2}>-1$,
by Corollary \ref{limit-tilt-stab-Gie-stab}, $S$ is $\sigma_{\alpha,-1}$-stable for $\alpha\gg 0$.
Since $H^{2}\cdot \ch_{1}^{-1}(S)=H^{3}$, 
any destabilizing subobject $F\subset S$ along $\beta=-1$ must satisfy $H^{2}\cdot \ch_{1}^{-1}(F)=H^{3}$ or $H^{2}\cdot \ch_{1}^{-1}(F)=0$.
Therefore, either subobject $F$ or the quotient object $E/F$ have infinite tilt slope, this is impossible.
It yields that $S$ is $\sigma_{\alpha,-1}$-stable for $\alpha>0$.
Hence, $S$ is $\sigma^{0}_{\alpha,\beta}$-stable for $(\alpha,\beta)\in V$ and $\beta<-\frac{1}{2}$.
Consequently, $S$ is $\sigma(\alpha,\beta)$-stable for $(\alpha,\beta)\in V$ and $\beta<-\frac{1}{2}$.
By Lemma \ref{GL-one-orbitV+}, $S$ is $\sigma$-stable for all $\sigma\in \mathcal{K}$.
\end{proof}

Naturally, one may ask the following:

\begin{quest}\label{Append-quest}
Let $E\in \Ku(Q)$ be a $\sigma$-stable object with class $\lambda_{2}=[S]\in \mathcal{N}(\Ku(Q))$ for $\sigma\in \mathcal{K}$.
Is, up to a shift, $E$ isomorphic to the spinor bundle $S$?
\end{quest}

The next goal is to give a confirmation to the above question.
To this end, we will gain the control over the maximal Chern characters $\ch_{2}$ and $\ch_{3}$ of $\mu_{H}$-slope stable sheaves with $\ch_{\leq 1}=(2,-H)$.

\begin{lem}\label{spinor-reflexive}
Take a $\mu_{H}$-slope stable sheaf $E$ on $Q$ with Chern character 
$$
\ch(E)=(2,-H,\ch_{2},\ch_{3}).
$$
Then $H\cdot \ch_{2}(E)\leq 0$; and if $H\cdot \ch_{2}(E)= 0$, then $\ch_{3}\leq \frac{1}{12}H^{3}$.
In particular, any $\mu_{H}$-slope stable sheaf with Chern character $(2, -H, 0, \frac{1}{12}H^{3})$ is a reflexive sheaf.
\end{lem}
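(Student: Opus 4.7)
The plan is to model the argument on Proposition \ref{maximal-ch-Q}, exploiting the fact that in rank $2$ the first inequality is actually cleaner than in the rank-$3$ case treated there.

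For the bound $H\cdot\ch_2(E)\leq 0$, the classical Bogomolov inequality applied to $E$ gives $H\cdot\ch_2(E)\leq (H^2\cdot\ch_1(E))^2/(2H^3\ch_0(E))=H^3/4$. Combined with the integrality $H\cdot\ch_2(E)\in \tfrac{H^3}{2}\ZN$ coming from Lemma \ref{Num-Chow-Q}, this immediately forces $H\cdot\ch_2(E)\leq 0$, since the next allowed value below $H^3/4$ in $\tfrac{H^3}{2}\ZN$ is $0$. Unlike the rank-$3$ case, no appeal to the refined Bogomolov inequality of \cite{Li19} is required here.

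Assuming $H\cdot\ch_2(E)=0$, I would next mimic the tilt-stability argument of Proposition \ref{maximal-ch-Q}. Since $\mu_H(E)=-\tfrac12$, Proposition \ref{limit-tilt-stab-Gie-stab-prop} places $E$ into $\Coh^\beta(Q)$ for $\beta<-\tfrac12$ and makes it $\sigma_{\alpha,\beta}$-stable for $\alpha\gg 0$. I would then upgrade this to $\sigma_{\alpha,-1}$-stability for every $\alpha>0$ via a no-wall argument entirely parallel to Lemma \ref{no-wall-1}: by Theorem \ref{tilt-wall-struct-thm} it suffices to rule out walls along $\beta=-1$, and with $\ch^{-1}_{\leq 2}(E)=(2,H,0)$ any putative destabilizing sequence $0\to A\to E\to B\to 0$ in $\Coh^{-1}(Q)$ forces $\ch_1^{-1}(A)=bH$ with $b\in\{0,1\}$, whence either $A$ or $B$ must have infinite tilt slope, incompatible with sharing the finite slope of $E$ on a wall. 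Given $\sigma_{\alpha,-1}$-stability, slope stability directly kills $H^0(Q,E)$ and $H^3(Q,E)$, while the limit comparison $\lim_{\alpha\to 0^+}\mu_{\alpha,-1}(E)=0>-1=\lim_{\alpha\to 0^+}\mu_{\alpha,-1}(\CO(-3H)[1])$ combined with Serre duality kills $H^2(Q,E)\cong \Hom(E,\CO(-3H)[1])^\vee$. Then Hirzebruch--Riemann--Roch \eqref{HRR-quadric3} computes $\chi(E)=\ch_3(E)-\tfrac{1}{12}H^3\leq 0$, giving $\ch_3(E)\leq \tfrac{1}{12}H^3$.

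Reflexivity of a $\mu_H$-slope stable $E$ with $\ch(E)=(2,-H,0,\tfrac{1}{12}H^3)$ follows formally from the two bounds: writing $0\to E\to E^{\vee\vee}\to T\to 0$ with $T$ torsion of codimension $\geq 2$ and $E^{\vee\vee}$ again $\mu_H$-slope stable of the same rank and first Chern class, the first inequality applied to $E^{\vee\vee}$ forces $\ch_2(T)=0$, so $T$ is supported in dimension $0$; the second inequality then forces $\ch_3(T)=0$, hence $T=0$. I do not foresee any serious obstacle in this plan; the only computation that genuinely needs care is the no-wall verification along $\beta=-1$, and that reduces to the integrality constraint $b\in\{0,1\}$ noted above.
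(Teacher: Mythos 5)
Your proposal is correct and follows essentially the same route as the paper: Bogomolov plus the integrality of Lemma \ref{Num-Chow-Q} for the first bound, the no-wall argument along $\beta=-1$ (which the paper imports from Lemma \ref{spinor-bd-stable}) together with the limit-slope comparison against $\CO(-3H)[1]$, Serre duality and Hirzebruch--Riemann--Roch for the second bound, and the double-dual sequence for reflexivity. Your side remark that the rank-$2$ case needs no refined Bogomolov-type input from \cite{Li19} is also consistent with the paper's proof.
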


\begin{proof}
Since the sheaf $E$ is $\mu_{H}$-slope stable,
by the classical Bogomolov inequality, we have $H\cdot \ch_{2}(E)\leq \frac{1}{4}H^{3}$.
By Lemma \ref{Num-Chow-Q}, it follows that $H\cdot \ch_{2}(E)\leq 0$

Suppose now that $H\cdot \ch_{2}(E)= 0$.
Since $\mu_{H}(E)=-\frac{1}{2}<0$, 
by $\mu_{H}$-slope stability, 
we get $\Hom(\CO,E)=0$; 
similarly, by Serre duality, we have $\Hom(\CO,E[3])\cong \Hom(E,\CO(-3H))=0$.
By the Hirzebruch--Riemann--Roch formula \eqref{HRR-quadric3}, 
we have 
\begin{equation}\label{Apend-eq1}
\chi(E)= \ch_{3}(E)-\frac{1}{12}H^{3}=-\hom(\CO,E[1])+\hom(\CO,E[2]).
\end{equation}
By Serre duality, we have $\Hom(\CO,E[2])\cong \Hom(E,\CO(-3H)[1])$.
Similarly as the proof of Lemma \ref{spinor-bd-stable}, 
we obtain that $E$ is $\sigma_{\alpha,-1}$-stable for $\alpha>0$.
By direct computations, we have
$$
\lim_{\alpha \to 0^{+}} \mu_{\alpha,-1}(E)=0 > -1=\lim_{\alpha \to 0^{+}} \mu_{\alpha,-1}(\CO(-3H)[1]),
$$
and thus $\Hom(E,\CO(-3H)[1])=0$. 
By the equation \eqref{Apend-eq1}, 
we get 
$$
\ch_{3}(E)-\frac{1}{12}H^{3}=-\hom(\CO,E[1])\leq 0
$$
and thus $\ch_{3}(E)\leq \frac{1}{12}H^{3}$.

Finally, assume that $E$ is a $\mu_{H}$-slope stable sheaf with the Chern character $(2, -H, 0, \frac{1}{12}H^{3})$.
Since $E$ is torsion-free,
there is a short exact sequence 
$$
\xymatrix@C=0.5cm{
0 \ar[r]^{} & E
\ar[r]^{} & E^{\vee\vee}
\ar[r]^{} & T \ar[r]^{} & 0,} 
$$
where $T$ is a sheaf supported in codimension $\geq 2$.
Note that $E^{\vee\vee}$ is also $\mu_{H}$-slope stable.
Applying the first statement, 
the second Chern character $\ch_{2}(E^{\vee\vee})=\ch_{2}(T) \leq 0$ and thus $\ch_{2}(E^{\vee\vee})=\ch_{2}(T)=0$.
As a result, we get the third Chern character 
$\ch_{3}(E^{\vee\vee})=\ch_{3}(T)+\frac{1}{12}H^{3}\leq \frac{1}{12}H^{3}$ and thus $\ch_{3}(T)=0$
Hence $T=0$ and thus $E\cong E^{\vee\vee}$.
So $E$ is a reflexive sheaf.
This completes the proof.
\end{proof}

\begin{prop}\label{semistab-char-spinorbd}
If $G$ is $\sigma^{0}_{\alpha,\beta}$-semistable with Chern character 
$$
\ch(G)=\pm(2,-H,0,\frac{1}{12}H^{3}),
$$
then $G$ is isomorphic to $S$ or $S[1]$. 
\end{prop}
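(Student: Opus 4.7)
The plan is to mirror the argument of Lemma~\ref{charact-stabob-inKu}, with $S$ (or $S[1]$) playing the role there played by $\mathcal{P}_{x}$ and $E_{D}$. By Lemma~\ref{GL-one-orbitV+} it suffices to treat one fixed $(\alpha,\beta)\in V$, and a sign check of $\Im Z^{0}_{\alpha,\beta}$ shows that $G$ can lie in the heart $\A(\alpha,\beta)$ only when either $\ch(G)=+\ch(S)$ with $(\alpha,\beta)\in V_{R}$, or $\ch(G)=-\ch(S)$ with $(\alpha,\beta)\in V_{L}$. The two cases are entirely symmetric, so I would focus on the second: $\ch(G)=-\ch(S)$ with $(\alpha,\beta)\in V_{L}$, and conclude $G\cong S[1]$.

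Next, I would invoke \cite[Proposition~4.1]{FP23} exactly as in the proof of Lemma~\ref{charact-stabob-inKu}: either (i) $G$ is $\sigma_{\alpha,\beta}$-semistable, or (ii) there is a distinguished triangle $F[1]\to G\to T$ with $F\in\mathcal{F}^{0}_{\alpha,\beta}$ and $T\in\mathcal{T}^{0}_{\alpha,\beta}$ supported on points. In case~(ii), the numerics give $\ch(F)=(2,-H,0,(\tfrac{1}{12}+t)H^{3})$ with $t\ge 0$. The wall analysis of Lemma~\ref{spinor-bd-stable} shows there are no tilt walls for $\ch_{\le 2}=(2,-H,0)$ to the left of $\beta=-\tfrac{1}{2}$, so $F$ is $\sigma_{\alpha,\beta}$-semistable for $\alpha\gg 0$; primitivity of $\ch_{\le 1}(F)=(2,-H)$ and Corollary~\ref{limit-tilt-stab-Gie-stab} then identify $F$ with a $\mu_{H}$-slope stable sheaf. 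But Lemma~\ref{spinor-reflexive} forces $\ch_{3}(F)\le\tfrac{1}{12}H^{3}$, so $t=0$ and $T=0$, and we are reduced to case~(i).

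In case~(i), $G$ is $\sigma_{\alpha,\beta}$-semistable with $\ch_{0}(G)=-2$. Adapting the proof of Corollary~\ref{large-limit-v} to the Chern character $-\ch(S)$, I would show $G\cong E[1]$ for a $\mu_{H}$-slope stable sheaf $E$ with $\ch(E)=+\ch(S)$. By Lemma~\ref{spinor-reflexive}, such an $E$ is reflexive; since $\ch(E)=(2,-H,0,\tfrac{1}{12}H^{3})$ forces $c_{3}(E)=0$, the reflexive sheaf $E$ is in fact locally free. The uniqueness of the spinor bundle among $\mu_{H}$-slope stable rank-$2$ bundles with $c_{1}=-1$ and $c_{2}=1$, recalled at the start of this appendix, then forces $E\cong S$ and hence $G\cong S[1]$. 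The main obstacle will be executing this analog of Corollary~\ref{large-limit-v}: starting from a tilt-semistable $G$ of rank $-2$ one must control the cohomology sheaves $\mathcal{H}^{-1}(G)$ and $\mathcal{H}^{0}(G)$, then use the fact that the Chern character $-\ch(S)$ exactly saturates the bound of Lemma~\ref{spinor-reflexive} to kill any nontrivial torsion in degree zero.
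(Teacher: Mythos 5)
Your proposal follows the paper's proof essentially step for step: \cite[Proposition 4.1]{FP23} to split into the two cases, the wall analysis of Lemma \ref{spinor-bd-stable} together with Corollary \ref{limit-tilt-stab-Gie-stab} and the $\ch_{3}$ bound of Lemma \ref{spinor-reflexive} to force $T=0$ in case (ii), and slope stability, reflexivity and the uniqueness of the spinor bundle to conclude in case (i). The one place you diverge is that the ``main obstacle'' you flag in case (i) --- adapting Corollary \ref{large-limit-v} to control $\mathcal{H}^{-1}(G)$ and $\mathcal{H}^{0}(G)$ for the rank $-2$ object --- does not actually arise: since $\beta<\mu_{H}(S)=-\tfrac{1}{2}$ there, a tilt-semistable object of class $-\ch(S)$ is by definition of the heart a shift $E[1]$ with $E\in\Coh^{\beta}(Q)$ of positive rank, so the paper simply passes to $G[-1]$ and applies Corollary \ref{limit-tilt-stab-Gie-stab} directly, with no analogue of \cite[Proposition 4.9]{BBF+} needed.
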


\begin{proof}
Since $G$ is $\sigma^{0}_{\alpha,\beta}$-semistable object, 
by \cite[Proposition 4.1]{FP23}, 
we have the following cases:
\begin{enumerate}
  \item[(i)] $G$ is $\sigma_{\alpha,\beta}$-semistable; 
  \item[(ii)]  there is an exact  triangle $\xymatrix@C=0.3cm{F[1] \ar[r]^{} & G  \ar[r]^{} & T}$,
where $F\in \mathcal{F}^{0}_{\alpha,\beta}$ and $T\in \mathcal{T}^{0}_{\alpha,\beta}$ is supported on points.  
\end{enumerate}
For the case (ii), we may suppose that $\ch(G)=-(2,-H,0,\frac{1}{12}H^{3})$.
Since $G$ is $\sigma_{\alpha,\beta}^{0}$-semistable,
it follows that $F$ must be $\sigma_{\alpha,\beta}$-semistable; 
otherwise, it will contradict to the $\sigma_{\alpha,\beta}^{0}$-stability of $G$.
Hence, we may assume that the Chern character
$$
\ch(F)=(2,-H,0,(\frac{1}{12}+\frac{t}{2})H^{3}),
$$
where $t\geq 0$. 
Likewise to Lemma \ref{spinor-bd-stable}, 
there is no wall for objects with $\ch_{\leq 2}(F)=(2,-H,0)$ to the left of the vertical wall $\beta=-\frac{1}{2}$ and thus $F$ is $\sigma_{\alpha,\beta_{0}}$-semistable for $\alpha>0$.
Since $\mu_{H}(F)=-\frac{1}{2}>\beta_{0}$, 
by Corollary \ref{limit-tilt-stab-Gie-stab}, $F$ is a $\mu_{H}$-slope stable torsion-free sheaf.
By Lemma \ref{spinor-reflexive}, $\ch_{3}(F)=(\frac{1}{12}+\frac{t}{2})H^{3}\leq \frac{1}{12}H^{3}$ and thus $t=0$, as $t\geq 0$. 
This means that $T=0$ and thus $G=F[1]$. 
We are reduced to the case (i) for discussing $\sigma_{\alpha,\beta}$-semistable sheaf $F$.

For the case (i), we may assume $\ch(G)=(2,-H,0,\frac{1}{12}H^{3})$.
By hypothesis, $G$ is $\sigma_{\alpha,\beta}$-semistable.
Analogous to Lemma \ref{spinor-bd-stable}, 
there is no wall for objects with $\ch_{\leq 2}(G)=(2,-H,0)$ to the left of the vertical wall $\beta=-\frac{1}{2}$.
As a consequence, $G$ is $\sigma_{\alpha,\beta_{0}}$-semistable for $\alpha>0$.
By Corollary \ref{limit-tilt-stab-Gie-stab},
$G$ is a $\mu_{H}$-slope stable torsion-free sheaf.
Thanks to Lemma \ref{spinor-reflexive}, $G$ is a reflexive sheaf.
As a result, according to \cite[Lemma 41]{CJ24},  
$G$ is isomorphic the spinor bundle $S$.
\end{proof}

Now we are in the position to give an affirmative answer to Question \ref{Append-quest}.

\begin{thm}\label{spinor-bundle-Bstable}
If $E\in \Ku(Q)$ is a $\sigma$-stable object with class $\lambda_{2}$ for $\sigma\in \mathcal{K}$,
then up to a shift, $E$ is isomorphic to the spinor bundle $S$.
In particular, the moduli space $M_{\sigma}(\lambda_{2})=\{S\}$.
\end{thm}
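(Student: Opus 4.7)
The plan is to mimic the strategy of Lemma \ref{charact-stabob-inKu}, adapting it to the class $\lambda_{2} = [S]$ and concluding via Proposition \ref{semistab-char-spinorbd} instead of Theorem \ref{claasifying-sh-ch-v}. By Lemma \ref{spinor-bd-stable}, the spinor bundle $S$ gives at least one point of $M_{\sigma}(\lambda_{2})$, so only uniqueness is in question. Using Lemma \ref{GL-one-orbitV+} and the $\tilde{\mathrm{GL}}^{+}_{2}(\RN)$-invariance of stability, I may assume $E$ is $\sigma(\alpha,\beta)$-stable for some $(\alpha,\beta) \in V_{L} := V \cap \widetilde{V}_{L}$, and after replacing $E$ by the appropriate shift that $G := E[1] \in \A(\alpha,\beta)$ with truncated Chern character $\ch_{\leq 2}(G) = (-2, H, 0)$.

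The key intermediate step is the tilt analogue: $G$ is $\sigma^{0}_{\alpha,\beta}$-semistable for some $(\alpha,\beta) \in V_{L}$. Arguing by contradiction as in the proof of Claim \ref{claim-for-charact-stabob-inKu}, if this failed throughout a neighbourhood of $(0,-1)$, then \cite[Proposition 2.2.2]{BMT14} would yield a constant HN filtration there, and hence a destabilizing quotient $B$ with $\ch_{\leq 2}(B) = (a, bH, \tfrac{c}{2}H^{2})$. Matching imaginary parts of $Z^{0}_{\alpha,\alpha-1}(G) \to 0$ forces a linear relation expressing $c$ in terms of $a,b$, while $\mu^{0}_{\alpha,\beta}(B) \geq -2$ from \cite[Remark 5.12]{BLMS23} and the Bogomolov-type constraint \cite[Proposition 3.2]{Li19} restrict $a+b$ to a small set of integer values. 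Each of the resulting numerical possibilities is then ruled out, either by a direct contradiction with the slope inequalities or by identifying the putative factors (e.g.\ shifts of line bundles, shifts $I_{\ell}[1]$ of ideal sheaves of lines, or copies of $S[1]$) via Theorem \ref{Hilb-line-Q} and Proposition \ref{semistab-char-spinorbd}, and observing that the resulting decomposition would contradict the $\sigma$-stability of $G$.

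Once $G$ is known to be $\sigma^{0}_{\alpha,\beta}$-semistable, I apply \cite[Proposition 4.1]{FP23} to split into two cases: either $G$ is already $\sigma_{\alpha,\beta}$-semistable, or there is a triangle $F[1] \to G \to T$ with $F \in \mathcal{F}^{0}_{\alpha,\beta}$ and $T$ a zero-dimensional torsion sheaf. In the second case $F$ is forced to be $\sigma_{\alpha,\beta}$-semistable with $\ch(F) = (2, -H, 0, (\tfrac{1}{12} + \tfrac{t}{2})H^{3})$ for some $t \geq 0$; the wall analysis of Lemma \ref{spinor-bd-stable} shows no wall exists to the left of $\beta = -\tfrac{1}{2}$ for such class, hence $F$ is $\mu_{H}$-slope stable by Corollary \ref{limit-tilt-stab-Gie-stab}, and Lemma \ref{spinor-reflexive} forces $t = 0$, so $T = 0$ and we reduce to the first case. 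Finally, Proposition \ref{semistab-char-spinorbd} identifies such $G$ with $S$ or $S[1]$, whence $E \cong S$ up to a shift.

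The main obstacle is the middle step, namely ruling out all numerical possibilities for the destabilizing quotient $B$ near the limit $(0,-1)$, as this mixes Bogomolov-type bounds with an explicit recognition of the potential destabilizing factors. Compared with the analogous argument in Lemma \ref{charact-stabob-inKu}, however, the class $\lambda_{2}$ is smaller and more rigid than $2\lambda_{2} - \lambda_{1}$, so the case analysis is expected to be noticeably shorter, essentially because there is less room in $\Lambda^{2}_{H,\Ku(Q)}$ for admissible destabilizing classes.
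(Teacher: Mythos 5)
Your proposal is correct and follows essentially the same route as the paper: reduce via the $\tilde{\mathrm{GL}}^{+}_{2}(\RN)$-action to $(\alpha,\beta)$ near $(0,-1)$, show $G=E[1]$ is $\sigma^{0}_{\alpha,\beta}$-semistable by the same constant-HN-filtration contradiction argument with the constraints from \cite[Remark 5.12]{BLMS23} and \cite[Proposition 3.2]{Li19}, and conclude by Proposition \ref{semistab-char-spinorbd}. The only cosmetic differences are that your last paragraph re-derives the \cite[Proposition 4.1]{FP23} case split that is already carried out inside Proposition \ref{semistab-char-spinorbd}, and that in the paper the surviving case $a+b=-1$ is eliminated purely numerically (the inequality $\beta a-b<-(2\beta+1)$ forces $a<-2$, against $|a|\leq 2$) with no need to identify the putative destabilizing factors as actual objects.
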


\begin{proof}
The idea of the proof is the same as that of Proposition \ref{charact-stabob-inKu}.
By direct computations, the Chern character of $E$ is $\ch(E)=\pm(2,-H,0,\frac{1}{12}H^{3})$.
By Lemma \ref{GL-one-orbitV+},
we may assume that $E\in \Ku(Q)$ is $\sigma(\alpha,\beta)$-stable object with class $\lambda_{2}$ for $(\alpha,\beta)\in V$ and $\beta<-\frac{1}{2}$.
Therefore, up to a shift $k\in \ZN$, $E[k]\in \A(\alpha,\beta)$ is $\sigma(\alpha,\beta)$-stable with the slope
$$
\mu_{\alpha,\beta}^{0}(E[k])=\frac{-(2\beta+1)}{-\beta-\beta^{2}+\alpha^{2}}.
$$
We set $G:=E[1]$.
Note that the limit of $\mu_{\alpha,\alpha-1}^{0}$-slopes
$$
\displaystyle
\lim_{\alpha \to 0^{+}} \mu_{\alpha,\alpha-1}^{0}(G)
= \lim_{\alpha \to 0^{+} } (\frac{1}{\alpha}-2)=+\infty
$$
for $0<\alpha<\frac{1}{2}$ and $(\alpha,\alpha-1)\in V$.
By Proposition \ref{semistab-char-spinorbd}, 
it is sufficient to show that  $G$ is $\sigma_{\alpha,\beta}^{0}$-semistable for some $(\alpha_{0},\beta_{0})\in V_{L}$.

Suppose that $G$ is not $\sigma_{\alpha,\beta}^{0}$-semistable for all $(\alpha,\beta)\in V_{L}$.
Since $G$ is $\sigma(\alpha,\beta)$-stable, 
by \cite[Proposition 2.2.2]{BMT14}, 
there is an open ball $U$ in $\RN^{2}$ containing the point $(0,-1)$ such that for any $(\alpha,\beta)\in U\cap V_{L}$,  $G\in \A(\alpha,\beta)$ and $G$ has constant HN filtration with respect to $\sigma^{0}_{\alpha,\beta}$.
Suppose that $B$ is the destabilizing quotient object of $G$ with minimum slope and 
$A\to G\to B$ 
is the destabilizing sequence of $E$ with respect to $\sigma_{\alpha,\beta}^{0}$ for $(\alpha,\beta)\in U\cap V_{L}$. 
Likewise to the proof of Claim \ref{claim-for-charact-stabob-inKu},
we may assume that $\ch_{\leq 2}(B)=(a,bH,\frac{-a-2b}{2}H^2)$ with $a,b\in \ZN$.
Then we have
\begin{equation}\label{claim-im-ineq-S}
0<(\beta^{2}-\alpha^{2}-1)\frac{a}{2}-(\beta+1)b\leq -\beta-\beta^{2}+\alpha^{2}
\end{equation} 
\begin{equation}\label{claim-mu-slope-ineq-S}
\frac{(\beta a-b)}{(\beta^{2}-\alpha^{2}-1)\frac{a}{2}-(\beta+1)b}
<
\frac{-(2\beta+1)}{-\beta-\beta^{2}+\alpha^{2}}
\end{equation}
for all $(\alpha,\beta)\in U\cap V$.
Combining \eqref{claim-im-ineq-line} with  \eqref{claim-mu-slope-ineq-line}, 
we get
\begin{equation}\label{claim-ineq-comb-S}
 \beta a-b < - (2\beta+1).
\end{equation}
According to \cite[Remark 5.12]{BLMS23}, 
one has $\mu_{\alpha,\beta}^0(B) \geq -2$ for $(\alpha,\beta)\in V_{L}$.
Considering the limit cases of \eqref{claim-mu-slope-ineq-S} and \eqref{claim-ineq-comb-S} 
when $(\alpha,\beta) \to (0,-1)$, we have $a+b=0$ or $a+b=-1$.

If $a+b=0$, this contradicts to \eqref{claim-mu-slope-ineq-S}.

If $a+b=-1$, then $\ch_{\leq 2}(B)=(a,(-a-1)H,\frac{a+2}{2}H^{2})$.
By  \eqref{claim-ineq-comb-S} and $\beta>-1$, 
we have  $a<-2$.
Since $B$ is $\sigma_{\alpha,\beta}^{0}$-semistable, 
by \cite[Proposition 4.1]{FP23}, 
$\ch_{\leq 2}(B)$ is a possible truncated Chern character of a $\sigma_{\alpha,\beta}$-semistable object $B'[1]$ for $B' \in \Coh^{\beta}(Q)$.
By \cite[Proposition 3.2]{Li19}, we have $|a|\leq 2$, a contradiction.
This finishes the proof of Theorem \ref{spinor-bundle-Bstable}.
\end{proof}

\section{Hilbert scheme of lines}\label{Append-Hilb-line}

In this appendix, we will reinterpret the Hilbert scheme of lines in $Q$ as a Bridgeland moduli space in $\Ku(Q)$.

By the Hartshorne--Serre correspondence,
one can view the spinor bundle $S$ as a bundle associated with a line $\ell\subset Q$;
namely, there is a short exact sequence 
$$
\xymatrix@C=0.5cm{
0 \ar[r]^{} & \CO(-H) \ar[r]^{} & S \ar[r]^{} & I_{\ell} \ar[r]^{} & 0,}
$$
where $I_{\ell}$ is the ideal sheaf of $\ell$ in $Q$.
Therefore, the ideal sheaf $I_{\ell}\in \Ku(Q)$, the Chern character $\ch(I_{\ell})=(1,0,-\frac{1}{2}H^{2},\frac{1}{4}H^{3})$ and the numerical class $[I_{\ell}]=\lambda_{2}-\lambda_{1}\in \mathcal{N}(\Ku(Q))$.

\begin{lem}\label{ideal-line-stab}
The ideal sheaf $I_{\ell}$ is $\sigma$-stable for all stability conditions $\sigma\in \mathcal{K}$.
\end{lem}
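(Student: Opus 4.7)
The plan is to follow the template of Lemma \ref{spinor-bd-stable} almost verbatim, since $I_{\ell}$ has very similar Chern character features to $S$: namely, $\ch(I_{\ell})=(1,0,-\tfrac{1}{2}H^{2},\tfrac{1}{4}H^{3})$ and $\mu_{H}(I_{\ell})=0$. First I would compute the wall structure for $I_{\ell}$ in the upper $(\alpha,\beta)$-plane. Since $\ch_{0}(I_{\ell})=1\neq 0$, Theorem \ref{tilt-wall-struct-thm} produces a unique numerical vertical wall at $\beta=0$, together with two sets of nested semicircular walls whose apexes lie on the hyperbola $\mu_{\alpha,\beta}(I_{\ell})=0$, which is $\alpha^{2}=\beta^{2}-1$. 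In particular, every numerical semicircular wall to the left of $\beta=0$ has apex $(\alpha_{0},\beta_{0})$ with $\beta_{0}\leq -1$ and $\alpha_{0}=\sqrt{\beta_{0}^{2}-1}$; an elementary check ($\beta_{0}+\sqrt{\beta_{0}^{2}-1}\geq -1$ for $\beta_{0}\leq -1$) shows each such semicircle crosses the vertical line $\beta=-1$. Hence it suffices to rule out actual walls for $I_{\ell}$ along $\beta=-1$.

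For the large volume limit, $I_{\ell}$ is a torsion-free rank-$1$ sheaf and therefore $H$-Gieseker stable (hence $2$-$H$-Gieseker stable), so Proposition \ref{limit-tilt-stab-Gie-stab-prop} yields that $I_{\ell}$ is $\sigma_{\alpha,-1}$-stable for $\alpha\gg 0$. The exclusion of actual walls along $\beta=-1$ then follows the same dichotomy used in Lemma \ref{spinor-bd-stable}: one computes $\ch_{\leq 2}^{-1}(I_{\ell})=(1,H,0)$, so $H^{2}\cdot\ch_{1}^{-1}(I_{\ell})=H^{3}$; since $\mathrm{Pic}(Q)=\ZN\cdot H$ and any destabilizing subobject $F\subset I_{\ell}$ in $\Coh^{-1}(Q)$ has $H^{2}\cdot\ch_{1}^{-1}(F)\in\{0,H^{3}\}$, one of $F$ or $I_{\ell}/F$ has vanishing $\Im Z_{\alpha,-1}$ and therefore infinite tilt slope, contradicting the requirement that all three members of the destabilizing sequence share a common finite tilt slope on an actual wall. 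Therefore $I_{\ell}$ is $\sigma_{\alpha,-1}$-stable for all $\alpha>0$, and the nested wall structure then propagates this stability across the entire left half $\beta<0$.

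Finally, I would pass from tilt stability to the rotated weak stability condition $\sigma_{\alpha,\beta}^{0}$ and thence to $\sigma(\alpha,\beta)$. A direct computation gives $\mu_{\alpha,\beta}(I_{\ell})=\frac{\beta^{2}-1-\alpha^{2}}{-2\beta}<0$ for every $(\alpha,\beta)\in V$ (in each of the two subcases defining $V$, one has $\alpha^{2}>\beta^{2}-1$), so $I_{\ell}\in\mathcal{F}_{\alpha,\beta}^{0}$ and $I_{\ell}[1]\in\Coh_{\alpha,\beta}^{0}(Q)\cap\Ku(Q)=\A(\alpha,\beta)$. Since $\sigma_{\alpha,\beta}^{0}$ is just a rotation of $\sigma_{\alpha,\beta}$, the $\sigma_{\alpha,\beta}$-stability of $I_{\ell}$ yields $\sigma(\alpha,\beta)$-stability of $I_{\ell}[1]$ (equivalently, of $I_{\ell}$ up to shift) for every $(\alpha,\beta)\in V$ in the relevant subrange. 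Lemma \ref{GL-one-orbitV+} then extends this to all $\sigma\in\mathcal{K}$.

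The main obstacle I expect is ensuring the no-wall argument along $\beta=-1$ is watertight: one must rigorously exclude the possibility that a destabilizing subobject with $\Im Z=0$ arises from a torsion sheaf supported in codimension $\geq 2$, but this is immediate from the torsion-freeness of $I_{\ell}$ (any $0$-dimensional subsheaf would violate this). Everything else is a mechanical parallel to Lemma \ref{spinor-bd-stable}.
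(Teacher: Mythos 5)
Your proposal is correct and follows essentially the same route as the paper's own proof: the unique vertical wall at $\beta=0$, the observation that all left-hand semicircular walls cross $\beta=-1$, large-volume tilt stability of $I_{\ell}$ there, the parity argument on $H^{2}\cdot\ch_{1}^{-1}$ forcing an infinite tilt slope on any putative destabilizer, and finally Lemma \ref{GL-one-orbitV+} to sweep out all of $\mathcal{K}$. The only additions are harmless elaborations (the explicit check that the semicircles meet $\beta=-1$ and the verification that $I_{\ell}[1]$ lands in $\A(\alpha,\beta)$), which the paper leaves implicit.
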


\begin{proof}
Note that $\ch_{0}(I_{\ell})\neq 0$, Theorem \ref{tilt-wall-struct-thm} yields that there is a unique numerical vertical wall given by $\beta=0$, and the numerical semicircular walls are two sets of nested semicircles whose apexes lie on the hyperbola $\beta^{2}-\alpha^{2}=1$. 
Then all the numerical semicircular walls to the left of the vertical wall intersects with $\beta=-1$.
Hence, it suffices to prove that there are no walls for $I_{\ell}$ along the vertical line $\beta=-1$.
Since $I_{\ell}$ is $\mu_{H}$-slope stable with $\mu_{H}(I_{\ell})=0>-1$,
so $I_{\ell}\in \Coh^{-1}(Q)$ and it is $\sigma_{\alpha,-1}$-stable for $\alpha\gg0$ from Proposition \ref{limit-tilt-stab-Gie-stab-prop}.
Since $H^{2}\cdot \ch_{1}^{-1}(I_{\ell})=H^{3}$,
any destabilizing subobject $F\subset I_{\ell}$ along $\beta=-1$ must have $H^{2}\cdot \ch_{1}^{-1}(F)=H^{3}$ or $H^{2}\cdot \ch_{1}^{-1}(F)=0$.
As a result, either subobject $F$ or the quotient object $I_{\ell}/F$ have infinite tilt slope, a contradiction.
This implies that $I_{\ell}$ is $\sigma_{\alpha,-1}$-stable for $\alpha>0$.
Therefore, $I_{\ell}$ is $\sigma(\alpha,\beta)$-stable for $(\alpha,\beta)\in V$.
In particular, by Lemma \ref{GL-one-orbitV+},  $I_{\ell}$ is $\sigma$-stable for all $\sigma\in \mathcal{K}$.
\end{proof}

The above lemma implies that the moduli space $M_{\sigma}(\lambda_{2}-\lambda_{1})$ is non-empty for every $\sigma\in \mathcal{K}$.
Let $\mathrm{Hilb}_{Q}^{m+1}$ be the Hilbert scheme of lines in $Q$. 
It is well-known that the Hilbert scheme $\mathrm{Hilb}_{Q}^{m+1}$ is isomorphic to $\PB^{3}$.
Next we give a modular description to this Hilbert scheme.

\begin{thm}\label{Hilb-line-Q}
The moduli space $M_{\sigma}(\lambda_{2}-\lambda_{1})$ is isomorphic to the Hilbert scheme $\mathrm{Hilb}_{Q}^{m+1}$.
\end{thm}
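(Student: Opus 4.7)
The plan is to construct the natural morphism
$$
\Phi : \mathrm{Hilb}_Q^{m+1} \longrightarrow M_{\sigma}(\lambda_{2}-\lambda_{1}), \qquad \ell \longmapsto I_{\ell},
$$
and prove it is an isomorphism. Well-definedness is precisely Lemma \ref{ideal-line-stab}, and injectivity is immediate since $\ell$ is recovered as the support of $\CO_{Q}/I_{\ell}$. What remains is surjectivity on closed points and then the upgrade to a scheme-theoretic isomorphism.

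For surjectivity, let $E \in \Ku(Q)$ be $\sigma$-stable with class $\lambda_{2}-\lambda_{1}$; by Lemma \ref{GL-one-orbitV+} I reduce to $\sigma=\sigma(\alpha,\beta)$ for $(\alpha,\beta)\in V_{L}:=V\cap \widetilde{V}_{L}$, so $\ch(E)=\pm(1,0,-\frac{1}{2}H^{2},\frac{1}{4}H^{3})$. Following the strategy of Lemma \ref{charact-stabob-inKu} and Theorem \ref{spinor-bundle-Bstable}, I would first show that, up to shift, $E$ is $\sigma^{0}_{\alpha,\beta}$-semistable for some $(\alpha,\beta)\in V_{L}$ near $(0,-1)$. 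The argument mirrors Claim \ref{claim-for-charact-stabob-inKu}: otherwise, a destabilizing quotient $B$ of $E[1]$ with constant HN filtration would satisfy $\ch_{\leq 2}(B)=(a,bH,\frac{c}{2}H^{2})$ with $c=-a-2b$ (from $\lim Z^{0}_{\alpha,\alpha-1}(B)=0$), and the limit of the slope inequality at $(0,-1)$ combined with the bound $\mu^{0}_{\alpha,\beta}(B)\geq -2$ from \cite[Remark 5.12]{BLMS23} confines $a+b$ to a very small list. Each admissible $(a,b)$ is then eliminated either by \cite[Proposition 3.2]{Li19} or by checking that no $\sigma_{\alpha,\beta}$-semistable object has the requisite truncated Chern character.

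Once $\sigma^{0}_{\alpha,\beta}$-semistability of (a shift of) $E$ is granted, \cite[Proposition 4.1]{FP23} leaves two possibilities: either $E$ is already $\sigma_{\alpha,\beta}$-semistable, or fits in a triangle $F[1]\to E\to T$ with $F$ tilt-semistable and $T$ a point-supported sheaf. A rank-$1$ analogue of Lemma \ref{spinor-reflexive} (any $\mu_{H}$-slope stable sheaf with $\ch_{\leq 1}=(1,0)$ has $\ch_{2}\leq 0$, and if equal then $\ch_{3}\leq \frac{1}{4}H^{3}$) forces $T=0$. The no-wall computation of Lemma \ref{ideal-line-stab} then shows $E$ is tilt-semistable for all $\alpha>0$, and Corollary \ref{limit-tilt-stab-Gie-stab} identifies $E$ as a $\mu_{H}$-slope stable torsion-free sheaf of rank $1$ with $c_{1}=0$. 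Since $\mathrm{Pic}(Q)=\ZN\cdot H$, one has $E^{\vee\vee}\cong \CO_{Q}$, hence $E=I_{Z}$ for a subscheme $Z\subset Q$ with $\ch(\CO_{Z})=(0,0,\frac{1}{2}H^{2},-\frac{1}{4}H^{3})$. The truncated Chern character forces $Z$ to be a one-dimensional pure subscheme of degree $1$, and $\chi(\CO_{Z})=1$ forces it to be a line; thus $E\cong I_{\ell}$ for a unique $\ell\in \mathrm{Hilb}_{Q}^{m+1}$.

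To promote $\Phi$ to a scheme-theoretic isomorphism, I would use that $\lambda_{2}-\lambda_{1}$ is primitive, so the universal ideal sheaf on $Q\times \mathrm{Hilb}_{Q}^{m+1}$ induces a morphism of proper schemes $\Phi$ which we have shown is bijective on closed points. A direct Ext-computation starting from the ideal sequence $0\to I_{\ell}\to \CO_{Q}\to \CO_{\ell}\to 0$ yields $\Ext^{1}_{\Ku(Q)}(I_{\ell},I_{\ell})\cong \Ext^{1}_{Q}(I_{\ell},I_{\ell})\cong H^{0}(\ell,N_{\ell/Q})\cong \CN^{3}$, which coincides with the tangent space of $\mathrm{Hilb}_{Q}^{m+1}\cong \PB^{3}$ at $[\ell]$; thus $\Phi$ is étale, and being a bijective étale morphism between smooth proper schemes of dimension three, it is an isomorphism. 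The main obstacle is the Claim-\ref{claim-for-charact-stabob-inKu}-type numerical analysis establishing $\sigma^{0}_{\alpha,\beta}$-semistability -- as in the proof of the main theorem, this is where the delicate case-by-case bookkeeping lives, and it drives the entire classification.
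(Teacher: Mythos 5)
Your overall route is the same as the paper's: reduce to classifying the $\sigma$-stable objects of class $\lambda_{2}-\lambda_{1}$, establish $\sigma^{0}_{\alpha,\beta}$-semistability of a shift near $(0,-1)$ by the Claim~\ref{claim-for-charact-stabob-inKu}-type limit analysis, rule out the point-supported piece $T$, and identify the resulting slope-stable rank-one sheaf as $I_{\ell}$. The only genuine divergence is the endgame: the paper identifies the two moduli stacks and invokes uniqueness of good/coarse moduli spaces as in Proposition~\ref{main-thm-iso-prop3}, whereas you propose a bijective \'etale morphism between smooth proper spaces; both work, though yours additionally requires $\Ext^{2}(I_{\ell},I_{\ell})=0$ (or some substitute) to know the Bridgeland moduli space is smooth of dimension three before concluding.

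One step of your plan would fail as literally described. You claim each admissible $(a,b)$ in the destabilization analysis is eliminated either by \cite[Proposition 3.2]{Li19} or by checking that no $\sigma_{\alpha,\beta}$-semistable object has the requisite truncated Chern character. In the surviving case $a+b=-1$, $a=-2$, one finds $\ch_{\leq 2}(B)=(-2,H,0)$ and $\ch_{\leq 2}(A)=(1,-H,\frac{1}{2}H^{2})$, and semistable objects with exactly these invariants do exist: $B\cong S[1]$ by Proposition~\ref{semistab-char-spinorbd} and $A\cong \CO(-H)$ by Proposition~\ref{sum-lines-detect}. So the contradiction in this case is not numerical; it comes from $\RHom(S,\CO(-H))=0$, which forces the destabilizing sequence to split, whence $G\cong \CO(-H)\oplus S[1]$ is decomposable, contradicting $\sigma(\alpha,\beta)$-stability. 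Your elimination scheme needs this extra Ext-vanishing and splitting argument to close the only nontrivial case.
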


\begin{proof}
Using the same idea as the proof of Proposition \ref{main-thm-iso-prop3}; see also \cite[Theorem 4.5]{FP23} and \cite[Theorem 1.1]{PY22},
it suffices to show that for any $\sigma(\alpha,\beta)$-stable object $E\in \Ku(Q)$ with class $\lambda_{2}-\lambda_{1}$, up to a shift, it is isomorphic to the ideal sheaf $I_{\ell}$ for some line $\ell\subset Q$.
We may assume that $G:=E[1]\in \A(\alpha,\beta)$ is $\sigma(\alpha,\beta)$-stable for $(\alpha,\beta)\in V$.
Then the slope 
$
\displaystyle
\mu_{\alpha,\beta}^{0}(G)=\frac{-\beta}{\frac{1}{2}-\frac{1}{2}\beta^{2}+\frac{1}{2}\alpha^{2}}.
$
Then  $
\displaystyle
\lim_{\alpha \to 0^{+}} \mu_{\alpha,\alpha-1}^{0}(G)
= \lim_{\alpha \to 0^{+} } (\frac{1}{\alpha}-1)=+\infty.$

Next we show that $G$ is $\sigma^{0}_{\alpha,\beta}$-semistable for some $(\alpha,\beta)\in V$.
Suppose that $G$ is not $\sigma_{\alpha,\beta}^{0}$-semistable for all $(\alpha,\beta)\in V$.
Since $G$ is $\sigma(\alpha,\beta)$-stable, 
by \cite[Proposition 2.2.2]{BMT14}, 
there is an open ball $U$ in $\RN^{2}$ containing the point $(0,-1)$ such that for any $(\alpha,\beta)\in U\cap V$, 
$G\in \A(\alpha,\beta)$ and $G$ has constant HN filtration with respect to $\sigma^{0}_{\alpha,\beta}$.
Suppose that $B$ is the destabilizing quotient object of $G$ with minimum slope and 
$A\to G\to B$ 
is the destabilizing sequence of $E$ with respect to $\sigma_{\alpha,\beta}^{0}$ for $(\alpha,\beta)\in U\cap V$. 
Likewise to the proof of Claim \ref{claim-for-charact-stabob-inKu},
we may assume that $\ch_{\leq 2}(B)=(a,bH,\frac{-a-2b}{2}H^2)$ with $a,b\in \ZN$.
Then we have
\begin{equation}\label{claim-im-ineq-line}
0<(\beta^{2}-\alpha^{2}-1)\frac{a}{2}-(\beta+1)b\leq \frac{1}{2}-\frac{1}{2}\beta^{2}+\frac{1}{2}\alpha^{2}
\end{equation} 
\begin{equation}\label{claim-mu-slope-ineq-line}
\frac{(\beta a-b)}{(\beta^{2}-\alpha^{2}-1)\frac{a}{2}-(\beta+1)b}
<
\frac{-\beta}{\frac{1}{2}-\frac{1}{2}\beta^{2}+\frac{1}{2}\alpha^{2}}
\end{equation}
for all $(\alpha,\beta)\in U\cap V$.
Combining \eqref{claim-im-ineq-line} with  \eqref{claim-mu-slope-ineq-line}, 
we get
\begin{equation}\label{claim-ineq-comb-line}
 \beta a-b < - \beta.
\end{equation}
According to \cite[Remark 5.12]{BLMS23}, 
one has $\mu_{\alpha,\beta}^0(B) \geq -2$ for $(\alpha,\beta)\in V_{L}$.
Considering the limit cases of \eqref{claim-mu-slope-ineq-line} and \eqref{claim-ineq-comb-line} when $(\alpha,\beta) \to (0,-1)$, we have $a+b=0$ or $a+b=-1$.

If $a+b=0$, then it contradicts to \eqref{claim-mu-slope-ineq-line}.  

If $a+b=-1$, then $\ch_{\leq 2}(B)=(a,(-a-1)H,\frac{a+2}{2}H^{2})$.
Since $\beta>-1$, using \eqref{claim-ineq-comb-line},
it follows that $a<-1$.
Since $B$ is $\sigma_{\alpha,\beta}^{0}$-semistable, 
by \cite[Proposition 4.1]{FP23}, 
$\ch_{\leq 2}(B)$ is a possible truncated Chern character of a $\sigma_{\alpha,\beta}$-semistable object $B'[1]$ for $B' \in \Coh^{\beta}(Q)$.
By \cite[Proposition 3.2]{Li19}, we have $|a|\leq 2$ and thus $a=-2$.
Therefore, we get $\ch_{\leq 2}(B)=(-2,H,0)$ and $\ch_{\leq 2}(A)=(1,-H,\frac{1}{2}H^{2})$.
Analogous to Claim \ref{ch-B-part}, we have $\ch(B)=(-2,H,0,-\frac{1}{12}H^{3})$ and thus $\ch(A)=(1,-H,\frac{1}{2}H^{2},-\frac{1}{6}H^{3})$.
By Proposition \ref{semistab-char-spinorbd}, we get $B\cong S[1]$.
Note that $\ch(A(H))=(1,0,0,0)$.
Since $A$ is $\sigma^{0}_{\alpha,\beta}$-semistable for  $(\alpha,\beta)\in U\cap V$, 
by \cite[Proposition 4.1]{FP23}, $A$ is $\sigma_{\alpha,\beta}$-semistable.
By Proposition \ref{sum-lines-detect}, we get $A(H)\cong \CO$ and thus $A\cong \CO(-H)$.
Since $\RHom(S,\CO(-H))=0$, it follows that $G\cong \CO(-H)\oplus S[1]$, a contradiction.

Now we may assume that $G$ is $\sigma^{0}_{\alpha,\beta}$-semistable for some $(\alpha,\beta)\in V$.
By \cite[Proposition 4.1]{FP23}, 
there are two cases: (i) $G$ is $\sigma_{\alpha,\beta}$-semistable; 
(ii) there is an exact  triangle $\xymatrix@C=0.3cm{F[1] \ar[r]^{} & G \ar[r]^{} & T}$,
where $F\in \mathcal{F}^{0}_{\alpha,\beta}$ and $T\in \mathcal{T}^{0}_{\alpha,\beta}$ is supported on points.  
For the case (ii), we may suppose that $G=E[1]$ and thus $\ch_{\leq 2}(F)=(1,0,-\frac{1}{2}H^{2})$.
Since $G$ is $\sigma_{\alpha,\beta}^{0}$-semistable and thus $F$ must be $\sigma_{\alpha,\beta}$-semistable.
We may assume that the Chern character
$$
\ch(F)=(1,0,-\frac{1}{2}H^{2},(\frac{1}{4}+\frac{t}{2})H^{3}),
$$
where $t\geq 0$. 
Likewise to Lemma \ref{ideal-line-stab}, 
there is no wall for objects with $\ch_{\leq 2}(F)=(1,0,-\frac{1}{2}H^{2})$ to the left of the vertical wall $\beta=0$ and thus $F$ is $\sigma_{\alpha,\beta}$-semistable for $\alpha>0$.
Since $\mu_{H}(F)=0>\beta$, 
by Corollary \ref{limit-tilt-stab-Gie-stab}, $F$ is a $\mu_{H}$-slope stable torsion-free sheaf.
As proved in Lemma \ref{ideal-line-stab}, 
$F$ is $\sigma_{\alpha,-1}$-stable for any $\alpha > 0$. 
Note that 
$
\displaystyle \lim_{\alpha\to 0^{+}}\mu_{\alpha, -1}(\CO(-3H)[1])=-1< 0=\lim_{\alpha\to 0^{+}} \mu_{\alpha, -1}(F).
$
By Serre duality, we have $\Ext^{2}(\CO,F)\cong \Hom(F,\CO(-3H))=0$.
Since $G\in \Ku(Q)$, it follows that $\Hom(\CO,F)=0$.
By \eqref{HRR-quadric3}, we get $\chi(F)=\ch_{3}(F)-\frac{1}{4}H^{3}\leq 0$.
As a result, 
we have $\ch_{3}(F)=(\frac{1}{4}+\frac{t}{2})H^{3} \leq \frac{1}{4}H^{3}$ and thus $t=0$, as $t\geq 0$. 
This yields that $T=0$ and thus $G=F[1]$.
We are reduced to the case (i).

For the case (i), we may assume $G:=E$.
By hypothesis, $G$ is $\sigma_{\alpha,\beta}$-semistable.
Similar to Lemma \ref{ideal-line-stab}, 
there is no wall for objects with $\ch_{\leq 2}(G)=(1,0,-\frac{1}{2}H^{2})$ to the left of the vertical wall $\beta=0$.
As a consequence, $G$ is $\sigma_{\alpha,\beta}$-semistable for $\alpha>0$.
By Corollary \ref{limit-tilt-stab-Gie-stab},
$G$ is a $\mu_{H}$-slope stable torsion-free sheaf.
This completes the proof of Theorem \ref{Hilb-line-Q}.
\end{proof}


\end{document}